\newcommand{\SE}{{\mathcal{E}}}
\newcommand{\SD}{{\mathcal{D}}}
\newcommand{\SW}{{\mathcal{W}}}
\newcommand{\SF}{{\mathcal{F}}}
\newcommand{\SL}{{\mathcal{L}}}
\newcommand{\ST}{{\mathcal{T}}}
\newcommand{\SU}{{\mathcal{U}}}
\renewcommand{\SS}{{\mathcal{S}}}
\newcommand{\SH}{\mathcal{H}}
\newcommand{\LSD}{\mathcal{L}(\mathcal{D})}
\newcommand{\LLSD}{\mathcal{L}(\mathcal{L}(\mathcal{D}))}
\newcommand{\LLE}{\mathcal{L}(\mathcal{L}(\mathcal{E}))}
\newcommand{\LLW}{\mathcal{L}(\mathcal{L}(\mathcal{W}))}
\newcommand{\std}{{\operatorname{std}}}
\newcommand{\pr}{{\operatorname{pr}}}
\newcommand{\weak}{{\operatorname{weak}}}
\renewcommand{\int}{{\operatorname{int}}}
\newcommand{\R}{{\mathbb{R}}}
\newcommand{\Z}{{\mathbb{Z}}}
\newcommand{\NS}{{\mathbb{S}}}
\newcommand{\D}{{\mathbb{D}}}
\newcommand{\SO}{{\operatorname{SO}}}
\newcommand{\Op}{{\mathcal{O}p}}
\newcommand{\image}{{\operatorname{im}}}
\newcommand{\trans}{{\operatorname{trans}}}
\newcommand{\sol}{{\operatorname{sol}}}
\newcommand{\lra}{\longrightarrow}
\newcommand{\Engel}{{\operatorname{\mathfrak{Engel}}}}
\newcommand{\FEngel}{{\operatorname{\mathcal{F}\mathfrak{Engel}}}}
\newcommand{\OT}{{\operatorname{OT}}}
\newtheorem{theorem}{Theorem}[section]
\newtheorem*{theorem*}{Theorem}
\newtheorem{lemma}[theorem]{Lemma}
\newtheorem{proposition}[theorem]{Proposition}
\newtheorem{corollary}[theorem]{Corollary}
\newtheorem{definition}[theorem]{Definition}
\theoremstyle{definition}
\newtheorem{remark}[theorem]{Remark}
\newtheorem{example}[theorem]{Example}
\newtheorem*{main*}{Theorem~\ref{t:main}}
\begin{document} 

\title{The Engel-Lutz twist and overtwisted Engel structures}

\subjclass[2010]{Primary: 58A30.}
\date{\today}

\keywords{Engel structures}

\author{\'Alvaro del Pino}
\address{Utrecht University, Department of Mathematics, Budapestlaan 6, 3584 Utrecht, The Netherlands}
\email{a.delpinogomez@uu.nl}

\author{Thomas Vogel}
\address{Mathematisches Institut der LMU, Theresienstr. 39, 80333 M{\"u}nchen, Germany. }
\email{tvogel@math.lmu.de}

\begin{abstract}
We introduce a modification procedure for Engel structures that is reminiscent of the Lutz twist in $3$-dimensional Contact Topology. This notion allows us to define what an Engel overtwisted disc is, and to prove a complete $h$-principle for overtwisted Engel structures with fixed overtwisted disc.
\end{abstract}

\maketitle

\section{About this paper}

A maximally non-integrable $2$-plane field $\SD$ on a $4$-manifold $M$ is called an \emph{Engel structure}. Maximal non-integrability means that $\SE = [\SD,\SD]$ is a distribution of rank $3$ that satisfies $[\SE,\SE] = TM$. Engel structures hold a privileged position in the taxonomy of distributions: they are one of the four \emph{topologically stable} families of distributions (i.e. distributions described by an open condition and having a local model). In this paper we study them from the perspective of the $h$-principle, viewing them as sections of the Grassmann bundle $\textrm{Gr}(TM,2)$ which satisfy a certain differential relation of order $2$. The goal is to study the space of such sections, comparing it with the space of formal solutions. In the case when $M$ and $\SD$ are orientable and oriented, formal solutions are trivializations of $TM$ in which the first two components of the framing span $\SD$.

Gromov's method of flexible continuous sheaves \cite{gr} shows that Engel structures, which are given by an open and Diff-invariant partial differential relation, satisfy the $h$-principle in open manifolds. This does not prove the analogous result for closed manifolds. In \cite[Intrigue F2]{em} it is stated: ``\emph{On the other hand, it is unknown whether the h-principle holds for Engel structures on closed 4-manifolds. In particular, it is an outstanding open question whether any closed parallelizable 4-manifold admits an Engel structure}''.

It was proven in \cite{vo} that every parallelizable $4$-manifold does admit an Engel structure. The proof relies on the interplay between Engel and contact structures: First, the ambient manifold is decomposed into round handles, then one can proceed handle by handle constructing the desired Engel structure. During this process, the boundary of each handlebody inherits a contact structure. The heart of the argument is to be able to manipulate these contact structures to ensure that the handles can indeed be glued. 

More recently, the existence problem for Engel structures (in every formal class) was solved \cite{cppp}. It was shown that the inclusion 
$$ i: \Engel(M) \to \FEngel(M) $$
of the Engel structures into the formal Engel structures is a surjection in homotopy groups whenever $M$ is closed. The key contribution of \cite{cppp} is a method for constructing and manipulating Engel structures locally. As such, the approach differs from the one in \cite{vo} and is more of an $h$-principle. However, the proof of $h$-principle in \cite{cppp} does {\em not} work relative to subsets $U$ of the manifold. This is because it relies on adding twisting (see Sections~\ref{sec:definitions} and~\ref{sec:development map}) along the leaves of the characteristic foliation. This is not possible when the characteristic foliation 
is tangent to the boundary of $\partial U$ (as sketched in Figure~\ref{fig:non-rel-cp3}).
\begin{figure}[htb]
\begin{center}
\includegraphics[scale=0.8]{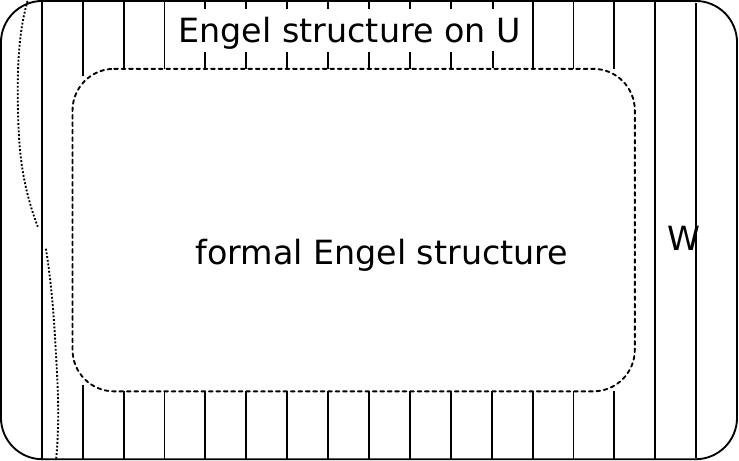}
\caption{Using the methods of \cite{cppp}, one cannot extend every Engel structure from $U=\Op(\partial \D^4)$ to the interior of $M=\D^4$, in general. The characteristic foliation is vertical.  \label{fig:non-rel-cp3}}
\end{center}
\end{figure}

Currently, the outstanding open question in Engel Topology is, whether $i$ might in fact be a homotopy equivalence. While answering this question is beyond the scope of this article, we are able to define what an \emph{overtwisted} Engel structure is and show that Engel structures with fixed overtwisted disc satisfy the complete $h$-principle. In particular, the result we obtain is relative in the parameter, and the domain. An important corollary, which does not follow from \cite{vo} nor \cite{cppp}, is that every Engel germ in $\partial\D^4$ extends to the interior of $\D^4$ if it extends formally.

\subsection{Outline of the paper}

Throughout the paper we use Gromov's notation $\Op(A)$ to denote an arbitrarily small neighborhood of the set $A$. We write $\int(A)$ for the interior of the set $A$.

\subsubsection*{The Engel-Lutz twist and overtwisted Engel structures}

In $3$-dimensional Contact Topology one defines the Lutz twist \cite{lu} as a surgery operation along a transverse knot. One replaces the contact structure on a standard neighborhood of the knot by a contact structure which ``twists'' more. While this operation does not change the homotopy class as a plane field, it can change the homotopy class as a contact structure. In particular, the resulting contact structure is always overtwisted \cite{El89}.

The main focus of this article are Engel-Lutz twists. This construction is the Engel analogue of the classical Lutz twist. One replaces the Engel structure in a neighborhood of a $2$-torus transverse to $\SD$ by another Engel structure which has more ``twisting''. This operation preserves the formal type, but we do not know whether the resulting Engel structure is Engel homotopic to the original one. This is explained in Section~\ref{sec:Lutz}. 

Before defining the Engel-Lutz twist, we show that there are many transverse $2$-tori in a given Engel manifold (Section~\ref{sec:surfaces}). More precisely, we prove that transverse $2$-tori can be constructed along knots transverse to the even-contact structure $[\SD,\SD]$. Moreover, the transverse surfaces we construct are quite flexible and can be isotoped effectively.

When one attempts to construct an Engel structure from a formal Engel structure, a certain family of Engel germs along $\partial\D^4$ is obtained (Proposition \ref{prop:reduction}). A key observation is that the Engel-Lutz twist can be used to extend these germs to the interior (Subsection \ref{ssec:nonParametric}). Motivated by this fact, in Section~\ref{sec:otDisc} we endow the $4$-ball with a specific Engel structure $\SD_\OT$, which is a portion of Engel-Lutz twist satisfying a certain numerical constraint. We say that the Engel manifold $\Delta_\OT = (\D^4,\SD_\OT)$ is an \emph{overtwisted disc}. If an Engel embedding $\Delta: \Delta_\OT \lra (M,\SD)$ exists, we say that $(M,\SD)$ is an \emph{overtwisted Engel manifold}.

\subsubsection*{Statement of the main results}

Let $\Delta: \D^4 \lra M$ be a smooth embedding. We define $\Engel_\OT(M,\Delta) \subset \Engel(M)$ to be the subspace of those Engel structures on $M$ such that their pullback by $\Delta$ is $\SD_\OT$. Its formal analogue is $\FEngel(M,\Delta) \subset \FEngel(M)$, the subspace of those formal Engel structures which are Engel on $\image(\Delta)$ and that pullback to $\SD_\OT$ under $\Delta$. 

In families, the particular embedding of the overtwisted disc may vary and might even be twisted: Let $K$ be a compact manifold. We will say that a $K$-family of Engel structures $\SD: K \lra \Engel(M)$ is \emph{overtwisted} if there is a locally trivial fibration of overtwisted discs $\Delta_k \subset (M,\SD(k))$, $k \in K$. We will say that the family $\Delta = (\Delta_k)_{k \in K}$ is the \emph{certificate of overtwistedness} of $\SD$. In Subsection \ref{ssec:foliated} we discuss a different (but equivalent up to homotopy) way in which overtwistedness can appear in the parametric setting.

The terminology ``\emph{overtwisted}'' is justified by our main result, which states that overtwisted families of Engel structures are flexible.
\begin{theorem} \label{thm:main}
Let $M$ be a smooth $4$-manifold (possibly non-compact or with boundary). Let $U \subset M$ be a closed subset such that $M \setminus U$ is connected. Let $K' \subset K$ be compact CW-complexes.

Suppose $\SD_0: K \lra \FEngel(M)$ is a family of formal Engel structures satisfying: 
\begin{itemize}
\item $\SD_0(k)$ is Engel at $p \in M$ if $p \in \Op(U)$ or $k \in \Op(K')$,
\item $\SD_0$ has a certificate of overtwistedness $(\Delta_k)_{k \in K}$ with $\Delta_k \subset M \setminus U$.
\end{itemize}

Then, there is homotopy $\SD: K \times [0,1] \lra \FEngel(M)$ such that
\begin{itemize}
\item $\SD(\cdot,0) = \SD_0$,
\item $\SD(k,s)(p) = \SD(k,0)(p)$ whenever $k \in \Op(K')$ or $p \in \Op(U) \cup \Delta_k$,
\item $\SD(\cdot,1): K \lra \Engel(M)$. 
\end{itemize}
\end{theorem}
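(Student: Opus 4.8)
The plan is to prove Theorem~\ref{thm:main} by a parametric, relative $h$-principle argument in which the certificate of overtwistedness plays the role of a flexibility reservoir, exactly as the overtwisted disc does in the Borman--Eliashberg--Murphy theorem in contact topology. First I would set up a double induction: over the cells of $K$ relative to $\Op(K')$, and over a handle decomposition of $M$ rel $\Op(U)$ (using an exhaustion of $M$ by compact pieces in the non-compact case). Over $\Op(K')$ and over $\Op(U)$ the structure is already Engel, so both inductions start and remain relative. At the inductive step one is reduced to the following local problem: given a $K$-cell $D^j$ and a ball $B\cong\D^4\subset M\setminus U$, with a formal Engel structure $\SD$ on $B\times D^j$ that is already genuine Engel over the part of $\partial(B\times D^j)$ where the earlier steps apply, and with a copy $\Delta_k$ of the overtwisted disc joined to $B$ inside $M\setminus U$, one must extend $\SD$ --- rel that boundary --- to a genuine Engel structure on all of $B\times D^j$.

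The first substantive move is to normalise the formal data near $\partial B$ using Proposition~\ref{prop:reduction}: after a homotopy fixed on the relevant boundary, $\SD$ is genuine Engel outside a smaller ball $B'\times D^j$, and on $\Op(\partial B')\times D^j$ it agrees with the explicit family of Engel germs produced by the development map of Section~\ref{sec:development map}. It remains to fill $B'\times D^j$ with a genuine Engel structure realising these model germs. By the discussion of Subsection~\ref{ssec:nonParametric}, such a filling exists provided one may insert a sufficiently long piece of Engel-Lutz twist --- the model germ can be capped by a portion of Lutz twist once its ``twisting count'' is large enough. This is where the overtwisted disc enters: using that $M\setminus U$ is connected, I would join $B'$ to $\Delta_k$ by an embedded tube $\gamma_k\times\D^3\subset M\setminus U$ avoiding $\Op(U)$ and meeting $\Delta_k$ only near one endpoint, and varying continuously with $k$ (and constant over $\Op(K')$, where nothing needs to be done). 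Along a knot transverse to $\SE=[\SD,\SD]$ threading this tube I would build a transverse $2$-torus, via the surface construction of Section~\ref{sec:surfaces}, and perform Engel-Lutz twists along it (Section~\ref{sec:Lutz}). Repeated twisting transports the required amount of ``twisting'' out of $\Delta_\OT$ and into a neighbourhood of $B'$; the numerical slack built into $\SD_\OT$ is precisely what makes this importation absorbable near $\Delta_k$ without disturbing the structure on $\Delta_k$ itself and without changing the formal class. After this correction the model germs bound a genuine Engel structure on $B'\times D^j$, completing the local extension; reassembling the two inductions then yields the theorem.

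The main obstacle I expect is not any single geometric construction but the parametric and relative bookkeeping that glues them together. One must choose the balls, the connecting tubes $\gamma_k$, the transverse knots, and the transverse tori so that they depend continuously on $k\in K$, are genuinely constant over $\Op(K')$, avoid $\Op(U)$, and stay disjoint from the moving family $(\Delta_k)_{k\in K}$ --- all while $M$ may be non-compact, and while $(\Delta_k)$ is only assumed to form a locally trivial fibration rather than a product. This forces one to use the effective isotopy statements for transverse surfaces of Section~\ref{sec:surfaces} in families, to check that the Engel-Lutz twist of Section~\ref{sec:Lutz} depends continuously on its defining torus, and to control Proposition~\ref{prop:reduction} parametrically. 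A secondary technical point is to carry out the ``twisting absorption'' near $\Delta_k$ rel $\Delta_k$: since the homotopy must be constant on $\Delta_k$, the Mazur-swindle-type cancellation has to take place entirely in $\Op(\Delta_k)\setminus\Delta_k$, which is exactly what the numerical constraint defining $\SD_\OT$ is engineered to permit.
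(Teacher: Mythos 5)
Your overall skeleton does match the paper's strategy (reduce via Proposition~\ref{prop:reduction} to a finite collection of shells in $M\setminus U$, connect each shell to the certificate through the connected complement of $U$, and import Lutz twisting from the overtwisted disc rather than creating it), but the crucial filling step is a genuine gap. You invoke Subsection~\ref{ssec:nonParametric} as if it said that a model germ can be capped once ``a sufficiently long piece of Engel-Lutz twist'' has been inserted. What Proposition~\ref{prop:nonParametric} actually uses is a transverse torus whose profile carries an \emph{a priori unbounded} number of stabilizations, chosen after seeing the shell, and that twist is inserted by a \emph{formal} homotopy. In the relative and parametric setting neither option is available: whether adding Engel torsion can ever be realized by an Engel homotopy is precisely the open question recorded after Proposition~\ref{prop:nonParametric}, so the only twisting you may move by a genuine Engel homotopy is the one already present in $\Delta_\OT$, whose profile is the fixed, singly stabilized unknot. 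Hence ``repeated twisting transports the required amount of twisting out of $\Delta_\OT$'' is unjustified --- you cannot repeat, and one thin tube does not carry enough stabilizations to run the non-parametric argument. The missing idea is the twist system of Subsection~\ref{ssec:universalTwistSystem}: the core of the single Lutz tube of the (replicated) disc is homotoped through transverse curves so that the tube stacks many translated copies of the thin cylinder side by side in the bottom of the shell, and the unlinking condition (B) lets the angular function be lowered cylinder by cylinder; this cascade, followed by flowing along the Legendrian marking to sweep out a large transverse hypersurface and then Lemma~\ref{lem:Bolzano}, is the content of Proposition~\ref{prop:extension} and is the heart of the proof, absent from your proposal.

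Two secondary points. The mechanism for staying relative to $\Delta_k$ is not an ``absorption'' or Mazur-swindle-type cancellation near $\Delta_k$: the paper first replicates the certificate (Lemmas~\ref{lem:selfReplication} and~\ref{lem:selfReplication2}) and performs every subsequent move on the copy $\Delta'$, never touching $\Delta$ at all. Correspondingly, the numerical constraint $L\tau_0>2$ in Definition~\ref{def:otDisc} is not engineered for any cancellation near the disc; its role is to allow the profile to be shrunk relative to the ends (Lemma~\ref{lem:scalingProfile}), which is what makes self-replication and the rel-scaling-region core homotopies of Subsection~\ref{ssec:homotopyOTdisc} possible. Finally, on the parametric bookkeeping you rightly flag: producing connections from the shells to the certificate that vary continuously in $k$, avoid $V$, $\Delta$ and the other shells, requires the contractibility condition (iv) of Proposition~\ref{prop:reduction} together with the parametric connected-sum Lemma~\ref{lem:BEM}; a cell-by-cell induction over $K$ combined with a handle decomposition of $M$ does not by itself supply these, and in the paper the parametric reduction is already handled in one stroke by the adapted triangulations of $M\times K$.
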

Its proof spans Section~\ref{sec:hPrinciple}. We invite the reader to check that our approach provides an alternate proof of Eliashberg's classification of overtwisted contact structures in dimension $3$ \cite{El89}.

From the theorem, the complete $h$-principle for Engel structures with fixed overtwisted disc follows immediately (Subsection \ref{ssec:corMain}):
\begin{corollary} \label{cor:main}
The inclusion 
$$ \Engel_\OT(M,\Delta) \lra \FEngel(M,\Delta) $$
is a homotopy equivalence.
\end{corollary}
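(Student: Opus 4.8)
The plan is to deduce Corollary~\ref{cor:main} from Theorem~\ref{thm:main} by a standard parametric argument, taking $U = \emptyset$ and letting the certificate of overtwistedness come from the fixed embedding $\Delta$. Recall that a map of pairs $f: (K,K') \to (\FEngel(M,\Delta), \Engel_\OT(M,\Delta))$ represents an element of the relative homotopy group, and the inclusion is a homotopy equivalence if and only if every such $f$ is homotopic rel $K'$ (through maps of pairs) to a map landing in $\Engel_\OT(M,\Delta)$. So first I would take an arbitrary compact CW-pair $(K,K')$ and a family $\SD_0 : K \to \FEngel(M,\Delta)$ with $\SD_0|_{K'}$ landing in $\Engel_\OT(M,\Delta)$; concretely, $\SD_0(k)$ is a formal Engel structure which is genuinely Engel on $\image(\Delta)$ for all $k$, genuinely Engel everywhere for $k \in K'$, and pulls back to $\SD_\OT$ under $\Delta$.

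Next I would verify that the hypotheses of Theorem~\ref{thm:main} are met with this data. Set $U = \emptyset$, so $M \setminus U = M$ is connected provided $M$ is connected (one reduces to the connected case component by component, or simply adds connectedness of $M$ as a running assumption as is implicit throughout). The constant certificate $\Delta_k := \Delta(\D^4)$ with the constant Engel structure $\SD_\OT$ on it forms a locally trivial fibration of overtwisted discs, and it lies in $M \setminus U = M$. The condition ``$\SD_0(k)$ is Engel at $p$ if $p \in \Op(U)$ or $k \in \Op(K')$'' becomes: Engel near $K'$ in the parameter — which holds by hypothesis after shrinking, since being Engel is an open condition — while there is no constraint from $\Op(U) = \emptyset$. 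One subtlety: $\SD_0(k)$ need only be Engel on $\image(\Delta)$, not on a neighborhood $\Op(\Delta_k)$; but the fixed-disc condition is about the germ, and I would note that $\SD_\OT$ as an Engel structure on $\D^4$ extends to an Engel germ on $\Op(\D^4)$, so by pulling back we may assume $\SD_0(k)$ is Engel on an open neighborhood of $\image(\Delta)$, or alternatively observe that the proof of Theorem~\ref{thm:main} only ever needs the Engel condition on the closed set $\Delta_k$ itself.

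Then I would apply Theorem~\ref{thm:main} to obtain a homotopy $\SD : K \times [0,1] \to \FEngel(M)$ with $\SD(\cdot,0) = \SD_0$, with $\SD(\cdot,1)$ landing in $\Engel(M)$, and fixed over $\Op(K')$ in the parameter and over $\Op(U) \cup \Delta_k = \Delta_k$ pointwise. The last fixedness clause is exactly what guarantees that the pullback by $\Delta$ stays equal to $\SD_\OT$ throughout the homotopy, so in fact $\SD(\cdot,s) : K \to \FEngel(M,\Delta)$ for all $s$, and $\SD(\cdot,1) : K \to \Engel(M) \cap \FEngel(M,\Delta) = \Engel_\OT(M,\Delta)$. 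The fixedness over $\Op(K')$ ensures the homotopy is constant on $K'$, hence a homotopy of maps of pairs. Since $(K,K')$ was an arbitrary compact CW-pair, this shows the inclusion induces a bijection on all homotopy sets $[(K,K'),(\FEngel(M,\Delta),\Engel_\OT(M,\Delta))]$; applying this with $(K,K') = (D^n, S^{n-1})$ and with $(K,K') = (D^n \times [0,1], D^n \times \{0,1\} \cup S^{n-1}\times[0,1])$ gives surjectivity and injectivity of $\pi_n$, and by Whitehead's theorem the inclusion is a weak homotopy equivalence; since both spaces have the homotopy type of CW-complexes (spaces of sections of a fiber bundle satisfying an open differential relation), it is a genuine homotopy equivalence.

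The only real content here is bookkeeping: Theorem~\ref{thm:main} is doing all the work, and the main thing to be careful about is the interplay between ``Engel on $\image(\Delta)$'' versus ``Engel on a neighborhood'' and the precise meaning of the certificate of overtwistedness in the non-parametric ($K$ a point) and parametric cases — in particular checking that the fixed embedding $\Delta$ with structure $\SD_\OT$ genuinely qualifies as a (constant) certificate, which is essentially the definition of $\Engel_\OT(M,\Delta)$ and of an overtwisted disc. There is no hard step; this is why the excerpt says the corollary ``follows immediately''.
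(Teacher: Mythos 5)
Your argument is correct and is essentially the paper's own proof: the paper likewise deduces the corollary from Theorem~\ref{thm:main}, using the fixed disc as a constant certificate and applying the theorem with $K=\D^k$, $K'=\partial\D^k$ to kill the relative homotopy groups $\pi_k(\FEngel(M,\Delta),\Engel_\OT(M,\Delta))$ (surjectivity alternatively via Corollary~\ref{cor:existence}). Your extra bookkeeping about ``Engel on $\image(\Delta)$'' is already handled by the paper's convention that Engel on a closed set means Engel on a neighborhood, so there is no gap.
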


As is often the case, the particular trivialization of the overtwisted disc does not matter for $\pi_0$ statements. Write $\Engel_\OT(M) \subset \Engel(M)$ for the subspace of overtwisted Engel structures. The following statement follows from Theorem~\ref{thm:main}:
\begin{corollary} \label{cor:pi0hPrinciple}
The inclusion
$$ \Engel_\OT(M) \lra \FEngel(M) $$
induces a bijection between path components.
\end{corollary}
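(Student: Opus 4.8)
The plan is to read the $\pi_0$ statement off Theorem~\ref{thm:main} and Corollary~\ref{cor:main}, the one new input being that an overtwisted disc can be slid around inside $M$. We may assume $\FEngel(M)\neq\emptyset$ (otherwise both spaces are empty) and, treating connected components one at a time, that $M$ is connected, so that $M\setminus\emptyset$ is connected as required by Theorem~\ref{thm:main}.

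For surjectivity, let $\SD_0\in\FEngel(M)$. Fix a smooth embedding $\Delta\colon\D^4\hookrightarrow\int(M)$; using connectedness of $\FEngel(\D^4)$ in the appropriate orientation sector, first perform a formal homotopy supported near $\Delta(\D^4)$ after which $\Delta^*\SD_0=\SD_\OT$, so that $\SD_0$ carries the constant certificate of overtwistedness $\Delta$. Then apply Theorem~\ref{thm:main} with $K=\mathrm{pt}$, $K'=U=\emptyset$: it produces a formal homotopy, fixed near $\Delta(\D^4)$, to an honest Engel structure $\SD_1\in\Engel_\OT(M)$ with $[\SD_1]\mapsto[\SD_0]$. (Equivalently, this is exactly what the non-parametric Engel--Lutz construction of Subsection~\ref{ssec:nonParametric} does, feeding the germs of Proposition~\ref{prop:reduction} enough twisting to meet the numerical constraint defining $\SD_\OT$.)

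For injectivity, let $\SD_0,\SD_1\in\Engel_\OT(M)$ be formally homotopic, with certificates $\Delta^0,\Delta^1\colon\Delta_\OT\lra(M,\SD_i)$. First I would make the two discs agree: since the space of embeddings $\D^4\hookrightarrow\int(M)$ in a fixed orientation class is connected, an ambient isotopy of $M$ carries $\Delta^0$ onto $\Delta^1$, and pushing $\SD_0$ along it is a path inside $\Engel_\OT(M)$; so after this move I may assume $\Delta^0=\Delta^1=:\Delta$, hence $\Delta^*\SD_0=\SD_\OT=\Delta^*\SD_1$ and $\SD_0,\SD_1\in\Engel_\OT(M,\Delta)$. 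If the formal homotopy between them could be chosen rel $\image(\Delta)$, i.e.\ inside $\FEngel(M,\Delta)$, then Corollary~\ref{cor:main} would at once give a path joining them in $\Engel_\OT(M,\Delta)\subset\Engel_\OT(M)$ and we would be done.

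I expect the main obstacle to be precisely this last point, where ``the particular trivialization does not matter'' has to be cashed in. Restriction $\FEngel(M)\to\FEngel(\Op(\Delta(\D^4)))\cong\FEngel(\D^4)$ is a fibration with fibre $\FEngel(M,\Delta)$ over $\SD_\OT$, and its exact sequence only shows that $\SD_0$ and $\SD_1$ lie in one orbit of the monodromy action of $\pi_1(\FEngel(\D^4),\SD_\OT)$ on $\pi_0\FEngel(M,\Delta)$, say $[\SD_1]=\gamma\cdot[\SD_0]$. To conclude I would verify that this action is invisible in $\pi_0\Engel_\OT(M)$: if $\pi_1(\FEngel(\D^4),\SD_\OT)=0$ there is nothing to do; otherwise one checks it is generated by reparametrization loops $t\mapsto\rho_t^{*}\SD_\OT$ coming from loops $\rho_t$ of diffeomorphisms of $\D^4$ based at the identity, and each such $\rho_t$ extends to an ambient isotopy $\Psi_t$ of $M$ supported near $\Delta(\D^4)$ with $\Psi_t\circ\Delta=\Delta\circ\rho_t$. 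Then $t\mapsto(\Psi_t)_*\SD_0$ is a loop of honest Engel structures, overtwisted throughout with disc $\Delta\circ\rho_t$, hence a path in $\Engel_\OT(M)$ whose endpoint $(\Psi_1)_*\SD_0$ represents $\gamma\cdot[\SD_0]$; combined with Corollary~\ref{cor:main} this forces $[\SD_0]=[\SD_1]$ in $\pi_0\Engel_\OT(M)$. The genuinely fiddly points are thus the computation of $\pi_1(\FEngel(\D^4),\SD_\OT)$, the verification that its generators extend over $M$, and the orientation bookkeeping when sliding $\Delta^0$ onto $\Delta^1$; the appeals to Theorem~\ref{thm:main} and Corollary~\ref{cor:main} are black boxes.
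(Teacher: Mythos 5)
Your surjectivity argument and the first step of your injectivity argument follow the paper's own route: the paper likewise implants the overtwisted model formally near a fixed ball and applies Theorem~\ref{thm:main} (this is Corollary~\ref{cor:existence}), and for injectivity it first makes the two certificates coincide and then invokes Corollary~\ref{cor:main}. (A small omission on your side: the two discs are modelled on $[0,L_i]\times\D^3$ with possibly different lengths, so before any ambient isotopy the paper first equalizes the lengths by an Engel homotopy as in Lemma~\ref{lem:selfReplication}; together with the orientation bookkeeping you mention, this part is routine.) You are also right that the appeal to Corollary~\ref{cor:main} implicitly requires the formal homotopy to be taken relative to the common disc, i.e.\ the monodromy point you isolate.

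The genuine gap is in how you close that point. Since $\D^4$ is contractible, evaluation at a point shows $\FEngel(\D^4)$ is homotopy equivalent to the space of formal Engel data on $\R^4$ (a complete flag together with the isomorphisms~\eqref{e:canonicalIsos}), and $\pi_1(\FEngel(\D^4),\SD_\OT)$ is a finite group strictly larger than the image of $\pi_1(\mathrm{GL}_4^+(\R))\cong\Z_2$: for instance, the loop that rotates $\SD$ by the angle $\pi$ around $\SW$ inside $\SE$ (simultaneously at all points, in a trivialization) reverses the orientations of $\SD/\SW$ and $\SE/\SD$ while preserving both isomorphisms up to homotopy, hence is a nontrivial class with nontrivial orientation monodromy. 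By contrast, any loop $\rho_t^*\SD_\OT$ with $\rho_t\in\mathrm{Diff}(\D^4)$ has class lying in the image of $\pi_1(\mathrm{GL}_4^+(\R))$ (homotope the flag field of $\SD_\OT$ to a constant map; only the derivative loop survives), so its orientation monodromy is trivial. Therefore $\pi_1(\FEngel(\D^4),\SD_\OT)$ is \emph{not} generated by reparametrization loops, and your final step fails for general $\gamma$. The fix, within the paper's toolkit, is not a computation of $\pi_1$ but a further use of Theorem~\ref{thm:main}, exactly as in the proof of Corollary~\ref{cor:pi3hPrinciple}: realize the monodromy by a formal homotopy supported in a small ball, produce a second certificate disjoint from that ball (Lemma~\ref{lem:selfReplication2}, or implant one near an auxiliary point), and apply Theorem~\ref{thm:main} to the resulting $[0,1]$-families with $K'=\{0,1\}$ and the auxiliary certificate; this shows the $\pi_1$-action is invisible in $\pi_0\Engel_\OT(M)$, which is what your argument needs.
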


More generally, the homotopy type of the certificate is not relevant for lower dimensional families:
\begin{corollary} \label{cor:pi3hPrinciple}
Let $K$ be a CW-complex of dimension at most 3. Two overtwisted families $\SD_0,\SD_1: K \lra \Engel(M)$ are homotopic if and only if they are formally homotopic.
\end{corollary}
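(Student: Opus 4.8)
The plan is to deduce Corollary~\ref{cor:pi3hPrinciple} from Theorem~\ref{thm:main} by reducing the comparison of two overtwisted families to a single parametric application of the relative $h$-principle, the only subtlety being that the two families $\SD_0,\SD_1$ a priori carry \emph{different} certificates of overtwistedness, and these certificates need not be homotopic as maps into the space of embedded overtwisted discs.

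First I would set up the parameter space. Given $\SD_0,\SD_1: K \to \Engel_\OT(M)$ that are formally homotopic, pick a formal homotopy $\SD_t: K \to \FEngel(M)$, $t\in[0,1]$, connecting them; this is a family over $K\times[0,1]$ which is Engel over $K\times\{0,1\}$. The goal is to deform it, rel $K\times\{0,1\}$, to a genuine family of Engel structures. To invoke Theorem~\ref{thm:main} we must produce a certificate of overtwistedness $(\Delta_{(k,t)})_{(k,t)\in K\times[0,1]}$, i.e.\ a locally trivial fibration of overtwisted discs inside the total family, restricting over $K\times\{0\}$ and $K\times\{1\}$ to the given certificates for $\SD_0$ and $\SD_1$. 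Here we use the hypothesis $\dim K\le 3$: by the discussion of Section~\ref{sec:surfaces} (existence and flexibility of transverse $2$-tori, hence of embedded overtwisted discs) together with a general position / obstruction-theoretic count, the space of embedded overtwisted discs in a $4$-manifold with a fixed Engel structure is highly connected — more precisely, connected enough that a certificate defined over $K\times\{0,1\}\cup (K'\times[0,1])$ (where $K'=\emptyset$ here, or we may take the $h$-principle's $K'$ to absorb part of the data) extends over all of $K\times[0,1]$ whenever $\dim(K\times[0,1])=\dim K+1\le 4$. This is exactly the point where the dimension bound enters, and it is the same mechanism that makes Corollary~\ref{cor:pi0hPrinciple} work in the $\pi_0$ case.

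With the certificate in hand, I would apply Theorem~\ref{thm:main} with ambient parameter space $K\times[0,1]$, subcomplex $K' := K\times\{0,1\}$, and $U:=\emptyset$ (so $M\setminus U=M$ is connected by hypothesis; if $M$ is disconnected one works component by component, or inserts a trivial $U$). The family $\SD_t$ is Engel over $\Op(K')$ by construction, and it has the certificate $(\Delta_{(k,t)})$ with $\Delta_{(k,t)}\subset M\setminus U$; thus the hypotheses are met. The theorem returns a homotopy, fixed over $\Op(K')=\Op(K\times\{0,1\})$ and over the discs, ending at a family $K\times[0,1]\to\Engel(M)$. Restricting to $K\times\{0\}$ and $K\times\{1\}$ we recover $\SD_0$ and $\SD_1$ unchanged, and the interpolating family is now a genuine Engel homotopy between them. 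Conversely, if $\SD_0,\SD_1$ are Engel-homotopic they are certainly formally homotopic, so the equivalence is immediate.

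The main obstacle is the extension of the certificate of overtwistedness over the cylinder $K\times[0,1]$ rel its ends. One must check that the relevant space — overtwisted discs, or equivalently transverse $2$-tori satisfying the numerical constraint of Section~\ref{sec:otDisc}, embedded in $(M,\SD_t)$ and varying continuously in $t$ — is $(\dim K)$-connected, so that a section over the boundary $\partial(K\times[0,1])$ extends inward. I expect this to follow from the flexibility statements for transverse surfaces established in Section~\ref{sec:surfaces} (which allow any two such tori to be joined by an isotopy, and families of them to be interpolated) combined with a routine obstruction argument; the hypothesis $\dim K\le 3$ guarantees the obstructions, living in $H^{j+1}(K\times[0,1],\partial;\,\pi_j(\text{space of discs}))$, vanish in the relevant range. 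If instead one wanted the statement without the dimension restriction, one would need the full parametric statement where the certificate itself is allowed to vary nontrivially, which is precisely what Theorem~\ref{thm:main} (via Corollary~\ref{cor:main}) handles but at the cost of fixing the disc.
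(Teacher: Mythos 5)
There is a genuine gap at the heart of your argument: the claim that the space of embedded overtwisted discs in a fixed Engel manifold is $(\dim K)$-connected, so that a certificate defined over $K\times\{0,1\}$ extends over $K\times[0,1]$. Nothing in Section~\ref{sec:surfaces} (or anywhere else in the paper) establishes connectivity of this space; the flexibility results there let one move a transverse surface by an \emph{Engel homotopy of the structure} along an isotopy of its core (Remark~\ref{rem:moving core is moving Lutz twist}, Proposition~\ref{prop:homotopyOTdisc}), which is very different from isotoping overtwisted discs inside a fixed Engel structure. Whether two overtwisted discs in the same $(M,\SD)$ are even isotopic is not addressed, and the entire architecture of the paper (fixing the disc in Corollary~\ref{cor:main}, the self-replication Lemma~\ref{lem:selfReplication}) is designed precisely to avoid having to compare certificates. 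Your parenthetical claim that this is "the same mechanism" behind Corollary~\ref{cor:pi0hPrinciple} is also not how that case is handled. A second, related problem: your proposed certificate over $K\times[0,1]$ requires the family to be genuinely Engel (equal to $\SD_\OT$) on a neighborhood of each $\Delta_{(k,t)}$, but the formal homotopy $\SD_t$ you start from is Engel only at $t=0,1$; over the interior of the cylinder there is no Engel structure in which to embed discs, so the extension problem is not even well posed until one first modifies the formal homotopy near the intended discs.

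The paper's proof uses the hypothesis $\dim K\le 3$ in a much more elementary way, and this is the idea your proposal is missing: shrink the discs of each certificate parametrically so that, since $\dim K<4$, the unions $\bigcup_k\image(\Delta^0_k)$ and $\bigcup_k\image(\Delta^1_k)$ miss a common point $p\in M$; then modify $\SD_0$ and $\SD_1$ \emph{formally} only in a small ball $U=\Op(p)$ to implant a fresh common certificate $\Delta\subset U\times K$. One then applies Theorem~\ref{thm:main} several times, each time with a certificate that is fixed for that comparison: using $\Delta$ to turn the modified families into genuine Engel families $\SD_{1/3}$, $\SD_{2/3}$ and to connect them, and using the original certificates $\Delta^0$, $\Delta^1$ to connect $\SD_0$ to $\SD_{1/3}$ and $\SD_1$ to $\SD_{2/3}$ (these homotopies are supported in $\Op(U)$, away from $\Delta^0$, $\Delta^1$). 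If you want to salvage your route, you would have to prove the connectivity statement about the space of overtwisted discs, which is an open question rather than a "routine obstruction argument."
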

Corollaries \ref{cor:pi0hPrinciple} and \ref{cor:pi3hPrinciple} are worked out in Subsection \ref{pi3hPrincipleproof}.

Theorem~\ref{thm:main} addresses one of the shortcomings in \cite{cppp}, namely that the result proved there is not relative in the domain (see Figure~\ref{fig:non-rel-cp3}).
\begin{corollary} \label{cor:extension}
Let $K$ be a compact CW-complex and let $\SD_0: K \lra \FEngel(\D^4)$ be a family of formal Engel structures in the $4$-ball with $\SD_0(k)|_{\partial \D^4}$ Engel. Then, there is a family $\SD_1: K \lra \Engel(\D^4)$ which, relative to $\partial\D^4$, is formally homotopic to $\SD_0$.
\end{corollary}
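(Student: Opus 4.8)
The plan is to deduce Corollary~\ref{cor:extension} from Theorem~\ref{thm:main} by a standard ``seed the overtwisted disc, then apply the relative $h$-principle'' argument. First I would note that since $\SD_0(k)|_{\partial\D^4}$ is Engel for all $k$, the relevant formal data along $\Op(\partial\D^4)$ is genuinely Engel, so we may take $U = \Op(\partial\D^4)$ (a closed subset of $\D^4$ with connected complement) as the set over which the structures are already honest and will be kept fixed. The parameter pair is $K' = \emptyset \subset K$. What is missing to invoke Theorem~\ref{thm:main} is a certificate of overtwistedness $(\Delta_k)_{k\in K}$ with $\Delta_k \subset \D^4 \setminus U = \int(\D^4)$, together with a modification of $\SD_0$ in the interior that creates such a certificate without changing the formal homotopy class rel $\partial\D^4$.

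To produce the certificate I would carry out the following steps. Pick a small ball $B \subset \int(\D^4)$, disjoint from $\Op(\partial\D^4)$, and chosen to vary trivially over $K$ (this is possible since $K$ is compact and $\int(\D^4)$ is connected). Inside $B$ we want to arrange, for every $k$, an embedding of the model overtwisted disc $\Delta_{\OT} = (\D^4, \SD_{\OT})$. Since $\SD_0(k)$ need not be Engel on $B$, the point is that formally any two $2$-plane fields on a ball are homotopic rel nothing and the formal class of $\SD_{\OT}$ is the ``standard'' one (this is exactly the content of why the Engel-Lutz twist preserves the formal type, referenced in the discussion before Proposition~\ref{prop:reduction}); hence there is a formal homotopy, supported in $B$ and parametrized by $K$, from $\SD_0$ to a family $\SD_0'$ that agrees with $\SD_{\OT}$ on a slightly smaller concentric ball $B' \subset B$. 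This homotopy is rel $\partial\D^4$ by construction. Now $\SD_0': K \lra \FEngel(\D^4)$ is Engel on $\Op(U)$, and it carries the constant certificate of overtwistedness $\Delta_k = B' \cong \Delta_{\OT}$ with $\Delta_k \subset \D^4 \setminus U$; the two bullet hypotheses of Theorem~\ref{thm:main} are met (the second bullet for $\SD_0(k)$ Engel is needed only on $\Op(U)$ and on $k\in\Op(K') = \emptyset$, which holds).

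Applying Theorem~\ref{thm:main} to $\SD_0'$ with this data yields a homotopy $\SD: K\times[0,1] \lra \FEngel(\D^4)$ with $\SD(\cdot,0) = \SD_0'$, fixed on $\Op(U) \cup \Delta_k$ (in particular rel $\partial\D^4$), and with $\SD(\cdot,1): K \lra \Engel(\D^4)$ landing in genuine Engel structures. Concatenating the two homotopies — first $\SD_0 \rightsquigarrow \SD_0'$ (supported in $B$, rel $\partial\D^4$) and then $\SD_0' \rightsquigarrow \SD(\cdot,1)$ (rel $\partial\D^4$) — produces the desired $\SD_1 := \SD(\cdot,1): K \lra \Engel(\D^4)$ that is formally homotopic to $\SD_0$ relative to $\partial\D^4$. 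I expect the only genuinely delicate point to be the first step: one must make sure the ``seed'' ball $B$, and the formal homotopy that implants $\SD_{\OT}$ in it, can be chosen continuously (indeed trivially) over the whole compact family $K$ while staying in the interior and away from $U$; this is where one uses compactness of $K$ and connectedness of $\D^4 \setminus U$, but it is routine once the model $\SD_{\OT}$ and its formal class are understood from Section~\ref{sec:otDisc}. Everything else is formal and follows by concatenation of homotopies.
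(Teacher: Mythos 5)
Your overall strategy (seed a certificate of overtwistedness in the interior, then apply Theorem~\ref{thm:main} with $U=\partial\D^4$, $K'=\emptyset$) is exactly the paper's, but the seeding step as you wrote it has a genuine gap in the parametric case — and it is precisely the point you dismissed as routine. You ask for a homotopy, supported in a fixed ball $B\subset\int(\D^4)$, from $\SD_0$ to a family $\SD_0'$ that is \emph{equal to the fixed model} $\SD_\OT$ on a fixed smaller ball $B'$, with a $k$-independent identification. Restricting such a homotopy to $B'$ shows that the map $K\lra \FEngel(B')\simeq\mathrm{Map}(B',F)\simeq F$, $k\mapsto \SD_0(k)|_{B'}$ (here $F$ is the pointwise fibre: complete flags in $\R^4$ together with the isomorphisms \eqref{e:canonicalIsos}), must be null-homotopic, since it gets homotoped to the constant map at the model. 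Because $\D^4$ is contractible, this is the same as asking that the family $K\lra\FEngel(\D^4)\simeq F$ be null-homotopic — which is not implied by the hypotheses and fails in general: $F$ has the homotopy of the full flag manifold of $\R^4$ (up to the discrete isomorphism data), so e.g. $\pi_1$ and $\pi_3$ are nonzero, and families representing nontrivial classes are exactly the interesting ones the corollary is meant to handle. ``Any two plane fields on a ball are homotopic'' settles the case of a single $k$ (and your argument is fine non-parametrically, essentially recovering Subsection~\ref{ssec:nonParametric}), but it does not let you make the structure on $B'$ \emph{constant in $k$}; compactness of $K$ and connectedness of $\D^4\setminus U$ are irrelevant to this obstruction.

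The paper avoids this by exploiting that a certificate is only required to be a \emph{locally trivial, possibly twisted} fibration of overtwisted discs (Definition~\ref{def:OTfamily}), not a product with a fixed identification. Concretely: fix an interior point $p$, choose a $K$-family of short embedded intervals $\gamma_k$ through $p$ transverse to the formal hyperplane field $\SE_0(k)$ — the space of such families is contractible, so the choice is unobstructed over $K$ — and deform $\SD_0$ only in $\Op(\{p\})\times K$ so that a neighborhood of $\gamma_k$ becomes a locally trivial fibration with fibre $([0,L]\times\D^3,\SD_\OT)$, the identification varying with $k$ along with $\gamma_k$ and the flag of $\SD_0(k)$ at $p$. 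Because the implanted model is adapted to the existing formal data rather than fixed, no global null-homotopy over $K$ is needed, and Theorem~\ref{thm:main} then applies verbatim. To repair your write-up you would have to replace ``agrees with $\SD_\OT$ on a concentric ball $B'$, constant over $K$'' by this adapted, possibly twisted implantation.
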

In the non-parametric case, this is shown in Subsection \ref{ssec:nonParametric} without relying on the full force of Theorem~\ref{thm:main}. The parametric version is deduced from Theorem \ref{thm:main} in Subsection \ref{ssec:extensionProof}.

Finally, in Subsection \ref{ssec:foliated} we briefly explain how an $h$-principle for foliations endowed with a leafwise Engel structure can be deduced from our main result.

\subsubsection*{Looseness and overtwistedness}

In \cite{dp2} (see also the upcoming paper \cite{cpp}) an alternative characterization of Engel flexibility is introduced and, using it, a result similar to Corollary~\ref{cor:main} is obtained for a different subclass of Engel structures, which we called \emph{loose} in \cite{cpp}. Normally, any two different notions of overtwistedness are equivalent up to homotopy. Indeed, suppose we have two different definitions, the first given by the presence of some local model, which we call the disc of type I, and the second one given by another local model, which we call the disc of type II. Suppose $\SD$ is an overtwisted structure of type I. We modify it away from its overtwisted disc to introduce a disc of type II, and then we use the parametric nature of Theorem~\ref{thm:main} to construct a homotopy between them through overtwisted structures of type I. This shows that an overtwisted structure of type I contains a disc of type II up to homotopy.

This argument cannot be applied here because Engel looseness is {\em not} given by a \emph{local} model but by a \emph{global} property of the structure. Then, an intriguing open question is how Engel overtwistedness and looseness might be related. A closely related question is whether there exist Engel families that are not overtwisted nor loose.

A relevant remark is that the extension problem from Engel structures (Corollary \ref{cor:extension}) cannot be deduced either from the results in \cite{cpp}. This is due to the fact that looseness is a global property and therefore not well-suited for performing constructions relative in the domain.

{\bf Acknowledgments:} The work in this article has profited from an AIM-Workshop on Engel structures which was held in the spring of 2017. We would like to thank all the participants of this workshop for their questions and all the discussions we had. In particular, we would like to thank Roger Casals, Yakov Eliashberg, Dieter Kotschick, Emmy Murphy, and Fran Presas. \'A.~del Pino is supported by the NWO Vici Grant no. 639.033.312 of Marius Crainic.

\section{Definitions and standard results}

We start by reviewing the relevant definitions and going over basic results. Unless stated otherwise, all distributions we consider are smooth. 

\subsection{Engel structures} \label{sec:definitions}

\begin{definition}
Let $M$ be a smooth $4$-manifold.
\begin{itemize}
	\item An \textbf{even-contact structure} is a smooth hyperplane field $\SE$ such that $[\SE,\SE]=TM$.
	\item An \textbf{Engel structure} is a smooth plane field $\SD$ such that $[\SD,\SD]$ is an even-contact structure.	
\end{itemize}
\end{definition}
When $U\subset M$ is closed, we say that a plane field $\SD$ is Engel on $U$ if it is an Engel structure on some open neighborhood of $U$.

For an even-contact structure $\SE$ and any $p \in M$, we can consider the map
\begin{align*}
\SE(p)\times\SE(p) & \lra T_p M/\SE(p) \\
X,Y & \longmapsto [X,Y](p).
\end{align*}
The commutator is defined using local extensions of the vectors $X,Y$ to local sections of $\SE$, but the result is independent of choices. Thus, we have an antisymmetric bilinear form on $\SE(p)$ taking values in a real vector space of dimension one. Therefore, this form has a kernel, which we denote by $\SW(p)$. It defines a foliation of rank one $\SW \subset \SE$ which we will call the \textbf{kernel foliation}\footnote{Other common names for $\SW$ include {\em characteristic foliation} and {\em isotropic foliation}.} of $\SE$. It is characterized by the property $[\SW,\SE]\subset \SE$, i.e. all flows tangent to $\SW$ preserve $\SE$.

If $\SD$ is an Engel structure satisfying $\SE = [\SD,\SD]$, it follows that $\SW\subset\SD$, because otherwise $[\SD,\SD]\not\subset\SE$. Consequently, from every Engel structure $\SD$ we obtain a flag of distributions 
\begin{equation} \label{e:flag}
\SW\subset\SD\subset\SE\subset TM
\end{equation}
where the rank increases by one at every step. Whenever $\SE=[\SD,\SD]$ is induced by an Engel structure, $\SE$ has a canonical orientation defined by $\{X,Y,[X,Y]\}$, where $\{X,Y\}$ is any local frame of $\SD$. However, there are no canonical orientations for the other elements of the flag.

\begin{definition}
A \textbf{formal  Engel structure} is a complete (non-oriented) flag $\SW \subset \SD \subset \SE \subset TM$, together with two  isomorphisms
\begin{align} 
\begin{split}\label{e:canonicalIsos}
 \det(\SD) &\cong \SE/\SW, \\
 \det(\SE/\SW)& \cong TM/\SE.
\end{split}
\end{align}
\end{definition}
The space of formal Engel structures on $M$, endowed with the $C^0$-topology, is denoted by $\FEngel(M)$. Under the natural inclusion
$$ \iota : \Engel(M) \lra \FEngel(M) $$
it induces the $C^2$-topology on the space of Engel structures $\Engel(M)$. 

If $\SD$ is an Engel structure, the first isomorphism in Equation (\ref{e:canonicalIsos}) is induced by 
\begin{align}
\begin{split} \label{e:first iso}
\Lambda^2 \SD & \lra \SE/\SD \\
X\wedge X' & \longmapsto [X,X'].
\end{split}
\end{align}
The induced map being an isomorphism is a reformulation of the fact that $\SE$ inherits a canonical orientation from $\SD$. The second isomorphism is induced by the map
\begin{align}
\begin{split} \label{e:second iso}
\Lambda^2 \SE & \lra TM/\SE \\
Y\wedge Y' & \longmapsto [Y,Y']
\end{split}
\end{align}
whose kernel is $\SW \wedge \SE$. We will often write $(\SW,\SD,\SE)$ when we talk about a formal Engel structure, omitting the morphisms \eqref{e:canonicalIsos}.

Lastly, consider the case when $\SW$ and $\SD$ are oriented and fix a framing $\{W \in \SW,X \in \SD\}$ of $\SD$ compatible with the orientations. Then, the following expression yields a framing of $TM$:
\begin{align*}
\{ W,X,[W,X],[X,[W,X]] \}
\end{align*}
This framing is unique, up to homotopy, once the orientations of $\SW$ and $\SD$ have been fixed. We call it the \textbf{Engel framing}. Under these orientation assumptions the Engel condition can be rephrased in terms of a pair of forms $\alpha,\beta$ defining $\SD=\ker(\alpha)\cap\ker(\beta)$ and $\ker(\alpha)=\SE$ as follows
\begin{align*}
\alpha\wedge d\alpha & \neq 0,  \\
\alpha\wedge \beta \wedge d\alpha&=0, \\
\alpha\wedge \beta \wedge d\beta &\neq 0.
\end{align*}
In this oriented case, we can consider oriented flags as the formal counterparts of Engel structures.

\subsection{The development map} \label{sec:development map}

Following \cite{montgomery} we now recall an interpretation of the condition $[\SD,\SD]=\SE$ in terms of the holonomy of $\SW$.

Fix a point $p \in M$ and write $\SW_p$ for the leaf of $\SW$ through $p$. Let $\widetilde\SW_p$ be its universal cover. Given a point $q \in \widetilde\SW_p$, we can find a vector field $W$ which is tangent to $\SW$ and is non-vanishing on the segment of $\SW_p$ connecting $p$ and the image of $q$ in $\SW_p$ (we abuse notation and still denote it by $q$). The flow $\varphi_t$ of $W$ preserves $\SW$ and $\SE$. Then, the differential of $\varphi_{-t}$ induces a linear map
$$ h_p(q):=D\varphi_{-t}  : \SE(q)/\SW(q) \longrightarrow \SE(p)/\SW(p) $$
where $q=\varphi_t(p)$. This map is independent of the choice of $W$ and it represents the linearized holonomy of the foliation $\SW$. Therefore, one can consider the image of $\SD(q)$ for each $q$ in $\widetilde\SW_p$. This defines the {\bf development map}
\begin{align}
\begin{split} \label{e:development map}
h_p : \widetilde\SW_p & \longrightarrow \mathbb{P}(\SE(p)/\SW(p)) \\
q & \longmapsto D\varphi_{-t}(\SD(q)) \textrm{ with } \varphi_t(p)=q \textrm{ as above.}
\end{split}
\end{align}
Because of the Engel condition $[\SW,\SD]=\SE$, we deduce that $h_q$ is an immersion. 

When $\SD$ is oriented, one can replace the projective space $\mathbb{P}(\SE(p)/\SW(p))$ by the space $\widetilde{\mathbb{P}}(\SE(p)/\SW(p))$ of oriented lines in $\SE(p)/\SW(p)$.

\subsection{Curves on spheres and the Engel condition}

Fix coordinates $(p,t)$ in $\D^3\times\R$. A plane field $\SD$ in $\D^3\times\R$ containing $\partial_t$ can be described in terms of a smooth family of curves $(H_p : \R\lra \NS^2)_{p\in \D^3}$. Indeed, once we fix a framing $\{X(p),Y(p),Z(p)\}$ of $T\D^3$, we can identify $(H_p)_{p \in \D^3}$ with the vector field
$$
(p,t) \longmapsto \left(H_p(t)\right)_1\cdot X+\left(H_p(t)\right)_2\cdot Y  +  \left(H_p(t)\right)_3\cdot Z,
$$
where the subscript $i$ corresponds to taking the $i$-th coordinate. Using this identification, we construct a plane field:
\[ \SD(p,t) = \langle \partial_t, H_p(t) \rangle. \]
One can now compute
\begin{align*}
[\SD,\SD](p,t) & = \langle \partial_t, H_p(t), \dot{H}_p(t) \rangle \\
\intertext{where $\dot{H}_p(t) = [\partial_t,H_p(t)]$. Similarly,}
\big[\SD,[\SD,\SD]\big](p,t) & = \big\langle \partial_t, H_p(t), \dot{H}_p(t), \ddot{H}_p(t), [H_p(t),\dot{H}_p(t)] \big\rangle 
\end{align*}
where $\ddot{H}_p(t) = [\partial_t,\dot{H}_p(t)]$.

The following characterization of Engel structures tangent to $\partial_t$ will be fundamental. Its proof follows from the discussion above. For additional details we refer the reader to \cite{cppp}.
\begin{proposition} \label{prop:EngelCharacterisation}
A $\D^3$-family of curves $(H_p)_{p \in \D^3}$ in $\NS^2$ determines an Engel structure near $(p,t)\in \D^3\times\R$ if and only $H_p$ is an immersion at time $t$ and at least one of the following conditions holds.
\begin{itemize}
\item[(i)] The vectors $H_p(t),\dot{H}_p(t),\ddot{H}_p(t)$ are linearly independent. 
\item[(ii)] The vector fields $H_q(t)$ and $\dot{H}_q(t)$ span a contact structure in a neighborhood of $(p,t)$ in $\D^3\times\{t\}$.  
\end{itemize}
\end{proposition}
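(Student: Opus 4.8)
The plan is to translate the Engel condition $[\SD,[\SD,\SD]] = TM$ directly into the language of the family $(H_p)$ using the commutator formulas already displayed above, and then unwind what it means for the five listed vector fields to span $TM$ at $(p,t)$. First I would note that $\SD$ being a genuine plane field (rank $2$) near $(p,t)$ requires $\partial_t$ and $H_p(t)$ to be linearly independent there, which is automatic since $H_p(t) \in \NS^2 \subset \R^3$ has no $\partial_t$-component; so the first nontrivial requirement is that $\SE = [\SD,\SD] = \langle \partial_t, H_p(t), \dot H_p(t)\rangle$ have rank $3$, i.e. that $H_p(t), \dot H_p(t)$ be linearly independent, which is exactly the statement that $H_p$ is an immersion at time $t$. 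Granting this, the work is to understand when
\[
\big\langle \partial_t, H_p(t), \dot H_p(t), \ddot H_p(t), [H_p(t),\dot H_p(t)] \big\rangle = T_{(p,t)}(\D^3\times\R).
\]

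Since $\partial_t, H_p, \dot H_p$ are already independent, spanning $TM$ is equivalent to having $\ddot H_p(t)$ or $[H_p(t),\dot H_p(t)]$ (or a combination) contribute a fourth independent direction. Working in the framing $\{\partial_t, X, Y, Z\}$, the vectors $H_p(t), \dot H_p(t) = \partial_t H_p(t)$, $\ddot H_p(t) = \partial_t^2 H_p(t)$ are "vertical" (no $\partial_t$-component), their components being derivatives of the $\NS^2$-valued curve, while $[H_p(t), \dot H_p(t)]$ is the only bracket that can pick up $p$-derivatives and hence detect behaviour transverse to the $\{t=\mathrm{const}\}$ direction. Condition (i) is the case where the purely vertical data already suffices: $H_p(t), \dot H_p(t), \ddot H_p(t)$ independent in $\R^3$ means $\langle \partial_t, H_p, \dot H_p, \ddot H_p\rangle = TM$. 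If instead $\ddot H_p(t) \in \langle H_p(t), \dot H_p(t)\rangle$ (equivalently, $H_p$ has vanishing "torsion", i.e. is locally a curve in a great circle to second order), then the fourth direction must come from $[H_p(t), \dot H_p(t)]$, and I would check that this bracket is independent of $\partial_t, H_p, \dot H_p$ at $(p,t)$ precisely when $H_q(t), \dot H_q(t)$ span a contact structure near $(p,t)$ in the slice $\D^3\times\{t\}$ — which is condition (ii). The reverse implications are the same computation read backwards: if (i) holds the span is all of $TM$ regardless of the bracket; if (ii) holds the bracket supplies the missing direction regardless of $\ddot H_p$.

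The one point needing care — and the main obstacle — is the equivalence, in the degenerate case $\ddot H_p(t)\in\langle H_p,\dot H_p\rangle$, between "$[H_p,\dot H_p]$ is transverse to $\SE$ at $(p,t)$" and "$H_q(t),\dot H_q(t)$ span a contact structure near $(p,t)$ on the slice". Here I would compute $[H_p,\dot H_p]$ as a section of $T(\D^3\times\R)$ and observe that its $X,Y,Z$-components involve only $p$-derivatives of $H$ (the $\partial_t$-parts cancelling by equality of mixed partials), so that $[H_p(t),\dot H_p(t)] \bmod \langle H_p(t), \dot H_p(t)\rangle$ is, up to a nonzero factor, exactly the expression whose non-vanishing is the contact condition $d\mu\wedge\mu\neq 0$ for the $1$-form $\mu$ on $\D^3$ annihilating the plane field $q\mapsto \langle H_q(t),\dot H_q(t)\rangle$. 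Modulo $\langle H_p,\dot H_p\rangle$ this also absorbs $\ddot H_p(t)$ (which lies in that span by hypothesis), so the two conditions genuinely coincide; away from the degenerate case condition (i) takes over and there is nothing to prove. With the computation referred to \cite{cppp} as stated, the argument is then a short bookkeeping of which of the two sources — vertical jets of $H_p$ or the horizontal bracket — fills in the last dimension.
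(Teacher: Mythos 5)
Your proposal is correct and follows essentially the same route as the paper, whose proof is precisely the preceding bracket computation: the immersion condition is equivalent to $[\SD,\SD]$ having rank $3$, and the five displayed vector fields span $T(\D^3\times\R)$ exactly when either the vertical $2$-jet contributes a fourth direction (condition (i)) or the slice-tangent bracket $[H,\dot H]$ does, which, modulo $\langle H,\dot H\rangle$, is the contact defect of the slice plane field (condition (ii)), with openness of both conditions giving the passage from the point to a neighborhood. No substantive difference from the paper's argument.
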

We remark that $H_p$ being an immersion is equivalent to the fact that $\SD$ is non-integrable (i.e. $[\SD,\SD]\not\subset\SD$). Note that conditions (i) and (ii) are both open and not mutually exclusive. Moreover, we do not require for the contact structure in (ii) to be positive with respect to a fixed orientation of $\D^3$.  

By definition, the kernel foliation is tangent to $\partial_t$ if and only if condition (i) fails. If condition (i) is satisfied, then $H_p$ is strictly convex or strictly concave at $t$.

\subsection{Model structures}  \label{ssec:modelStructures}

In Section~\ref{sec:hPrinciple} we will perform certain local manipulations of formal Engel structures. We will now describe a family of local models that is suitable for this purpose. The reader should compare the construction presented here to condition (ii) in Proposition~\ref{prop:EngelCharacterisation}.

Let $I$ be a $1$-manifold. We write $(y,x)$ for the product coordinates in $\Gamma = I \times [0,1]$. A pair of smooth functions $f_\pm: \Gamma \lra \R$ satisfying $f_- \leq 0 < f_+$ determines a domain 
$$ D(I,f_-,f_+) = \{ (y,x,z) \in \Gamma \times \R \quad|\quad f_-(y,x)\leq z \leq f_+(y,x)\} $$
which we endow with the contact structure 
$$ \xi = \ker(\cos(z)dy-\sin(z)dx) = \langle \partial_z, \cos(z)\partial_x+\sin(z)\partial_y \rangle. $$

Let $J=[a,b]$ be a $1$-manifold with coordinate $w$. A function $c: D(I,f_-,f_+)\times J \lra \R$ determines a complete flag in $D(I,f_-,f_+)\times J$: 
\begin{align} \label{e:Engel on shell}
\begin{split}
\SW & = \langle\partial_w\rangle \\
\SD_c & =\SW\oplus\langle X_c= \cos(c)\partial_z + \sin(c)(\cos(z)\partial_x+\sin(z)\partial_y)\rangle \\
\SE & = \SW \oplus \xi,
\end{split}
\end{align}
where the distribution $\SE$ is even-contact, does not depend on $c$, and has $\SW$ as its characteristic foliation.

Using the flag we can describe a formal Engel structure by fixing bundle isomorphisms as in Equation~\eqref{e:canonicalIsos}. First note that, since $\SE$ is even-contact, the isomorphism $\det(\SE/\SW) \cong TM/\SE$ is automatically given by Equation \eqref{e:second iso}. The other bundle isomorphism $\det(\SD) \cong \SE/\SD$ is unique, up to homotopy, once we impose for $\{\partial_w,\partial_z,\cos(z)\partial_x+\sin(z)\partial_y\}$ to be a positive framing of $\SE$. It is given by an identification
\begin{align*}
\Lambda^2\SD & \lra  \SE/\SD \\
\partial_t \wedge X_c & \longmapsto b\cdot\left(-\sin(c)\partial_z + \cos(c)\left(\cos(z)\partial_x+\sin(z)\partial_y\right)\right),
\end{align*}
where $b$ is a positive function. Often, $\SD_c$ will be Engel on some subset $U$ of the model. Then, the isomorphism is prescribed $U$ by Equation \eqref{e:first iso}  on, and we will assume that $b|_U = (\partial_t c)|_U$.

\begin{definition} \label{def:model Engel structure}
The formal Engel manifold 
$$
M(I,J,f_-,f_+,c) = \big(\left(D(I,f_-,f_+)\times J\right),\SW,\SD_c,\SE\big)
$$
is said to be a \textbf{model structure} with
\begin{center}
{\def\arraystretch{1.3}\tabcolsep=10pt
\begin{tabular}{|c|c|}
\hline
angular function & $c$ \\
height function & $c(y,x,z,b) - c(y,x,z,a)$ \\
height & $\min_{(y,x,z) \in \partial D(I,f_-,f_+)} (c(y,x,z,b) - c(y,x,z,a))$ \\
bottom boundary & $\{w=a\}$ \\
top boundary & $\{w=b\}$ \\
vertical boundary & $ \left\{(y,x,z,w)\,\big|\, (y,x,z) \in \partial D(I,f_-,f_+)\right\}$\\
projection & $\pi: D(I,f_-,f_+) \times J \lra D(I,f_-,f_+)$ \\
\hline
\end{tabular} }
\end{center}
The model is {\bf solid} if $\partial_w c>0$ everywhere. 
\end{definition}
It is immediate that parametric families of models can be considered by smoothly varying the functions $f_-$, $f_+$, $c$, and the parametrisation of $I$ and $J$.

Condition (ii) of Proposition~\ref{prop:EngelCharacterisation} implies that a model structure is Engel at $(y,x,z,w)$ if and only if $\partial_w c(y,x,z,w) \neq 0$. Having chosen $\{\partial_w,\partial_z,\cos(z)\partial_x+\sin(z)\partial_y\}$ as a positive framing of $\SE$, we need to further require $\partial_w c(y,x,z,w) > 0$ to obtain the correct formal class.
\begin{lemma} \label{lem:Bolzano}
Let $M(I,J,f_-,f_+,c)$ be a model that is Engel along the boundary and has strictly positive height function everywhere. Then $\SD_c$ can be homotoped to a solid model through model structures and relative to the boundary.
\end{lemma}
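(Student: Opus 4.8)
The plan is to fix a model $M(I,J,f_-,f_+,c)$ with $c = c(y,x,z,w)$ such that along the vertical boundary $\partial D(I,f_-,f_+)\times J$ and along $\{w=a\}\cup\{w=b\}$ one has $\partial_w c>0$, and such that the height function $c(y,x,z,b)-c(y,x,z,a)$ is everywhere strictly positive. We want to deform $c$ through angular functions $c_s$, keeping the flag shape \eqref{e:Engel on shell} and keeping $c_s$ fixed near the boundary, so that $c_1$ has $\partial_w c_1>0$ throughout $D(I,f_-,f_+)\times J$. Since being a model structure only requires that we have an angular function (not any positivity), the intermediate $c_s$ automatically give model structures; what we must arrange is the solidity of the endpoint. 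The key point is that for each fixed $(y,x,z)$ the restriction $w\mapsto c(y,x,z,w)$ is a path in $\R$ from $c(y,x,z,a)$ to $c(y,x,z,b)$ with the two endpoint derivatives positive and with positive total increment; any such path is homotopic rel endpoints (and rel a neighborhood of the endpoints in the $w$-direction) to one that is strictly increasing, essentially because $\R$ is $1$-dimensional and a monotone reparametrization does the job. Carrying this out smoothly and simultaneously over all $(y,x,z)$, and compatibly with the fixed boundary data, is the content of the argument.

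Concretely, I would proceed as follows. First, normalize: after a fiber-preserving reparametrization of $J$ depending smoothly on $(y,x,z)$ near the bottom and top boundaries, arrange that $\partial_w c$ is a fixed positive constant on $\Op(\{w=a\})\cup\Op(\{w=b\})$ and that the boundary positivity persists; record the ``defect set'' $\{\partial_w c\le 0\}$, a compact subset of the interior. Second, define the candidate monotone function by integrating a positive density: set
\[
\widehat c(y,x,z,w) \;=\; c(y,x,z,a) \;+\; \frac{c(y,x,z,b)-c(y,x,z,a)}{\int_a^b \rho}\int_a^w \rho(y,x,z,u)\,du,
\]
where $\rho>0$ is a smooth function chosen to agree with $\partial_w c$ near $w=a,b$ (possible since those boundary derivatives are positive and the height function is positive, so the normalizing constant is positive); then $\widehat c$ has $\partial_w\widehat c>0$, agrees with $c$ to first order near the top and bottom boundary, and agrees with $c$ along the vertical boundary after possibly also matching $\rho$ there to $\partial_w c$ (which is legitimate since $\partial_w c>0$ on the vertical boundary). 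Third, connect $c$ to $\widehat c$ by the straight-line homotopy $c_s=(1-s)c+s\widehat c$; this fixes the values at $w=a,b$, is constant near the top/bottom and vertical boundaries, and gives a through-models homotopy by the remarks after Definition~\ref{def:model Engel structure}. The endpoint $c_1=\widehat c$ is solid.

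The main obstacle is the third step: the straight-line homotopy need not stay an \emph{angular function} in any constrained sense — there is no constraint, any smooth function is allowed — so in fact the only real issue is whether $c_s$ remains fixed near the \emph{whole} boundary and whether the construction can be done in \emph{families} (as the paper will need). For the boundary matching one must be careful that $\widehat c$ and $c$ agree in a full neighborhood of the vertical boundary, not just on it; this forces choosing $\rho$ to equal $\partial_w c$ on $\Op(\text{vertical boundary})$, which in turn requires that $\int_a^b\rho\,du$ equals the height function there, and that is exactly the hypothesis that the model is Engel along the boundary (so $\partial_w c>0$ there and its integral is the height function). For the parametric statement, all choices — the reparametrization of $J$, the density $\rho$, the normalizing integral — depend smoothly on the ambient parameters because they are built by explicit integration and convex combination, so no obstruction arises; this is why the lemma is stated without parameters but used parametrically via the remark that families of models are handled by letting $f_\pm,c$ vary smoothly.
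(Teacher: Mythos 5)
Your overall strategy is exactly the paper's: since \emph{any} smooth angular function defines a model structure, it suffices to produce one function $c^{\sol}$ with $\partial_w c^{\sol}>0$ that coincides with $c$ near the boundary, and then take the straight-line homotopy $(1-s)c+s\,c^{\sol}$. The gap is in the boundary matching of your explicit $\widehat c$. Writing $\lambda(y,x,z)=\bigl(c(y,x,z,b)-c(y,x,z,a)\bigr)/\int_a^b\rho\,du$, your formula gives, on the collar where $\rho=\partial_w c$, $\widehat c=c(\cdot,a)+\lambda\bigl(c-c(\cdot,a)\bigr)$ near $\{w=a\}$ and $\widehat c=c(\cdot,b)-\lambda\bigl(c(\cdot,b)-c\bigr)$ near $\{w=b\}$. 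Away from the vertical boundary there is no reason to have $\lambda\equiv 1$, so $\widehat c$ agrees with $c$ only \emph{on} the hypersurfaces $\{w=a\}$ and $\{w=b\}$, and its $w$-derivative there is $\lambda\,\partial_w c\neq\partial_w c$: the claimed ``first order'' agreement is false, and the straight-line homotopy is not constant on any neighborhood of the top and bottom boundaries, contrary to what you assert. This matters, because ``relative to the boundary'' must mean coincidence with $c$ on a \emph{neighborhood} of the whole boundary: in the applications (e.g.\ Proposition~\ref{prop:nonParametric} and Corollary~\ref{cor:parametricExtension}) the deformed shell is glued to the unchanged structure outside, and with your $\widehat c$ the glued plane fields would only be $C^0$ across $\{w=a\}\cup\{w=b\}$.

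The repair is the same care you already applied to the vertical boundary, applied everywhere: require $\rho=\partial_w c$ on a collar of the \emph{entire} boundary (legitimate, since ``Engel along the boundary'' gives $\partial_w c>0$ on a neighborhood of it) and, in addition, normalize so that $\int_a^b\rho\,du\equiv c(\cdot,b)-c(\cdot,a)$ on every fiber; then $\lambda\equiv 1$ and $\widehat c\equiv c$ on the collar, and your interpolation finishes the proof. This normalization is achievable because, for a sufficiently thin collar, the increments of $c$ over the collar pieces of each fiber are uniformly small compared with the height function, which has a uniform positive lower bound by compactness of $D(I,f_-,f_+)$; this is precisely where the hypothesis of strictly positive height enters. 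Equivalently --- and this is in effect the paper's one-line construction --- set $c^{\sol}=c$ on the boundary collar and extend it across the middle of each fiber with $\partial_w c^{\sol}>0$, which is possible because the value of $c$ at the inner edge of the bottom collar is smaller than at the inner edge of the top collar. Finally, your preliminary fiberwise reparametrization of $J$ is unnecessary (and, if read as a modification of the structure rather than a relabelling of coordinates, would itself violate boundary-relativity); it can simply be dropped.
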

\begin{proof}
Since $c(y,x,z,b) > c(y,x,z,a)$, one can construct a function 
$$ c^\sol: D(I,f_-,f_+)\times J \lra \R $$
which coincides with $c$ on a neighborhood of the boundary and satisfies $\partial_w c^\sol>0$. Then $M(I,J,f_-f_+,tc^{sol}+(1-t)c)$, $t\in[0,1]$, provides the desired homotopy through model structures.
\end{proof}

\subsection{Loops transverse to the even-contact structure} \label{ssec:transverseKnots}

Let $(M,\SD)$ be an Engel manifold. In Sections~\ref{sec:surfaces} and~\ref{sec:Lutz}, we will construct surfaces and hypersurfaces transverse to $\SD$. These submanifolds will lie in standard neighborhoods of curves transverse to $\SE=[\SD,\SD]$. In this subsection we will explain some elementary facts about such curves.

\begin{definition}
 A {\bf transverse curve} is an embedding of a $1$-manifold into $(M,\SD)$ which is transverse to $\SE=[\SD,\SD]$.
\end{definition}
When a transverse curve is closed, we will sometimes call it a \emph{transverse loop}. 

\subsubsection{Normal form of $\SD$ close to a transverse loop} \label{sssec:modelKnot}

Let $I$ be a compact $1$-manifold. We consider $I\times \R^3$ with coordinates $(y,x,z,w)$ and endowed with the Engel structure 
$$ \SD_\trans = \ker(\alpha_\trans=dy-zdx,\beta_\trans=dx-wdz). $$
The following proposition states that any Engel structure $\SD$ is isomorphic to this model in the vicinity of a transverse curve.
\begin{proposition} \label{prop:modelKnot}
Let $(M, \SD)$ be an Engel manifold, $\SE$ the associated even-contact structure, and $\gamma: I \lra M$ a transverse curve. Then, the map $\gamma$ can be extended to an Engel diffeomorphism
$$ \varphi : \Op(I\times\{0\}) \subset (I \times \R^3 , \SD_\trans) \lra  \Op(\image(\gamma)) \subset (M,\SD). $$
The space of such diffeomorphisms is weakly contractible. 
\end{proposition}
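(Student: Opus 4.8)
The statement has two parts: existence of the normalizing diffeomorphism $\varphi$, and weak contractibility of the space of such $\varphi$. For existence I would proceed by successively straightening the flag $\SW\subset\SD\subset\SE$ along $\gamma$. First, since $\gamma$ is transverse to $\SE$, the restriction $\SE|_{\image(\gamma)}$ together with the line field $\dot\gamma$ trivializes the normal data, and one can choose coordinates $(y,x,z,w)$ on a neighborhood of $\gamma$ in which $\gamma = I\times\{0\}$ and $w$ is the coordinate along $\gamma$ — wait, more precisely one wants $\gamma$ transverse to $\SE$, so $\gamma$ should be a section of the ``missing'' direction. I would pick coordinates so that $\SE(p) = \ker(dy - z\,dx)$ along $\gamma$ to first order, using that $\SE$ is even-contact (a Darboux-type normal form for even-contact structures, which is classical — $\SE = \ker(dy - z\,dx)$ with kernel foliation $\partial_w$). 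Then, knowing that $\SW\subset\SD\subset\SE$ with $\SW$ the kernel foliation of $\SE$, one straightens $\SW$ to $\langle\partial_w\rangle$ and $\SD$ to a plane field containing $\partial_w$; by Proposition~\ref{prop:EngelCharacterisation}-type reasoning the remaining generator of $\SD$ can be put in the form $\partial_z - w\,\partial_x$ plus higher order, and a further coordinate change (exploiting the freedom in the $z$, $x$ directions and the flexibility of the $1$-jet) removes the error terms, yielding exactly $\SD_\trans = \ker(dy - z\,dx)\cap\ker(dx - w\,dz)$ on $\Op(I\times\{0\})$. Each straightening step is an application of the flow-box / Moser-type argument for the relevant distribution.

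For the parametric/contractibility statement, the standard strategy is to identify the space of normalizing diffeomorphisms with a space of jets along $\gamma$ and show that space is contractible. Concretely, two normalizing diffeomorphisms $\varphi_0,\varphi_1$ differ by an Engel diffeomorphism $\psi = \varphi_1^{-1}\circ\varphi_0$ of $\Op(I\times\{0\})\subset(I\times\R^3,\SD_\trans)$ fixing $I\times\{0\}$. So the space of $\varphi$'s is a torsor over the group $G$ of germs along $I\times\{0\}$ of Engel automorphisms of $(I\times\R^3,\SD_\trans)$ fixing $I\times\{0\}$ pointwise; it suffices to prove $G$ is weakly contractible. I would do this by the Alexander-trick-style scaling: the dilations $\delta_\lambda(y,x,z,w) = (\lambda^3 y, \lambda^2 x, \lambda z, \lambda w)$ (choosing weights so that $\alpha_\trans$, $\beta_\trans$ scale homogeneously, hence $\SD_\trans$ is preserved) contract a neighborhood of $I\times\{0\}$ onto $\gamma$ as $\lambda\to 0$. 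Conjugating any $\psi\in G$ (or a whole family $K\to G$) by $\delta_\lambda$ and letting $\lambda\to 0$ gives a deformation of $\psi$ to its linearization along $\gamma$; a second, linear, deformation retracts the linear part onto the identity using that the linearized automorphism group of the model is itself contractible (it is a subgroup of a parabolic-type group, contractible by a Gram–Schmidt / convexity argument). One must check continuity in the parameter as $\lambda\to 0$, which is where the weight bookkeeping matters.

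**The main obstacle.**
The delicate point is not the construction of a single $\varphi$ — that is a routine, if multi-step, normal-form argument — but making every step canonical enough to work in families and to yield \emph{weak contractibility} rather than mere non-emptiness. In particular, one must ensure the scaling flow $\delta_\lambda$ genuinely preserves $\SD_\trans$ (forcing the specific weights $(3,2,1,1)$, which must be compatible with the degree-filtered structure of the defining forms) and that the limiting ``infinitesimal'' model at $\lambda = 0$ has a contractible symmetry group; equivalently, one has to rule out any hidden $\pi_k$ coming from the holonomy of $\SW$ along $\gamma$ or from the mapping-class-type behavior when $I$ is a circle. I expect the cleanest route is to set up, once and for all, a filtration on vector fields by the weights above, observe that the model's automorphism group is filtered with abelian graded pieces, and then contractibility of $G$ follows formally from contractibility of each graded piece together with the scaling retraction; verifying these graded pieces are vector spaces (hence contractible) is the technical heart.
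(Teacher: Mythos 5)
Your existence argument is, in outline, the same semi-global normal form the paper establishes, but the step you describe as ``a further coordinate change \ldots removes the error terms'' by ``a flow-box / Moser-type argument'' is precisely the content that needs a mechanism: Engel structures admit no Gray-type stability, so one cannot just invoke Moser. The paper names the mechanism: choose a hypersurface $N$ transverse to $\SW$ containing $\gamma$, use the contact structure $TN\cap\SE$ and the flow of a vector field $H$ spanning the Legendrian line field $TN\cap\SD$ to build coordinates $(x,y,z)$ on $N$ with $\xi=\ker(dy-z\,dx)$, and then produce the last coordinate $w$ from the Engel condition $[\SD,\SD]=\SE$ (equivalently: once $\SE=\ker(dy-z\,dx)$ and $\SW=\langle\partial_w\rangle$ are normalized, reparametrize each $\SW$-leaf by the slope of $\SD$ inside $\SE$, which is strictly monotone in $w$). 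This gap in your write-up is fillable, so the existence half is essentially fine, if under-specified.

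The contractibility half is where your proposal would fail as written. First, the dilation $\delta_\lambda(y,x,z,w)=(\lambda^3y,\lambda^2x,\lambda z,\lambda w)$ does preserve $\SD_\trans$, but it rescales the core direction: it is not defined at all when $I=\NS^1$ (and transverse \emph{loops} are exactly the case the paper needs later), and for an interval it compresses the whole core to one endpoint, so the $\lambda\to0$ limit of $\delta_\lambda^{-1}\psi\,\delta_\lambda$ retains only the weighted $1$-jet of $\psi$ at a single point of the core; continuity at $\lambda=0$ and the fact that the limit is still an automorphism germ along all of $I\times\{0\}$ are exactly the unproved points. Nor can you drop the weight on $y$: the pullback of $\alpha_\trans=dy-z\,dx$ under $(y,x,z,w)\mapsto(y,\lambda^2x,\lambda z,\lambda w)$ is $dy-\lambda^3z\,dx$, which fails to lie in $\langle\alpha_\trans,\beta_\trans\rangle$ wherever $zw\neq0$; the only pure scalings of $(x,z,w)$ preserving $\SD_\trans$ have weights of opposite signs (e.g.\ $(y,\lambda x,\lambda^{-1}z,\lambda^{2}w)$), hence none contracts a neighborhood of the core onto it. This is why the paper uses $\psi_\lambda$ only with transversality estimates, never as an Engel automorphism. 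Second, your claim that the linearized automorphism group is contractible is not correct as stated: $(y,x,z,w)\mapsto(y,-x,-z,w)$ is a linear Engel automorphism of the model fixing $I\times\{0\}$ pointwise, and it reverses the induced orientation of $\SD_\trans/\SW$ along the core, a locally constant invariant; so it lies in a different path component of your group $G$, i.e.\ the ``hidden $\pi_0$'' you promise to rule out is actually present unless orientation normalizations are imposed and tracked, which your scheme never does. The paper's route avoids both problems: it observes that any normalizing $\varphi$ is recovered from the germ data $N=\varphi(\{w=0\})$, $\Gamma=\varphi(\{z=w=0\})$, $H=\varphi_*\partial_z$ along $\gamma$, and that the space of such germs of data is weakly contractible; this argument is insensitive to whether $I$ is an interval or a circle, and the orientation bookkeeping is carried by the choices (notably of $H$), so no Alexander trick or graded-group analysis is needed.
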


\begin{proof}
Since $\gamma$ is transverse to $\SE$, there is a hypersurface $N\simeq I\times \R^2$ containing $\gamma$ which is transverse to $\SW$. Then $TN\cap\SE$ is a contact structure $\xi$ on $N$, $\gamma$ is transverse to it, and the Engel structure induces a Legendrian line field $\SH = TN\cap \SD \subset TN\cap\SE$ on $N$.

We choose a vector field $H$ spanning $\SH$ and a surface $\Gamma = I \times (-\varepsilon,\varepsilon) \subset N$ which is transverse to $H$ and contains $\gamma$. Using the characteristic foliation $\xi(\Gamma)$ one obtains coordinates $(x,y)$ on a neighborhood of $\gamma$ within $\Gamma$ such that $\gamma=\{x=0\}$ and $\xi(\Gamma)$ is defined by $dy$. Then we use the flow of $H$ and the contact condition to obtain coordinates $(x,y,z)$ in $N$ such that $\xi=\ker(dy-zdx)$. By construction, $\SH$ is spanned by $\partial_z$.

Finally, we choose a vector field $W$ spanning $\SW$ near $N$. Due to the condition $[\SD,\SD]=\SE$ there are coordinates $(x,y,z,w)$ on a neighborhood of $\gamma$ in which the even-contact structure $\SE$ is defined by $\ker(dy-zdx)$ and $\SD=\SE\cap\ker(dx-wdz)$.  

If $\varphi$ is a diffeomorphism with the desired property, then there are submanifolds $\Gamma\subset N$ and a vector field $H$ which produce $\varphi$ as above: we simply pick $N=\varphi(\{w=0\})$, $\Gamma=\varphi(\{z,w=0\})$, and $H=\varphi_*(\partial_z)$. Now assume $K$ is a compact space and $\{\varphi_k\}_{k\in K}$ is a family of diffeomorphisms with the desired properties. Then the germs of $\Gamma_k \subset N_k$ and $H_k$ along $\gamma$ as above form a weakly contractible space. Thus, the same is true for the space of germs of diffeomorphisms satisfying the hypotheses of the proposition.  
\end{proof}

\subsubsection{Existence and classification of transverse curves}

The following lemma proves the $h$-principle in $\pi_0$ for \emph{embedded} transverse curves. It is an elementary consequence of the corresponding $h$-principle for \emph{immersed} transverse curves in contact $3$-manifolds.
\begin{lemma} \label{lem:hPrincipleTransverseCurves} 
Let $\gamma: [0,1] \lra M$ be a smooth map that, in a neighborhood of the endpoints, is embedded and positively transverse to $\SE$. Then it can be homotoped, relative to the boundary, to an embedded transverse arc. 

Let $\gamma_0, \gamma_1 : [0,1] \lra M$ be embedded arcs, transverse to $\SE$, that agree in a neighborhood of their endpoints, and which lie in the same homotopy class as maps. Then they are isotopic, relative to the boundary, as embedded transverse arcs.  
\end{lemma}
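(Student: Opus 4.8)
The plan is to reduce both statements to the analogous, already-understood $h$-principle for \emph{immersed} transverse curves in contact $3$-manifolds, by finding a suitable $3$-dimensional ``screen'' containing the curves. Recall that a curve transverse to $\SE$ and tangent to the contact hyperplane $TN\cap\SE$ inside a hypersurface $N$ transverse to $\SW$ is the same data as a transverse curve to that contact structure $\xi = TN \cap \SE$; and, more importantly, the Engel structure $\SD$ near such an $N$ is determined by the contact germ together with the Legendrian line field $\SH = TN\cap\SD$, as in the proof of Proposition~\ref{prop:modelKnot}. So the strategy is: (1) isotope the given arc(s) into such a hypersurface $N$, transverse to $\xi$; (2) apply the contact $h$-principle for transverse curves inside $N$; (3) transfer the result back to $M$ using that nearby Engel structures are standard along transverse curves.

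First I would address existence (the first paragraph of the statement). Given $\gamma:[0,1]\to M$ which near the endpoints is embedded and positively transverse to $\SE$, I would first perturb $\gamma$, relative to a smaller neighborhood of the endpoints, so that it is everywhere transverse to $\SE$ — this is a generic condition since $\SE$ has codimension $1$, so a $C^0$-small perturbation of a generic map achieves it, and positivity at the endpoints propagates because transversality to a cooriented hyperplane field is an open condition and $[0,1]$ is connected (a transverse arc cannot change coorientation). Next, I would use that $\SE$ is even-contact: the flow of $\SW$ can be used to push $\gamma$ into a hypersurface $N$ transverse to $\SW$, along which $\xi = TN\cap\SE$ is a contact structure and $\gamma$ becomes a curve transverse to $\xi$ (this is exactly the setup in the proof of Proposition~\ref{prop:modelKnot}, run with $N$ chosen first). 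Inside the contact $3$-manifold $(N,\xi)$, the classical $h$-principle for transverse immersions (Eliashberg–Mishachev, or the original transverse curve results) lets me homotope $\gamma$, relative to its endpoints, to an \emph{embedded} curve transverse to $\xi$; such a curve is automatically transverse to $\SE$ in $M$.

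For the classification part, given embedded transverse arcs $\gamma_0,\gamma_1$ agreeing near their endpoints and homotopic as maps, I would again place both into transverse hypersurfaces $N_0, N_1$. The subtlety is that $N_0$ and $N_1$ need not coincide, so I would first use the $\SW$-flow (and the weak contractibility statement already in Proposition~\ref{prop:modelKnot}) to arrange, after a preliminary isotopy of $\gamma_1$ through transverse arcs, that both lie in a common $N$ and are transverse to the same $\xi$; I should be careful that this preliminary isotopy is supported away from a neighborhood of the endpoints. Then $\gamma_0,\gamma_1$ are embedded transverse curves in $(N,\xi)$ in the same homotopy class (the inclusion $N\hookrightarrow M$ is $\pi_1$-surjective onto the relevant classes once $N$ is a neighborhood of a chosen homotopy between them, or one reduces to an arc in a ball), and the contact classification of transverse curves (again via the immersed $h$-principle, using that an isotopy of embeddings is obtained from a homotopy of immersions by general position in dimension $3$) produces an isotopy between them relative to the endpoints through curves transverse to $\xi$, hence to $\SE$.

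The main obstacle I expect is the bookkeeping in the classification statement: matching up the two \emph{a priori} different hypersurfaces $N_0,N_1$ (equivalently, the two different contact ``screens'') while keeping everything fixed near the endpoints, and ensuring the homotopy-class hypothesis in $M$ descends to a homotopy-class statement inside the chosen $3$-manifold. Both issues are handled by the $\SW$-flow and by the weak contractibility of the space of standardizing diffeomorphisms from Proposition~\ref{prop:modelKnot}, but stating it cleanly requires care; the contact-topological input itself is entirely standard and I would simply cite it.
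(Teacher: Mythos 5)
Your reduction to a $3$-dimensional contact ``screen'' is the right starting point and matches the paper's setup, but the classification step contains a genuine error. You claim that inside a fixed contact $3$-manifold $(N,\xi)$ two homotopic embedded transverse arcs are transversely isotopic, ``using that an isotopy of embeddings is obtained from a homotopy of immersions by general position in dimension $3$.'' This is false: for a $1$-parameter family of curves in a $3$-manifold, double points occur at isolated parameter values and cannot be removed by general position -- this is exactly why knot theory exists, and indeed embedded transverse curves in contact $3$-manifolds are \emph{not} classified by their formal/homotopy data (transverse unknots are distinguished by self-linking number, and there are transversely non-simple knot types). The paper's proof only uses the contact $h$-principle to produce a family of transverse \emph{immersions} $\widehat{\gamma}_s$ inside a family of screens $N_s$, and then resolves the unavoidable crossings by pushing off in the fourth direction, namely along the flow $h_r$ of a vector field spanning $\SW$ (which is transverse to the screens and whose flow preserves $\SE$, hence preserves transversality): the curves $h_{\varepsilon\lambda(s)g(y)}(\widehat{\gamma}_s(y))$ are embedded because distinct double-point branches get pushed by different amounts. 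This use of the extra dimension is the essential idea your proposal is missing. A related problem is your ``common screen'' step: arranging, ``after a preliminary isotopy of $\gamma_1$ through transverse arcs,'' that $\gamma_0$ and $\gamma_1$ lie in one hypersurface $N$ is essentially the statement being proved (the two arcs may be far apart in $M$), so the reduction is circular. The paper avoids this by taking an arbitrary smooth homotopy $(\gamma_s)$, making it embedded and nowhere tangent to $\SW$ by genericity, and building a \emph{moving} family of screens $N_s=\psi_s([0,L]\times\D^2)$ around each $\gamma_s$.

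There is also a flaw in your existence argument: everywhere-transversality of a curve to the hyperplane field $\SE$ is \emph{not} a generic condition -- a generic arc is tangent to $\SE$ at isolated points, and a $C^0$-small perturbation cannot remove tangencies of a curve that, say, crosses back. Achieving transversality rel endpoints is precisely the nontrivial content of the first statement and requires the $h$-principle input (Smale--Hirsch plus the transverse-immersion $h$-principle in the contact slices), not general position; what genericity does give you, and what the paper actually uses, is embeddedness and non-tangency to the line field $\SW$, which is what makes the thickening to a screen possible in the first place.
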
 
\begin{proof}
Let us prove the second statement, which is slightly more involved. The proof of the first is analogous.

Since $\gamma_0$ and $\gamma_1$ lie in the same homotopy class, we can choose a smooth homotopy $(\gamma_s)_{s\in[0,1]}$ between them. Since $\gamma_0$ and $\gamma_1$ are transverse to $\SE$ and agree at their endpoints, we can view $\gamma_s$ as family of formal immersions transverse to $\SE$ (that is, as a family of curves together with an injective bundle map $T[0,1]\lra TM/\SE$). These bundle maps are unique up to homotopy. Using the Smale-Hirsch theorem \cite[8.2.1]{em} we can assume that the $(\gamma_s)_{s\in[0,1]}$ are immersions, but not necessarily transverse to $\SE$. For dimensional reasons, standard transversality theorems imply that the $(\gamma_s)_{s\in[0,1]}$ can be assumed to never be tangent to $\SW$. Similarly, after a perturbation we may assume that $\gamma_s$ is embedded for all $s$. 

We can pick a family of embeddings $\psi_s : [0,L]\times\D^2 \longrightarrow M$ which are transverse to $\SW$ and satisfy $\psi_s(y,0,0)=\gamma_s(y)$. Intersecting the tangent space of the image of $\psi_s$ with $\SE$ we obtain a contact structure $\xi_s$ on $N_s = \psi_s([0,1]\times\D^2)$. The $h$-principle for transverse immersions in contact manifolds (see \cite[14.2.2]{em}) implies that there is a smooth family of immersions 
$$ \widehat{\gamma}_s : [0,L] \lra (N_s,\xi_s) \subset M $$ 
which are transverse to $\xi_s$ (and therefore transverse to $\SE$) interpolating between $\gamma_0$ and $\gamma_1$. We may assume that $\widehat{\gamma}_s$ coincides with $\gamma_0$ and $\gamma_1$ on a neighborhood $[0,l]\cup[1-l]$ of the points $\{0,1\}$. 

In order to turn $\widehat{\gamma}_s$ into a family of transverse embeddings, we use the extra dimension complementary to $N_s$. Let $g: [0,1] \lra [0,1]$ be a smooth function such that 
\begin{itemize}
\item $g\equiv 0$ on $[0,l/2]\cup[1-l/2,1]$,
\item $g$ is strictly monotone outside of $[l,1-l]$. 
\end{itemize}
Locally in a neighborhood of $\widehat{\gamma}_s$ we can choose a vector field $W$ orienting $\SW$; denote its flow by $h_r$. Let $\lambda : [0,1]\lra [0,1]$ be a non-decreasing surjective function which is constant close to its endpoints. If $\varepsilon>0$ is small enough
\begin{align*}
\gamma_s : [0,1] & \longrightarrow M \\
y & \longmapsto h_{\varepsilon \lambda(s)g(y)}\big(\widehat{\gamma}_s(y)\big) 
\end{align*}
is a family of transverse embeddings interpolating between the curves $\gamma_0$ and $\gamma_1$. 
\end{proof}

\subsection{Curves tangent to the even-contact structure} \label{ssec:Legendrians}

One of the technical steps in Section~\ref{sec:hPrinciple} requires us to manipulate curves that are \emph{tangent} to the even-contact structure but transverse to its kernel. The following statement provides standard neighborhoods for such curves:
\begin{proposition} \label{prop:tangentCurvesModel}
Let $(M,\SE)$ be a $4$-dimensional even-contact manifold and $\nu: I \lra M$ an embedded curve tangent to $\SE$ but transverse to its kernel.

Then, there is an even-contact embedding:
\[ \big(\Op(I \times \{0\}) \subset I \times \R^3, \ker(dz-ydx)\big) \lra (\Op(\image(\nu)) \subset M, \SE), \]
where the coordinates in $I \times \R^3$ are $(y,x,z,w)$.
\end{proposition}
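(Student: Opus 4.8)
The plan is to build the required coordinates in two stages, matching the structure of the proof of Proposition~\ref{prop:modelKnot}: first produce a codimension-one hypersurface carrying a contact structure in which $\nu$ sits as a Legendrian-type curve, then recover the fourth coordinate $w$ transverse to it using the kernel foliation $\SW$.

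First I would choose, near $\image(\nu)$, a hypersurface $N \simeq I \times \R^2$ containing $\nu$ and transverse to the kernel foliation $\SW$ of $\SE$; this is possible because $\nu$ is transverse to $\SW$, so a small transverse disc bundle along $\nu$ can be pushed off. Then $\xi = TN \cap \SE$ is a contact structure on the $3$-manifold $N$ (this is the standard fact that a hypersurface transverse to the kernel of an even-contact structure inherits a contact structure), and since $\nu$ is tangent to $\SE$ but transverse to $\SW$, the curve $\nu$ lies in $N$ and is tangent to $\xi$, i.e. $\nu$ is a Legendrian curve in $(N,\xi)$. By the standard Legendrian neighborhood theorem in contact $3$-manifolds, there are coordinates $(x,z,w)$ on a neighborhood of $\nu$ inside $N$ in which $\xi = \ker(dz - w\,dx)$ and $\nu = \{w = 0,\, x = 0\}$ — careful: I should relabel so the coordinate normal-in-$N$-but-tangent-to-$\xi$ direction is named consistently with the statement; the target $1$-jet model there is $\ker(dz-ydx)$ with coordinates $(x,z,w)$ on $N$ and $\nu$ the $x$-axis, so in the notation of the proposition the coordinate along the transverse-in-$N$ direction is $y$ wait — I need the three coordinates on $N$ to be $(y,x,z)$ with $\xi=\ker(dz-ydx)$ and $\nu=\{y=z=0\}$. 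So: pick the standard Legendrian chart $(y,x,z)$ on $N$ with $\xi(\Gamma)$ normalized via its characteristic foliation exactly as in Proposition~\ref{prop:modelKnot}, so that $\xi = \ker(dz - y\,dx)$ and $\nu$ is the $x$-axis.

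Second, I would extend to the fourth coordinate $w$. Pick a vector field $W$ spanning $\SW$ near $N$; since $W$ is transverse to $N$, its flow $\psi_w$ gives a diffeomorphism $N \times (-\delta,\delta) \to \Op(N)$, $(p,w) \mapsto \psi_w(p)$, which produces a coordinate $w$ with $N = \{w=0\}$ and $\partial_w$ spanning $\SW$. Because $W$ is tangent to the kernel foliation of $\SE$, its flow preserves $\SE$, so $\SE = \SW \oplus \xi = \ker(dz - y\,dx)$ on the whole neighborhood, with the $1$-form $dz-y\,dx$ now read on $I\times\R^3$ (it is pulled back from $N$ and is $\partial_w$-invariant). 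This is exactly the model $\ker(dz - y\,dx)$ on $I \times \R^3$ with coordinates $(y,x,z,w)$, and the constructed diffeomorphism is by definition an even-contact embedding sending $I\times\{0\}$ to $\Op(\image(\nu))$.

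The main obstacle is the clean bookkeeping of the $3$-dimensional step: one must make sure the hypersurface $N$ is genuinely transverse to $\SW$ along \emph{all} of $\nu$ (not merely at one point) so that $\xi=TN\cap\SE$ is everywhere contact, and that the Legendrian neighborhood normalization can be carried out \emph{parametrically in $I$}, i.e. with the curve $\nu$ being the full $1$-manifold rather than a single Legendrian arc — this is why one normalizes $\xi$ first via the characteristic foliation of a transverse surface $\Gamma\subset N$ containing $\nu$, exactly as in the proof of Proposition~\ref{prop:modelKnot}, rather than invoking a black-box Legendrian tube lemma. Everything else is an application of the flow-box theorem for $\SW$ and is routine; note that, unlike Proposition~\ref{prop:modelKnot}, no uniqueness/weak-contractibility claim is asserted here, so the argument stops once the embedding is produced.
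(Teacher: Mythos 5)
Your proof is correct and is essentially the paper's own argument: the paper's proof just cites the fact that an even-contact structure is semi-locally a contact slice stabilized by $\R$ (your hypersurface $N$ transverse to $\SW$ together with the flow of $W$ spanning $\SW$) plus the standard Legendrian neighborhood theorem applied to $\nu \subset (N,\xi = TN\cap\SE)$. The only blemish is bookkeeping: in the proposition's model the core $I\times\{0\}$ is the $y$-axis $\{x=z=w=0\}$ (which is indeed Legendrian for $\ker(dz-y\,dx)$), whereas you normalize $\nu$ to the $x$-axis $\{y=z=w=0\}$ --- harmless, since for instance $(x,y,z)\mapsto(y,x,xy-z)$ preserves $\ker(dz-y\,dx)$ and exchanges the two curves.
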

\begin{proof}
This follows from the standard neighborhood theorem for Legendrians in a contact $3$-manifold and the fact that any even-contact structure is semi-locally given by a contact slice stabilized by $\R$.
\end{proof}

The following proposition states the \emph{existence $h$-principle} for curves tangent to an even-contact structure.
\begin{proposition} \label{prop:tangentCurvesExistence}
Let $K$ be a contractible set and $(M,\SE_k)_{k \in K}$ a $K$-family of $4$-dimensional even-contact manifolds with corresponding kernels $(\SW_k)_{k \in K}$. Let $J \subset I$ be intervals.

Fix a family of embeddings $(\nu_k: I \lra M)_{k \in K}$ with $\nu_k|_{\Op(J)}$ tangent to $\SE_k$ and transverse to $\SW_k$. Then, there is a $C^0$-small perturbation $\tilde\nu_k$ of $\nu_k$ such that
\begin{itemize}
\item $\tilde\nu_k$ is an embedded curve everywhere tangent to $\SE_k$ but transverse to $\SW_k$, and
\item $\tilde\nu_k \equiv \nu_k$ over the interval $J$. 
\end{itemize}
\end{proposition}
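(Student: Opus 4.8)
The plan is to reduce the statement to a standard $h$-principle for tangent curves (isotropic immersions) in even-contact manifolds, proved by exploiting the semi-local product structure of an even-contact structure, namely that $(M,\SE_k)$ is locally $(N_k,\xi_k)\times\R_w$ where $\xi_k$ is a contact structure and $\partial_w$ spans the kernel $\SW_k$. Being tangent to $\SE_k$ and transverse to $\SW_k$ then corresponds to being a Legendrian immersion in the contact slice direction that moves nontrivially in $w$; more precisely, if one projects $\tilde\nu_k$ to the contact factor one wants a Legendrian immersion, and the $w$-component can be prescribed freely (for instance, one can ask $w\circ\tilde\nu_k$ to be strictly increasing), which will help with embeddedness later. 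So the first step is to set up the formal data: from $\nu_k$ one extracts a family of formal tangent immersions, i.e. a curve together with an injective bundle map $TI\to\SE_k$ landing transverse to $\SW_k$; since $K$ is contractible and $\nu_k$ is already genuinely tangent over $\Op(J)$, this formal data exists and is unique up to homotopy rel $\Op(J)$.

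Second, I would invoke the $h$-principle for isotropic (here, Legendrian) immersions in contact manifolds — the same tool used in the proof of Lemma~\ref{lem:hPrincipleTransverseCurves} via \cite[14.2.2]{em}, now in its parametric, relative form — to homotope, through formal tangent immersions, to a family $\hat\nu_k$ of genuine immersions tangent to $\SE_k$ and transverse to $\SW_k$, agreeing with $\nu_k$ over $J$. One has to be a little careful that the $h$-principle is applied inside a fixed ambient contact slice; since the problem is semi-local (a neighborhood of a compact arc), one can work inside a tube $\psi_k: I\times\D^3\to M$ adapted to $\SE_k$ as in Proposition~\ref{prop:tangentCurvesModel}, use the contact $h$-principle in the contact factor, and reintroduce a $w$-motion of one's choosing. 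Third, to upgrade from immersions to embeddings: since we are in a $4$-manifold and the curves are $1$-dimensional, generic perturbation removes self-intersections, and — just as in the last paragraph of the proof of Lemma~\ref{lem:hPrincipleTransverseCurves} — one can absorb the perturbation using the extra $\SW_k$-direction (flowing along a vector field $W$ spanning $\SW_k$ by a small, position-dependent time that vanishes over $J$), which keeps the curve tangent to $\SE_k$ and transverse to $\SW_k$ while making it embedded; $C^0$-smallness is automatic by taking the flow parameter small.

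The main obstacle I anticipate is the same one that makes the contact-geometric input nontrivial: producing the genuine tangent immersion $\hat\nu_k$ is an $h$-principle statement (isotropic immersions do not simply follow from transversality), so one must cite the parametric relative form of the isotropic-immersion $h$-principle and make sure the hypotheses — contractibility of $K$, the curve being already honest over $\Op(J)$, and the compactness of $I$ — are exactly what is needed to apply it relative to $J$. A secondary technical point is checking that the embeddedness perturbation in step three can be made compatible with the tangency condition without destroying it; this is precisely why one wants the semi-local product picture and the freedom in the $w$-direction, and it mirrors the argument already carried out for transverse arcs. Once these two points are in place, the proposition follows by assembling the three steps into a single homotopy rel $J$.
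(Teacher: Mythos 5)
Your overall strategy (reduce to Legendrian curves in a contact slice, then repair embeddedness using the kernel direction) is close in spirit to the paper's, but there is a genuine gap at the very first geometric step: the ``tube adapted to $\SE_k$'' around $\nu_k$ does not exist a priori. Proposition~\ref{prop:tangentCurvesModel}, which you invoke for this, applies only to curves that are \emph{already} tangent to $\SE_k$ and transverse to $\SW_k$; away from $\Op(J)$ your $\nu_k$ is an arbitrary embedding, which may be tangent to $\SW_k$ along subintervals, and then no neighborhood of $\nu_k(I)$ carries a product structure (contact slice)$\times\R_w$ with $\partial_w$ spanning the kernel --- the restriction of the kernel foliation to such a neighborhood need not admit any global slice. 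So the reduction of your step two to ``a Legendrian immersion in the contact factor plus a free $w$-component'' is not available yet. The missing ingredient is exactly the paper's first step (Lemma~\ref{lem:technical1}): first homotope the family, $C^0$-small and relative to $J$, to embedded curves transverse to $\SW_k$. Note that in the parametric setting this is not plain genericity (for large $\dim K$ transversality of the whole family can fail generically); it uses that $T_pM\setminus\SW_k(p)$ is ample together with contractibility of $K$ to produce the formal data, i.e. the $h$-principle for open, ample, Diff-invariant relations. Once the curves are transverse to $\SW_k$, they can be thickened to embedded transverse $3$-manifolds $N_k$ with contact structures $\xi_k=TN_k\cap\SE_k$, and only then does your contact-slice picture exist.

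After that correction, your second half essentially coincides with the paper's Lemma~\ref{lem:technical2}, though with two caveats worth fixing. First, \cite[14.2.2]{em} is the $h$-principle for \emph{transverse} immersions; for your route you would need the Legendrian-immersion $h$-principle in its parametric, relative, and $C^0$-dense form (the $C^0$-smallness required by the statement is not automatic from the immersion $h$-principle alone --- your flow-repair only controls the last perturbation). The paper sidesteps both issues by approximating directly with embedded Legendrian knots via front projections inside $N_k$, which gives tangency to $\SE_k$, embeddedness, $C^0$-smallness, and transversality to $\SW_k$ (the curve stays in $N_k$) in one stroke, with no need for the $w$-spreading or the $\SW$-flow repair. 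Your repair step itself is sound --- flows tangent to $\SW_k$ preserve both $\SE_k$ and $\SW_k$, so tangency and transversality survive --- but with the front-projection approach it becomes unnecessary.
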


For its proof, we need two standard $h$-principle results. The first one provides embeddings transverse to $\SW$:
\begin{lemma} \label{lem:technical1}
Let $K$ be a contractible set, $M$ a $4$-dimensional manifold, and $(\SW_k)_{k \in K}$ a family of line fields in $M$. Let $J \subset I$ be intervals.

Fix a family of embeddings $(\nu_k: I \lra M)_{k \in K}$ with $\nu_k|_{\Op(J)}$ transverse to $\SW_k$. Then, there is a $C^0$-small perturbation $\tilde\nu_k$ of $\nu_k$ which satisfies:
\begin{itemize}
\item $\tilde\nu_k$ is an embedded curve everywhere transverse to $\SW_k$, and
\item $\tilde\nu_k \equiv \nu_k$ over the interval $J$. 
\end{itemize}
\end{lemma}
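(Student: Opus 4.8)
The plan is to prove Lemma~\ref{lem:technical1} as a direct application of the Smale--Hirsch $h$-principle for immersions, upgraded to embeddings by a dimension count, and made relative to $J$ by an argument analogous to the one used at the end of the proof of Lemma~\ref{lem:hPrincipleTransverseCurves}. Transversality of a curve to a line field in a $4$-manifold is an open, Diff-invariant differential relation of order $1$ imposed on the $1$-jet of $\nu_k$, and since $\SW_k$ has codimension $3$, genericity alone forces any immersed curve to be transverse to $\SW_k$; the only real content is making the conclusion hold with prescribed behavior over $J$ and with control of the $C^0$-size.

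First I would reduce to the case where the $\nu_k$ are immersions: since $K$ is contractible and each $\nu_k$ is already an embedding, it is in particular a monomorphism, so the Smale--Hirsch theorem (as cited via \cite[8.2.1]{em}) is not even needed here — the maps are already immersions. Next, I would make them transverse to $\SW_k$. For each fixed $k$, a generic $C^1$-small perturbation of the immersion $\nu_k$, keeping it fixed near $J$ where it is already transverse, yields a curve transverse to $\SW_k$ everywhere: this is the standard parametric transversality theorem applied to the map $K \times I \to M$ relative to the subset $K \times J \cup \Op(\partial)$ together with the openness of the transversality condition along $J$. Concretely one picks, in a tubular neighborhood of $\image(\nu_k)$, a trivialization in which $\SW_k$ is a coordinate line field, and perturbs the remaining three coordinate functions of $\nu_k$ to be generic; the perturbation can be taken $C^0$-small and supported away from $J$ by multiplying by a cutoff. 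Finally, I would restore embeddedness: a further $C^1$-small, hence $C^0$-small, perturbation supported away from $J$ makes the family of curves embedded again (double points of a family of arcs in a $4$-manifold are non-generic), and since transversality to $\SW_k$ is open this does not destroy the property just achieved.

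The main obstacle — really the only subtle point — is bookkeeping the three relative conditions simultaneously: relative to $J$, relative to the parameter space (the perturbation must be a smooth family over $K$, which is fine since $K$ is contractible so there is no monodromy obstruction), and $C^0$-small. The clean way to organize this is to do everything inside a fixed tubular neighborhood $\Phi_k: I \times \D^3 \to M$ of $\nu_k$ chosen smoothly in $k$ with $\Phi_k(y,0) = \nu_k(y)$ and with $\SW_k$ pulled back to, say, $\langle \partial_{v_1}\rangle$; then $\tilde\nu_k(y) = \Phi_k(y, \rho(y)\sigma_k(y))$ for a cutoff $\rho$ vanishing near $J$ and a small generic family $\sigma_k: I \to \D^3$ whose last two components are chosen so that $y \mapsto (\rho(y)\sigma_k(y))_{2,3}$ is an immersion off the support of $\rho'$ and so that the resulting curve avoids self-intersections — exactly the same template as the use of the flow $h_{\varepsilon\lambda(s)g(y)}$ in Lemma~\ref{lem:hPrincipleTransverseCurves}. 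The remaining details are the routine invocations of parametric transversality and general position, which I would not spell out.
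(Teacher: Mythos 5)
Your reduction to immersions is fine, but the core step --- achieving transversality to $\SW_k$ for \emph{every} $k$ by a generic $C^1$-small perturbation --- does not work once the parameter space has dimension $\geq 2$, and the lemma is used precisely for contractible parameter spaces of arbitrary dimension (balls $K_0$ coming from the main theorem). Tangency of a curve to a line field in a $4$-manifold is a codimension-$3$ condition on the $1$-jet, while the family sweeps out a $(\dim K + 1)$-dimensional domain; parametric transversality therefore only guarantees that the tangency locus in $K \times I$ has dimension $\dim K - 2$, which is nonempty in general, and an essential tangency (nonzero local intersection number of the normalized velocity map with the directions spanning $\SW_k$) cannot be removed by any $C^1$-small perturbation. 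This is exactly why the paper does not argue by genericity: it uses a single honest perturbation at one parameter $k_0$ only to produce \emph{formal} data (an injective bundle map avoiding $\SW_{k_0}$), spreads this formal data over all of $K$ using contractibility, and then invokes the complete (parametric and relative) $h$-principle for ample, Diff-invariant relations --- the complement of a line in $T_pM$ is connected with full convex hull --- i.e.\ convex integration, which produces a $C^0$-small but $C^1$-large modification (small oscillations) that genericity alone cannot supply. Your approach and the paper's genuinely diverge here, and yours has the gap.

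A secondary problem is the proposed normal form: a tubular neighborhood $\Phi_k: I \times \D^3 \to M$ with $\Phi_k(y,0)=\nu_k(y)$ in which $\SW_k$ pulls back to a fiber direction $\langle\partial_{v_1}\rangle$ cannot exist in general, because in such coordinates the core $y \mapsto \Phi_k(y,0)$ would automatically be transverse to $\SW_k$ --- which is the conclusion you are trying to prove, and which fails precisely at the tangency points of $\nu_k$. So the concrete template $\tilde\nu_k(y) = \Phi_k(y,\rho(y)\sigma_k(y))$ is circular: with an honest (non-straightened) pullback of $\SW_k$, making such a graph transverse is the original problem. (The analogous trick at the end of Lemma~\ref{lem:hPrincipleTransverseCurves} is used there to restore \emph{embeddedness} after transversality has already been obtained from an $h$-principle, not to produce transversality.) Your observations that Smale--Hirsch is unnecessary and that the relative-to-$J$ and $C^0$-smallness requirements can be handled by cutoffs are fine, but they do not repair the main step.
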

\begin{proof}
For a fixed $k_0 \in K$, standard transversality implies that $\nu_{k_0}$ can be $C^\infty$-perturbed to yield an embedded curve transverse to $\SW_{k_0}$. The homotopy between the two, as embeddings, endows $\nu_{k_0}$ with the structure of an embedding that is \emph{formally transverse} to $\SW_{k_0}$. Then, since the parameter space $K$ is contractible, it follows that the whole family $(\nu_k)_{k \in K}$ can be regarded as a family of embeddings that are formally transverse to the corresponding line fields $(\SW_k)_{k \in K}$.

For all points $p \in M$, the subset $T_pM \setminus \SW_k(p)$ is connected and its convex hull is $T_pM$, i.e. it is an \emph{ample} subset. Then the result follows from the complete $h$-principle for ample, Diff-invariant, $1$-dimensional differential relations (see \cite[Section 10.4]{em}).
\end{proof}

The second ingredient we need is a method for producing transverse knots in contact $3$-manifolds. 
\begin{lemma} \label{lem:technical2}
Let $K$ be a contractible set and $(N_k,\xi_k)_{k \in K}$ a $K$-family of $3$-dimensional contact manifolds. Let $J \subset I$ be intervals.

Fix a family of knots $(\nu_k: I \lra N_k)_{k \in K}$ with $\nu_k|_{\Op(J)}$ tangent to $\xi_k$. Then, there is a $C^0$-small perturbation $\tilde\nu_k$ of $\nu_k$ which satisfies:
\begin{itemize}
\item $\tilde\nu_k$ is a Legendrian knot for $\xi_k$, and
\item $\tilde\nu_k \equiv \nu_k$ over the interval $J$. 
\end{itemize} 
\end{lemma}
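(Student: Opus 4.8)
The plan is to deduce Lemma~\ref{lem:technical2} from the $h$-principle for Legendrian immersions together with a genericity argument to recover embeddedness, exactly parallel to the proof of Lemma~\ref{lem:hPrincipleTransverseCurves} but with the roles of ``transverse'' replaced by ``tangent''. First I would observe that a knot $\nu_k: I \lra N_k$ can be upgraded to a \emph{formal Legendrian}: the bundle monomorphism $TI \lra \xi_k$ covering $\nu_k$ is prescribed over $\Op(J)$ (where $\nu_k$ is already tangent to $\xi_k$) and, since the fiber of $\xi_k$ has rank $2$ and we only need a line in it, the space of such extensions is connected; contractibility of $K$ lets us choose the formal data in a family. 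Then the $h$-principle for Legendrian immersions in contact $3$-manifolds (see \cite[Theorem 16.1.3]{em}, or derive it from the $h$-principle for directed immersions since the relevant relation is open and ample in the principal directions) produces a family of Legendrian immersions $\widehat\nu_k$ agreeing with $\nu_k$ over $J$ and $C^0$-close to $\nu_k$.

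Next I would remove the self-intersections. A Legendrian immersion of a $1$-manifold into a $3$-manifold has, generically, only transverse double points, and these can be removed by a $C^0$-small Legendrian isotopy: near a double point one has the standard contact $\R^3 = \ker(dz - y\,dx)$ and two Legendrian strands crossing, and one pushes one strand off the other using the front-projection picture (a small vertical shift in the front corresponds to an allowed Legendrian perturbation). Since we work over the contractible parameter space $K$, this desingularization can be performed simultaneously for the whole family, leaving $\widehat\nu_k$ untouched over $\Op(J)$. The output $\tilde\nu_k$ is then a family of Legendrian \emph{embeddings} with the required properties.

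The main obstacle I anticipate is purely bookkeeping rather than conceptual: ensuring that every step is genuinely parametric and strictly relative over $J$. The Legendrian $h$-principle as usually stated gives a homotopy of formal data to a genuine solution, and one must check that this homotopy is constant where the curve is already Legendrian and that it respects the $K$-family structure — both of which follow from the relative and parametric form of the $h$-principle for open ample relations, but need to be invoked correctly. Likewise, the removal of double points must be arranged to be $C^0$-small so that the conclusion ``$C^0$-small perturbation'' is not violated, and to not re-introduce intersections elsewhere in the family; a standard general-position argument handles this, using that the double-point locus of a generic family of immersions of $1$-manifolds into a $3$-manifold, parametrized by a space of dimension $\dim K$, is avoidable after perturbation as long as $\dim K$ is finite (which holds since $K$ is a CW-complex). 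I would remark that this lemma, like Lemma~\ref{lem:technical1}, is entirely standard and is only recorded here to fix the parametric and relative statement needed in Section~\ref{sec:hPrinciple}.
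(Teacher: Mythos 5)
Your first step is fine: upgrading $\nu_k$ to a family of formal Legendrian curves rel $\Op(J)$ and invoking the relative, parametric, $C^0$-dense $h$-principle for Legendrian immersions does produce Legendrian \emph{immersions} $\widehat\nu_k$ that are $C^0$-close to $\nu_k$ and agree with them near $J$. This is, however, a different route from the paper, which works directly in (families of) Darboux charts and replaces the front of $\nu_k$ by a cuspidal front whose slope approximates the missing coordinate of the original embedded curve. The difference is not cosmetic, because the quantitative slope control of the front construction is exactly what is discarded when you pass through the abstract immersion $h$-principle, and it is what makes embeddedness come for free, parametrically.

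The genuine gap is in your last step. For a $K$-family of curves in a $3$-manifold, consider the evaluation map $(k,t_1,t_2)\mapsto(\widehat\nu_k(t_1),\widehat\nu_k(t_2))$ on $K\times\big((I\times I)\setminus\Delta\big)$: the diagonal has codimension $3$, so the double-point locus of a generic family has dimension $\dim K-1$ and, being a transversality phenomenon, is \emph{stable} under small perturbations. Hence for $\dim K\ge 1$ general position does not make every $\widehat\nu_k$ embedded, contrary to your claim that the double points are ``avoidable after perturbation as long as $\dim K$ is finite''; the same count applies within the space of Legendrian immersions (a front crossing with equal slopes is a codimension-one condition in $k$), so your local fix by a small vertical shift of one strand of the front only moves the parameter value at which the crossing has equal slopes, it does not eliminate it. Since Lemma~\ref{lem:technical2} is used parametrically in the proof of Proposition~\ref{prop:tangentCurvesExistence}, this is not a removable bookkeeping issue. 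Nor does $C^0$-closeness to the embeddings $\nu_k$ alone rescue the argument: it only forces double points to occur between parameter-close strands, and the abstract $h$-principle gives no control excluding such small self-intersections. What does rescue it — and what the paper's proof uses — is that the approximating Legendrian can be chosen with front slope uniformly close to the $y$-coordinate of the embedded $\nu_k$: then any front crossing between parameter-distant strands has distinct slopes (because $\nu_k$ is embedded), and the explicit zig-zag model rules out local double points, simultaneously for all $k$. To repair your proposal you would need to build this quantitative control into the approximation, i.e.\ essentially carry out the front-projection construction rather than quote the immersion $h$-principle and genericity.
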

\begin{proof}
If $(N_k,\xi_k) \simeq (\R^3,\xi_\std)$, we can use the front projection and view $\nu_k$ as a (possibly singular) curve in the plane together with a slope at every point recording the missing coordinate. Then $\tilde\nu_k$ is drawn in the front projection as a cuspidal curve such that the slope approximates the given one. In the general case, for each $t \in [0,1]$, we can find, parametrically in $k$, a Darboux ball $(\Op(\nu_k(t)),\xi_k)$. We may then use the corresponding local front projections and the relative nature of the statement (in the domain, not the parameter) to conclude.
\end{proof}

\begin{proof}[Proof of Proposition~\ref{prop:tangentCurvesExistence}]
First we apply Lemma \ref{lem:technical1} to yield a new family of embedded curves $(\nu_k')_{k \in K}$ which are transverse to the line fields $(\SW_k)_{k \in K}$. Transversality with respect to $\SW_k$ implies that each $\nu_k'$ can be thickened to an embedded transverse $3$-manifold endowed with a contact structure $(N_k,\xi_k = TN_k \cap \SE_k)$. Then we apply Lemma \ref{lem:technical2}, parametrically in $k$, to homotope $\nu_k'$ to an embedding $\tilde\nu_k$ which is still contained in $N_k$ but is additionally transverse to $\SE_k$. This concludes the proof.
\end{proof}

\section{Transverse surfaces in Engel manifolds} \label{sec:surfaces}

Let $(M,\SD)$ be an Engel manifold, $I$ a compact $1$-manifold, and $\gamma: I \lra M$ a transverse curve. The purpose of this section is to construct embedded surfaces transverse to $\SD$ and contained in a tubular neighborhood of $\gamma$. Later, in Section~\ref{sec:Lutz}, we will describe how to perform an \emph{Engel-Lutz} twist along such a surface. 

Throughout this section we will assume that $M$, $\SD$, and all surfaces appearing in the discussion are oriented. In particular, $\SD$ is trivial as a vector bundle. 

\subsection{Transverse surfaces}  \label{ssec:many trans tori}

The following notion is the central object in the discussions that follow.
\begin{definition}
Let $(M,\SD)$ be an Engel manifold. A {\bf transverse surface} in $M$ is an immersed surface $S$ whose tangent space at each point is transverse to $\SD$.
\end{definition}
It follows that the even-contact structure $\SE = [\SD,\SD]$ intersects a transverse surface in a line field.

An immersed surface $S\subset (M,\SD=\ker(\alpha)\cap\ker(\beta))$, with $\SE = \ker(\alpha)$, is transverse if and only if:
\begin{itemize}
\item $\ker(\alpha)\cap TS = \SE\cap TS$ has rank $1$ everywhere,
\item $\ker(\beta)\cap TS$ has rank $1$ everywhere, and
\item these two line fields are everywhere transverse to each other.   
\end{itemize}

\subsubsection{Profiles and transverse cylinders}

Let $\gamma : I \lra M$ be a transverse curve.  We will construct transverse surfaces in $\Op(\gamma)$ using the normal form from Proposition~\ref{prop:modelKnot}. A reference concerning  transverse knots in contact $3$-manifolds is~\cite{etnyre}. 

Our first example is an explicit  transverse cylinder/torus:
\begin{example} \label{ex:explicitParametrisationTorus}
Let $r: I \lra \R$ be a positive function. The map 
\begin{align*}
S : I \times \NS^1 \quad\longrightarrow &\quad (I \times \R^3,\SD_\trans) \\
(y,\theta)  \quad\longmapsto 	& \quad 
\left(\begin{array}{l} 
y\\ 
x(y,\theta)=r(y)^2\sin(\theta)\cos(\theta)\\ 
z(y,\theta)=r(y)\sin(\theta)\\
w(y,\theta)=2r(y)\cos(\theta)\end{array}\right)
\end{align*}
is an embedding of $I \times \NS^1$. We now determine a condition which ensures that the $1$-forms $S^*(dy-zdx)$ and $S^*(dx-wdz)$ are linearly independent everywhere, i.e.~that $S$ is transverse to $\SD_\trans$. We compute
\begin{align}
\begin{split} \label{e:T2 transv}
S^*(dy-zdx) 
            & = (1-2r(y)^2r'(y)\sin(\theta)^2\cos(\theta))dy \\
						& + r(y)^3\sin(\theta)(\sin(\theta)^2 - \cos(\theta)^2)d\theta \\
S^*(dx-wdz) 
            & = -r(y)^2d\theta.
\end{split}
\end{align}
Therefore, $S$ is transverse to $\SD_\trans$ if $|2r(y)^2r'(y)| < 1$. In particular, the construction yields a transverse surface whenever the function $r(y)$ is constant. \hfill $\Box$
\end{example}

Let us explain in more geometric terms the role of the non-integrability of $\SD_\trans$ in Equation \eqref{e:T2 transv}. We will work under the simplified assumption that $r(y)$ is constant: The $2$-torus $S$ intersects the level $\{y=y_0\}$ in a curve $\eta$ that is transverse to $\ker(\beta_\trans)$ (and in particular transverse to $\partial_w$). The front projection of $\eta$ (i.e. its projection to the $(x,z)$-plane) is a planar curve that does not depend on $y_0$; it is a figure-eight, as depicted in Figure~\ref{fig:Torus1}. The short line segments in Figure~\ref{fig:Torus1} indicate how the line field $T\SD_\trans \cap \{w=w(y,\theta)\}$ varies along the knot. Using $\beta_\trans$ we can recover the missing coordinate $w$ from the line field.

If $\SD_\trans$ was \emph{integrable} and tangent to $\langle \partial_w\rangle$, the line field $T\SD_\trans \cap \{w=w_0\}$ would be independent of $w_0$ and it would project to a non-singular line field in the $(x,z)$-plane. Since the projection of $\eta$ is a closed curve, it would necessarily be tangent to this line field somewhere. The non-integrability of $\SD_\trans$ precisely implies that $T\SD_\trans \cap \{w=w_0\}$ varies with $w_0$, providing enough flexibility to construct embedded transverse curves in the projection and therefore embedded transverse tori in the model.

\begin{figure}[ht] 
\centering
\includegraphics[scale=0.8]{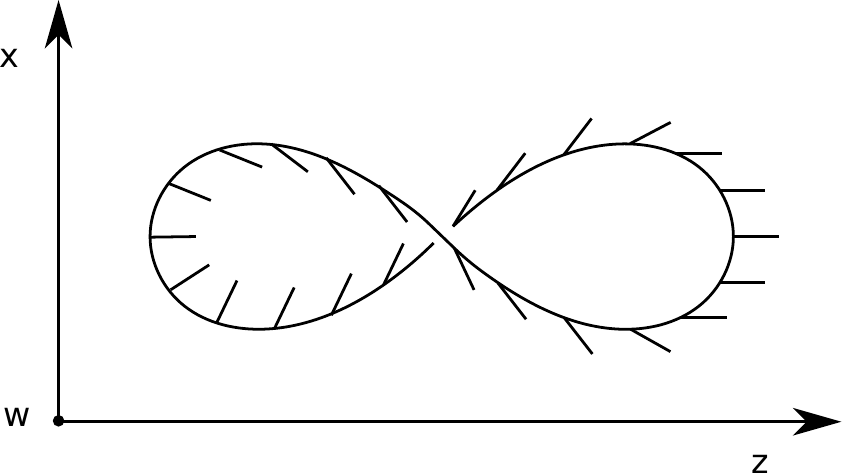}
\caption{Cross section of a transverse torus that is $C^0$-close to a transverse knot $\gamma$.} 
\label{fig:Torus1}
\end{figure}

Example~\ref{ex:explicitParametrisationTorus} can be generalized as follows.
\begin{lemma} \label{lem:constantProfile}
Let $\eta$ be a transverse knot in $(\R^3,\xi_\std=\ker(\beta_\trans))$. The surface 
$$ S = I \times \eta \subset (I \times \R^3,\SD_\trans) $$  
is embedded and transverse to $\SD$.
\end{lemma}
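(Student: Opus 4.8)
The goal is to show that for a transverse knot $\eta \subset (\R^3, \xi_\std = \ker(\beta_\trans))$, the cylinder $S = I \times \eta \subset (I \times \R^3, \SD_\trans)$ is embedded and transverse to $\SD_\trans = \ker(\alpha_\trans) \cap \ker(\beta_\trans)$. Since $\eta$ is an embedded knot in $\R^3$ and $I \times \R^3$ carries the product coordinates $(y,x,z,w)$ with $\SD_\trans$ independent of $y$, the map $S(y,p) = (y, \eta(p))$ is manifestly an embedding; the content of the lemma is the transversality.

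The plan is to use the three-point characterization of transverse surfaces recorded just after the definition of transverse surface: $S$ is transverse to $\SD_\trans = \ker(\alpha_\trans) \cap \ker(\beta_\trans)$ with $\SE = \ker(\alpha_\trans)$ iff (a) $\ker(\alpha_\trans) \cap TS$ has rank $1$ everywhere, (b) $\ker(\beta_\trans) \cap TS$ has rank $1$ everywhere, and (c) these two line fields on $S$ are everywhere transverse. I would verify each of these pointwise in the coordinates $(y,x,z,w)$, exploiting that $TS = \langle \partial_y \rangle \oplus T\eta$ as a subbundle.

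For (b): since $\partial_y \in \ker(\beta_\trans)$ (note $\beta_\trans = dx - wdz$ involves only $x,z,w$), and $\eta$ is transverse to $\xi_\std = \ker(\beta_\trans)$ in $\R^3$, the one-form $\beta_\trans$ restricted to $T\eta$ is nonvanishing; hence $\ker(\beta_\trans) \cap TS = \langle \partial_y \rangle$ has rank exactly $1$. For (a): $\alpha_\trans = dy - zdx$, so $\alpha_\trans(\partial_y) = 1 \neq 0$, meaning $\partial_y \notin \ker(\alpha_\trans)$, so $\ker(\alpha_\trans) \cap TS$ is at most one-dimensional; to see it is exactly one-dimensional I note that along $\eta$ the tangent vector $\dot\eta = \dot x \partial_x + \dot z \partial_z + \dot w \partial_w$ satisfies $\dot x - w\dot z \neq 0$ (transversality of $\eta$), and one can combine $\partial_y$ with a suitable multiple of $\dot\eta$ to kill $dy - zdx$, i.e. the vector $\dot\eta - (\dot x - z \dot x)\,\partial_y$... more cleanly: the restriction of $\alpha_\trans$ to the rank-two bundle $TS$ has $\alpha_\trans(\partial_y) = 1$, so its kernel in $TS$ is a rank-one complement. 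For (c): the two line fields are $\langle \partial_y \rangle$ (the $\ker\beta_\trans$ line) and some line field $L \subset TS$ on which $\alpha_\trans$ vanishes; since $\alpha_\trans(\partial_y) = 1$ while $\alpha_\trans|_L = 0$, we have $\partial_y \notin L$, so the two are transverse. I expect the main (minor) subtlety to be checking (a), i.e. that $\ker(\alpha_\trans) \cap TS$ is genuinely rank one and not rank zero; this requires knowing that $T\eta$ is not entirely contained in the $\ker(dy - zdx)$-direction within the $(x,z,w)$-slice, which holds because in that slice $dy \equiv 0$ and $\alpha_\trans$ restricts to $-z\,dx$, which vanishes on $T\eta$ exactly where $\dot x = 0$ — and even there $\partial_z \in T\eta$-component together with... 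Actually the clean statement is: $TS \cap \ker\alpha_\trans$ always contains a vector of the form $v + c\,\partial_y$ with $v \in T\eta$, obtained by solving $\alpha_\trans(v) + c = 0$ for $c$, which is always solvable; hence rank is exactly $1$. So condition (a) is automatic and the only real input is the transversality of $\eta$ to $\xi_\std$, used for (b). This should be a short verification; the hardest part is simply being careful that all three conditions hold at every point of $S$, including points where $\dot x$ or $\dot z$ vanish.

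\begin{proof}
That $S = I \times \eta$ is an embedded submanifold of $I \times \R^3$ is clear, since $\eta$ is an embedded knot in $\R^3$ and $S$ is its product with $I$. We check transversality using the characterization following the definition of transverse surface: writing $\SD_\trans = \ker(\alpha_\trans) \cap \ker(\beta_\trans)$ with $\SE = \ker(\alpha_\trans)$ and $\alpha_\trans = dy - zdx$, $\beta_\trans = dx - wdz$, we must show that (a) $\ker(\alpha_\trans) \cap TS$ has rank $1$ everywhere, (b) $\ker(\beta_\trans) \cap TS$ has rank $1$ everywhere, and (c) these two line fields on $S$ are everywhere transverse.

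At a point of $S$ we have $TS = \langle \partial_y \rangle \oplus T\eta$, where $T\eta$ is spanned by a vector $v = \dot x\,\partial_x + \dot z\,\partial_z + \dot w\,\partial_w$. Since $\alpha_\trans(\partial_y) = 1 \neq 0$, the functional $\alpha_\trans$ is nonzero on $TS$, so $\ker(\alpha_\trans) \cap TS$ has rank exactly $1$: it is spanned by $v - \alpha_\trans(v)\,\partial_y$. This proves (a). For (b), note that $\beta_\trans$ does not involve $dy$, so $\partial_y \in \ker(\beta_\trans)$; moreover, since $\eta$ is transverse to $\xi_\std = \ker(\beta_\trans)$ in $\R^3$, we have $\beta_\trans(v) \neq 0$, and hence $\ker(\beta_\trans) \cap TS = \langle \partial_y \rangle$, which has rank $1$. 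This proves (b). Finally, for (c): the $\ker(\beta_\trans)$-line in $TS$ is $\langle \partial_y \rangle$, on which $\alpha_\trans$ evaluates to $1$; the $\ker(\alpha_\trans)$-line in $TS$ is contained in $\ker(\alpha_\trans)$ by definition. Since these two evaluations differ, the lines are distinct, hence transverse. Therefore $S$ is transverse to $\SD_\trans$.
\end{proof}
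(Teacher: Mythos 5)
Your proof is correct and follows essentially the same route as the paper: the paper verifies the same two facts (tangency of $S$ to $\partial_y$ with $\beta_\trans(\partial_y)=0$, and $\beta_\trans(\dot\eta)\neq 0$ from transversality of $\eta$ to $\xi_\std$) and packages them as the single computation $\alpha_\trans\wedge\beta_\trans(\partial_y,\dot\eta)=\beta_\trans(\dot\eta)>0$, which is just the wedge-product form of your three-bullet check.
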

\begin{proof}
Since $S$ is tangent to $\partial_y$, it is transverse to $\ker(\alpha_{\trans})$. It is also transverse to $\ker(\beta_{\trans})$ because $\eta$ itself is transverse to this hyperplane field. Using $\beta_\trans(\partial_y)=0$ we have: 
$$ 
\alpha_\trans\wedge\beta_\trans\left(\frac{\partial}{\partial y},\dot{\eta}\right)  
 = \alpha_\trans(\partial_y)\beta_\trans(\dot{\eta})-\alpha_\trans(\dot{\eta})\beta_\trans(\partial_y) 
 = \beta_\trans(\dot{\eta}) > 0. 
$$
\end{proof}
In order to generalize this construction we fix the following terminology. 
\begin{definition} 
Let $\gamma$ be a transverse curve. Suppose $(\eta_y)_{y\in I}$ is an isotopy of transverse knots in $(\R^3,\xi_\std)$ such that
\begin{align*}
S: I \times \NS^1& \longrightarrow (I \times \R^3,\SD_\trans) \\
(y,\theta)    & \longmapsto \left(y,\eta_y(\theta)\right)
\end{align*}
is an embedded transverse surface lying in a standard neighborhood of $\gamma$. We say that $\gamma$ is the {\bf core} of $S$ and the family $(\eta_y)_{y\in I}$ is its {\bf profile}.  
\end{definition}
As stated before, we think of the profile as a family of planar curves in the front projection $(x,z)$ to which we add an oriented line every point to record the missing coordinate $w$. The explicit expression we gave in Example~\ref{ex:explicitParametrisationTorus} corresponds to the constant profile shown in Figure~\ref{fig:Torus1}.

\subsubsection{Scaling of profiles}

In order to build more general surfaces using gluing constructions, we will use diffeomorphisms of the form
\begin{align}
\begin{split} \label{e:psi lambda}
\psi_\lambda : I \times \R^3 & \longrightarrow I \times \R^3 \\
(y,x,z,w) & \longmapsto (y, \lambda^2(y) x, \lambda(y) z, \lambda(y) w)
\end{split}
\end{align}
where $\lambda$ is a positive function of $y$.

\begin{lemma}\label{lem:smallProfile}
Let $(\eta_y)_{y \in I}$ be an isotopy of transverse knots in $(\R^3,\xi_\std)$ for a compact $1$-manifold $I$. The embedded surface 
\begin{align*}
S: I \times \NS^1& \longrightarrow (I \times \R^3,\SD_\trans) \\
(y,\theta)    & \longmapsto \psi_\lambda\left(y,\eta_y(\theta)\right)
\end{align*}
is transverse to $\SD_\trans$ if $\lambda(y)=\lambda>0$ is small enough.
\end{lemma}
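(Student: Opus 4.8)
The plan is to reduce the claim to the transversality criterion for surfaces of the form $S(y,\theta) = \psi_\lambda(y,\eta_y(\theta))$ by a direct computation of the pullback forms $S^*\alpha_\trans$ and $S^*\beta_\trans$, and then to argue that, as $\lambda \to 0$, these pullbacks converge (after suitable rescaling) to the pullbacks appearing in Lemma~\ref{lem:constantProfile}, which are linearly independent. First I would record how $\psi_\lambda$ acts on the defining forms. A short calculation gives $\psi_\lambda^*(dy - z\,dx) = dy - \lambda^3 z\,dx - (\text{terms involving } \lambda^2 z\, x\, \lambda'/\lambda)\,dy$ and $\psi_\lambda^*(dx - w\,dz) = \lambda^2\,dx - \lambda^2 w\,dz + (\text{terms in } \lambda')$; the precise bookkeeping of the $\lambda'$ contributions is routine, and when $\lambda$ is constant they vanish entirely, leaving $\psi_\lambda^*\alpha_\trans = dy - \lambda^3 z\,dx$ and $\psi_\lambda^*\beta_\trans = \lambda^2(dx - w\,dz)$.

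Next I would compose with the parametrization $(y,\theta) \mapsto (y,\eta_y(\theta))$, where I write $\eta_y(\theta) = (x(y,\theta), z(y,\theta), w(y,\theta))$. Pulling back, $S^*\alpha_\trans = (1 - \lambda^3 z\, \partial_y x)\,dy - \lambda^3 z\,\partial_\theta x\,d\theta + O(\lambda')$, while $S^*\beta_\trans = \lambda^2\big[(\partial_y x - w\,\partial_y z)\,dy + (\partial_\theta x - w\,\partial_\theta z)\,d\theta\big] + O(\lambda')$. For the surface to be transverse to $\SD_\trans$ I need $S^*\alpha_\trans \wedge S^*\beta_\trans \neq 0$ at every point. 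Dividing out the nonzero factor $\lambda^2$, the wedge equals $(1 - \lambda^3 z\,\partial_y x)(\partial_\theta x - w\,\partial_\theta z) + \lambda^3 z\,\partial_\theta x\,(\partial_y x - w\,\partial_y z) + O(\lambda')$ times $dy\wedge d\theta$. As $\lambda \to 0$ this expression converges uniformly on the compact set $I \times \NS^1$ to $\partial_\theta x - w\,\partial_\theta z = \beta_\trans(\dot\eta_y)$, which is strictly positive because each $\eta_y$ is a transverse knot in $(\R^3,\xi_\std)$ and $I$ is compact (so the positive minimum is attained). Hence for $\lambda$ small enough the full expression stays positive, and $S$ is transverse to $\SD_\trans$.

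The one subtlety — and the step I expect to require the most care — is the handling of the $\lambda'$ terms, since $\lambda$ is allowed to be a function of $y$ rather than a constant. However, the statement only asks for transversality when $\lambda(y) \equiv \lambda$ is a sufficiently small constant, so those terms drop out and the argument above is clean; if one wanted the slowly-varying version one would further note that $\lambda'$ can be kept uniformly small relative to $\lambda$, so the $O(\lambda')$ perturbation does not destroy the strict positivity of the limit. Thus the only genuine ingredient is the compactness of $I \times \NS^1$, which turns the pointwise positivity of $\beta_\trans(\dot\eta_y)$ into a uniform lower bound and lets the $\lambda$-dependent error be absorbed.
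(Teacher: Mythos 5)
Your proposal is correct and follows essentially the same route as the paper: pull back $\alpha_\trans$ and $\beta_\trans$ under $\psi_\lambda$ (with the $\lambda'$ terms absent since $\lambda$ is constant), evaluate the wedge on the coordinate frame of $S$, and use compactness of $I\times\NS^1$ to get a uniform lower bound on $\beta_\trans(\dot\eta_y)$ that dominates the higher-order-in-$\lambda$ terms. The only cosmetic difference is that you divide by $\lambda^2$ and take a uniform limit where the paper keeps the estimate in the form $(1+O(\lambda^3))\lambda^2 C + O(\lambda^5)$.
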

\begin{proof}
For $\lambda=1$ there is no guarantee that $S$ is transverse to $\SD_\trans$ because of the uncontrolled behavior of $\eta_y$ as $y$ varies. We use the notation
\begin{alignat*}{3}
\psi_\lambda^*\alpha_\trans \quad= &&\quad dy-\lambda^3zdx   \quad=:&&\quad \alpha_\lambda \\
\psi_\lambda^*\beta_\trans  \quad= &&\quad \lambda^2(dx-wdz) \quad=:&&\quad \beta_\lambda.
\end{alignat*}
Again,  we compute
\begin{align*}
\alpha_\trans\wedge\beta_\trans\left(\frac{\partial S}{\partial y}(y,\theta),\frac{\partial S}{\partial \theta}(y,\theta) \right) & = 
\alpha_\lambda\wedge\beta_\lambda \left(\partial_y+\frac{\partial\eta_y}{\partial y}(\theta), \dot{\eta}_y(\theta)\right) \\
& = (1+O(\lambda^3))\lambda^2C + O(\lambda^5).
\end{align*}
Here $C>0$ is a lower bound for $\beta(\dot{\eta}_y)$ and the constants implicit in the Landau symbols depend only on the family $(\eta_y)_{y\in I}$. Therefore, $S$ is transverse if $\lambda>0$ is sufficiently small.
\end{proof}

Finally, we show how to isotope a cylinder, relative to the boundary and through transverse surfaces, so that its profile $\eta_y$ can become arbitrarily small for some values of $y$ if a quantitative condition is satisfied.
\begin{lemma}\label{lem:scalingProfile}
Let $\eta$ be a transverse knot in $(\R^3,\xi_\std)$. Then there exists a constant $\tau_0$  depending only on $\eta$ with the following property.

For every function $\lambda: I \lra (0,1]$ satisfying $|\lambda'(y)| < \tau_0$, the surface
\begin{align*}
S: I \times \NS^1 &\longrightarrow (I \times \R^3, \SD_\trans) \\
(y,\theta) &\longmapsto \psi_\lambda(y,\eta(\theta))
\end{align*}
is transverse to $\SD$.
\end{lemma}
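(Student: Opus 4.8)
The plan is to reduce Lemma~\ref{lem:scalingProfile} to the computation already carried out in the proof of Lemma~\ref{lem:smallProfile}, only this time keeping careful track of the $y$-derivative term that was harmless there. Recall that for the surface $S(y,\theta) = \psi_\lambda(y,\eta(\theta))$ transversality to $\SD_\trans$ is equivalent to the non-vanishing of
\begin{align*}
\alpha_\trans \wedge \beta_\trans\left(\frac{\partial S}{\partial y},\frac{\partial S}{\partial\theta}\right) = \alpha_\lambda \wedge \beta_\lambda\left(\partial_y + \frac{\partial}{\partial y}\big(\psi_\lambda(y,\eta(\theta))\big)_{\!*},\ (\psi_\lambda)_*\dot\eta(\theta)\right),
\end{align*}
where $\alpha_\lambda = dy - \lambda(y)^3 z\,dx$ and $\beta_\lambda = \lambda(y)^2(dx - w\,dz)$ as in the previous proof. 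The key point is that now $\eta$ does \emph{not} depend on $y$, so the only $y$-dependence in $\psi_\lambda(y,\eta(\theta))$ comes through $\lambda(y)$; differentiating, the $\partial/\partial y$ contribution to the first vector is $\lambda'(y)$ times a vector field that is a fixed (smooth, bounded on $I \times \NS^1$) expression in $\eta$, $\dot\eta$ and the coordinate vector fields.

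First I would expand the wedge product explicitly. The term coming from $\partial_y$ paired against $(\psi_\lambda)_*\dot\eta$ reproduces, exactly as in Lemma~\ref{lem:smallProfile}, a main term of the form $(1 + O(\lambda^3))\lambda^2 C + O(\lambda^5)$, where $C > 0$ is a lower bound for $\beta_\trans(\dot\eta)$ depending only on $\eta$; crucially, since $\lambda \leq 1$, these estimates are uniform. The new term, coming from the $\lambda'(y)$-part of $\partial S/\partial y$ paired against $(\psi_\lambda)_*\dot\eta$, is bounded in absolute value by $\lambda'(y)$ times $\lambda(y)^2$ times a constant $C'$ depending only on $\eta$ (one power of $\lambda^2$ from $\beta_\lambda$, and the remaining $\lambda$-powers from $\psi_\lambda$ combine to something bounded since $\lambda \leq 1$; one should just check that no negative powers of $\lambda$ appear, which they do not because $\psi_\lambda$ scales coordinates by positive powers of $\lambda$). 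Dividing through by $\lambda(y)^2$, transversality is implied by
\begin{align*}
(1 + O(\lambda^3))C + O(\lambda^2) - |\lambda'(y)|\,C' > 0,
\end{align*}
which, since $C$ is a fixed positive constant and the $O$-terms are uniformly small for $\lambda \leq 1$, holds as soon as $|\lambda'(y)| < \tau_0$ for a suitable $\tau_0 = \tau_0(\eta) > 0$. This gives the stated constant.

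The main obstacle — really the only thing requiring care — is bookkeeping the powers of $\lambda$ in the $\lambda'$-term to confirm that after dividing by $\lambda^2$ one is genuinely left with a bound of the form $|\lambda'|\cdot(\text{const})$ with no residual negative power of $\lambda$ that would blow up as $\lambda \to 0$. Because $\psi_\lambda$ multiplies $x$ by $\lambda^2$ and $z,w$ by $\lambda$, and because $\alpha_\lambda,\beta_\lambda$ carry nonnegative powers of $\lambda$, every monomial appearing has total $\lambda$-degree at least $2$, so this works out; one simply records that the implicit constants depend only on $\sup_\theta$ of finitely many derivatives of $\eta$, hence only on $\eta$. I would also note in passing that, exactly as in Lemma~\ref{lem:smallProfile}, the two line fields $\ker(\alpha_\lambda)\cap TS$ and $\ker(\beta_\lambda)\cap TS$ are individually of rank one (the latter because $\eta$ is transverse to $\xi_\std$ and $\beta_\trans$ pulls back along the $\theta$-direction to something nonzero), so the non-vanishing of the wedge product is indeed equivalent to transversality to $\SD_\trans$, and finally that $S$ is an embedding since $\psi_\lambda$ is a diffeomorphism and $y\mapsto\eta$ is constant.
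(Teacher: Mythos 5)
Your argument is essentially the paper's proof with the bookkeeping shifted: the paper records the $\lambda'$-contribution by pulling back the forms, $\psi_\lambda^*\alpha_\trans=(1-2\lambda^2\lambda'xz)\,dy-\lambda^3z\,dx$ and $\psi_\lambda^*\beta_\trans=\lambda^2(dx-w\,dz)+\lambda\lambda'(2x-zw)\,dy$, and evaluates on $\partial_y$ and $\dot\eta$, whereas you record it in the tangent vector $\partial S/\partial y=\partial_y+\lambda'(y)V$ with $V$ a bounded vector field determined by $\eta$ and the scaling weights; both routes produce a main term $\lambda^2\beta_\trans(\dot\eta)\ge\lambda^2C$ plus an error that is linear in $\lambda'$ with coefficient at most $C'\lambda^2$, and hence $\tau_0=C/C'$ works. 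So the approach is the same, and the crux you identify (no negative powers of $\lambda$ in the $\lambda'$-term) is indeed the right one.

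Two points should be repaired. First, your displayed identity mixes the two schemes: the pulled-back forms $\alpha_\lambda,\beta_\lambda$ live on the unscaled model and cannot be paired with the pushed-forward vector $(\psi_\lambda)_*\dot\eta$, and the first slot $\partial_y+\tfrac{\partial}{\partial y}\psi_\lambda(y,\eta(\theta))$ double-counts $\partial_y$; either pull back the forms (in which case the $\lambda'$-terms must appear inside $\alpha_\lambda,\beta_\lambda$, as in the paper) or evaluate $\alpha_\trans\wedge\beta_\trans$ on $\partial_y+\lambda'V$ and $(\psi_\lambda)_*\dot\eta$, but not a hybrid. Second, and more substantively, the step ``the $O$-terms are uniformly small for $\lambda\le1$'' is not a valid deduction: uniformly bounded constants do not make an $O(\lambda^3)$ or $O(\lambda^2)$ term small when $\lambda$ is of order $1$, and the content of the lemma is precisely that $\tau_0$ works for \emph{all} $\lambda\in(0,1]$. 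The correct justification, which your own setup gives for free, is that since $\eta$ does not depend on $y$ the $\lambda'$-free part of the wedge is \emph{exactly} $\lambda^2\beta_\trans(\dot\eta)$ (the $O(\lambda^3)$ and $O(\lambda^5)$ corrections in Lemma~\ref{lem:smallProfile} were proportional to $\partial\eta_y/\partial y$ and vanish here; equivalently, in the paper's computation every correction term carries a factor $\lambda'$). With that observed, dividing by $\lambda^2$ yields $\beta_\trans(\dot\eta)-|\lambda'|C'\ge C-|\lambda'|C'>0$ whenever $|\lambda'|<\tau_0:=C/C'$, uniformly in $\lambda\in(0,1]$, which is the assertion of the lemma.
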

\begin{proof}
The proof is almost identical to the one of Lemma~~\ref{lem:smallProfile}, and we use notation similar to the one introduced there. The pullbacks of $\alpha_{\trans}$ and $\beta_{\trans}$ under $\psi_\lambda$ are  
\begin{alignat*}{3}
\psi_\lambda^*\alpha_\trans \quad=&&\quad (1-2\lambda^2\lambda'xz)dy - \lambda^3zdx    \quad=:&&\quad \alpha_\lambda \\
\psi_\lambda^*\beta_\trans  \quad=&&\quad \lambda^2(dx-wdz) + \lambda\lambda'(2x-zw)dy \quad=:&&\quad \beta_\lambda.
\end{alignat*}
Again, we seek a condition on $\lambda$ which ensures that $S$ is a transverse surface: 
\begin{align*}
\alpha_\trans\wedge\beta_\trans \left(\frac{\partial S}{\partial y}(y,\theta),\frac{\partial S}{\partial \theta}(y,\theta) \right) & =
\alpha_\lambda\wedge\beta_\lambda(\partial_y, \dot{\eta}(\theta)) \\
&  = (1-2\lambda^2\lambda'xz)\lambda^2C - O(\lambda^4\lambda'). 
\end{align*}
As above, $C$ is a lower bound for $\beta(\dot{\eta})$ and the constants implicit in the Landau symbols depend only  on $\eta$. Hence,  if $\lambda^2\lambda'$ is sufficiently small, then $S$ is transverse to $\SD_\trans$. This is immediate if $|\lambda'|<\tau_0$ for a sufficiently small constant $\tau_0$. 
\end{proof}
Thus, if the core of a cylinder is sufficiently long, it is possible to shrink its profile as much as we want over some subinterval. This will be crucial when we define the Engel overtwisted disc.

\begin{remark} \label{rem:moving core is moving Lutz twist}
Consider an isotopy $(\gamma_t)_{t\in[0,1]}$ of curves transverse to $\SE$. Let $S_0$ be a transverse surface with core $\gamma_0$ and profile $(\eta_y)_{y \in I}$. Suppose that the profile (and thus $S_0$) can be shrunk into an arbitrarily small tubular neighborhood of $\gamma_0$ using the dilation from Equation \eqref{e:psi lambda} (as in Lemma \ref{lem:smallProfile}). Then one obtains a family of surfaces $(S_t)_{t\in[0,1]}$ transverse to $\SD$ by shrinking $(\eta_y)_{y \in I}$ until it fits into a tubular neighborhood of $\gamma_0$ that can be identified with the standard neighborhoods of all the $\gamma_t$.

Moreover, once the profile is sufficiently thin, one can apply Lemma~\ref{lem:smallProfile} to interpolate through transverse surfaces between $S_0$ and some other $S_1$, lying in a standard neighborhood of $\gamma_1$, which is given in terms of some other profile $(\tilde\eta_y)_{y \in I}$. 
\end{remark}

\subsection{Universal twist systems} \label{ssec:universalTwistSystem}

We finish the section describing a particular collection of transverse surfaces. This collection of cylinders will play an important role in the proof of our main theorem, and the properties we require are motivated by that proof.

Let $I$ be the interval $[a,b]$. Fix coordinates $(x,z,w)$ in $\R^3$ and write $\R^3_- = \{z\le 0\}$. Suppose we are given constants $\lambda_0,\delta_0,\varepsilon_0 > 0$ which are potentially very small. We will now construct a $t$-dependent family $(\SS_t)_{t \in [0,1]}$ of embedded transverse surfaces in $(I \times \R^3, \SD_\trans)$, depending also smoothly on the parameters $\lambda_0$, $\delta_0$, and $\varepsilon_0$. Each $\SS_t$ will be a collection of infinitely many cylinders.

The cylinders comprising $\SS_t$ are all copies of a single cylinder shifted in the $x$-direction by   consecutive applications of the translation
\begin{align*} 
T_\lambda : I \times \R^3 & \lra I \times \R^3_- \\
(y,x,z,w) & \longmapsto (y,x+\lambda^2,z,w).
\end{align*} 
The core of each cylinder will be a curve parallel to the $y$-axis. The construction is done in several steps and it is only at the very end that we will achieve transversality with $\SD_\trans$. We abbreviate $T := T_1$.

\subsubsection{A nice transverse knot}

Let $\eta_1\subset (\R^3_-,\xi_\trans=\ker(\beta_\trans))$ be a transverse unknot with self-linking number $-3$ satisfying the following properties:
\begin{enumerate}
\item $\eta_1$ is disjoint from $\{z=0\}$ in the complement of $\eta_1^+ = \eta_1 \cap \{z,w=0\}$.
\item $\cup_{n\in\Z} T^n(\eta_1^+)$ is a covering of the $x$-axis. 
\item $\eta_1$ is disjoint from $T^n(\eta_1)$ for $n \neq \pm 1$.
\item $\eta_1$ and $T(\eta_1)$ intersect in a non-degenerate interval $P$ which is contained in the $x$-axis, and they are otherwise disjoint. $P$ and $T(P)$ are disjoint.
\item \label{isotopy prop 1} There is an isotopy $(\eta_t)_{t\in[1/2,1]}$ of $\eta_1$, supported in a neighborhood of $P$, such that $\eta_t$ is disjoint from $T(\eta_t)$ for $t \in [1/2,1)$. This isotopy is purely vertical: only the $w$-coordinate varies. The following condition holds for $t<1$:
\begin{itemize}
\item[(B)] Given any two points $(x,z,w)\in\eta_t$ and $(x,z,w')\in T(\eta_t)$, it holds that $w<w'$.  \label{condition B} 
In particular, the link $\cup_{n\in\Z} T^n(\eta_t)$ is an unlink. 
\end{itemize}
\item \label{isotopy prop 2} There is an isotopy $(\eta_t)_{t\in[0,1/2]}$ of $\eta_{1/2}$ such that $\eta_0$ is contained in $\{x_0-1/2<x<x_0+1/2\}$ for some $x_0$. This isotopy induces a $T$-equivariant isotopy of unlinks $(\cup_{n\in\Z} T^n(\eta_t))_{t\in[0,1/2]}$ such that all the components of $\cup_{n\in\Z} T^n(\eta_0)$ have disjoint front projection. We further assume that the isotopy is constant for $t \in [0,1/4]$.
\end{enumerate}
The isotopy $(\eta_t)_{t\in[0,1]}$ can be assumed to be smooth. A knot $\eta_1$ with the desired properties exists: its front projection is shown in Figure~\ref{fig:univ-twist-overlay-neu}, along with the front projection of the translation $T(\eta_1)$. The isotopy $(\eta_t)_{t\in[0,1/2]}$ separating the front projections simply makes the upper half of the knot progressively smaller.

\begin{figure}[ht]
\begin{center}
\includegraphics[scale=0.7]{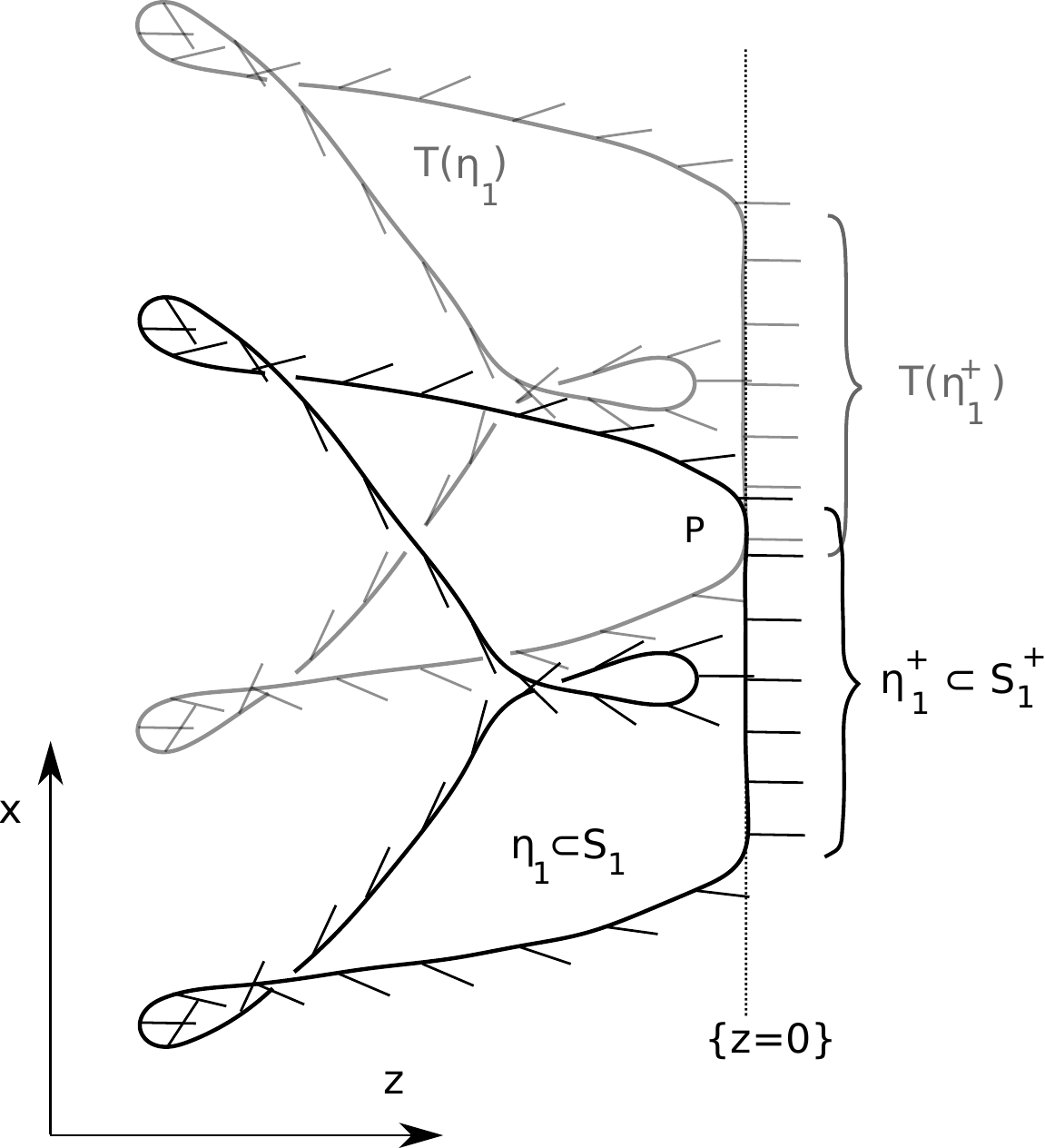}
\caption{The profile $\eta_1$ (in black) arising in the construction of universal twist systems and its translate $T(\eta_1)$ (in gray). The $w$-direction points away from the reader. \label{fig:univ-twist-overlay-neu}}
\end{center}
\end{figure}

\subsubsection{An infinite family of cylinders} 

Write $\R^3_-(y_0)$ for the half-hyperplane $\{y=y_0\} \subset I \times \R^3_-$. Recall the parameters $\lambda_0,\delta_0,\varepsilon_0 > 0$ introduced at the beginning of the subsection. We now fix a smooth family of bump functions $(\chi_t)_{t \in [0,1]}: [a,b] \lra [0,1]$ with the following properties.
\begin{itemize}
\item $\chi_t(y) \equiv t$ if $y \in [a+\delta_0,b-\delta_0]$.
\item $\chi_t(y) \equiv 0$ if $y \in [a,a+2\delta_0/3] \cup [b-2\delta_0/3,b]$.
\item $\chi_t$ is non-decreasing if $y\in[a,a+\delta_0]$.
\item $\chi_t$ is non-increasing if $y\in[b-\delta_0,b]$.
\end{itemize}
We may construct another smooth family of functions $(\rho_t)_{t \in [0,1]}: [a,b] \lra [\lambda_0,1]$ satisfying:
\begin{itemize}
\item $\rho_t(y) \equiv 1$ if $y \in [a+2\delta_0/3,b-2\delta_0/3]$ and $t \in [1/4,1]$.
\item $\rho_t(y) \equiv \lambda_0$ if $y \in [a,a+\delta_0/3] \cup [b-\delta_0/3,b]$ or $t \in \Op(0)$.
\item $\rho_t$ is non-decreasing if $y\in[a,a+2\delta_0/3]$.
\item $\rho_t$ is non-increasing if $y\in[b-2\delta_0/3,b]$.
\item $t \to \rho_t(y)$ is non-decreasing.
\end{itemize}
\begin{remark} \label{rem:lambda0}
The functions $\rho_t$ and $\chi_t$ depend on the parameters $\delta_0$ and $\lambda_0$. We may assume that this dependence is smooth. Further, we assume that the derivative $\rho_t'$ remains uniformly bounded as $\lambda_0$ goes to zero. \hfill $\Box$
\end{remark}

These two families of functions allow us to define a family of (not yet transverse) cylinders $(S_t)_{t \in [0,1]}$ by imposing 
\[ S_t \cap \R^3_-(y) = \psi_{\rho_t(y)}(\eta_{\chi_t(y)}) \]
where $\psi_\lambda$ is the scaling function defined in Equation~\ref{e:psi lambda}. What we achieve is the following: For $t \in [1/4,1]$ and $y \in [a+\delta_0,b-\delta_0]$, the profile of $S_t$ is precisely $\eta_t$. As $y$ approaches $0$ or $1$, the profile becomes $\eta_0$ and then it decreases in size; the precise scaling is given by the constant $\lambda_0$. Similarly, the surfaces $(S_t)_{t \in [0,1/4]}$ have a constant profile which is a rescaling of $\eta_0$ which becomes smaller as $t$ goes to $0$.

\subsubsection{Scaling}

Consider the collections of infinite cylinders $(\bigcup_{n\in\Z}T^n(S_t))_{t \in [0,1]}$. The proof of Lemma~\ref{lem:smallProfile} can be applied parametrically in $t$. Using this fact we claim that there is a constant $\lambda >0$, depending smoothly on $\lambda_0$, $\delta_0$, and $\varepsilon_0$, such that the collection of surfaces
\[ \SS_t = \psi_\lambda\left(\bigcup_{n\in\Z}T^n(S_t)\right) =  \bigcup_{n\in\Z}T^n_\lambda(\psi_\lambda(S_t)) \]
satisfies:
\begin{itemize}
\item $\SS_t$ is a collection of transverse cylinders,
\item if $t\neq 1$, the cylinders in $\SS_t$ are disjoint.
\item $\SS_t \subset \{|z|,|w| < \varepsilon_0\}$.
\end{itemize}
The first fact follows from Lemma~\ref{lem:smallProfile},  the second is immediate from the construction. Finally, the third holds for any $\lambda$ sufficiently small (depending on $\varepsilon_0$).


\begin{definition} \label{def:universalTwistSystem}
The collection of transverse cylinders $(\SS_t)_{t\in[0,1]}$ is a {\bf universal twist system}. 
\end{definition}

\begin{remark} \label{rem:twistSystemConstants}
The family $(\SS_t)_{t\in[0,1]}$ depends on the three parameters $\lambda_0$ (an upper bound for the size of the profiles at $y=0,1$ and $t=0$), $\delta_0$ (the size of the interval in the $y$-direction in which the profiles perform the unlinking and the shrinking), and $\varepsilon_0$ (an upper bound for the size of the cylinders in the $(x,z,w)$-coordinates). Let us clarify how they depend on each other.

We fix $\delta_0$ and $\varepsilon_0$ first. The constant $\lambda$ that determines the final scaling of the profiles depends on both; as they converge to zero so does $\lambda$. However, Lemma~\ref{lem:scalingProfile} and Remark~\ref{rem:lambda0} imply that $\lambda$ does not depend on $\lambda_0$. We will fix $\lambda_0$ last in our constructions.
\end{remark}

\section{The Engel-Lutz twist} \label{sec:Lutz}

Having constructed transverse tori, we immediately obtain many examples of transverse hypersurfaces in $(M,\SD)$: Consider the boundary of a suitable tubular neighborhood of a given transverse torus. If the torus is contained in the vicinity of its core $\gamma$, so is the $3$-manifold we obtain.

In this section we describe how to add \emph{Engel torsion} along a transverse $3$-manifold; this is analogous to adding \emph{Giroux torsion} along a transverse torus in $3$-dimensional Contact Topology. The special case when this operation is performed along a transverse $3$-torus obtained from a transverse loop $\gamma$ is what we call an \emph{Engel-Lutz twist} with core $\gamma$. These procedures are well-defined up to homotopy through Engel structures.

Additionally, we show that if the core is contractible, then the Engel-Lutz twist preserves the homotopy type of the Engel framing. Similarly, the formal type of the Engel structure is unchanged when one introduces Engel torsion along the same transverse $3$-manifold twice. Again, all of this is very reminiscent of the Lutz twist.

\subsection{Hypersurfaces transverse to the Engel structure} \label{ssec:transverseHypersurfaces}

\begin{definition}
Let $(M,\SD)$ be an Engel manifold. An immersed $3$-dimensional submanifold $N\subset M$ is a {\bf  transverse hypersurface} if its tangent space at each point is transverse to $\SD$. 
\end{definition}
Given a closed, orientable, embedded, transverse $3$-manifold $N \subset (M, \SD)$, we want to describe the germ $\SD|_N$. First, we note that $\SD$ imprints the following data on $N$:
\begin{itemize}
\item a line field $\SH_N = TN \cap \SD$, and 
\item a $2$-plane field $\xi_N = TN \cap \SE$ which contains $\SH_N$.
\end{itemize}
Of course, if $N$ is not only transverse to $\SD$ but in addition transverse to $\SW$, then $\xi_N$ is a contact structure and $\SH_N$ is a Legendrian line field. 

Once an orientation of $N$ is fixed, we can define subsets 
\begin{align*}
N^+ & = \{x\in N\,|\,\xi_N \textrm{ is a positive contact structure close to }x\}\\
N^- & = \{x\in N\,|\,\xi_N \textrm{ is a negative contact structure close to }x\}\\
N^0 & = N\setminus(N^+\cup N^-).
\end{align*}
Equivalently, $N^0$ is the set where $\SW$ is tangent to $N$. By definition, the regions $N^\pm$ are open while $N^0$ is closed. If $N$ is chosen $C^\infty$-generically, $N^+$ and $N^-$ are open $3$-dimensional manifolds separated by the possibly disconnected surface $N^0$.

Let $T$ be a vector field transverse to $N$ and tangent to $\SD$; denote its flow by $\phi_t$. Using $\phi_t$ for small times $t \in [-\varepsilon,\varepsilon]$, we obtain a tubular neighborhood $\SU(N) \cong N\times(-\varepsilon,\varepsilon)$ of $N \cong N\times\{0\}$. We use coordinates $(p,t) \in N\times(-\varepsilon,\varepsilon)$ in the model, i.e. $\partial_t=T$. Choosing $T$ suitably we may assume that the following conditions hold: \label{choice of T}
\begin{itemize}
\item $T$ is tangent to $\SW$ in the complement of an arbitrarily small neighborhood of $N^0 \times (-\varepsilon,\varepsilon)$,
\item If $T(p,0)$ is tangent to $\SW$ for $p\in N$, then the $T$-orbit $\cup_{t\in(-\varepsilon,\varepsilon)} \phi_t(p)$ is part of the leaf of $\SW$ through $p$. 
\item Conversely, if $T(p,0) \notin \SW$ for $p\in N$, then the segment $\cup_{t\in(-\varepsilon,\varepsilon)} \phi_t(p)$ is never tangent to $\SW$.  
\end{itemize}

Write $N_t=N\times\{t\}$. Fix a vector field $H$ on $N\times(-\varepsilon,\varepsilon)$ such that $\SD \cap TN_t$ is spanned by the restriction of $H$ to $N_t$ and let 
\begin{align*}
H' & = [\partial_t,H]& H'' & = [\partial_t,H']. 
\end{align*}
We can associate a sign to each connected component of $N^0$ indicating whether the framing $\langle H,H',H'' \rangle$ of $TN$ is positive or negative. We write $N^{0,+}$ (respectively $N^{0,-}$) for the union of those connected components of $N^0$ in which it is positive (respectively negative).

\begin{proposition} \label{prop:localModel}
Let $(M,\SD)$ be an Engel manifold and $N \subset M$ a transverse hypersurface with framing $\{X \in \SD \cap TN,Y \in \SE\cap TN,Z\}$. Then, the Engel structure in $\SU(N)$ can be  written as 
$$ \SD(p,t) = \langle \partial_t, H = X+tY+g(p,t)Z \rangle, $$
where $g:\SU(N) \cong N \times (-\varepsilon,\varepsilon) \lra \R$ is a function with the following properties:
\begin{itemize}
\item $g(p,t) = 0$ if $\partial_t \in \SW(p,0)$,
\item $g(p,t)$ is convex if $p\in N^{0,+}$, 
\item $g(p,t)$ is concave if $p\in N^{0,-}$.
\end{itemize}
\end{proposition}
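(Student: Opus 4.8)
The idea is to build the claimed normal form by successive adapted changes of coordinates near $N$, working with the tubular neighborhood $\SU(N) \cong N \times (-\varepsilon,\varepsilon)$ already fixed above (with $\partial_t = T$). First, since $\SD \cap TN_t$ is a line field along each slice $N_t$, and these vary smoothly in $t$, I can choose the vector field $H$ on $\SU(N)$ spanning it; extending the given framing $\{X,Y,Z\}$ of $TN$ to all slices and writing $H(p,t)$ in this (pulled-back) basis, the $\partial_t$-component can be normalized away (it does not lie in $\SD$), so $H = a(p,t)X + b(p,t)Y + g(p,t)Z$ with $a(p,0)=1$, $b(p,0)=0$, $g(p,0)=0$ because $H(p,0)$ must span $\SH_N = \langle X\rangle$. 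Rescaling $H$ by $1/a$ (valid near $t=0$) gives $a \equiv 1$. The content is then to show $b(p,t)$ can be taken to be exactly $t$: this is where one uses the development map / the Engel condition $[\SW,\SD]=\SE$. The key point is that $H' = [\partial_t,H]$ must, together with $H$ and $\partial_t$, span $\SE$, which forces $\partial_t b(p,0) \neq 0$; after possibly flipping the sign of $Y$ we get $\partial_t b > 0$ near $t=0$, and then $(p,t) \mapsto (p, b(p,t))$ is a fiber-preserving diffeomorphism of $\SU(N)$ that we can use to replace the $t$-coordinate, achieving $b(p,t)=t$ on a smaller neighborhood. Under this reparametrization $X$ and $Z$ are unchanged along $N_0$ and $H$ retains the form $\partial_t + X + tY + g(p,t)Z$ (absorbing any new lower-order terms into a redefined $g$, still vanishing at $t=0$ on the locus where we want it).

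Next I would identify where $g$ vanishes and determine its convexity. The condition $\partial_t \in \SW(p,0)$ means exactly that the kernel foliation is tangent to $N$ at $p$, i.e. $p \in N^0$; and along such a point the $T$-orbit through $p$ is (by our choice of $T$, the second bullet in the construction of $T$) contained in the leaf of $\SW$, so $\SD(p,t) = \langle \partial_t, \SW\text{-direction}\rangle$ is constant in $t$ and equal to $\langle \partial_t, X\rangle$, forcing $g(p,t) \equiv 0$ there — this is the first asserted property. For the convexity statements, I compute $H'' = [\partial_t,[\partial_t,H]]$. Modulo $\SD$ and $\SE$, the relevant quantity is the component of $H''$ transverse to $\SE$, which by the computation of $[\SD,[\SD,\SD]]$ recalled in the ``curves on spheres'' subsection is governed by $\ddot H_p$ — here, in the $(p,t)$ picture with $H$ tangent to $\SD$, the second $t$-derivative. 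At a point $p \in N^0$ we have $H(p,0) \parallel X$ and $H'(p,0) \parallel Y$ (up to the normalization above), so $\langle H, H', H''\rangle$ at $(p,0)$ is a positive (resp. negative) framing of $TN$ exactly when the sign of $\partial_t^2 g(p,0)$ (the $Z$-component of $H''$) is positive (resp. negative); comparing with the definition of $N^{0,\pm}$ via the sign of $\{H,H',H''\}$ gives that $g$ is strictly convex in $t$ at $p \in N^{0,+}$ and strictly concave at $p \in N^{0,-}$. Since convexity/concavity at a point persists in a neighborhood and $N^{0,\pm}$ are unions of connected components of the closed set $N^0$, one gets convexity (resp. concavity) of $t \mapsto g(p,t)$ for $p$ in a whole neighborhood of $N^{0,+}$ (resp. $N^{0,-}$) inside $N$, which — after shrinking $\varepsilon$ — is the stated conclusion.

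\textbf{Main obstacle.} The routine part is the linear algebra of changing framings slice-by-slice; the subtle point is the interplay between the two normalizations. Reparametrizing $t$ to make $b(p,t) = t$ is fine near $N^0$ (there $\partial_t$ is $\SW$-tangent and the orbit stays in a leaf, so things are rigid), but away from $N^0$ one must check that this reparametrization is compatible with the choice of $T$ already made (e.g. that we have not destroyed the property that $T$ is $\SW$-tangent outside a neighborhood of $N^0$, or that the orbits stay non-tangent to $\SW$ where required). The cleanest way around this is to do the two normalizations in the right order: first fix $T$ as in the construction, then observe that near $N^0$ one automatically has $g \equiv 0$ and $b$ can be rectified, and away from $N^0$ transversality to $\SW$ is an open condition so a $C^1$-small reparametrization preserves it. I expect verifying that $\partial_t b > 0$ near $t = 0$ (equivalently, that the development map is an immersion, which is exactly the Engel condition $[\SW,\SD]=\SE$ as recalled after Equation \eqref{e:development map}) and bookkeeping the error terms absorbed into $g$ to be the only places requiring care; both are direct consequences of facts already established in the excerpt.
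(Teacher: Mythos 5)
Your normal form derivation (normalize the $X$-component, use $[\SD,\SD]=\SE$ with $\partial_t\in\SD$ to get $\partial_t b(p,0)\neq 0$, then rectify $b$ by a fiberwise reparametrization) is essentially the paper's argument, which phrases it as: extend $\{T,X,Y,Z\}$ $T$-invariantly, view $\SD$ as a family of spherical curves $H_p$ graphical over the equator $\langle X,Y\rangle$, and conclude by rescaling $T$ and the implicit function theorem. Your treatment of the convexity bullets is also fine in spirit (the sign of $\partial_t^2 g(p,0)$ is the sign of the framing $\langle H,H',H''\rangle$), though note the paper gets convexity/concavity for \emph{all} $t$ directly from condition (i) of Proposition~\ref{prop:EngelCharacterisation}, which holds along the whole orbit over $N^0$ because of the third bullet in the choice of $T$, rather than by openness at $t=0$ plus shrinking $\varepsilon$ (your route works too, using compactness of $N^0$).

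There is, however, a genuine error in your justification of the first bullet. You assert that $\partial_t\in\SW(p,0)$ means $p\in N^0$; it is exactly the other way around. At $p\in N^0$ the kernel $\SW(p)$ is tangent to $N$ while $\partial_t=T$ is transverse to $N$, so $\partial_t\notin\SW(p,0)$ there; the locus where $\partial_t\in\SW(p,0)$ is the complement of a small neighborhood of $N^0$, where $T$ was chosen to span $\SW$. Moreover, your conclusion that along such an orbit ``$\SD(p,t)$ is constant in $t$ and equal to $\langle\partial_t,X\rangle$'' is false: the flow of a vector field tangent to $\SW$ preserves $\SE$ but not $\SD$; indeed the Engel condition forces $\SD$ to rotate nontrivially inside $\SE$ along $\SW$ (this is precisely the development map being an immersion), and constancy would contradict your own normalization $b(p,t)=t$. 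The correct argument is: along an orbit with $T\in\SW$, the flow of $T$ preserves $\SE$, and since the frame is extended $T$-invariantly with $\SE(p,0)=\langle T,X,Y\rangle$, one gets $\SE(p,t)=\langle T,X,Y\rangle$ for all $t$; as $\SD\subset\SE$, the $Z$-component of $H$ vanishes identically, i.e. $g(p,\cdot)\equiv 0$, while all the twisting of $\SD$ is recorded by $b$. (Relatedly, the spanning condition $\langle\partial_t,H,\dot H\rangle=\SE$ that gives $\partial_t b(p,0)\neq 0$ is the Engel condition $[\SD,\SD]=\SE$ with $\partial_t\in\SD$, not literally $[\SW,\SD]=\SE$, since $\partial_t$ need not lie in $\SW$ near $N^0$.)
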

\begin{proof}
Consider the framing $\{T,X,Y,Z\}$ of $TM$ along $N$. Using the flow of $T$ we can extend this framing to a translation invariant framing on  $\SU(N)$. The Engel structure $\SD$ can now be described by a smooth family of curves $(H_p: (-\varepsilon,\varepsilon) \lra \NS^2)_{p\in N}$ such that $\SD(p,t)$ is spanned by  
$$  T=\partial_t \quad\textrm{ and }\quad H(p,t) = (H_p(t))_1\cdot X + (H_p(t))_2 \cdot Y +(H_p(t))_3\cdot Z. $$
Since $\SD$ is an Engel structure, the curves $H_p$ satisfy at least one of the conditions in Proposition~\ref{prop:EngelCharacterisation}. The conditions on $T$ stated above ensure that each curve $H_p$ is an immersion whose image is either contained in a great circle or is convex/concave everywhere. In either case, $H_p(t)$ is graphical over the equator 
$$
t \longmapsto (H_p(t))_1\cdot X + (H_p(t))_2\cdot Y.
$$ 
Shrinking $\varepsilon$ and scaling $T$ appropriately one obtains the claim using the implicit function theorem.
\end{proof}

The following elementary constructions provide many examples of transverse $3$-mani\-folds.
\begin{definition} \label{def:thin nbhd}
Let $(M,\SD)$ be an Engel manifold and let $S$ be a transverse surface. A trivialized tubular neighborhood $U\simeq S\times \D^2$ of $S$ is {\bf thin} if the $3$-manifolds $S\times\partial \D^2_\varepsilon$ are transverse to $\SD$ for all $0<\varepsilon\le 1$. 
\end{definition}

\begin{lemma} \label{lem:t3fromt2}
Any transverse surface in an Engel manifold has a thin tubular neighborhood.
\end{lemma}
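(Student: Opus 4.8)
\textbf{Proof proposal for Lemma~\ref{lem:t3fromt2}.}

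The plan is to construct, for a given transverse surface $S\subset (M,\SD)$, a tubular neighborhood $U\simeq S\times\D^2$ such that every sphere bundle $S\times\partial\D^2_\varepsilon$ stays transverse to $\SD$. The key point is that transversality of $S\times\partial\D^2_\varepsilon$ is an open condition along $S$, and since $S\times\{0\}=S$ is transverse, the naive tubular neighborhood is already thin near $S$; the only issue is to pick the $\D^2$-directions and the size so that transversality propagates. I would proceed locally first, patching afterwards. Fix a point $p\in S$. Since $S$ is transverse to $\SD$, we have $TS\cap\SE$ of rank $1$; pick a complement and build a framing $\{X\in\SD\cap TS,\ Y\}$ of $TS$ with $Y\notin\SE$ (possible since $TS$ is not contained in $\SE$), together with two more vector fields $Z_1,Z_2$ spanning a complement to $TS$ in $TM$, chosen so that $Z_1\in\SD$ and $Z_2$ completes the frame. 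This is possible because $\SD$ is rank $2$ and $S$ is $2$-dimensional, so $\SD\cap TS$ and a further $\SD$-direction transverse to $S$ give us the needed vectors; globally we use that $\SD$ and $S$ are oriented (as assumed throughout Section~\ref{sec:surfaces}), so $\SD$ is trivial and such frames exist over all of $S$.

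Next I would define the tubular neighborhood by the exponential-type map $(s,u_1,u_2)\mapsto \exp_s(u_1 Z_1(s)+u_2 Z_2(s))$, or more simply flow along $Z_1,Z_2$, giving coordinates $(s,u_1,u_2)\in S\times\D^2$. In these coordinates, the tangent space to $S\times\partial\D^2_\varepsilon$ at $(s,\varepsilon\cos\phi,\varepsilon\sin\phi)$ is spanned by $\partial_s$-directions (i.e.\ $X,Y$ up to corrections of order $\varepsilon$) and the angular direction $-\sin\phi\,\partial_{u_1}+\cos\phi\,\partial_{u_2}$ (which is $-\sin\phi\,Z_1+\cos\phi\,Z_2$ up to order $\varepsilon$). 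I want this rank-$3$ space to be transverse to the rank-$2$ plane $\SD$, i.e.\ to span, together with $\SD$, all of $TM$. At $\varepsilon=0$ the relevant $4$ vectors are $X,Y$, the angular vector, and (to complete) we must check $X,Y,-\sin\phi\,Z_1+\cos\phi\,Z_2$ together with $\SD$ span $TM$. Here is where I would use the transversality of $S$: the obstruction to transversality of $S\times\partial\D^2_\varepsilon$ is a section of a line bundle over $S\times\NS^1$ (the determinant of the map $T(S\times\partial\D^2_\varepsilon)\oplus\SD\to TM$), and at $\varepsilon=0$ it equals, up to a nonvanishing factor, the pairing of $-\sin\phi\,Z_1+\cos\phi\,Z_2$ against the quotient $TM/(\SD+TS')$ where $TS'=\langle X,Y\rangle$. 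Since $\SD+TS$ has rank $3$ (as $Y\notin\SE\supset\SD$ forces $\SD+TS=TM$... wait — actually $\SD$ has rank $2$ and $TS$ has rank $2$ with $\SD\cap TS=\SD\cap TS$ rank $1$, so $\SD+TS$ has rank $3$), the quotient $TM/(\SD+TS)$ is a line and the obstruction is the $Z_i$-component there; by choosing the framing $Z_1,Z_2$ generically (so that neither $Z_1$ nor $Z_2$ lies in $\SD+TS$, and in fact $-\sin\phi Z_1+\cos\phi Z_2\notin\SD+TS$ for all $\phi$ — which holds as long as the line $\langle Z_1,Z_2\rangle$ meets $\SD+TS$ in at most a point, achievable since $\langle Z_1,Z_2\rangle$ is a complement to $TS$ and we may rotate it off the rank-$3$ plane) we make the obstruction nonvanishing on all of $S\times\NS^1$ at $\varepsilon=0$. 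By compactness of $S\times\NS^1$ (using that $S$ is compact — or working locally and patching if not) and openness, there is $\varepsilon_0>0$ with transversality for all $\varepsilon\le\varepsilon_0$; rescaling $\D^2$ by $\varepsilon_0$ gives the thin neighborhood.

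The main obstacle is the global choice of the complementary framing $Z_1,Z_2$ so that the angular directions avoid the rank-$3$ distribution $\SD+TS$ uniformly over $S\times\NS^1$; this is a genuine transversality/general-position argument rather than a formality, and it is where the orientability/triviality hypotheses on $\SD$ and $S$ get used. Once that framing is fixed the rest is the openness-and-compactness packaging sketched above, together with the routine verification that the order-$\varepsilon$ corrections do not destroy transversality for small $\varepsilon$. If $S$ is noncompact one patches the local constructions with a partition of unity, choosing a function $\varepsilon:S\to(0,\infty)$ and taking the $\D^2$-fibers of radius $\varepsilon(s)$, which is the usual device for producing tubular neighborhoods of noncompact submanifolds.
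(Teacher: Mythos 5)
Your conclusion is true and the final ``openness plus compactness plus rescaling'' packaging is fine, but the central step of your argument is based on incorrect linear algebra. For a \emph{transverse surface} the tangent space is a $2$-plane transverse to the rank-$2$ plane field $\SD$ inside the $4$-dimensional $TM$, so $TS\cap\SD=0$ and $TS+\SD=TM$ at every point of $S$; it is $\SE\cap TS$ that has rank $1$, not $\SD\cap TS$ (compare the three bullet conditions right after the definition of transverse surface). Consequently your frame vector ``$X\in\SD\cap TS$'' does not exist, $\SD+TS$ has rank $4$ rather than $3$, the quotient $TM/(\SD+TS)$ is zero, and the ``obstruction'' you propose to pair the angular vector $-\sin\phi\,Z_1+\cos\phi\,Z_2$ against is identically vacuous. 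The genericity requirement you single out as the main obstacle --- choosing $Z_1,Z_2$ so that $-\sin\phi\,Z_1+\cos\phi\,Z_2\notin\SD+TS$ for all $\phi$ --- is literally impossible to satisfy (every vector of $TM$ lies in $\SD+TS$) and, more importantly, unnecessary: no general-position argument occurs in this lemma at all.

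The correct observation is simpler. Since $TS\pitchfork\SD$, i.e.\ $TS+\SD=TM$ along $S$, and since in any trivialized tubular neighborhood the tangent spaces of $S\times\partial\D^2_\varepsilon$ contain $2$-planes that converge uniformly (on compact sets) to the tangent planes of $S$ as $\varepsilon\to 0$, transversality of these hypersurfaces to $\SD$ is automatic for small $\varepsilon$: already the nearby $2$-plane together with $\SD$ spans $TM$, hence so does the $3$-plane containing it. Openness of transversality and compactness (or a variable radius function, as you say, if $S$ is noncompact) give an $\varepsilon_0$, and rescaling the $\D^2$-factor yields a thin neighborhood in the sense of Definition~\ref{def:thin nbhd} --- which is why the paper states Lemma~\ref{lem:t3fromt2} without proof. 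Where the orientation/triviality hypotheses of Section~\ref{sec:surfaces} actually enter is not in any genericity step but in producing a \emph{trivialized} tubular neighborhood $U\simeq S\times\D^2$ in the first place: the normal bundle of $S$ is isomorphic to $\SD|_S$ because $TS\oplus\SD=TM$ along $S$, and this bundle is trivial under the standing assumptions.
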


The transverse submanifolds obtained in this way are confined to small neighborhoods of $S$, but once a transverse hypersurface is available it can be isotoped through transverse hypersurfaces.
\begin{lemma} \label{lem:thickenTransverse3fold}
Let $N$ be a hypersurface transverse to $\SD$. Let $X$ be a vector field tangent to $\SD$ and transverse to $N$ with flow $\varphi_t$. Then $\varphi_t(N)$ is a transverse hypersurface for all $t$.
\end{lemma}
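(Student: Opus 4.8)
The plan is to exploit the elementary fact that a vector field is invariant under its own flow, together with the hypothesis that $X$ is \emph{tangent} to $\SD$; no analysis of how $\varphi_t$ distorts $\SD$ is needed.

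First I would unwind the transversality hypothesis. Since $N$ has dimension $3$, $\SD$ has rank $2$, and $\dim M = 4$, saying that the immersed hypersurface $N$ is transverse to $\SD$ means precisely that $T_pN + \SD(p) = T_pM$ at every $p \in N$ (equivalently, $T_pN \cap \SD(p)$ is a line). The goal is to verify the same identity for $\varphi_t(N)$ at every point $q = \varphi_t(p)$.

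Second, I would observe that $N$ is transverse to $X$ by hypothesis and that this transversality is preserved by $\varphi_t$: the differential $D\varphi_t$ carries $X$ to itself, $D\varphi_t(X(p)) = X(\varphi_t(p))$, so $T_q(\varphi_t(N)) = D\varphi_t(T_pN)$ is a hyperplane of $T_qM$ not containing $X(q)$, whence $T_q(\varphi_t(N)) \oplus \R\langle X(q)\rangle = T_qM$. Third, I would conclude: because $X$ is tangent to $\SD$ we have $X(q) \in \SD(q)$, and therefore
\[
T_q(\varphi_t(N)) + \SD(q) \;\supseteq\; T_q(\varphi_t(N)) + \R\langle X(q)\rangle \;=\; T_qM,
\]
so $\varphi_t(N)$ is transverse to $\SD$ at $q$. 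This holds for every such $q$ and every $t$ for which $\varphi_t$ is defined; the argument is entirely local, so completeness of $X$ is not an issue.

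There is essentially no obstacle here: the only point worth stating carefully is that one uses the tangency $X \in \SD$ — not any invariance of $\SD$ under $\varphi_t$, which fails in general — and the self-invariance of a vector field under its own flow. The very same one-line argument also shows, if ever needed, that $\varphi_t(N)$ stays transverse to $\SE = [\SD,\SD]$ whenever $N$ is, since $X$ is a fortiori tangent to $\SE$.
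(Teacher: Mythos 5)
Your argument is correct: the flow-invariance $D\varphi_t(X(p))=X(\varphi_t(p))$ keeps $X$ transverse to $\varphi_t(N)$, and tangency $X\in\SD$ then forces $T_q\varphi_t(N)+\SD(q)=T_qM$, which is exactly the transversality required. The paper states this lemma without proof, and your one-line reasoning is precisely the elementary argument it implicitly relies on, so there is nothing to add.
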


\subsection{Adding Engel torsion along a transverse hypersurface} \label{ssec:Lutz}

Let us continue using the notation introduced in the previous section. We want to modify $\SD$ on a neighborhood $\SU(N)$ of $N$ such the resulting Engel structure $\SL(\SD)$ satisfies:
\begin{itemize}
\item $\SL(\SD)$ coincides with $\SD$ outside of $\SU(N)$,  
\item $\SL(\SD)$ is tangent to $T$, and
\item in the region where $T(p,0)$ is tangent to $\SW(p,0)$, the even-contact structures associated to $\SL(\SD)$ and $\SD$ are the same, but $\SL(\SD)$ performs one additional turn along the flow lines of $T$.
\end{itemize}
This is achieved by replacing the family of curves $(H_p)_{p\in N}$ by a family $(\lambda_p)_{p\in N}$. First, we construct a $C^2$-family of $C^2$-curves $(\eta_p)_{p\in N}$ that has all the desired properties except that the curves are only piecewise smooth. In a second step we smooth $(\eta_p)_{p\in N}$ (both individually and as a family) to obtain $(\lambda_p)_{p\in N}$. During the argument we do not keep track of the paremetrisation of the curves since any regular paremetrisation works. 

We now describe $(\eta_p: (-\varepsilon,\varepsilon) \lra \NS^2)_{p\in N}$. Each curve $\eta_p$ consists of three pieces. The first and third pieces are, respectively, $H_p((-\varepsilon,0])$ and $H_p([0,\varepsilon))$. The middle piece consists of the (unique, possibly non-maximal) circle in $\NS^2$ which
\begin{itemize}
\item passes through $(1,0,0)$, 
\item is tangent to the maximal circle spanned by $(1,0,0)$ and $(0,1,0)$, and
\item has the same geodesic curvature as $H_p$ at $t=0$.
\end{itemize}
These three pieces depend smoothly on $p$, each one of them is smooth and, at the gluing points, the assumption on the curvature guarantees $C^2$-regularity.

The smoothing step from $\eta_p$ to $\lambda_p$ is clear. Indeed, since the relevant properties of the curves depend only on their $2$-jet and the segments $\eta_p$ are already $C^2$-smooth, any $C^2$-small smoothing yields a smooth family of curves with the same convexity properties as the family $(\eta_p)_{p\in N}$.

\begin{lemma}
The plane field $\SL(\SD)$ is an Engel structure.
\end{lemma}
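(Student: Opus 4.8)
The plan is to verify the Engel condition for $\SL(\SD)$ using the characterization from Proposition~\ref{prop:EngelCharacterisation}, treating $\SL(\SD)$ as the plane field tangent to $T = \partial_t$ determined by the smooth family of curves $(\lambda_p)_{p\in N}$. Since $\lambda_p$ is a $C^2$-small perturbation of the piecewise-smooth family $\eta_p$, and the Engel condition at a point depends only on the $2$-jet of the relevant curve, it suffices to check that each $\eta_p$ satisfies one of conditions (i) or (ii) at every time $t$, uniformly enough that the property survives the smoothing. I would break the verification into the three regimes corresponding to the three pieces of $\eta_p$.

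First, on the first and third pieces, $\eta_p$ agrees with $H_p$ (on $(-\varepsilon,0]$ and $[0,\varepsilon)$ respectively), so $\SL(\SD)$ agrees with $\SD$ there and the Engel condition is inherited from the hypothesis that $\SD$ is Engel; this also gives the first bullet in the desired list (agreement outside $\SU(N)$). Second, on the middle piece, by construction the curve lies on a (possibly non-maximal) circle in $\NS^2$, so $\eta_p, \dot\eta_p, \ddot\eta_p$ need not be linearly independent — condition (i) may fail — and we must instead verify condition (ii): that $H_q(t)$ and $\dot H_q(t)$ span a contact structure in a neighborhood of $(p,t)$ in the slice $N\times\{t\}$. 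Here the key point is that, by Proposition~\ref{prop:localModel} and the choices made for $T$, the even-contact structure $\SE$ of $\SD$ is unchanged along the orbits of $T$ in the region where $T$ is tangent to $\SW$, and the middle piece is designed precisely so that $\langle \partial_t, \lambda_p(t)\rangle$ has $\SE$ as its second derived distribution with one extra full turn. Because the contact condition $\alpha\wedge d\alpha \neq 0$ at $(p,0)$ is open and is satisfied on the slice where the original $H_p, \dot H_p$ already spanned a contact structure (equivalently, on $N \setminus N^0$ away from the tangency locus), and because $\eta_p$ is obtained from $H_p$ by only altering the $w$-coordinate/curvature data in a controlled way, the contact condition persists after the modification. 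Near $N^0$, where condition (ii) could degenerate, one uses instead that the middle-piece circle inherits the geodesic curvature of $H_p$, so that on $N^{0,+}$ (resp. $N^{0,-}$) the curve $\lambda_p$ is strictly convex (resp. concave), i.e.\ condition (i) holds with a uniform margin.

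The main obstacle, as I see it, is the transition zones: the times $t$ near $0$ where $\eta_p$ switches from $H_p$ to the middle circle and back, and the locus in $N$ near $N^0$ where the "contact" regime (ii) and the "convex/concave" regime (i) meet. At such points neither condition may hold with a uniform bound a priori, so one must argue that the construction of the middle piece — together with the freedom in choosing $T$ and the smoothing — can be arranged so that at every $(p,t)$ at least one of (i), (ii) holds with a margin bounded below away from zero, making the conditions robust under the $C^2$-small smoothing. Concretely, I would: (a) fix a compact exhaustion / use compactness of $N$ (or work locally and patch) to get uniform constants; (b) observe that (i) holds with uniform margin on a neighborhood of $N^{0,+}\cup N^{0,-}$ by the curvature-matching property, and (ii) holds with uniform margin on the complement, by openness of the contact condition and the fact that $\SE$ is genuinely unchanged there; (c) since $\{(i) \text{ robustly}\}$ and $\{(ii)\text{ robustly}\}$ form an open cover of $N\times(-\varepsilon,\varepsilon)$, every point is covered, and the smoothing $\lambda_p$ of $\eta_p$, being $C^2$-close, satisfies the same (open) condition at each point. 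This yields that $\SL(\SD)$ is Engel, and the three bulleted properties follow from the explicit form of the middle piece together with Proposition~\ref{prop:localModel}.
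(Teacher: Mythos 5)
Your proposal is correct and follows essentially the same route as the paper's proof: condition (i) of Proposition~\ref{prop:EngelCharacterisation} persists on the convex/concave region because the inserted circle matches the geodesic curvature of $H_p$, condition (ii) is verified near the locus where the curves stay tangent to the great circle by openness of the contact condition (the paper phrases this as $C^\infty$-convergence of the circles tangent to $\lambda_p$ to $C$ near that closed set), and everything is stable under the $C^2$-small smoothing. The only caveat is that your claim that $\SE$ is \emph{genuinely unchanged} is literally true only on the tangency locus itself, while at nearby points the new plane field is merely close to the old one — which is precisely why the openness/convergence step is needed, and is in effect the argument you invoke.
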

\begin{proof}
Consider the curves $(\lambda_p)_{p\in N}$ from the construction. Whenever $H_p$ is convex (resp. concave), so is $\lambda_p$. Therefore, condition (i) from Proposition~\ref{prop:EngelCharacterisation} implies that $\SL(\SD)$ is Engel whenever this is the case. Let $U$ be the complement, i.e. the set of those $p$ such that $H_p$ is everywhere tangent to the maximal circle $C$ given by $(1,0,0)$ and $(0,1,0)$. The curves $\lambda_p$ are tangent to $C$ on $U$. Since $U$ is closed, we cannot invoke condition (ii) of Proposition~\ref{prop:EngelCharacterisation}. However, one notes that the maximal circles tangent to the curves $\lambda_p$ $C^\infty$-converge to $C$ as $(p,t)$ approaches $U$. This implies that condition (ii) holds in small neighborhood of $U$. This concludes the proof.
\end{proof}

\begin{definition}
We say that the plane field $\SL(\SD)$, obtained from $\SD$ and $N$ by the procedure we just described, is the result of introducing \textbf{Engel torsion} along $N$.

If $N$ is a $3$-torus arising from a $2$-torus $S$, as in Definition~\ref{def:thin nbhd}, we say that $\SL(\SD)$ is obtained from $\SD$ by introducing an \textbf{Engel-Lutz twist} along $S$. The tubular neighborhood in which the Engel-Lutz twist is performed is an {\bf Engel-Lutz tube} carried by $S$.
\end{definition}
The construction is shown pictorially in Figure \ref{fig:EngelLutzTwist}. In this figure, the kernel foliation is vertical, and each segment transverse to $N$ represents a fibre of the tubular neighborhood $\SU(N)$. The horizontal segment on the right goes through the region $N^{0,+}$. The vertical ones on the left are away from $N^0$ and are therefore tangent to the kernel. For each segment we draw two spheres showing the corresponding curves $H_p$ (left) and $\lambda_p$ (right). For the horizontal one, $H_p$ is a short convex curve and $\lambda_p$ is a convex loop with the same endpoints. For the vertical ones, $H_p$ is a short piece of equator and $\lambda_p$ is a curve running around that same equator with the same endpoints but describing one more turn; we draw $\lambda_p$ not overlapping with $H_p$ to show  the additional turn and the boundary condition more clearly.

\begin{remark}
Since $N$ is still transverse to $\SL(\SD)$, Engel torsion can be introduced several times along the same hypersurface $N$.
\end{remark}

\begin{figure}
\begin{center}
\includegraphics[scale=1.3]{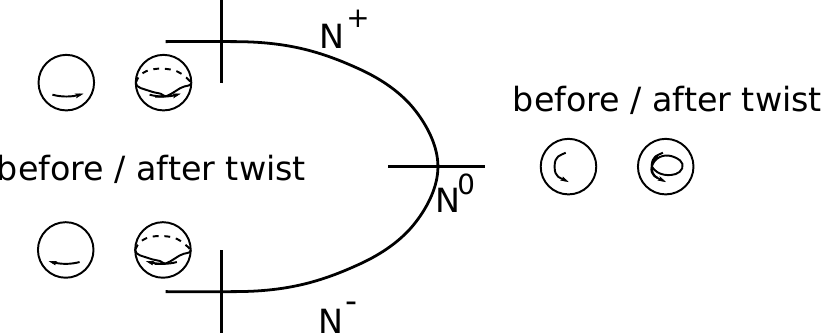} 
\caption{The Engel-Lutz twist in terms of the curves $H_p$ and $\lambda_p$ near a hypersurface $N=N^+\cup N^-\cup N^0$ (represented by the $U$-shaped curve). }
\label{fig:EngelLutzTwist}
\end{center}
\end{figure}

\begin{remark} \label{contractible choice in Engel Lutz}
The construction described above is well-defined up to deformations through Engel structures. To see this, it suffices to note that the vector fields $T$ satisfying the required properties in a neighborhood of $N$ form a contractible space. More generally, note that the construction can also be carried out parametrically when $(N_k)_{k \in K}$ is a family of closed manifolds transverse to a family of Engel structures $(\SD_k)_{k \in K}$. In the proof of the $h$-principle for overtwisted Engel structures, it will be essential to introduce Engel-Lutz twists parametrically along a family of transverse surfaces. \hfill $\Box$
\end{remark}

The addition of Engel torsion along a hypersurface has already appeared in the literature in less generality, which we now recall. The first example deals with Engel structures obtained from contact structures which are trivial as bundles.
\begin{example} \label{ex:adding a twist}
Let $(N,\xi)$ be a contact $3$-manifold admitting a global framing $C_1,C_2$. Let $t$ be the coordinate $\NS^1=\R/2\pi\Z$. For each positive integer $k$, we define an Engel manifold 
$$  \left(N\times \NS^1, \SD_k = \left\langle \partial_t, X_k=\cos(kt)C_1+\sin(kt)C_2 \right\rangle\right). $$
The even-contact structure of $\SD_k$ is the preimage of $\xi$ under the bundle projection and the characteristic foliation $\SW$ is spanned by $\partial_t$. The hypersurfaces $N\times\{t\}$ are clearly transverse to $\SD_k$ (indeed, they are transverse to their kernel $\SW$). Introducing Engel torsion to $\SD_k$ along $N\times\{t\}$ produces $\SD_{k+1}$. It is easy to see that the homotopy type of the Engel framing changes when one passes from $\SD_{k}$ to $\SD_{k+1}$, but it only depends on the parity of $k$ (see \cite{ks,dp}).
\end{example}

The second example deals with the Engel structures on mapping tori. It is very similar, but not equivalent, to the previous example.
\begin{example}
In \cite{ge-review} H.~Geiges constructs an Engel structure $\SD_k$ on any mapping torus with trivial tangent bundle.  The Engel structures $\SD_k$ produced in this way are tangent to the suspension vector field. Trivializing the suspension vector field allows us to express $\SD_k$ as a $3$-dimensional family of curves, as in Proposition~\ref{prop:EngelCharacterisation}. The key property of his construction is that these curves become convex if $k$ is large enough. The fibers are $3$-manifolds transverse to $\SD_k$ and passing from $\SD_k$ to $\SD_{k+1}$ amounts to introducing Engel torsion along one of them.
\end{example}

\subsection{Modification of the formal data when adding Engel torsion} \label{ssec:formalData}

Example~\ref{ex:adding a twist} shows that an Engel-Lutz twist sometimes does change the homotopy type of the Engel framing.  We first show that the Engel-Lutz twist applied twice along the same hypersurface results in an Engel structure with an Engel framing that is homotopic to the original one. This is analogous to the case of contact structures. 

\begin{proposition} \label{p:double twist}
Let $\SD$ be an Engel structure on $M$ and let $N$ be a transverse hypersurface. The Engel framing of $\LLSD$ is homotopic to the Engel framing of $\SD$.  
\end{proposition}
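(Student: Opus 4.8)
The plan is to compare the two Engel framings directly and to reduce the statement to the fact that $\pi_1(\SO(3))=\Z/2$. Recall that, up to homotopy, the Engel framing is the trivialization $\{W,X,[W,X],[X,[W,X]]\}$ of $TM$ attached to an oriented frame $\{W\in\SW,X\in\SD\}$; in particular it depends only on the oriented flag $\SW\subset\SD$ and the canonical orientations of $\SE$ and $TM$. Since $\LLSD$ and $\SD$ coincide outside the neighborhood $\SU(N)\cong N\times(-\varepsilon,\varepsilon)$ in which the two Engel torsions are introduced, their Engel framings coincide there as well. Fixing the Engel framing of $\SD$ as a reference trivialization of $TM$, we obtain a comparison map $g:\SU(N)\lra\operatorname{GL}^+(4,\R)$ with $g\cdot(\textrm{Engel framing of }\SD)=(\textrm{Engel framing of }\LLSD)$ that equals $\operatorname{Id}$ near $\partial\SU(N)=N\times\{\pm\varepsilon\}$; deformation-retracting $\operatorname{GL}^+(4,\R)$ onto $\SO(4)$ we regard $g$ as a map of pairs $(\SU(N),\partial\SU(N))\lra(\SO(4),\operatorname{Id})$. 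It then suffices to prove that $g$ is null-homotopic relative to $\partial\SU(N)$, since then the two framings are homotopic rel $M\setminus\SU(N)$, hence on all of $M$.

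First I would put the construction of Engel torsion into normal form. In suitable coordinates $(p,t)\in N\times(-\varepsilon,\varepsilon)$, with a translation-invariant frame $\{\partial_t,X,Y,Z\}$ of $T\SU(N)$, one has $\SD=\langle\partial_t,H_p(t)\rangle$ for a smooth family of immersed curves $H_p:(-\varepsilon,\varepsilon)\lra\NS^2\subset\langle X,Y,Z\rangle$ with $H_p(0)=X$ and $\dot H_p(0)$ tangent to the equator $\{Z=0\}\cap\NS^2$, while $\LLSD=\langle\partial_t,\mu_p(t)\rangle$, where $\mu_p$ agrees with $H_p$ outside a small interval $[t_-,t_+]\ni 0$ and on $[t_-,t_+]$ follows $H_p$, then traverses twice the circle $C_p\subset\NS^2$ through $X$ tangent there to the equator and with geodesic curvature $\kappa(p)\in[0,\infty)$ equal to that of $H_p$ at $t=0$, then follows $H_p$ again. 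Unwinding the formula for the Engel framing in terms of this curve description, one checks that, rel $\partial\SU(N)$, the Engel framings of $\SD$ and of $\LLSD$ are homotopic to framings of the shape $\{\partial_t,H_p(t),\dot H_p(t),J_p(t)\}$ and $\{\partial_t,\mu_p(t),\dot\mu_p(t),J'_p(t)\}$ with $J=J'$ outside $N\times[t_-,t_+]$; hence $g$ is supported in $N\times[t_-,t_+]$, fixes the $\partial_t$-line (so it lands in $\SO(3)\subset\SO(4)$), and on each segment $\{p\}\times[t_-,t_+]$ it is the loop $\gamma_{C_p}*\gamma_{C_p}$, where $\gamma_{C_p}\in\Omega\SO(3)$ is the loop of rigidly rotating once around $C_p$. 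I expect this bookkeeping — in particular carrying it out uniformly over the locus of $N$ where $\SW$ is tangent to $N$ (and its vicinity, where $H_p$ is strictly convex, $C_p$ is a small circle and $\SW\neq\langle\partial_t\rangle$) and verifying continuity of $g$ across it — to be the main technical obstacle; everything afterwards is soft.

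Finally, a rigid full turn around any embedded circle $C\subset\NS^2$ is a full rotation about the axis of $C$ and so represents the generator $g_0$ of $\pi_1(\SO(3))=\Z/2$. As $p$ ranges over $N$, the circle $C_p$ ranges over the contractible family of circles through $X$ tangent there to the equator, parametrized by the single parameter $\kappa(p)\in[0,\infty)$; hence $p\mapsto\gamma_{C_p}$ is homotopic, rel the endpoints $t_\pm$ (where $\gamma_{C_p}=\operatorname{Id}$), to the constant family at a fixed representative $\gamma_0$ of $g_0$. This exhibits $g$ as homotopic, rel $\partial\SU(N)$, to the $p$-independent map $(p,t)\mapsto(\gamma_0*\gamma_0)(t)$. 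Since $[\gamma_0*\gamma_0]=2g_0=0$ in $\pi_1(\SO(3))$, the loop $\gamma_0*\gamma_0$ admits a based null-homotopy, and performing this null-homotopy constantly in $p$ deforms $g$ to the constant map $\operatorname{Id}$ relative to $\partial\SU(N)$. Therefore the Engel framing of $\LLSD$ is homotopic to that of $\SD$. Carrying out the same computation for a single Engel torsion instead produces the loop $\gamma_0$, representing the nontrivial class $g_0$, which is consistent with Example~\ref{ex:adding a twist}: only the parity of the number of Engel torsions is detected.
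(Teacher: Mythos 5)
Your overall strategy (reduce to a difference map $g:(\SU(N),\partial\SU(N))\to(\SO(4),\Id)$, identify it fiberwise with a doubled rotation loop, and kill it because the double of the generator dies in $\pi_1$ of the rotation group) is reasonable, and its final ``soft'' steps are fine; the topological input is the same as the paper's, which concludes via $\pi_1(\SO(4))=\Z_2$. The genuine gap is exactly the step you defer as ``bookkeeping''. Your claim that, \emph{rel} $\partial\SU(N)$, the Engel framings of $\SD$ and $\LLSD$ are homotopic to framings of the shape $\{\partial_t,H_p,\dot H_p,J_p\}$ and $\{\partial_t,\mu_p,\dot\mu_p,J'_p\}$ is false over $\Op(N^0)$: for $p$ near the tangency locus $N^0$ the vector field $T=\partial_t$ is chosen \emph{not} tangent to $\SW$ along the entire fiber, including at $N\times\{\pm\varepsilon\}$, so the first vector $W\in\SW$ of the Engel framing is not parallel to $\partial_t$ at points of $\partial\SU(N)$, and two framings that disagree on the boundary cannot be homotopic relative to it. Likewise, over the interior near $N^0$ the kernels of $\SD$ and of $\LLSD$ are genuinely different line fields, so the honest difference of Engel framings does not preserve the $\partial_t$-line; your assertion that $g$ lands in $\SO(3)$ and equals $\gamma_{C_p}*\gamma_{C_p}$ on each fiber is therefore not a computation you have set up correctly, but the statement that needs proof.

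The argument can be repaired in spirit (compare the two difference maps rather than the framings themselves, i.e.\ homotope both framings simultaneously through pairs that agree wherever $\SD=\LLSD$), but then one must still produce, uniformly in $p$ and continuously across $N^0$, the identification of the fiberwise loops with a family $\gamma_{C_p}*\gamma_{C_p}$ factoring through the contractible space of circles $C_p$ — and that is precisely the geometric heart of the proposition, not an afterthought. The paper's proof is engineered to avoid this direct computation: it first homotopes the inserted circles through non-maximal circles to the great circle so that the even-contact structures of $\SD$, $\LSD$, $\LLSD$ become identified, then tilts the plane fields inside the common $\SE$ to a normalized position relative to $T$, and only then transports the kernel $\LLW$ along the homotopy; because $\LLSD$ consists of two identical inserted layers, the transported line field $\SW_1\subset\SD$ traverses the same loop twice along the fibers, and $\pi_1(\SO(4))=\Z_2$ finishes. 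So the missing piece in your proposal is not the homotopy-theoretic conclusion (which matches the paper) but the normalization near $N^0$ that justifies the loop identification; without it the proof is incomplete.
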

\begin{proof}
We will make use of the notation introduced in Subsections~\ref{ssec:transverseHypersurfaces} and~\ref{ssec:Lutz}. In the first part of this proof we compare the pairs $(\SD,\SE)$ and $(\LSD,[\LSD,\LSD])$. The need for the double Lutz twist will come into play only in the second part. 

The even-contact structure $\SE=[\SD,\SD]$ has basis $\{T,H_p(t),\dot{H}_p(t)\}$ at $(p,t)\in N(U)$. We claim that the even-contact structure associated to $\LSD$ is homotopic to $\SE$. Recall the curves $(\eta_p)_{p \in N}$, those appearing in the intermediate step of the construction of an Engel-Lutz twist, before smoothing. It suffices to see that one can homotope the middle piece of $\eta_p$, through non-maximal circles, to the great circle spanned by $H_p(0)$ and $\dot{H}_p(0)$. The plane field $\SD_\weak$ associated to the homotoped curves is not integrable and the hyperplane field $\SE_\weak = [\SD_\weak,\SD_\weak]$ is homotopic to the even contact structure of $\LSD$ by construction. It is clear that $\SE_\weak$ is in turn homotopic to $\SE$. Iterating this argument, we see that the even-contact structure associated to $\LLSD$ is homotopic to $\SE$ as well.

Notice that $T$ is tangent to all the distributions throughout all homotopies that have been discussed so far. In the following, we may therefore pretend that the hyperplane fields underlying the even-contact structures associated to $\SD$, $\LSD$, and $\LLSD$ are the same (as long as we do not use assumptions on their characteristic foliation). 

In the model, the plane field $\SD$ is coorientable within the oriented hyperplane field $\SE$, so we may fix a coorientation. Then, one can homotope $\SD$ inside of $\SE$ to a plane field which coincides with $\SD$ in a neighborhood of $\partial \SU(N)$ and which is transverse to $T$ in the interior of the model. Choose the neighborhood of $\partial \SU(N)$ so small that $\SD$, $\LSD$, and $\LLSD$ coincide there. Then, we tilt $\SD$ away from $T$ (staying within $\SE$) in the direction prescribed by its coorientation until it coincides with $\SE\cap(N\times\{t\})$. The same can be done, exactly by the same argument, with $\LSD$ and $\LLSD$. Therefore, the pairs 
$$
\big(\SD,\SE\big), \big(\LSD,[\LSD,\LSD]\big),\textrm{ and }\big(\LLSD,[\LLSD,\LLSD]\big)
$$ 
are homotopic as oriented pairs of hyperplane fields containing a plane field.

Now we pass to the second part of the proof, which focuses on $\LLSD$. Let us abbreviate $[\LLSD,\LLSD]=\LLE$ and write $\LLW$ for its kernel. It is difficult to compare the characteristic foliation of $\LSD$ with the characteristic foliation of $\SD$, but we can homotope the pair $(\LLSD,\LLE)$ to $(\SD,\SE)$ and carry the characteristic foliation of $\LLSD$ along (as a family of oriented vector fields contained in the plane fields). 

By construction, we can view $\LLSD$ as obtained from $\SD$ by inserting two copies of the same layer $N\times(-\delta,\delta)$ carrying the same Engel structure (that is, two copies of the Engel torsion model). We can then choose the homotopy connecting $(\LLSD,\LLE)$ and $(\SD,\SE)$ in such a way that the two layers always coincide at every step, including the homotopy of $\LLW$. Denote by $\SW_1 \subset \SD$ the line field at the end of the homotopy.

Then, $\SW_1$ is a line field in $\SD$ which coincides with $\SW$ near $\partial \SU(N)$ and, in the interior, describes twice the same loop within $\SD$ as one moves along the fibers of $\SU(N) = N\times(-\varepsilon,\varepsilon)\lra N$. We have shown that the Engel framing of $\LLSD$ is homotopic to a framing whose last two components (i.e the components complementary to $\SD$) coincide with those of the Engel framing of $\SD$, while the first two describe a certain loop twice. Since $\pi_1(\SO(4)) = \Z_2$,  the Engel framings of $\LLSD$ and $\SD$ are homotopic.
\end{proof}

Without topological assumptions on $N$ it is not true, in general, that the Engel framings of $\LSD$ and $\SD$ are homotopic. However, the following statements are corollaries of the first part of the proof of the proposition:
\begin{corollary} \label{cor:homotopyChange1}
Assume that $N$ is contained in a ball. Then the Engel framing of $\LSD$ is homotopic to the Engel framing of $\SD$ relative to the boundary of the ball.
\end{corollary}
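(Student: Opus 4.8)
The plan is to leverage the first part of the proof of Proposition~\ref{p:double twist}, which already establishes that the pairs $(\SD,\SE)$ and $(\LSD,[\LSD,\LSD])$ are homotopic as oriented pairs of a hyperplane field containing a plane field, and to show that when $N$ sits inside a ball $B$, this homotopy can be promoted to a homotopy of Engel framings relative to $\partial B$. The key point is that the Engel framing is determined, up to homotopy, by the flag data together with the characteristic foliation, so I need to produce a homotopy of characteristic foliations that is compatible with the homotopy of pairs and that is the identity near $\partial B$.

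First I would recall that, since $N$ is contained in a ball $B$, the neighborhood $\SU(N)$ of $N$ in which the Engel torsion is introduced can also be taken inside $B$, and outside $\SU(N)$ the structures $\SD$ and $\LSD$ literally coincide; in particular they agree near $\partial B$. Next, carrying over the argument from Proposition~\ref{p:double twist}, I would homotope the pair $(\LSD,[\LSD,\LSD])$ to $(\SD,\SE)$ through oriented pairs of hyperplane-fields-containing-a-plane-field, staying fixed near $\partial\SU(N)$ (hence near $\partial B$), and drag the characteristic foliation $\SW_{\LSD}$ along this homotopy as a family of oriented line fields inside the moving plane field. This produces a line field $\SW_1 \subset \SD$ which agrees with $\SW$ near $\partial B$ and which differs from $\SW$ only inside $\SU(N)$. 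The resulting framings: the Engel framing of $\LSD$ is homotopic to a framing whose last two components agree with those of the Engel framing of $\SD$ and whose first two span $\SD$ according to the loop traced by $\SW_1$ along the fibers of $\SU(N)\to N$.

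Now, unlike the double-twist case, $\SW_1$ describes the relevant loop in $\SD$ \emph{once} rather than twice, so a priori it need not be homotopic to $\SW$ rel $\partial B$ as a line field in $\SD$ — which is exactly why, without topological hypotheses on $N$, the framings of $\LSD$ and $\SD$ genuinely differ. The hypothesis that $N$ lies in a ball is what rescues the statement: over a ball, $\SD$ is a trivial $2$-plane bundle, the space of oriented line fields in $\SD$ that agree with $\SW$ near $\partial B$ is a space of maps $(B,\partial B)\to \NS^1$, i.e.\ is classified by $\pi_n$ of $\NS^1$ for the relevant parameter dimension, and in the non-parametric case $\pi_{\dim B}(\NS^1)$ (for $\dim B \geq 2$) together with the fact that the loop $\SW_1$ traces is \emph{contractible in $\NS^1$ rel basepoint} (it bounds — it is produced by a compactly supported modification and extends over the ball by construction) forces $\SW_1$ to be homotopic to $\SW$ rel $\partial B$. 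Feeding this homotopy back through the homotopy of pairs yields the desired homotopy of Engel framings rel $\partial B$.

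The main obstacle I expect is being careful about exactly what ``rel $\partial B$'' requires: I must ensure that the homotopy of pairs from Proposition~\ref{p:double twist} can be taken stationary not merely near $\partial\SU(N)$ but on all of $B\setminus\SU(N)$, and that dragging the characteristic foliation along it keeps it fixed there — this is where one uses that $T$ is tangent to all distributions throughout the homotopy and that the modification is supported in $\SU(N)$. The secondary point requiring care is the comparison of line fields: one needs that the single loop traced by $\SW_1$ in $\SD|_{\SU(N)}$, viewed in the trivialization of $\SD$ over the ball, is null-homotopic rel endpoints — this follows because $\SW_1$ is defined on all of the ball $B$ (it is genuinely a line field there, agreeing with $\SW$ outside $\SU(N)$), so its ``winding'' relative to $\SW$ is automatically trivial in $\pi_1(\NS^1)$ once one quotients by the contractible ball; I would spell this out as the one genuinely new ingredient beyond Proposition~\ref{p:double twist}.
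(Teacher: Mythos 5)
Your proposal is correct and follows essentially the same route as the paper: apply the first half of the proof of Proposition~\ref{p:double twist} to arrange that the last two components of the two Engel framings agree, so that the remaining discrepancy is an $\NS^1$-valued comparison over the ball which is trivial near the boundary, and kill it using the topology of the ball ($\pi_{\dim B}(\NS^1)=0$, equivalently simple connectivity plus lifting to $\R$) --- this is exactly the paper's one-line conclusion. One caveat: your parenthetical claim that ``the loop $\SW_1$ traces is contractible in $\NS^1$ rel basepoint'' is false as stated --- along a fibre of $\SU(N)\to N$ the line field $\SW_1$ winds nontrivially relative to $\SW$, which is precisely why a single twist can change the framing when $N$ is not contained in a ball (cf.\ Example~\ref{ex:adding a twist}). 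Fortunately that claim is also unnecessary: the vanishing of the relative homotopy group for maps $(B,\partial B)\to\NS^1$ already forces $\SW_1$ (indeed the whole framing of $\SD$) to be homotopic to $\SW$ rel $\partial B$; the fibrewise loops are individually essential but assemble into no obstruction over the $4$-ball.
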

\begin{proof}
In view of the first half of the proof of Proposition~\ref{p:double twist} we may assume that, after a homotopy, the last two components of the Engel framings of $\LSD$ and of $\SD$ agree, and thus, the first two components span the same plane field. Since the ball is simply connected, the framings are homotopic. 
\end{proof}

\begin{corollary} \label{cor:homotopyChange2}
Assume that $N$ is the boundary $\partial(S \times \D^2)$ of a thin neighborhood of a transverse torus $S$. Then the Engel framing of $\LSD$ is homotopic, relative to the boundary of a slightly bigger neighborhood, to the Engel framing of $\SD$.

In particular, introducing an Engel-Lutz twist does not change the formal type of the Engel structure.
\end{corollary}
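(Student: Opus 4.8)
The plan is to mimic the proof of Corollary~\ref{cor:homotopyChange1}, replacing its use of simple connectivity by a computation adapted to a neighborhood of a transverse torus. First I would fix a slightly enlarged neighborhood $V = S \times \D^2_{1+\delta}$ of $S \times \D^2$, chosen so that the Engel-Lutz tube $\SU(N)$ --- the region where $\SD$ is actually modified --- lies in $\int(V)$ and is disjoint from a neighborhood of $\partial V$. Then $\LSD$ and $\SD$ agree near $\partial V$, so it suffices to homotope their Engel framings relative to a neighborhood of $\partial V$. The relevant feature of $V$ is that it deformation retracts onto $S \cong T^2$.

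Next, invoking the first half of the proof of Proposition~\ref{p:double twist}, I would use that the oriented pairs $(\SD \subset \SE)$ and $(\LSD \subset [\LSD,\LSD])$ are homotopic through oriented pairs (a plane field inside a hyperplane field), via a homotopy supported in $\SU(N)$. Carrying the Engel framing of $\LSD$ along this homotopy, I may assume that it and the Engel framing of $\SD$ are both framings of $TM|_V$ adapted to one fixed oriented flag $\SD \subset \SE \subset TM$ (first two components spanning $\SD$, first three spanning $\SE$, all with the prescribed orientations) and that they agree near $\partial V$. Since the space of framings adapted to a fixed oriented flag deformation retracts onto the space of framings of the trivial rank-$2$ bundle $\SD|_V$, two such framings agreeing near $\partial V$ differ --- up to homotopy rel $\partial V$ --- by a map $V \to \mathrm{GL}^+(2,\R) \simeq \NS^1$ that is constant near $\partial V$, i.e. by a class in $H^1(V,\partial V;\Z)$ that obstructs the desired homotopy.

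The one substantive point is that this obstruction group vanishes: by Poincar\'e--Lefschetz duality, $H^1(V,\partial V;\Z) \cong H_3(V;\Z) \cong H_3(T^2;\Z) = 0$, since $V$ is homotopy equivalent to a $2$-complex. Hence the two Engel framings are homotopic rel $\partial V$, and, since they already coincide outside $V$, homotopic on all of $M$; as in the oriented setting the formal type of an Engel structure is recorded by the homotopy class of its Engel framing (Subsection~\ref{sec:definitions}), this also yields the final assertion. I do not expect a genuine obstacle here: the only non-formal ingredient is the homotopy of oriented pairs from Proposition~\ref{p:double twist}, and the rest is cohomological bookkeeping. What deserves emphasis is why a single twist suffices while Proposition~\ref{p:double twist} needed two --- namely, the obstruction to matching the $\SD$-framings lives in a relative cohomology group of $(V,\partial V)$ that vanishes for dimensional reasons, so one never has to control the characteristic foliation of $\LSD$; the minor thing to verify is that all the homotopies above are supported in $\SU(N) \subset \int(V)$, hence relative to a neighborhood of $\partial V$.
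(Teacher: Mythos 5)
Your proposal is correct and takes essentially the same route as the paper: reduce, via the first half of the proof of Proposition~\ref{p:double twist}, to two framings that differ only in the two components spanning $\SD$ and agree near the boundary, and then kill the resulting $S^1$-valued difference class using the topology of $S\times\D^2$. The only (immaterial) difference is the last step, where the paper invokes surjectivity of $\pi_1(\partial(S\times\D^2))\to\pi_1(S\times\D^2)$ while you show the obstruction group $H^1(V,\partial V;\Z)$ itself vanishes by Lefschetz duality.
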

\begin{proof}
We argue as in the previous corollary, noting this time that  $\pi_1(\partial(S \times \D^2)) \lra \pi_1(S \times \D^2)$ is surjective. This implies that any two framings differing only in the first two components and agreeing along the boundary must be homotopic relative to the boundary.
\end{proof}

\section{Overtwisted Engel structures} \label{sec:otDisc}

In $3$-dimensional Contact Topology, a Lutz twist along a knot transverse to the contact structure gives rise to a $\NS^1$-family of overtwisted discs.  Motivated by this, we will define an Engel overtwisted disc to be a portion of an Engel-Lutz tube that satisfies an additional quantitative property.

\subsection{The overtwisted disc} \label{ssec:otDisc}

Let $I=[0,L]$ and fix coordinates $(y,x,z,w)$ on $I \times \D^3$. On $I\times\D^3$ we consider the Engel structure
\begin{equation} \label{e:normal form D}
\SD_\trans = \ker(\alpha = dy-zdx)\cap \ker(\beta = dx-wdz) = \langle \partial_w, \partial_z + w(\partial_x+z\partial_y)\rangle.
\end{equation}
Of course, this is the standard form of an Engel structure in the vicinity of a transverse curve $\gamma \cong I\times\{0\}$ obtained in Proposition~\ref{prop:modelKnot}.

Let $\eta \subset (\D^3,\ker(\beta))$ be a transverse unknot with self-linking number $-3$, as shown in Figure~\ref{fig:univ-twist-overlay-neu}. According to Lemma~\ref{lem:constantProfile}, the cylinder $\Sigma = I \times \eta$ with profile $\eta$ is transverse to $\SD_\trans$. Let $\tau_0$ be the scaling constant obtained from $\eta$ by applying Lemma~\ref{lem:scalingProfile}.

\begin{definition} \label{def:otDisc}
Consider the Engel structure $\SL(\SD_\trans)$ obtained from $\SD_\trans$ by an Engel-Lutz twist along $\Sigma = I \times \eta$. If the length of $I$ satisfies $L > \frac{2}{\tau_0}$, then
\[ \Delta_\OT = (I \times \D^3, \SD_\OT = \SL(\SD_\trans)) \]
is an \textbf{overtwisted disc}. The curve $I \times \{0\} \subset I \times \D^3$ is said to be its \textbf{core}.

An Engel structure is {\bf overtwisted} if it admits an Engel embedding of an overtwisted disc.
\end{definition}

\subsection{Self-replication of overtwisted discs}

As we showed in Lemma~\ref{lem:scalingProfile}, the requirement on $L$ ensures that we can shrink the transverse cylinder $\Sigma$ 
\begin{itemize}
\item through transverse embedded surfaces and 
\item relative to the boundary
\end{itemize} 
so that a piece of the resulting surface is an arbitrarily thin cylinder of small (but fixed) length. The existence of this homotopy guarantees the self-replicating property of the overtwisted disc, i.e. in the presence of an overtwisted disc we can produce new ones by an Engel homotopy. This is a property that appeared in a similar form in E.~Murphy's definition of a loose chart for higher dimensional Legendrians \cite{Mur}.
\begin{lemma} \label{lem:selfReplication}
Let $\Delta_\OT=([0,L]\times\D^3,\SD_\OT)$ be an overtwisted disc. Then, there are:
\begin{itemize}
\item a path of Engel structures $([0,L] \times \D^3,\SD_r)_{r\in [0,r_0]}$,
\item a path of Engel embeddings $F_r: ([0,L] \times \D^3,\SD_\OT) \longrightarrow ([0,L] \times \D^3,\SD_r)$, and
\item an Engel embedding $G: ([0,L] \times \D^3,\SD_\OT) \longrightarrow ([0,L] \times \D^3,\SD_{r_0})$
\end{itemize}
such that $\SD_r=\SD_\OT$ on a neighborhood of the boundary, $\SD_0=\SD_\OT$, $F_0$ is the identity, and the images of $G$ and $F_{r_0}$ are disjoint.
\end{lemma}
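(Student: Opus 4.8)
The plan is to produce, by a homotopy of $\SD_\OT$ supported away from $\partial(I\times\D^3)$, two disjoint embedded copies of $\Delta_\OT$ inside $(I\times\D^3,\SD_\OT)$: one swept out by the path $F_r$ starting at the identity embedding, the other given by $G$. The two ingredients that make this possible are Lemma~\ref{lem:scalingProfile}, which is applicable precisely because the defining inequality $L>2/\tau_0$ holds, and the fact that the Engel-Lutz twist can be carried out parametrically along an isotopy of transverse surfaces (Remark~\ref{contractible choice in Engel Lutz}); these are combined with the scaling quasi-symmetries of $(\R^4,\SD_\trans)$ that trade thinness in the $\D^3$-directions for length in the $y$-direction.

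First I would fix a smooth function $\lambda:I=[0,L]\lra(0,1]$ with $|\lambda'|<\tau_0$, equal to $1$ near $\partial I$ and equal to an arbitrarily small constant $\varepsilon>0$ on a fixed subinterval $J\subset\int(I)$; such a $\lambda$ exists exactly because $L>2/\tau_0$ leaves room for the two monotone transition segments (each of length roughly $1/\tau_0$) together with $J$. Interpolating by $\lambda_r:=(1-r)+r\lambda$, one still has $|\lambda_r'|=r|\lambda'|<\tau_0$, so Lemma~\ref{lem:scalingProfile} makes $\Sigma_r:=\psi_{\lambda_r}(I\times\eta)$ an isotopy of embedded transverse cylinders from $\Sigma_0=\Sigma=I\times\eta$ to $\Sigma_1=:\Sigma'$, stationary near $\partial(I\times\D^3)$. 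Setting $\SD_r:=\SL_{\Sigma_r}(\SD_\trans)$ and $r_0:=1$, the parametric Engel-Lutz twist yields a path of Engel structures with $\SD_0=\SD_\OT$ and $\SD_r\equiv\SD_\OT$ on a neighbourhood of $\partial(I\times\D^3)$, as required.

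Next I would identify $(I\times\D^3,\SD_r)$ with a longer portion of Engel-Lutz tube. Away from the bulging region $B_r:=\psi_{\lambda_r}(I\times\D^3)$ we have $\SD_r=\SD_\trans$; on $B_r$ the diffeomorphism $\psi_{\lambda_r}$ identifies $\SD_r$ with $\SL_{I\times\eta}(\psi_{\lambda_r}^*\SD_\trans)$ over $I\times\D^3$. On the subinterval $J$, where $\lambda_r\equiv\varepsilon_r:=(1-r)+r\varepsilon$ is constant, $\psi_{\lambda_r}^*\SD_\trans=\ker(dy-\varepsilon_r^3z\,dx)\cap\ker(dx-w\,dz)$, and the coordinate change $(y,x,z,w)\mapsto(\varepsilon_r^{-3}y,x,z,w)$ carries this onto $\SD_\trans$ while stretching the $y$-interval $J$ to length $\varepsilon_r^{-3}|J|$ and leaving the profile $\eta$ undistorted. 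Splicing this stretch over $J$ with the identity near $\partial I$, and straightening $\psi_{\lambda_r}^*\SD_\trans$ over the two transition regions, I obtain an Engel isomorphism $\Phi_r:(I\times\D^3,\SD_r)\lra([0,\ell_r]\times\D^3,\SL_{[0,\ell_r]\times\eta}(\SD_\trans))$ with $\ell_0=L$, $\Phi_0=\Id$, $\ell_r\ge L$, and $\ell_{r_0}\ge\varepsilon^{-3}|J|$ as large as desired once $\varepsilon$ is small. Since the Engel-Lutz twist along a cylinder is local in the $y$-direction, $\SL_{[0,\ell]\times\eta}(\SD_\trans)$ restricts over every length-$L$ subinterval to a copy of $\SD_\OT$; so I set $F_r:=\Phi_r^{-1}$ composed with the inclusion of $\Delta_\OT$ over $[0,L]\subset[0,\ell_r]$ and $G:=\Phi_{r_0}^{-1}$ composed with the inclusion of $\Delta_\OT$ over $[\ell_{r_0}-L,\ell_{r_0}]$. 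Taking $\ell_{r_0}>2L$, these two subintervals are disjoint, hence so are $\image(F_{r_0})$ and $\image(G)$; meanwhile $F_0=\Id$, $\SD_0=\SD_\OT$, and $\SD_r\equiv\SD_\OT$ near the boundary, so every assertion of the lemma holds.

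The main obstacle is the construction of $\Phi_r$ over the transition regions of $\lambda_r$: there $\psi_{\lambda_r}^*\SD_\trans$ acquires the extra terms $-2\lambda_r^2\lambda_r'xz\,dy$ and $\lambda_r\lambda_r'(2x-zw)\,dy$ that appear in the proof of Lemma~\ref{lem:scalingProfile}, so it is not an honest rescaling of $\SD_\trans$ and must be put back into standard form by hand — by a $C^0$-small reparametrisation of the $y$-axis together with a further shrinking of the $\D^3$-factor and an appeal to Proposition~\ref{prop:modelKnot} for a thin neighbourhood of $\Sigma_r$ — in a way that is smooth in $r$ and compatible with the identifications already fixed over $J$ and near $\partial I$. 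Once this is arranged, the disjointness, the boundary behaviour, and the endpoint conditions $F_0=\Id$ and $\SD_0=\SD_\OT$ are immediate.
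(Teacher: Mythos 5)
Your first step coincides with the paper's: pinch the Lutz cylinder by $\psi_{\lambda_r}$ using Lemma~\ref{lem:scalingProfile} (this is exactly where $L>2/\tau_0$ is used) and carry the Engel--Lutz twist along parametrically, obtaining the path $\SD_r$, fixed near the boundary. The second step, however, has a genuine gap: the whole construction of $F_r$ and $G$ rests on the global Engel isomorphism $\Phi_r:(I\times\D^3,\SD_r)\lra([0,\ell_r]\times\D^3,\SL_{[0,\ell_r]\times\eta}(\SD_\trans))$, and this object is never constructed --- and cannot be constructed along the lines you sketch. Over the region $J$ where $\lambda_r\equiv\varepsilon_r$, your own identification (undo $\psi_{\lambda_r}$, then stretch $y$ by $\varepsilon_r^{-3}$) is the anisotropic expansion $(y,x,z,w)\mapsto(\varepsilon_r^{-3}y,\varepsilon_r^{-2}x,\varepsilon_r^{-1}z,\varepsilon_r^{-1}w)$; it is defined only on the thin tube $\psi_{\varepsilon_r}(J\times\D^3)$ and already maps that thin tube \emph{onto} the full-width target block $\varepsilon_r^{-3}J\times\D^3$. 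The full-width part of the source lying outside the thin tube (where $\SD_r=\SD_\trans$) then has nowhere left to go, so no injective extension of this identification to all of $J\times\D^3$ --- let alone across the transition regions, smoothly in $r$, with $\Phi_0=\Id$ --- can have the stated target. The symmetries of $\SD_\trans$ do not let you trade thinness in $\D^3$ for length in $y$ at fixed width: a $y$-stretch by $\mu^{-3}$ comes with an $(x,z,w)$-expansion by $(\mu^{-2},\mu^{-1},\mu^{-1})$. Your proposed repairs do not close this: Proposition~\ref{prop:modelKnot} only gives germs of standard neighborhoods near the core and cannot produce a full-width identification, while ``further shrinking of the $\D^3$-factor'' is incompatible with your definition $F_r=\Phi_r^{-1}\circ\iota$, which needs $\Phi_r^{-1}$ on the \emph{full-width} long model so that the unit-size $\Delta_\OT$ can be included over a length-$L$ subinterval (indeed your $F_r$-copy sits over $[0,L]$, which straddles precisely the transition region where $\Phi_r$ is missing). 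The disjointness of $\image(F_{r_0})$ and $\image(G)$ also relies on the injectivity of the unconstructed $\Phi_{r_0}$.

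The paper avoids any such global normalization: it maps scaled copies of $\Delta_\OT$ \emph{directly into the pinched region} via the explicit anisotropic Engel embeddings
\[
F_r(y,x,z,w)=\bigl(y(1-r)^3+Lr/2,\;x(1-r)^2,\;z(1-r),\;w(1-r)\bigr),
\]
and similarly $G_r$ with offset $L/2$, observing that over the image the profile of $\Sigma_r$ is exactly the $(1-r)$-scaled $\eta$, so that $F_r^{-1}(\Sigma_r)=\Sigma$ and Remark~\ref{contractible choice in Engel Lutz} upgrades $F_r$ to an Engel embedding of $\Delta_\OT$ into $(\,\cdot\,,\SD_r)$; disjointness for $r$ close to $1$ is an elementary estimate. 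Note that these maps are precisely the inverses of your partial identification over the constant-pinch region, so the correct fix for your argument is to discard $\Phi_r$ altogether and use those scaled embeddings directly.
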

\begin{proof}
The Engel structure $\SD_\OT$ is obtained from the standard Engel structure $([0,L]\times\D^3,\SD_\trans = \ker(\alpha=dy-zdx)\cap\ker(\beta=dx-wdz))$ by applying an Engel-Lutz twist along the surface $\Sigma = I \times \eta$. By assumption, $L$ and the constant $\tau_0$ from Lemma~\ref{lem:scalingProfile} satisfy $2<L\tau_0$. 

Recall the $y$-dependent rescaling of the $\D^3$-factor
\begin{align*}
\psi_\lambda : [0,L] \times \D^3 & \longrightarrow [0,L] \times \D^3 \\
(y,x,z,w) & \longmapsto (y, \lambda(y)^2x, \lambda(y)z, \lambda(y)w),
\end{align*}
from Equation~\ref{e:psi lambda}. Since $L > \frac{2}{\tau_0}$, we can construct a path of functions $\lambda_r: [0,L] \lra (0,1]$, $r \in [0,1)$, such that
\begin{itemize}
\item $\lambda_0(t) = 1$ for all $t$,
\item $\lambda_r(t) = 1$ for all $t$ in a neighborhood of $\partial[0,L]$,
\item $\lambda_r(t) = 1-r$ if $t \in [Lr/2,L(1-r/2)]$, and
\item $|\lambda_r'(t)| < \tau_0$.
\end{itemize}
By Lemma~\ref{lem:scalingProfile}, the cylinders $\Sigma_r = \psi_{\lambda_r}([0,L] \times \eta)$ are embedded and transverse. Adding an Engel-Lutz twist to $\SD_\trans$ along $\Sigma_r$ yields a path of Engel structures $\SD_r$ in $[0,L] \times \D^3$ with $\SD_0 = \SD_\OT$. All of them agree near the boundary of the model.

The embeddings $F_r$ are 
\begin{align*}
F_r : ([0,L] \times \D^3,\SD_\trans) & \longrightarrow ([Lr/2,Lr/2+L(1-r)^3] \times \D^3,\SD_\trans) \\
(y,x,z,w) & \longmapsto \left(y(1-r)^3 + Lr/2, x(1-r)^2, z(1-r), w(1-r)\right).
\end{align*}
A simple computation shows that $F_r$ is a well-defined embedding, since $Lr/2+L(1-r)^3 \leq L(1-r/2) \leq 1$ for all $r$. Additionally, the conformal nature of the Engel structure implies that $F_r$ is an Engel embedding. Since $F_r^{-1}(\Sigma_r)=\Sigma$, we can invoke Remark~\ref{contractible choice in Engel Lutz} and regard $F_r$ as an Engel embedding of $([0,L] \times \D^3,\SD_\OT)$ into $([0,L] \times \D^3,\SD_r)$.

When $r$ is sufficiently close to $1$, the map 
\begin{align*}
G_r: ([0,L] \times \D^3,\SD_\trans) & \longrightarrow ([L/2,Lr/2+L(1-r)^3] \times \D^3,\SD_\trans) \\
(y,x,z,w) & \longmapsto \left(y(1-r)^3 + L/2, x(1-r)^2, z(1-r), w(1-r)\right).
\end{align*}
is well-defined and its image is disjoint from $F_r(I\times \D^3)$. We fix such a value $r_0$ for $r$ and set $G = G_{r_0}$.
\end{proof}
We do not know whether this result still holds when the condition on $L$ is dropped.

\begin{remark}
Mimicking the proof of the lemma one can show that the choice of constant $L$ in the definition of overtwisted disc is not important as long as $L\tau_0 > 2$.
\end{remark}

\subsection{Overtwisted Engel structures}

Finally, we define what an overtwisted family of Engel structures is.
\begin{definition} \label{def:OTfamily}
Let $K$ be a compact manifold. A family of Engel structures $\SD: K \lra \Engel(M)$ is \textbf{overtwisted} if there is a submanifold $\Delta \subset M \times K$ satisfying:
\begin{itemize}
\item $(\Delta,\SD) \lra K$ is a locally trivial fibration of Engel manifolds
\item whose fiber is Engel diffeomorphic to $\Delta_\OT$.
\end{itemize}
The manifold $\Delta$ is said to be the \textbf{certificate of overtwistedness} of $\SD$.
\end{definition}
That is, we require that there exists a family of overtwisted discs compatible with the family of Engel structures. We will often write $\Delta_k$ for an explicit paremetrisation of the overtwisted disc of $\SD(k)$. If $\Delta$ is not a globally trivial fibration, we cannot choose $\Delta_k$ parametrically in $k$ globally.

Examples of overtwisted Engel structures arise from Engel-Lutz twists.
\begin{lemma} \label{lem:LutzTwistOT}
Let $(M,\SD)$ be an Engel manifold. Let $\Sigma$ be a transverse $2$-torus with core transverse to $[\SD,\SD]$ and profile as depicted in Figure~\ref{fig:univ-twist-overlay-neu}. Then, the Engel manifold $(M,\SL(\SD))$ obtained by Engel-Lutz twisting along $\Sigma$ is overtwisted up to Engel homotopy. 
\end{lemma}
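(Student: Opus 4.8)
The plan is to realize $(M, \SL(\SD))$ as containing an overtwisted disc after an Engel homotopy. The starting point is that, by hypothesis, $\Sigma$ has core $\gamma$ transverse to $\SE = [\SD,\SD]$ and profile the unknot $\eta$ of self-linking number $-3$ from Figure~\ref{fig:univ-twist-overlay-neu}. By Proposition~\ref{prop:modelKnot}, a neighborhood of $\gamma$ is Engel diffeomorphic to $\Op(I' \times \{0\}) \subset (I' \times \R^3, \SD_\trans)$ for some compact $1$-manifold $I'$ containing $\gamma$; and under this identification $\Sigma$ becomes (isotopic to, within transverse surfaces) the cylinder $I' \times \eta$ by Lemma~\ref{lem:constantProfile}. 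Performing the Engel-Lutz twist along $\Sigma$ produces $\SL(\SD)$, which near $\gamma$ agrees with the Engel-Lutz model $\SL(\SD_\trans)$ along $I' \times \eta$.

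The only thing separating this from the definition of overtwisted disc (Definition~\ref{def:otDisc}) is the quantitative condition $L > 2/\tau_0$, where $L$ is the length of the core interval and $\tau_0$ is the scaling constant of $\eta$ from Lemma~\ref{lem:scalingProfile}. So the key step is: \emph{first enlarge the core}. I would do this by invoking Lemma~\ref{lem:hPrincipleTransverseCurves} (or directly Remark~\ref{rem:moving core is moving Lutz twist}) to isotope $\gamma$, through transverse curves, to a transverse curve $\gamma'$ whose core interval is as long as we like — concretely, we can always prepend a long transverse arc since transverse curves can be lengthened by spiralling, and such an isotopy of cores induces, by Remark~\ref{rem:moving core is moving Lutz twist} together with Remark~\ref{contractible choice in Engel Lutz}, an Engel homotopy of $\SL(\SD)$ carrying the Engel-Lutz tube along. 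After this isotopy we may assume the Engel-Lutz tube is modelled on $I \times \D^3$ with $I = [0,L]$, $L > 2/\tau_0$, and profile $\eta$: this is precisely $\Delta_\OT$. Hence $(M, \SL(\SD))$ admits an Engel embedding of an overtwisted disc, i.e. is overtwisted, and the construction was through an Engel homotopy.

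A subtlety is that Remark~\ref{rem:moving core is moving Lutz twist} as stated requires the profile to be shrinkable into an arbitrarily small tubular neighborhood of the core via the dilation $\psi_\lambda$ (Lemma~\ref{lem:smallProfile}); since $\eta$ is a fixed transverse unknot this applies, so the Engel-Lutz tube can indeed be transported along the isotopy of cores. One should also note that enlarging the core does not change the transverse isotopy class of the profile — we keep $\eta$ fixed and only reparametrize the $y$-direction — so the resulting tube genuinely satisfies the numerical constraint with the \emph{same} $\tau_0$. The main obstacle, then, is bookkeeping rather than substance: one must check that the isotopy of transverse cores, the induced family of Engel-Lutz tubes, and the Engel homotopies from Remarks~\ref{contractible choice in Engel Lutz} and~\ref{rem:moving core is moving Lutz twist} can be concatenated coherently so that the final Engel structure is Engel homotopic to $\SL(\SD)$ and literally contains a copy of $\Delta_\OT$. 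Once that is arranged, the conclusion is immediate from Definition~\ref{def:otDisc}.
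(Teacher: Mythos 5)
There is a genuine gap, and it sits exactly at the step you flag as ``bookkeeping rather than substance'': the claim that by isotoping the core to a longer transverse curve you obtain a standard model $[0,L]\times\D^3$ with the \emph{same} profile $\eta$ and $L$ as large as you like, ``only reparametrizing the $y$-direction''. The length $L$ in Definition~\ref{def:otDisc} is not the length of the core as a curve in $M$; it is the $y$-extent of a Darboux model (Proposition~\ref{prop:modelKnot}) whose cross-section is large enough to contain the fixed-size profile $\eta$, and this ratio of core length to transverse size is exactly the quantitative content of the definition. You cannot reparametrize $y$ alone: the diffeomorphism $(y,x,z,w)\mapsto(f(y),x,z,w)$ does not preserve $\SD_\trans$ unless $f'\equiv 1$, and the symmetries of $\SD_\trans$ that do stretch $y$ (e.g. $(y,x,z,w)\mapsto(\mu^3y,\mu^2x,\mu z,\mu w)$) necessarily rescale the profile along with it, so they do not change the normalized length. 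Nor does isotoping the core to a ``longer'' curve in $M$ (spiralling, etc.) produce a long model with unit-size cross-section — nothing controls the thickness of the standard neighborhood of the new curve, and in fact Remark~\ref{rem:moving core is moving Lutz twist}, which you invoke to carry the Lutz tube along, requires you to \emph{first} shrink the profile so that it fits in a neighborhood common to all the curves $\gamma_t$; after that shrinking the question of whether $L\tau_0>2$ holds in the new model is again about the ratio of model length to profile size, which your argument never controls. Note also that the core here is a loop (the surface is a torus), so ``prepending a long transverse arc'' is not well formed, and that the authors explicitly do not know whether the length condition can be dropped (see the remark after Lemma~\ref{lem:selfReplication}) — so any argument that achieves arbitrary $L$ essentially by re-choosing coordinates should be treated with suspicion.

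The fix is the paper's proof, which is the opposite move and needs no isotopy of the core at all: keep the core fixed and shrink the profile. Rescaling $\eta$ by $\psi_\lambda$ with a small constant $\lambda$ keeps the cylinder transverse (Lemma~\ref{lem:constantProfile} or Lemma~\ref{lem:smallProfile}), and performing the Engel--Lutz twist parametrically along this family of transverse tori gives a homotopy of Engel structures from $\SL(\SD)$ to a structure whose Lutz tube lies in an arbitrarily thin neighborhood of the core. In the rescaled Darboux coordinates $(y,x,z,w)\mapsto(\lambda^{-3}y,\lambda^{-2}x,\lambda^{-1}z,\lambda^{-1}w)$ this thin tube over the fixed core becomes the standard tube with profile $\eta$ over a core of length of order $\lambda^{-3}$, so the constraint $L>2/\tau_0$ is satisfied for $\lambda$ small and the final structure contains $\Delta_\OT$. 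Your proposal, once the profile has been shrunk (as Remark~\ref{rem:moving core is moving Lutz twist} forces), is already in this situation, so the core-lengthening step is both unjustified as stated and unnecessary.
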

\begin{proof}
We homotope $\Sigma$ to make its profile arbitrarily small using Lemma \ref{lem:smallProfile}. This homotopy of transverse surfaces provides, by parametric Engel-Lutz twisting, a homotopy of Engel structures $(\SD_s)_{s \in [0,1]}$ which starts on $\SD_0 = \SL(\SD)$ and finishes on $\SD_1$ overtwisted. This follows from the fact that the shrinking allows us to ensure that the length constant $L$ is large enough.
\end{proof}
It is unclear to the authors whether this result still holds when $\Sigma$ is not obtained from a transverse knot.

An immediate consequence of the lemma is as follows:
\begin{corollary} \label{cor:existence}
Fix a smooth embedding $\Delta: [0,L] \times \D^3 \lra M$. The following inclusion induces a surjective map
$$ \Engel_\OT(M,\Delta) \lra \FEngel_\OT(M,\Delta). $$
\end{corollary}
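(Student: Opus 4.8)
The plan is to deduce the corollary from Theorem~\ref{thm:main}; the role of Lemma~\ref{lem:LutzTwistOT} (together with Corollary~\ref{cor:homotopyChange2}) is to manufacture, without leaving $\FEngel_\OT(M,\Delta)$, a \emph{second} overtwisted disc that is disjoint from $\image(\Delta)$, so that the main theorem can be applied relative to a neighbourhood of $\image(\Delta)$. Fix $\SD_0 \in \FEngel_\OT(M,\Delta)$: by definition $\SD_0$ is an honest Engel structure on some neighbourhood $V$ of $\image(\Delta)$ and $\Delta^*\SD_0 = \SD_\OT$. We may assume $M$ connected, since on any other component of $M$ there is no constraint and the existence $h$-principle there is covered by \cite{cppp} (closed case), \cite{gr} (open case), or by Theorem~\ref{thm:main} itself after a preliminary Engel-Lutz twist into overtwistedness.

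First I would breed an auxiliary overtwisted disc. Choose a smaller closed neighbourhood $U$ of $\image(\Delta)$ with $U \subset V$, and in $V \setminus U$ pick a short embedded loop $\gamma$ transverse to $[\SD_0,\SD_0]$ whose thin standard neighbourhood (Proposition~\ref{prop:modelKnot}) still lies in $V \setminus U$; such transverse loops exist in abundance, e.g.\ by making a small immersed loop transverse via the $h$-principle for transverse immersions in the ambient contact slices and then perturbing it to be embedded, exactly as in the proof of Lemma~\ref{lem:hPrincipleTransverseCurves}. Inside that standard neighbourhood form the transverse torus $\Sigma$ with core $\gamma$ and constant profile the knot $\eta$ of Figure~\ref{fig:univ-twist-overlay-neu} (Lemma~\ref{lem:constantProfile}), and set $\SD_1 = \SL(\SD_0)$. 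This modification is supported in $V \setminus U$, hence leaves $\Delta^*\SD_0$ untouched and keeps the structure Engel near $\image(\Delta)$; by Corollary~\ref{cor:homotopyChange2} it preserves the formal type via a homotopy supported near $\Sigma$, so $\SD_1 \in \FEngel_\OT(M,\Delta)$ lies in the path component of $\SD_0$. Finally, Lemma~\ref{lem:LutzTwistOT}, whose proof only shrinks the profile of $\Sigma$, gives a homotopy of Engel structures supported near $\gamma$ from $\SD_1$ to some $\SD_2$ admitting an Engel embedding $\Delta^\flat$ of $\Delta_\OT$ inside that neighbourhood; this homotopy again stays in $\FEngel_\OT(M,\Delta)$, and $\image(\Delta^\flat) \subset M \setminus U$.

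Now I would apply Theorem~\ref{thm:main} with $K$ a point, $K' = \emptyset$, the closed set $U$, and the certificate $\Delta^\flat$: indeed $\SD_2$ is Engel on $\Op(U)$, $\image(\Delta^\flat) \subset M \setminus U$, and $M \setminus U$ is connected. The theorem produces a homotopy, constant on $\Op(U) \cup \image(\Delta^\flat) \supset \image(\Delta)$, from $\SD_2$ to an honest Engel structure $\SD_3$ on $M$; being constant near $\image(\Delta)$ the homotopy stays in $\FEngel_\OT(M,\Delta)$, and $\Delta^*\SD_3 = \Delta^*\SD_0 = \SD_\OT$, so $\SD_3 \in \Engel_\OT(M,\Delta)$. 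Concatenating the three homotopies shows that $\SD_3$ represents the path component of $\SD_0$, which is the asserted surjectivity on $\pi_0$; running the same argument with $K$ an arbitrary compact CW-complex (and supplying a family of spare discs in the same way) upgrades this to surjectivity on all homotopy groups, since Theorem~\ref{thm:main} is already parametric.

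The main obstacle is the tension between ``relative to $\image(\Delta)$'' and ``overtwisted disc disjoint from the fixed locus'': the given disc $\image(\Delta)$ is useless as a certificate for Theorem~\ref{thm:main} precisely because it is the region we may not disturb, and this is exactly what forces the detour through Lemma~\ref{lem:LutzTwistOT}, which produces a fresh overtwisted disc in the collar $V \setminus \image(\Delta)$ where $\SD_0$ is already genuine. The remaining points are routine: one must check that every auxiliary modification is genuinely supported away from $\image(\Delta)$ and preserves Engel-ness on a fixed neighbourhood of it, so that the path never leaves $\FEngel_\OT(M,\Delta)$, and one must do the (easy) bookkeeping for disconnected $M$.
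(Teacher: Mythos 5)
Your proposal is correct, but it proves the corollary by a genuinely different route than the paper. The paper's own proof does not use Theorem~\ref{thm:main} at all: it applies the absolute existence $h$-principle of \cite{cppp} to replace the formal family by a genuine Engel family (thereby destroying the structure on $\image(\Delta)$), arranges $\image(\Delta)$ to be a Darboux ball, and then re-creates the overtwisted disc by choosing transverse knots \emph{inside} $\image(\Delta)$, Engel-Lutz twisting along the resulting tori (Corollary~\ref{cor:homotopyChange2} guaranteeing the formal class is unchanged), and finally isotoping so that $\Delta$ itself becomes the Engel embedding of $\Delta_\OT$. You instead keep everything relative to $\image(\Delta)$ throughout: you breed a spare disc in the Engel collar outside $\image(\Delta)$ via a Lutz twist plus Lemma~\ref{lem:LutzTwistOT}, and then invoke Theorem~\ref{thm:main} relative to a closed neighbourhood $U$ of $\image(\Delta)$ with the spare disc as certificate. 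This is essentially the alternative the paper itself flags in Subsection~\ref{ssec:corMain}, and your key observation --- that the given $\Delta$ is useless as a certificate when one insists on working relative to $\Op(\image(\Delta))$, which forces the detour through Lemma~\ref{lem:LutzTwistOT} rather than the replication Lemma~\ref{lem:selfReplication2} (whose homotopy would modify the structure on $\image(\Delta)$ and leave $\FEngel_\OT(M,\Delta)$) --- is exactly right. What each approach buys: the paper's argument needs only the existence result of \cite{cppp} and is available before the main theorem is proved, but its connecting homotopy exits $\FEngel_\OT(M,\Delta)$ in the middle (the disc is destroyed and rebuilt); your argument uses the full strength of Theorem~\ref{thm:main}, but produces a homotopy that never leaves $\FEngel_\OT(M,\Delta)$, which is literally what surjectivity of the inclusion on homotopy groups asks for. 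There is no circularity, since the proof of Theorem~\ref{thm:main} nowhere relies on this corollary, although your route inverts the paper's expository order.

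Two small points you should make explicit, both at the same level of gloss as the paper's own proof: for the parametric statement with $K=\NS^m$ non-contractible, the family of spare discs requires a parametric choice of transverse loops $\gamma_k$ in the collar (the loops must be transverse to the varying $[\SD_0(k),\SD_0(k)]$; one can argue via a parametric version of Lemma~\ref{lem:hPrincipleTransverseCurves} together with the weak contractibility of standard neighbourhoods from Proposition~\ref{prop:modelKnot}, just as the paper implicitly does when it chooses its family of knots inside the Darboux balls); and your treatment of components of $M$ not meeting $\image(\Delta)$ should record that the Engel structures produced there must be \emph{formally homotopic} to the given formal data, which \cite{cppp} (closed case) and Gromov (open case) indeed provide.
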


\begin{proof}
Given any $\NS^m$-family $\SD_0$ of formal Engel structures in $\FEngel_\OT(M,\Delta)$, we apply the existence $h$-principle from \cite{cppp} to produce a family of Engel structures $\SD_{1/3}$ which is formally homotopic to $\SD_0$. Doing so destroys the overtwisted disc of $\SD_0$ because \cite{cppp} is not an $h$-principle relative in the domain (as pointed out in the introduction). Still, after a homotopy, we may assume that $\image(\Delta)$ is a Darboux ball for each $\SD_{1/3}(k)$, $k \in \NS^m$.

Choose a family of knots $\big(\gamma_k \subset \image(\Delta) \subset M\big)_{k \in \NS^m}$ with $\gamma_k$ transverse to $[\SD_{1/3}(k),\SD_{1/3}(k)]$. From this we obtain a $\NS^m$-family of $2$-tori. Parametrically introducing an Engel-Lutz twist yields an overtwisted family $\SD_{2/3}$ which is formally homotopic to $\SD_{1/3}$ by Corollary \ref{cor:homotopyChange2}. We isotope the structures $\SD_{2/3}$ to yield a family $\SD_1$ such that $\Delta$ is an Engel embedding of the overtwisted disc.
\end{proof}

\subsection{Replication of the certificate}

The following is a corollary of Lemma~\ref{lem:selfReplication}:
\begin{lemma} \label{lem:selfReplication2}
Let $K$ be a compact manifold and $K' \subset K$ a smooth ball. Let $\SD_0: K \lra \Engel(M)$ be a family of Engel structures with certificate $\Delta^0 \subset M \times K$. Then, there is a homotopy of Engel structures $(\SD_s)_{s \in [0,1]}: K \lra \Engel(M)$ supported in a neighborhood of $\Delta^0$ such that
\begin{itemize}
\item $\SD_s$ is overtwisted with certificate $\Delta^0$,
\item there is a $K'$-family of Engel embeddings of the overtwisted disc 
$$
\Delta_k^1: \Delta_\OT \lra \left(M,\SD_1(k)\right), \qquad k \in K'
$$ 
such that $\Delta_k^1(\Delta_\OT)$ is contained in $\Op(\Delta^0) \setminus \Delta^0$.
\end{itemize}
\end{lemma}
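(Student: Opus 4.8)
The plan is to derive this as the parametric companion of Lemma~\ref{lem:selfReplication}, by running the construction of that lemma fibrewise over the certificate fibration $\Delta^0\to K$. There are two points to keep in mind throughout. The homotopy must preserve $\Delta^0$ as a certificate, so whatever we do to $\SD_0$ on or near a fibre $\Delta^0_k$ must keep $\SD_s(k)|_{\Delta^0_k}$ Engel diffeomorphic to $\Delta_\OT$. And the replicated disc $\Delta^1_k$ must end up in $\Op(\Delta^0)\setminus\Delta^0$, whereas the spare copy produced by Lemma~\ref{lem:selfReplication} sits \emph{inside} the disc it starts from; reconciling these two requirements is the crux.

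First I would enlarge the fibres. Because the Engel--Lutz twist defining $\SD_\OT$ lives in an interior tube around $\Sigma=I\times\eta$, the structure $\SD_\OT$ agrees with the standard transverse model $\SD_\trans$ near the lateral part of $\partial\Delta_\OT$. Hence, after a preliminary homotopy of $\SD_0$ supported in $\Op(\Delta^0)\setminus\Delta^0$ (which is trivial on $\Delta^0$ itself, and so does not touch the certificate), we may assume that each $\Delta^0_k$ is the inner part of a slightly larger Engel manifold $\widehat\Delta_k\subset\Op(\Delta^0_k)$ which is again an overtwisted disc. Here we invoke the remark following Lemma~\ref{lem:selfReplication}, that only the inequality $L\tau_0>2$ is essential, together with Lemma~\ref{lem:scalingProfile}, so that the mild enlargement of the core tube still yields an overtwisted disc. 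These enlargements depend continuously on $k$, giving a locally trivial fibration $\widehat\Delta\to K$ that refines $\Delta^0\to K$ fibrewise, with $\widehat\Delta_k\setminus\Delta^0_k\subset\Op(\Delta^0)\setminus\Delta^0$.

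Next I would run the construction of Lemma~\ref{lem:selfReplication} inside each $\widehat\Delta_k$, relative to $\partial\widehat\Delta_k$, with the choices (the scaling functions $\lambda_r$, the constant $r_0$, the Engel embeddings $F_r$ and $G$) fixed once and for all on the model. Extending by $\SD_0(k)$ outside $\widehat\Delta_k$ produces a homotopy $\SD_s:K\to\Engel(M)$ supported in $\Op(\Delta^0)$; the fibrewise recipe glues over the transition maps of $\widehat\Delta\to K$ because the construction on the model is canonical, and any residual obstruction to gluing globally is absorbed using the contractibility of the space of relevant choices, exactly as in Remark~\ref{contractible choice in Engel Lutz}. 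At every time $s$, the restriction $\SD_s(k)|_{\Delta^0_k}$ is an Engel--Lutz tube over the core $I\times\{0\}$ with profile a rescaling of $\eta$, and its core still has length $L$ with $L\tau_0>2$; therefore it remains Engel diffeomorphic to $\Delta_\OT$, and $\Delta^0$ persists as a certificate of overtwistedness of $\SD_s$, as required.

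Finally I would extract the replicated disc over $K'$. Since $K'$ is a ball, the fibration $\widehat\Delta\to K'$ is trivial, so for $k\in K'$ we may identify each $\widehat\Delta_k$ with a fixed model overtwisted disc compatibly, transplant the Engel embedding $G$ of Lemma~\ref{lem:selfReplication}, and obtain a $K'$-family of Engel embeddings $\Delta^1_k:\Delta_\OT\to(M,\SD_1(k))$. The remaining, and main, obstacle is to arrange that $\Delta^1_k(\Delta_\OT)$ lands in $\widehat\Delta_k\setminus\Delta^0_k$: this is where one must choose the enlargement $\widehat\Delta_k$ and the rescaling data in Lemma~\ref{lem:selfReplication} so that the ``main'' copy $F_{r_0}$ is driven into $\Delta^0_k$ while the disjoint ``spare'' copy $G$ is driven out of $\Delta^0_k$ but kept inside $\widehat\Delta_k\subset\Op(\Delta^0)$ — precisely the feature that Lemma~\ref{lem:selfReplication} was designed to make possible. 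Keeping all the transversality estimates of Subsection~\ref{ssec:universalTwistSystem} (in particular Lemma~\ref{lem:scalingProfile}) uniform in the compact parameter $k$ is then routine, and no difficulty beyond the non-parametric case appears.
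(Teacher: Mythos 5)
Your proposal runs into two genuine problems, both at the points you yourself flag as delicate, and the paper resolves them by a different (and simpler) device than the one you suggest.

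First, the preservation of the certificate. Running Lemma~\ref{lem:selfReplication} fibrewise deforms the Engel structure \emph{inside} each disc: at intermediate times the structure restricted to the fixed submanifold $\Delta^0_k$ is a Lutz twist along a cylinder whose profile has been shrunk in the middle, and your assertion that this ``remains Engel diffeomorphic to $\Delta_\OT$'' (let alone that $(\Delta^0,\SD_s)\to K$ stays a locally trivial fibration with fibre $\Delta_\OT$) is exactly what would need a proof; the scaling maps $\psi_{\lambda_r}$ are not Engel diffeomorphisms of $\SD_\trans$, so no identification is available. What Lemma~\ref{lem:selfReplication} actually provides is a \emph{moving} family of Engel embeddings $F_{k,s}$ of $\Delta_\OT$, not the statement that the fixed disc stays standard. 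The paper's proof sidesteps this entirely: it applies the isotopy extension theorem to $F_{k,s}$ to get ambient isotopies $\psi_{k,s}$ with $F_{k,s}=\psi_{k,s}\circ\Delta^0_k$, and then defines $\SD_s(k)=\psi_{k,s}^*\widetilde\SD_s(k)$, so that $(\Delta^0_k)^*\SD_s(k)=\SD_\OT$ holds tautologically for all $s$.

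Second, the location of the spare copy, which you correctly identify as the crux but do not resolve. In Lemma~\ref{lem:selfReplication} both $F_{r_0}$ and $G$ have image in explicit boxes centred around $y\approx L/2$, i.e.\ deep in the \emph{middle} of the disc one starts from; there is no choice of the scaling data that ``drives $G$ out of $\Delta^0_k$'' into a thin collar $\widehat\Delta_k\setminus\Delta^0_k$. Moreover the preliminary step of enlarging each fibre to a bigger overtwisted disc $\widehat\Delta_k\subset\Op(\Delta^0_k)$ is not justified: the germ of $\SD_0(k)$ on the outside of $\partial\Delta^0_k$ is arbitrary, and a homotopy supported in $\Op(\Delta^0)\setminus\Delta^0$ does not obviously normalize it to the continuation of the model (that is a germ-extension problem of the same nature as the theorems this lemma feeds into). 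In the paper's argument the spare copy lands outside $\Delta^0$ automatically: since $G_k$ is disjoint from $F_{k,1}=\psi_{k,1}\circ\Delta^0_k$, the embedding $\Delta^1_k=\psi_{k,1}^{-1}\circ G_k$ is disjoint from $\Delta^0_k$ while staying in $\Op(\Delta^0)$, and the whole construction is cut off in the parameter between $\partial K'$ and $\partial\Op(K')$, over which region the certificate fibration is trivial. I would rewrite your proof around this pullback-by-ambient-isotopy idea rather than the disc-enlargement scheme.
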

\begin{proof}
The desired Engel homotopy will be constant on the complement of $\Op(K')$ in the parameter. Let us assume that $K = \Op(K')$ is a smooth ball. Then $\Delta^0$ is a globally trivial fibration by overtwisted discs $\Delta_k^0: \Delta_\OT \lra (M,\SD_0(k))$, $k \in K$.

We can apply Lemma~\ref{lem:selfReplication} to $\SD_0$ to obtain a homotopy of Engel families $(\widetilde\SD_s)_{s \in [0,1]}$ supported near $\Delta^0$. It yields
\begin{alignat*}{5}
\widetilde{\SD}_s: &\quad K          &\lra \Engel(M),             \qquad& \widetilde{\SD}_0 &= \SD_0, \\
F_{k,s}:           &\quad \Delta_\OT &\lra (M,\widetilde{\SD}_s), \qquad& F_{k,0}           &= \Delta_k^0,
\end{alignat*}
and a certificate $G_k: \Delta_\OT \lra (M,\widetilde{\SD}_1)$ which is disjoint from $F_{k,1}$.

By the isotopy extension theorem applied to $F_{k,s}$ there are isotopies $\psi_{k,s}$ of $M$ such that  $F_{k,s}= \psi_{k,s} \circ \Delta_k^0$. We set $\SD_s(k) = \psi_{k,s}^*\widetilde{\SD}_s(k)$. The certificate $\Delta_k^1$ is  $\psi_{k,1}^{-1} \circ G_k$. We can cut-off the replication as $k$ goes from $\partial K'$ to $\partial \Op(K')$ to ensure that the homotopy is relative to the complement of $\Op(K')$. 
\end{proof}
Lemma~\ref{lem:selfReplication2} yields copies of the certificate in the vicinity of $\Delta^0$. We will use such copies whenever a certain Engel homotopy requires a certificate of overtwistedness. This way, Engel homotopies can be performed relative to the original certificate $\Delta^0$.

\subsection{Homotopies of overtwisted discs} \label{ssec:homotopyOTdisc}

We conclude this section showing how homotopies of the core can be used to construct homotopies of the overtwisted disc. A key observation was explained in Remark~\ref{contractible choice in Engel Lutz}: adding Engel torsion is parametric as the Engel structure and the transverse $3$-manifold vary in families. 

Let $(M,\SD)$ be an overtwisted Engel manifold with $\Delta: \Delta_\OT \lra (M,\SD)$ an Engel embedding of the overtwisted disc. We denote its core by $\gamma: [0,L] \lra M$, the corresponding transverse cylinder by $\Sigma$, and its (constant) profile by $\eta$. Apart from $L$ (the length of $\Delta_\OT$ in the $y$-coordinate) we fix a constant $l$ satisfying $L/2 > l >1/\tau_0$. Here $\tau_0$ is the constant from Lemma~\ref{lem:scalingProfile} measuring the allowed speed of scaling for the profile $\eta$ (which already appeared in the definition of the overtwisted disc). We use the terminology
\begin{align*}
A_0 = \Delta(\{y=0\}) &\textrm{ is the \textsl{lower boundary}, } \\
A_L = \Delta(\{y=L\}) & \textrm{ is the \textsl{upper boundary}. }
\end{align*}
The endpoint $\gamma(0)$ (respectively $\gamma(L)$) of the core of the overtwisted disc lies in $A_0$ (respectively $A_L$).

We cut $M$ along $A_0$ and $A_L$ to produce a manifold with boundary and corners $M^\Delta$. The boundary $\partial M^\Delta$ has two connected components, each of which consists of two copies of $A_0$ (respectively, $A_L$) glued to one another along their common boundary. $M^\Delta$ inherits an Engel structure, which we still denote by $\SD$. Similarly, we write $\gamma$ and $\Delta$ for their lift to $M^\Delta$. Observe that $(M^\Delta, \SD)$ is, by definition, obtained from some other Engel structure $(M^\Delta, \SL^{-1}(\SD))$ by Engel-Lutz twisting along the cylinder $\Sigma$ with core $\gamma$ and profile $\eta$. As such, the structures $\SD$ and $\SL^{-1}(\SD)$ differ from one another only on  $\image(\Delta)$. $\SL^{-1}(\SD)$ is given instead by an Engel embedding of the standard model around $\gamma$:
$$ \varphi: ([0,L] \times \D^3,\SD_\trans) \lra (M^\Delta,\SL^{-1}(\SD)). $$
We will refer to the region $\varphi(\{y \in [0,l] \cup [L-l,L]\}$ as the \emph{scaling region}.

Let $(\gamma_t)_{t \in [0,1]}$ be a homotopy of embedded transverse arcs in $(M^\Delta, \SL^{-1}(\SD))$ with $\gamma_0$ being the core $\gamma$. By Proposition \ref{prop:modelKnot} this homotopy extends to a homotopy of standard neighborhoods
$$ \varphi_t: (U_t,\SD_\trans) \lra (M^\Delta,\SL^{-1}(\SD)) \textrm{ with }\varphi_0 = \varphi. $$
Here $U_t$ is a neighborhood of $[0,L_t]\times\{0\}$ in $[0,L_t]\times\R^3$, $L_t$ is a constant that depends smoothly on $t$, and $L_0 = L$. We will henceforth assume that $L_t>2l$, and we require that the curves $\gamma_t$ agree with $\gamma$ on the scaling region $[0,l]\cup[L_t-l,L_t]$, that is:
\begin{itemize}
\item $\gamma_t(y) = \gamma(y)$ and $\varphi_t(y,\cdot) = \varphi(y,\cdot)$  for all $y \in [0,l]$, and
\item $\gamma_t(L_t-y) = \gamma(L-y)$ and $\varphi_t(L_t-y,\cdot) = \varphi(L-y,\cdot)$ for all $y \in [0,l]$.
\end{itemize}

We consider a homotopy of profiles $\eta_t(y)$ with $t \in [0,1]$ and $y\in [0,L_t]$ such that
\begin{itemize}
\item $\eta_0(y) = \eta$,
\item $\eta_t(y) = \eta$ and $\eta_t(L_t-y) = \eta$ for all $y \in [0,l]$.
\end{itemize}
Recall the $y$-dependent rescaling from Equation~\ref{e:psi lambda}
\begin{align*}
\psi_\lambda : [0,L_t] \times \D^3 & \longrightarrow [0,L_t] \times \D^3 \\
(y,x,z,w) & \longmapsto (y, \lambda(y)^2x, \lambda(y)z, \lambda(y)w).
\end{align*}

The following result is an immediate consequence of Lemma~\ref{lem:scalingProfile}.
\begin{proposition} \label{prop:homotopyOTdisc}
In the situation described above, there is a homotopy of overtwisted Engel structures $(\SD_t)_{t \in [0,1]}$ satisfying:
\begin{itemize}
\item $\SD_0 = \SD$,
\item the overtwisted disc $\Delta_t$ of $\SD_t$ has $\gamma_t$ as core and a rescaling of $(\eta_t(y))_{y \in [0,L_t]}$ as profile.
\end{itemize}
\end{proposition}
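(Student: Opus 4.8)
The plan is to reduce Proposition~\ref{prop:homotopyOTdisc} to the parametric version of the Engel--Lutz twist (Remark~\ref{contractible choice in Engel Lutz}) together with the scaling estimate of Lemma~\ref{lem:scalingProfile}. Since $\SD$ is, by construction, obtained from $\SL^{-1}(\SD)$ by an Engel--Lutz twist along the transverse cylinder $\Sigma$ with core $\gamma$ and profile $\eta$, the idea is to build a $t$-family of transverse cylinders $\Sigma_t \subset (M^\Delta, \SL^{-1}(\SD))$ whose core is $\gamma_t$ and whose profile is a rescaling of $\eta_t$, and then to Engel--Lutz twist along the whole family at once. The homotopy $\SD_t$ is then defined by $\SD_t = \SL(\SD_t)(\SL^{-1}(\SD))$, performed along $\Sigma_t$; at $t=0$ this recovers $\SD_0 = \SD$ because $\Sigma_0 = \Sigma$.

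First I would produce the family $\varphi_t \colon (U_t,\SD_\trans) \lra (M^\Delta,\SL^{-1}(\SD))$ of standard neighborhoods of the cores $\gamma_t$ using Proposition~\ref{prop:modelKnot} (this is already set up in the statement), noting that $\varphi_t$ agrees with $\varphi$ over the scaling region $\{y\in[0,l]\cup[L_t-l,L_t]\}$. Next, inside each model $([0,L_t]\times\R^3,\SD_\trans)$ I would define the candidate cylinder by $\Sigma_t = \psi_{\lambda_t}\big(\{(y,\eta_t(y)(\theta))\}\big)$ for a suitable family of scaling functions $\lambda_t\colon[0,L_t]\lra(0,1]$. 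The functions $\lambda_t$ should be chosen so that $\lambda_t\equiv 1$ on the scaling region (so that $\Sigma_t$ matches $\Sigma$ there, and the twist agrees with the original near the boundary of the overtwisted disc), so that $\lambda_t$ is small enough in the middle region that the whole profile $\psi_{\lambda_t(y)}(\eta_t(y)(\cdot))$ fits inside $\varphi_t(U_t)$ for every $y$ and $t$, and — crucially — so that $|\lambda_t'|$ is bounded by a constant small enough to apply the transversality criterion. Here the length hypothesis $L_t>2l$ together with $l>1/\tau_0$ is exactly what guarantees there is enough room in the $y$-direction to interpolate from $\lambda_t\equiv 1$ down to an arbitrarily small value and back up while keeping $|\lambda_t'|<\tau_0$, so Lemma~\ref{lem:scalingProfile} (applied parametrically in $t$, as in the proof of Lemma~\ref{lem:smallProfile}) ensures each $\Sigma_t$ is embedded and transverse to $\SD_\trans$, hence $\varphi_t(\Sigma_t)$ is transverse to $\SL^{-1}(\SD)$.

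With the family $(\varphi_t(\Sigma_t))_{t\in[0,1]}$ of transverse cylinders in hand, carrying with each the boundary $3$-torus of a thin tubular neighborhood (Lemma~\ref{lem:t3fromt2}), I would apply the parametric Engel--Lutz twist of Remark~\ref{contractible choice in Engel Lutz}: the space of auxiliary vector fields $T$ needed to perform the twist is contractible, so the construction goes through continuously in $t$, yielding the homotopy $(\SD_t)_{t\in[0,1]}$ of Engel structures. By construction $\SD_t$ is overtwisted: its overtwisted disc $\Delta_t$ is the image under $\varphi_t$ of a neighborhood of $\gamma_t$ Engel--Lutz twisted along $\Sigma_t$, which has $\gamma_t$ as core and the rescaled $(\eta_t(y))_{y\in[0,L_t]}$ as profile, and the defining length inequality $L_t > 2l > 2/\tau_0$ certifies it is genuinely an overtwisted disc in the sense of Definition~\ref{def:otDisc}. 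Finally $\SD_0 = \SD$ since $\lambda_0\equiv 1$ and $\eta_0\equiv\eta$ make $\Sigma_0 = \Sigma$.

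The main obstacle is the simultaneous bookkeeping in the middle region: one must choose $\lambda_t$ so that the scaled profile $\psi_{\lambda_t(y)}(\eta_t(y))$ stays inside the (possibly narrow and $t$-varying) standard neighborhood $\varphi_t(U_t)$ of $\gamma_t$ for \emph{all} $y$ and $t$, while independently keeping $|\lambda_t'|<\tau_0$ so that transversality survives, \emph{and} matching $\lambda_t\equiv 1$ on the scaling region so everything glues. These three requirements interact, but they can be met because the length budget $L_t - 2l > 0$ can absorb an arbitrarily slow (hence $|\lambda_t'|$-small) descent of $\lambda_t$ to any prescribed small value; once this single family of functions is pinned down, the rest is a routine parametric repackaging of constructions already established in the paper.
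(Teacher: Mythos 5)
Your overall architecture --- build a $t$-family of transverse cylinders with core $\gamma_t$ and rescaled profile $\eta_t$, then perform the Engel--Lutz twist along them parametrically in $t$ --- is exactly the paper's. The problem lies in the one quantitative step where the constants $l>1/\tau_0$ and $L_t>2l$ must actually be used, and there your choice of scaling functions fails. You insist on $\lambda_t\equiv 1$ on the whole scaling region $\{y\in[0,l]\cup[L_t-l,L_t]\}$ and place the descent of $\lambda_t$ to a small value inside the middle interval $[l,L_t-l]$. Two things go wrong. First, that interval is only known to have positive length $L_t-2l$, while descending from $1$ to a prescribed small value and back up with $|\lambda_t'|<\tau_0$ requires length roughly $2/\tau_0$; your claim that ``the length budget $L_t-2l>0$ can absorb an arbitrarily slow descent'' is backwards --- the slower the descent, the more length it consumes, so slowing down cannot help when $L_t-2l$ is small. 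Second, on $[l,L_t-l]$ the profile $\eta_t(y)$ varies with $y$, and Lemma~\ref{lem:scalingProfile} (whose constant $\tau_0$ is computed for the \emph{fixed} knot $\eta$) does not cover a simultaneously varying profile and varying scale; in particular near $y=l$, where your $\lambda_t$ is still close to $1$ and the profile already moves, neither Lemma~\ref{lem:smallProfile} nor Lemma~\ref{lem:scalingProfile} applies and transversality of $\Sigma_t$ can genuinely fail.

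The fix is to decouple the two effects the way the paper does: require $\lambda_t\equiv 1$ only in $\Op(\{0,L_t\})$ (this is all that is needed for the homotopy to be relative to the lower and upper boundaries of the disc, hence to descend from $M^\Delta$ to $M$), perform the entire descent from $1$ to a constant value $\lambda_0$ inside the scaling region $[0,l]$ (and symmetrically in $[L_t-l,L_t]$), where the profile is the fixed $\eta$, so that Lemma~\ref{lem:scalingProfile} together with $l>1/\tau_0$ provides both the room and the transversality, and keep $\lambda_t\equiv\lambda_0$ constant on all of $[l,L_t-l]$, where the profile and the core vary, choosing $\lambda_0$ via a parametric application of Lemma~\ref{lem:smallProfile} so that the surfaces are transverse and contained in $U_t$ for all $t$. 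With that repartition the rest of your argument --- thin neighborhoods and the parametric Engel--Lutz twist of Remark~\ref{contractible choice in Engel Lutz} --- goes through as in the paper.
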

\begin{proof}
As shown in Lemma~\ref{lem:smallProfile}, there is a number $\lambda_0\in(0,1]$ such that all the surfaces 
\begin{equation} \label{e:homotopy transverse}
\bigcup_{y\in[0,L_t]} \{y\}\times\psi_{\lambda_0}(\eta_t(y)) \subset [0,L_t] \times \D^3
\end{equation}
are transverse to $\SD_\trans$ and contained in $U_t$ for all $t\in[0,1]$. Our choice of $l$ readily implies that there exists a smooth family of functions $\rho_t : [0,L_t] \longrightarrow(0,1]$ satisfying
\begin{itemize}
\item $\rho_0 \equiv 1$,
\item $|\rho_t'| < \tau_0$, 
\item $\rho_t \equiv 1$ in  $\Op(\{0,L_t\})$, and 
\item $\rho_t \equiv \lambda_0$ on $[l, L_t-l]$ for all $t$ in the complement of $\Op(\{0\})$.
\end{itemize}
It follows from the properties of $\rho_t$ that all the surfaces in the homotopy
\begin{equation} \label{e:boundary surface}
\Sigma_t = \varphi_t\left(\bigcup_{y\in [0,L_t]} \{y\}\times \psi_{\rho_t(y)}\big(\eta_t(y)\big)\right), \qquad t \in [0,1],
\end{equation}
are transverse and, by construction, $\Sigma_0 = \Sigma$. One of these surfaces is shown in Figure~\ref{fig:transverse homotopy}.

\begin{figure}[htb]
\begin{center}
\includegraphics[scale=0.8]{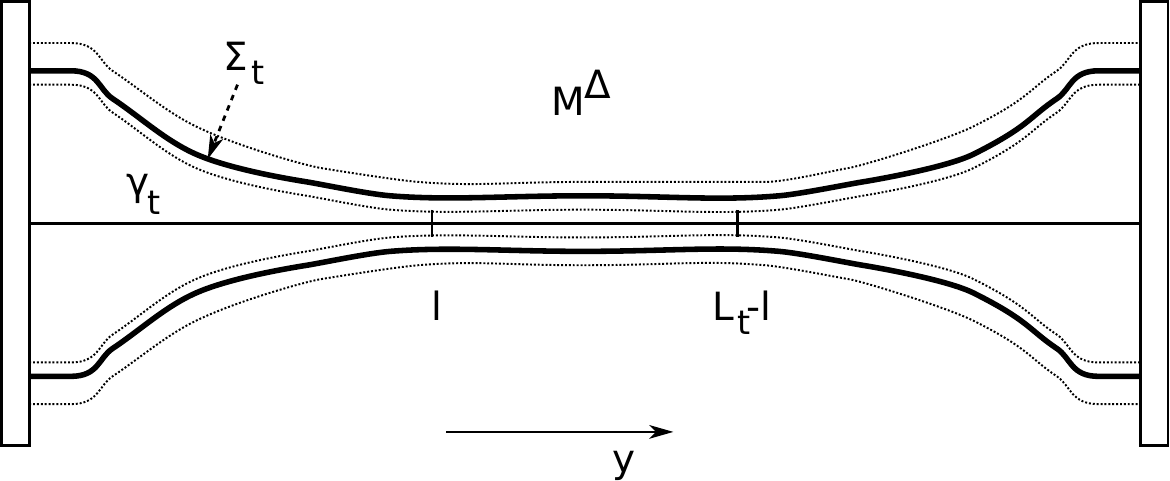}
\caption{A transverse surface $\Sigma_t$ together with a tubular neighborhood where an Engel-Lutz twist is performed.} \label{fig:transverse homotopy}
\end{center}
\end{figure}

We then add an Engel-Lutz twist to $\left(M^\Delta,\SL^{-1}(\SD)\right)$ along the surface $\Sigma_t$. Doing this parametrically in $t$ yields a homotopy of overtwisted Engel structures $(\SD_t)_{t\in[0,1]}$ starting at $\SD$. Since this is a homotopy relative to the lower and upper boundaries of the overtwisted disc, it can be regarded as a homotopy in $M$ instead of $M^\Delta$.
\end{proof}

\section{$h$-principle for overtwisted Engel structures} \label{sec:hPrinciple}

The rest of the article is concerned with the proof of Theorem~\ref{thm:main} and its corollaries. First, we go over the general setup, applying standard methods to reduce the proof to a simplified problem which will be treated in Subsections~\ref{ssec:extension1} and~\ref{ssec:extension}.

\subsection{Setup} \label{ssec:setup}

We fix a $4$-manifold $M$, a smooth compact manifold $K$ of arbitrary dimension, and a $K$-parametric family of formal Engel structures
$$ \SW_k\subset\SD_k\subset \SE_k\subset TM, \qquad k \in K. $$
When we say that this triple is Engel we mean that it is a formal Engel structure arising from a genuine Engel structure. Often we will just write $\SD_k$ for the formal Engel structure, even if it is not genuine, to keep the notation less cluttered. 

The $h$-principle we want to show will be valid for overtwisted Engel structures. As such, we require that:
\begin{itemize}
\item the family $(\SD_k)_{k \in K}$ has a certificate of overtwistedness $\Delta \subset M \times K$.
\end{itemize}
Since the $h$-principle we want to prove is parametric and relative in the parameter and the domain, we also assume that:
\begin{itemize}
\item there is a CW-complex $K' \subset K$ such that $\SD_k$ is Engel for all $k \in K'$,
\item there is a submanifold $U \subset M$ such that $(\SD_k)|_U$ is Engel for all $k$,
\item the manifold $M \setminus U$ is connected,
\item $\Delta$ is contained in $(M \setminus U) \times K$.
\end{itemize}
We denote $M_k = M\times \{k\}$. For ease of notation, we will write $V = (M \times K') \cup (U \times K) \subset M \times K$.

To deal with the parametric nature of the statement it is sometimes more convenient to view $(\SD_k)_{k\in K}$ as a formal Engel structure on the foliation $\SF_{M \times K}$ by fibers of $M \times K \lra K$. We use the notation $\SD_{M\times K}$ for the plane field on $M\times K$ which coincides with $\SD_k$ on each fibre $M_k$. The distributions $\SE_{M\times K}$ and $\SW_{M\times K}$ are defined analogously. We say that $\SD_{M\times K}$ is an Engel structure on an open subset $A$ of $M\times K$ if, for each $k$, $\SD_k$ is an Engel structure on $M_k\cap A$.

In this setting, the $h$-principle for overtwisted Engel structures is
\begin{theorem} \label{thm:mainPrime}
$\SD_{M\times K}$ is homotopic to a fiberwise Engel structure $\widetilde{\SD}_{M\times K}$ through a family of fibrewise formal Engel structures, relative to $V$ and $\Delta$.
\end{theorem}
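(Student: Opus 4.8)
The plan is to prove Theorem~\ref{thm:mainPrime} by a skeleton-by-skeleton induction in which the model structures of Subsection~\ref{ssec:modelStructures} play the role of local normal form and the overtwisted disc supplies the twisting needed to kill the only genuine obstruction; the details occupy Subsections~\ref{ssec:extension1} and~\ref{ssec:extension}. First I would fix a sufficiently fine smooth triangulation of $M$ having $U$ as a subcomplex and feed it, together with the CW-structure of $K$ relative to $K'$, into the standard parametric $h$-principle machinery \cite{em}. Starting the induction on the open set where $\SD_{M\times K}$ is already fibrewise Engel — that is, on $\Op(V) \cup \Op(\Delta)$ — this reduces the theorem to the following local problem: given a ball $B = \D^4$ of the triangulation, a parameter cell $P$, and a fibrewise formal Engel structure on $B \times P$ that is fibrewise Engel near $(\partial B \times P) \cup (B \times \partial P)$ (together with whatever part of $\Delta$ meets $B \times P$), homotope it relative to that region to a fibrewise Engel structure, knowing that an overtwisted disc is present somewhere in the ambient manifold.

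The second step is to bring an overtwisted disc to where it is needed. Since $M \setminus U$ is connected, I would join $B$ to the certificate $\Delta$ by an arc in $M \setminus U$ and invoke Lemma~\ref{lem:selfReplication2}: this produces, by a homotopy relative to $\Delta$ and to $V$, a fresh copy $\Delta^1$ of the overtwisted disc lying in $\Op(B) \setminus B$, after which the original certificate is never modified again. With $\Delta^1$ sitting next to $B$, Proposition~\ref{prop:reduction} then allows one to homotope the formal data over $B \times P$, relative to the locus where it is already Engel, into a fibrewise \textbf{model structure} $M(I,J,f_-,f_+,c)$ in the sense of Definition~\ref{def:model Engel structure}, whose Engel locus is exactly $\{\partial_w c > 0\}$ and which is already solid near its vertical, top, and bottom boundary. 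The remaining task is one-dimensional in flavour: deform the angular function $c$, through model structures and relative to the boundary of the model, so that $\partial_w c > 0$ holds everywhere.

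Here the overtwisted disc enters decisively. By Lemma~\ref{lem:Bolzano} it suffices to make the height function $c(\cdot,b) - c(\cdot,a)$ strictly positive at every point of the model; one does nothing where it is already positive and must \emph{increase} it on the complementary region. This I would achieve by threading transverse $2$-tori — carried by short arcs running from $B$ into $\Delta^1$ and produced by the constructions of Section~\ref{sec:surfaces} — across the model and performing Engel--Lutz twists along them, parametrically in $P$ via Remark~\ref{contractible choice in Engel Lutz}. Each such twist adds a definite positive amount to the winding of $\SD_c$ in the $w$-direction, hence to the height function, and is supported away from the boundary of the model; by Corollary~\ref{cor:homotopyChange2} it leaves the formal class unchanged, and — inserting the twists in pairs as in Proposition~\ref{p:double twist} whenever the bookkeeping requires it — the formal class is preserved throughout. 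After enough twists the height function is everywhere positive and Lemma~\ref{lem:Bolzano} completes the local step; iterating over all cells of $M \times K$ proves the theorem.

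The hard part is the last step. One must arrange the auxiliary transverse tori and their Engel--Lutz twists so that the additional twisting is supported \emph{precisely} on the region where the height function fails to be positive, without disturbing the already-Engel boundary of the model, the region $V$, or the certificate $\Delta$, and so that the cumulative effect on the formal class is trivial — all of this in families over $P$ and relative to $\partial P$. Making these partial, localised twists compatible with one another and with the model-structure normal form, while retaining control of the formal data, is where the technology of Sections~\ref{sec:surfaces}--\ref{sec:otDisc}, and in particular the self-replicating property of $\Delta_\OT$, is really used.
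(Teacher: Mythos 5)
There is a genuine gap, and it sits exactly at the step you describe as decisive: ``threading transverse $2$-tori across the model and performing Engel--Lutz twists along them''. An Engel--Lutz twist is a \emph{modification} of the structure, not a homotopy through (formal) Engel structures. In the non-parametric case this is harmless, because one only needs the final genuine Engel structure to be \emph{formally} homotopic to the original relative to the boundary of the shell --- this is precisely Proposition~\ref{prop:nonParametric}, and Corollary~\ref{cor:homotopyChange2} supplies the formal comparison. But in the parametric case the homotopy must be constant for $k\in\Op(K')$ and must agree with the given family near the parameter values where the shell is already solid; if for interior parameters you have inserted Lutz twists and for boundary parameters you have not, you must interpolate, i.e.\ you would need an \emph{Engel homotopy that adds Engel torsion}. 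Whether such a homotopy exists is exactly the open question flagged at the end of Subsection~\ref{ssec:nonParametric} (an affirmative answer would prove the unrestricted $h$-principle). Your appeals to Corollary~\ref{cor:homotopyChange2} and to the double twist of Proposition~\ref{p:double twist} only control the homotopy class of the Engel framing, which is not the obstruction; the obstruction is that the twist itself is not a deformation and cannot be cut off in the parameter direction.

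The paper circumvents this by never \emph{creating} torsion: it \emph{transports} the torsion already present in the certificate. After the replication step (which you do use) one constructs, parametrically, a connection from the shell to the replicated disc (Proposition~\ref{prop:connections exist}, which needs Lemma~\ref{lem:BEM} and the contractibility condition (iv) of Proposition~\ref{prop:reduction} --- a point your arc-choosing step ignores in families), and then homotopes the \emph{core} of the replicated overtwisted disc through curves transverse to the untwisted structure $\SL^{-1}(\SD)$ (Lemma~\ref{lem:hPrincipleTransverseCurves}, Propositions~\ref{prop:homotopiesCoreOTDisc} and~\ref{prop:homotopiesOTDisc}), dragging its Lutz tube until it lands on a twist system inside the bottom of the shell. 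The quantitative condition $L>2/\tau_0$ in Definition~\ref{def:otDisc}, via Lemma~\ref{lem:scalingProfile}, is what makes this dragging possible through transverse surfaces, and the whole move is an honest homotopy that can be cut off both in the domain and in the parameter. A further point your sketch glosses over: a single disc has a fixed, thin profile, so one Lutz tube cannot increase the height function over the whole region where it fails to be positive; this is why the paper stacks many translated cylinders (the twist system), inductively transfers their turning onto the shelves $L_j$ using property (B) of the universal twist system, and finally flows a thickening of the last cylinder along the Legendrian marking to obtain a transverse hypersurface whose shadow covers the bad region, before invoking Lemma~\ref{lem:Bolzano} (Proposition~\ref{prop:extension}, Corollary~\ref{cor:parametricExtension}). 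Without these mechanisms the final step of your plan does not go through.
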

Note that the parametric and relative nature of the statement yields the analogous result when $K$ is a CW-complex (as stated in Theorem \ref{thm:main}).

In order to make quantitative statements on angles and distances, we fix Riemannian metrics on $M$ and $K$.

\subsection{Reductions using standard $h$-principle methods}  \label{sec:reduction}

In this section we reduce the proof of Theorem~\ref{thm:mainPrime} to an extension problem given in a standardized form. To describe this extension problem, we use the language of model structures (Subsection \ref{ssec:modelStructures}), which we recall briefly.

A model structure $M(I,J,f_-,f_+,c)$ is a formal Engel structure with domain 
$$ 
\{y\in I, x\in [0,1], f_-\le z\le f_+, w \in J\}
$$ 
where $I$ is a $1$-manifold and $J$ is an interval. The functions $f_-$, $f_+$, and $c$ define a formal Engel structure with flag
\begin{align*}
\SW&=\langle \partial_w\rangle \\
\SD_c & = \SW\oplus\left\langle \cos(c)\partial_z+\sin(c)\left(\cos(z)\partial_x+\sin(z)\partial_y\right) \right\rangle \\
\SE & = \SW\oplus\mathrm{ker}(\cos(z)dy-\sin(z)dx).
\end{align*} 
A simple computation shows that $\SD_c$ is an Engel structure with the correct formal class if and only if $\partial_w c>0$ everywhere.
\begin{definition} \label{def:shell}
Let $\varepsilon < 0$ be a small constant and $K_0$ a smooth manifold, possibly with boundary. A family of model structures 
$$ B_k = M(I,[-\varepsilon,1],-\varepsilon,f_{k,+},c_k) $$
with $k\in K_0$ satisfying
\begin{itemize}
\item $\partial_w c_k > 0$ if $k \in \partial K_0$,
\item $\partial_w c_k > 0$ on a $2\varepsilon$-neighborhood of the boundary of the model,
\item $c_k(y,x,z,w) \equiv \arcsin(w)$ if $w \in [-\varepsilon,\varepsilon]$, and
\item $c_k(y,x,z,w) > -\pi$
\end{itemize}
is a \textbf{shell} if $I$ is a closed interval, and a \textbf{circular shell} if $I = \NS^1$. 
\end{definition}

The following proposition is the main result of this subsection.
\begin{proposition} \label{prop:reduction}
There is a $K$-family of formal Engel structures
\[ \SW_{M\times K}' \subset \SD_{M\times K}' \subset \SE_{M\times K}' \]
satisfying the following conditions:
\begin{itemize} \label{(i)-(iv)}
\item[(i)] The family is homotopic to the original formal data $\SW_{M\times K}\subset \SD_{M\times K}\subset \SE_{M\times K}$, relative to $\Delta$ and $V$.
\item[(ii)] $\SE_{M\times K}'$ is a $K$-family of even-contact structures whose family of characteristic line fields is $\SW_{M\times K}'$.
\item[(iii)] $\SD_{M\times K}'$ is an Engel structure $($with associated even-contact structure $\SE_{M\times K}')$ in the complement of a finite collection of shells $\{B_i \subset M \times K \setminus (V \cup \Delta)\}_i$.
\item[(iv)] For every subset of indices $\{i_j\}_j$, the intersection 
$$
\bigcap_j B_{i_j}\big/\SF_{M \times K}
$$ 
is either contractible or empty. 
\end{itemize}
\end{proposition}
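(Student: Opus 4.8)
The plan is to follow the standard scheme for reducing an $h$-principle to a cell-by-cell extension problem, adapted to the leafwise setting: first make the even-contact part genuine, then encode the remaining formal datum as the angular function of a model structure, and finally extend the Engel condition over the cells of a fine triangulation, collecting the finitely many obstructions into shells.

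\emph{Step 1: genuine even-contact structure.} The pair $(\SE_{M\times K},\SW_{M\times K})$ — a fiberwise hyperplane field together with a chosen line subfield and the tautological isomorphism $\det(\SE/\SW)\cong TM/\SE$ — is exactly a formal even-contact structure along $\SF_{M\times K}$. The differential relation defining even-contact structures is open and ample, hence it satisfies the full $h$-principle: parametric, relative, and $C^{0}$-dense (see, e.g., \cite{em}); moreover this $h$-principle can be set up so that the Cauchy characteristic of the genuine structure lies in the prescribed homotopy class of line fields. Applying it fiberwise, relative to $V$ (where $\SE_k$ is already even-contact, being induced by the Engel structure $\SD_k$) and to $\Delta$, produces a homotopy to a $K$-family of genuine even-contact structures $\SE'_{M\times K}$ with kernel foliation $\SW'_{M\times K}$, carrying $\SD$ along as $\SW'\subset\SD'\subset\SE'$. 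This yields conditions (i) and (ii); by the discussion preceding Definition~\ref{def:model Engel structure} the second isomorphism in \eqref{e:canonicalIsos} is now tautological, so the only remaining formal datum is the isomorphism $\det\SD'\cong\SE'/\SW'$.

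\emph{Step 2: reduction to the angular function.} Every even-contact structure is, semi-locally and parametrically, isomorphic to the model $\langle\partial_w\rangle\oplus\ker(\cos z\,dy-\sin z\,dx)$ of Subsection~\ref{ssec:modelStructures}, with $\partial_w$ spanning the Cauchy characteristic. Choose a locally finite cover of $M\times K\setminus\Op(V\cup\Delta)$ by such model charts with $w$-range $[-\varepsilon,1]$, arranged compatibly with a fine fiberwise triangulation $\mathcal{T}$ of the corresponding local leaf spaces. In each chart the plane field $\SD'$, sitting between $\SW'$ and $\SE'$, is a line in $\SE'/\SW'$ and hence is recorded by an angular function $c$ as in \eqref{e:Engel on shell}. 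A first homotopy, supported away from $V\cup\Delta$ and from the faces $\{w=-\varepsilon\}$, turns $c$ into a genuine function whose $w$-derivative matches the prescribed isomorphism on $\Op(V\cup\Delta)$, where we already have $\partial_w c>0$; a further normalization near the band $w\in[-\varepsilon,\varepsilon]$ — unobstructed because $\partial_w c>0$ there — puts $c$ into the form $c\equiv\arcsin(w)$ on that band and arranges $c>-\pi$.

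\emph{Step 3: extension and shells.} Now extend $\partial_w c>0$ skeleton by skeleton over $\mathcal{T}$. Over a cell $\sigma$ already handled on $\partial\sigma$, the chart is a model structure $M(I,[-\varepsilon,1],-\varepsilon,f_+,c)$ that is Engel near the model boundary; if its height function $c(\cdot,1)-c(\cdot,-\varepsilon)$ is everywhere positive, Lemma~\ref{lem:Bolzano} homotopes it, relative to the boundary, to one with $\partial_w c>0$ over all of $\sigma$. If not, we leave $c$ unchanged over $\sigma$: by construction the model structure over a small thickening of $\sigma$ is then a shell $B_\sigma$ in the sense of Definition~\ref{def:shell} (with $K_0=\sigma$), since $\partial_w c>0$ holds over $\partial K_0$ and near the model boundary, $c\equiv\arcsin(w)$ on $\{|w|\le\varepsilon\}$, and $c>-\pi$. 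Working over a compact exhaustion and with $\mathcal{T}$ finite on the relevant region, this produces (iii) with finitely many shells. For (iv): the shells are indexed by cells $\sigma$ of $\mathcal{T}$, and each chart over $\sigma$ is taken to be the $\SW'$-saturation of $\sigma$ inside a single local leaf space, so $\bigcap_j B_{\sigma_j}/\SF_{M\times K}$ is the intersection $\bigcap_j\sigma_j$, again a cell of $\mathcal{T}$ (hence contractible) or empty.

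\emph{Main obstacle.} The conceptual input — the even-contact $h$-principle — is off the shelf; the real work is the bookkeeping in Step 3. One must choose the cover by model charts, the triangulation $\mathcal{T}$, and the $w$-coordinates simultaneously so that the resulting shells satisfy Definition~\ref{def:shell} on the nose, so that the extension over the boundary of each new cell can be performed relative to that boundary (so Lemma~\ref{lem:Bolzano} applies), and so that multi-intersections of shells descend to cells of $\mathcal{T}$ — all while keeping the construction relative to $V$ and $\Delta$ and parametric in $K$. The non-compactness of the leaves of $\SW'$ is harmless, since every shell lives in a bounded $w$-band; the delicate point is maintaining the invariant that the Engel region always contains a neighborhood of the boundary of the next cell to be treated.
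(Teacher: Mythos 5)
Your overall scheme (make the even-contact pair genuine first, then encode $\SD'$ by angular functions over a fine decomposition, extend $\partial_w c>0$ cell by cell via Lemma~\ref{lem:Bolzano}, and package the leftover cells as shells) is indeed the paper's strategy, but the decomposition you choose breaks down exactly at the points you defer to ``bookkeeping''. The most serious gap is condition (iv). In the statement, $\SF_{M\times K}$ is the foliation by the fibres $M_k$, so $B_i/\SF_{M\times K}$ is the projection of the shell to the parameter space $K$ (the set $K_i\subset K$ later fed into Lemma~\ref{lem:BEM}), not its image in a local leaf space of $\SW'$. The shells themselves are pairwise disjoint, so the whole content of (iv) is that the subsets $K_{i}\subset K$ have contractible-or-empty multiple intersections; these subsets do overlap, and they come from cells living in different charts, so your identification ``$\bigcap_j B_{\sigma_j}/\SF_{M\times K}=\bigcap_j\sigma_j$, a cell of $\mathcal{T}$'' neither computes the right object nor controls it. This is precisely what the paper's machinery of iterated subdivisions with finitely many simplex shapes, convexity of the shrunken simplices in \emph{every} chart of the fixed finite cover, and convexity of their projections to $K$, is built to deliver; none of that is replaced by anything in your argument.

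The second gap is that your charts (leaf-space cell $\sigma$ times a fixed $w$-interval $[-\varepsilon,1]$) do not produce shells in the sense of Definition~\ref{def:shell}. The skeleton-wise induction only makes the structure Engel over the vertical boundary $\partial\sigma\times[-\varepsilon,1]$, and the band normalization handles the bottom; nothing forces $\partial_w c>0$ near the ceiling $\{w=1\}$, nor for every $k\in\partial K_0$ (and note $K_0$ must be a subset of $K$, not the cell $\sigma$ itself; in the paper this condition holds because the simplices are in general position with respect to $\SF_{M\times K}$, so the slices $\sigma_k$ degenerate to points over $\partial K_\sigma$, Lemma~\ref{lem:simplexToShell}). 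Likewise ``arranges $c>-\pi$'' is not a normalization: the angular lift leaves $(-\pi,\pi)$ as soon as $\SD'$ performs a full projective turn along a leaf of $\SW'$ inside the chart, and your justification ``unobstructed because $\partial_w c>0$ there'' is unfounded, since near $w=0$ the structure is still only formal at that stage. The paper deals with all of this at once by triangulating the \emph{total space} with adapted simplices -- general position with respect to $\SW,\SD,\SE,\SF_{M\times K}$ plus the ``less than one projective turn'' condition (which, as Figure~\ref{b:sufficiently fine} warns, is an extra requirement) -- achieving the Engel condition on a uniform neighborhood of the codimension-$1$ skeleton, and then building each shell by flowing the bottom faces of a shrunken top simplex along $\SW$, so that the \emph{entire} shell boundary lies in the Engel region and $c_k$ automatically takes values in $(-\pi,\pi)$. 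Finally, for non-compact $M$ your appeal to a compact exhaustion does not by itself give finitely many shells; the paper first applies Gromov's $h$-principle for the open Engel relation to concentrate all failure into finitely many balls near $\Delta$ (which also yields the orientability used later), and only then runs the triangulation argument.
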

Condition (iv)  will be used in Subsection \ref{ssec:extension1}, Proposition \ref{prop:connections exist}, to connect each ball $B_i$ with the overtwisted disc.

The rest of the subsection is dedicated to the proof of Proposition \ref{prop:reduction}. The result will be achieved in several steps, none of which use the overtwistedness of $\SD_{M\times K}$.\footnote{The proof is relatively technical but nonetheless standard. In particular, we introduce notation whose sole purpose is establishing claim (iv). The proof of Theorem~\ref{thm:main} can be understood  treating Proposition \ref{prop:reduction} as a black box.}

\subsubsection{Reduction to balls and genuine even-contact structures}

The Engel condition is open and $\mathrm{Diff}$-invariant as a relation in the space of $2$-jets of plane fields on $M$. Therefore, Gromov's $h$-principle (see \cite[Proposition 7.2.3]{em}) for open, $\mathrm{Diff}$-invariant relations on open manifolds implies that the given formal Engel structure is homotopic to a genuine Engel structure in the complement of a family of balls $(D_k)_{k\in K}$ disjoint from $V$ and $\Delta$. These balls can be assumed to vary smoothly with $k$ (for instance, by placing them in a neighborhood of $\Delta$). This $h$-principle is also parametric and relative, i.e. we do not modify the distributions on $V$ and $\Delta$. 

We may consider the neighborhoods $\Op(\Delta_k)$ that contain $D_k$ as a (smoothly) trivial fibration over $K$ with $\D^4$ fibres. This allows us to assume, for the rest of the proof, that the manifold $M$ is just $\D^4$ (so it is in particular compact) and that the formal Engel structure is a genuine Engel structure near $\partial\D^4$. It follows that all distributions are orientable and coorientable.

According to \cite[Proposition 7.2]{mcd} (see also \cite[Section 10.4]{em}) the formal even-contact structure $(\SE_k,\SW_k)$ is homotopic to a family of honest even-contact structures $(\widetilde{\SE}_k,\widetilde{\SW}_k)$. By this we mean that $\widetilde{\SE}_k$ is a smooth family of even-contact structures whose characteristic foliations are $\widetilde{\SW}_k$. The relative nature of this $h$-principle allows us to assume that this homotopy is constant on $V$ and $\Delta$. 

From now on, we assume that $(\SE_k,\SW_k)$ is an even-contact structure containing $\SD_k$ for all $k\in K$. This even-contact structure will change in the course of the argument. We have achieved condition (ii) from Proposition \ref{prop:reduction}, condition (i) is satisfied by all homotopies introduced so far.

\subsubsection{Adapted triangulations} \label{ssec:triangulation}

A basic ingredient in our constructions is a triangulation $\ST$ of $M\times K$ which is adapted to several distributions on $M\times K$. This means that every simplex $\sigma$ is so small that each distribution is almost constant with respect to the affine coordinates $\sigma\simeq\{x_0+\ldots+x_n=1 \textrm{ and }x_i\ge 0\}\subset\R^{n+1}$. The following definitions are minor adaptations of definitions from Thurston's paper \cite{th}.
  
\begin{definition} \label{d:general position}
Let $N$ be a manifold of dimension $n$ and $\xi$ a smooth distribution of codimension $q$. A top-dimensional simplex $\sigma \subset N$ is in {\bf general position} with respect to $\xi$ if, in the coordinates provided by $\sigma$ and for all points $p\in \sigma$, the linear projection $\sigma\longrightarrow\R^n/\xi_p$ along $\xi_p$ maps each $q$-dimensional subsimplex of $\partial\sigma$ to a non-degenerate simplex of $\R^q\simeq\R^n/\xi_p$.

A triangulation $\ST$ of $N$ is in general position with respect to $\xi$ if every top-dimensional simplex is in general position.
\end{definition}
In particular, being in general position guarantees that all simplices are transverse to $\xi$ in the sense that the intersection of the tangent space of each simplex with $\xi$ has minimal dimension everywhere.

\begin{definition} \label{d:adapted triang}
A top-dimensional simplex $\sigma \subset M \times K$ is {\bf adapted} to $\SW_{M\times K} \subset \SD_{M\times K}\subset\SE_{M\times K}$ if the following conditions are satisfied:
\begin{itemize}
\item[i.] $\sigma$ is in general position with respect to the foliation $\SF_{M \times K}$ by fibres of $M\times K \lra K$,
\item[ii.] $\sigma$ is in general position with respect to the distributions $\SW_{M\times K}$, $\SD_{M\times K}$, and $\SE_{M\times K}$,
\item[iii.] the plane field $\SD_{M\times K}|_\sigma$ describes less than one projective turn with respect to the line field $\SW_{M\times K}|_\sigma$.
\end{itemize}  

A triangulation $\ST$ is adapted, if:
\begin{itemize}
\item it is a triangulation of the pair $(M\times K, \Op(V \cup \Delta))$, where $\Op(V \cup \Delta)$ is an Engel neighborhood of $V \cup \Delta$,
\item every top simplex $\sigma \in \ST$ is adapted.
\end{itemize} 
\end{definition}
By a triangulation of $(M\times K, \Op(V \cup \Delta))$ we mean that there is a triangulation $\ST_\partial$ of $\partial(\Op(V \cup \Delta))$ which $\ST$ extends to the rest of $(M\times K) \setminus \Op(V \cup \Delta)$.

\begin{remark}
Condition (iii) for an adapted simplex should be understood in terms of the development map: We defined it for Engel structures in Equation \eqref{e:development map} p.~\pageref{e:development map}, and this definition extends to the present formal setting. The only difference with the Engel case is that now the development maps of $\SD_{M\times K}|_\sigma$ are not immersions in general (as that would imply that $\SD_{M\times K}|_\sigma$ is Engel). Condition (iii) is then equivalent to the assumption that the (oriented) development maps do not take antipodal values. This does not follow from Condition (ii), since the leaves of the kernel foliation may still twist around each other, as indicated in Figure~\ref{b:sufficiently fine}.
\end{remark}
 
\begin{figure}
\begin{center}
\includegraphics[scale=1.5]{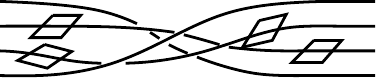} 
\caption{A plane field tangent to the line field shown in the image and also containing some fixed coordinate direction is $C^0$-close to a foliation and, as such, may be in general position. However, its development map might perform a full turn.}
\label{b:sufficiently fine}
\end{center}
\end{figure}

Let us introduce some notation:  We will write $\SW_\sigma\subset\SD_\sigma\subset\SE_\sigma$ for the restriction of the formal Engel structure to a top-dimensional simplex $\sigma$. If $\sigma$ is adapted, the subsimplices of $\sigma$ are transverse to $\SW_\sigma$, $\SD_\sigma$, and $\SE_\sigma$. In particular, a codimension-$1$ simplex is never tangent to $\SW_\sigma$, intersects $\SD_\sigma$ in a line field, and $\SE_\sigma$ in a plane field. 

Fix auxiliary orientations of $\SW_\sigma$ and $\SD_\sigma$. Then, we write $\sigma_-$ for the union of those faces where $\SW_\sigma$ points into $\sigma$. Similarly, the line field $T\sigma_- \cap \SD_\sigma$ divides the boundary of $\sigma_-$ into the regions $\partial_-\sigma_-$ and $\partial_+\sigma_-$, depending on whether it points inwards or outwards, respectively. Both $\partial_\pm \sigma_-$ are codimension-$2$ complexes and homeomorphic to closed balls.

Given a subset of $M \times K$, in this case $\sigma$, we write $K_\sigma$ for the subset of those $k\in K$ such that $M_k$ meets $\sigma$. For $k\in K_\sigma$ let $\sigma_k=\sigma\cap M_k$ and $\sigma_{k,-}=\sigma_-\cap M_k$. When $\sigma$ is adapted, $\SE_\sigma$ induces a contact structure $\xi_k$ on $\sigma_{k,-}$. This contact structure contains the line field $\SH_k = T\sigma_{k,-} \cap \SD_\sigma$.

Figure~\ref{b:sigma notation} illustrates the various parts of $\sigma$ in the non-parametric case: $\sigma$ is shown as a $3$-dimensional simplex and $\SW_\sigma$ corresponds to the vertical direction. The line field in the bottom face represents the imprint $\SH_k$ of $\SD_\sigma$ on $\sigma_-$.

\begin{figure}[htb] 
\begin{center}
\includegraphics[scale=1.0]{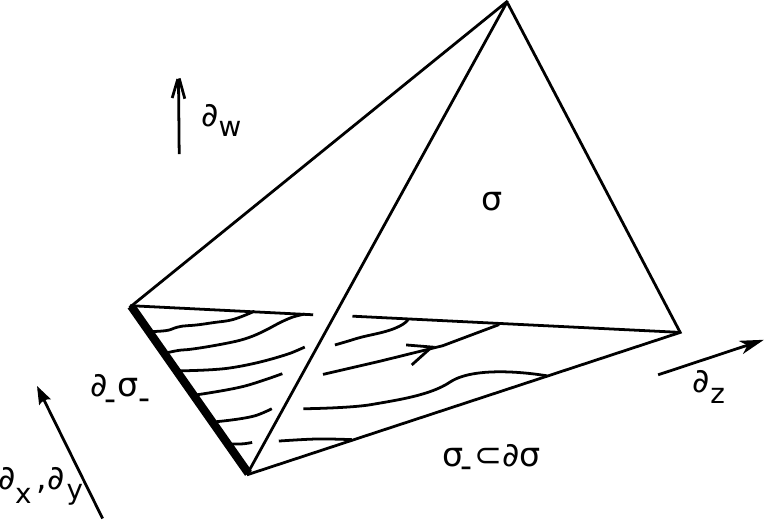}
\caption{Various parts of $\sigma$. The thickened line represents $\partial_-\sigma_-$. The coordinate directions shown are provided, roughly, by the identification of $\sigma$ with a model structure (Section~\ref{ssec:modelStructures}); this will be proven in Subsection~\ref{ssec:shells}.}
\label{b:sigma notation}
\end{center}
\end{figure}
Some immediate properties of adapted simplices are summarized in the next lemma:
\begin{lemma} \label{lem:simplexToShell}
Let $\sigma$ be an adapted simplex. Then:
\begin{itemize}
\item the set $K_\sigma$ is homeomorphic to a closed ball of dimension $\dim(K)$,
\item for every $k \in \partial(K_\sigma)$, $\sigma_k$ is a point,
\item for every $k \in \int(K_\sigma)$, $\sigma_k$ is a polyhedron homeomorphic to a closed $4$-ball, 
\item for every $k \in \int(K_\sigma)$, the union of the codimension-$1$ faces of $\sigma_k$ which are positively transverse to $\SW_{M\times K}$ is homeomorphic to a closed $3$-ball. The same is true for the negatively transverse faces.
\end{itemize}
\end{lemma}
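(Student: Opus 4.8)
\emph{Proof plan.} The plan is to reduce all four statements to elementary convex geometry. Set $q=\dim K$ and $n=4+q=\dim\sigma$, and let $\pi_K\colon M\times K\to K$ be the projection, so $\SF_{M\times K}=\ker(d\pi_K)$ and, since $\SD_{M\times K}$ is fibrewise, $\SW_{M\times K}\subset\SD_{M\times K}\subset\SE_{M\times K}\subset\SF_{M\times K}$. Being adapted (Definition~\ref{d:adapted triang}), $\sigma$ is small, so I would work in a product chart identifying a neighbourhood of $\sigma$ with an open set of $\R^4\times\R^q$ in which $\pi_K$ is the linear projection onto the second factor, $\sigma$ is $C^1$-close to an affine $n$-simplex $\operatorname{conv}(v_0,\dots,v_n)$, and the flag $\SW_{M\times K}\subset\cdots\subset\SF_{M\times K}$ is $C^0$-close to a constant one; equivalently, one straightens $\sigma$ and the flag by a diffeomorphism respecting $\pi_K$. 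All four assertions concern the homeomorphism type of polyhedra cut out by transverse linear conditions and are stable under such small perturbations, so it suffices to treat the affine model. In that model, general position of $\sigma$ with respect to $\SF_{M\times K}$ says precisely that $\pi_K(v_0),\dots,\pi_K(v_n)\in\R^q$ are in general affine position, i.e. every $q+1$ of them are affinely independent.

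Granting this, the first bullet is immediate: $K_\sigma=\pi_K(\sigma)=\operatorname{conv}(\pi_K(v_0),\dots,\pi_K(v_n))$ is a $q$-dimensional convex polytope, hence homeomorphic to a closed $q$-ball. For the second bullet, let $k\in\partial K_\sigma$ lie in the relative interior of a proper face $F'$ of this polytope and pick a facet $F\supseteq F'$, written $F=K_\sigma\cap H$ for a supporting hyperplane $H\subset\R^q$. Then $\pi_K^{-1}(H)$ is a supporting hyperplane of $\sigma$, so $\pi_K^{-1}(H)\cap\sigma$ is a proper face $\tau$ of the simplex $\sigma$; comparing dimensions with the general position of the projected vertices forces $\tau$ to be a $(q-1)$-simplex spanned by exactly $q$ of the $v_i$, on which $\pi_K$ is injective. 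Since $\sigma_k=\pi_K^{-1}(k)\cap\sigma\subseteq\pi_K^{-1}(F)\cap\sigma=\tau$, the fibre $\sigma_k$ is a single point. For the third bullet, if $k\in\int(K_\sigma)$ then the affine $4$-plane $\pi_K^{-1}(k)=M_k$ meets the interior of $\sigma$, so $\sigma_k=M_k\cap\sigma$ is a bounded $4$-dimensional convex polyhedron, hence homeomorphic to a closed $4$-ball.

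For the last bullet I would use the upper/lower envelope picture. Restricting $\SW_{M\times K}$, with its fixed auxiliary orientation, to $M_k$ gives an oriented line field inside $M_k$, i.e. a direction $\ell$. The facets of $\sigma_k$ are the nonempty intersections with $M_k$ of the facets of $\sigma$; these are transverse to $\SW_{M\times K}$ by general position, and since a facet of $\sigma_k$ is flat, $\ell$ points strictly out of $\sigma_k$ along all of it, or strictly in along all of it. Thus the facets partition into ``positive'' and ``negative'' ones. Projecting $\sigma_k$ along $\ell$ onto $M_k/\SW_k\cong\R^3$ produces a $3$-dimensional convex polytope $Q$, and the union of the positive facets is exactly the graph over $Q$ of the concave piecewise-linear ceiling function, hence homeomorphic to $Q$, so to a closed $3$-ball; the union of the negative facets is the graph of the convex floor function and is likewise a closed $3$-ball. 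This is the last assertion.

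The step I expect to require genuine care, rather than routine checking, is the reduction in the first paragraph: making precise that ``adapted'' supplies exactly the transversality and the smallness for which the homeomorphism types above are insensitive to the $C^1$-discrepancy between $\sigma$ and its linearization — equivalently, that the cross-sections $\sigma_k$, the two envelopes, and the image $K_\sigma$ deform through homeomorphisms as one passes from the affine model to the true $\sigma$. Once that reduction is in place, everything is the elementary convex geometry sketched above.
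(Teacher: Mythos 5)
Your argument is correct, and in fact the paper offers no proof at all of Lemma~\ref{lem:simplexToShell} — it is stated as a list of ``immediate properties'' of adapted simplices — so the convex-geometry argument you give is exactly the kind of fleshing-out the authors had in mind: general position with respect to $\SF_{M\times K}$ translates into general affine position of the projected vertices, and the four bullets then follow from projections of convex polytopes, fibres over boundary points of the image, and the ceiling/floor decomposition of $\partial\sigma_k$ with respect to the (nowhere vertical) direction $\SW$. The one step you flag as delicate, the reduction to the affine model, is actually easier than you fear in the paper's setting: by Lemma~\ref{lem:adapted triang} every top-dimensional simplex of the triangulations actually used is affine (a rescaled translate of a model simplex) inside a chart $U_i$ that is simultaneously a foliation chart for $\SF_{M\times K}$ and a flowbox for $\SW_{M\times K}$, so both $\SF_{M\times K}$ and $\SW_{M\times K}$ are \emph{constant} in those coordinates and no perturbation-stability argument is needed — only $\SD$ and $\SE$ fail to be constant, and they do not enter any of the four bullets. (For a literally arbitrary adapted simplex in the sense of Definition~\ref{d:adapted triang} one would instead invoke the frozen-at-every-point general position of Definition~\ref{d:general position} together with the smallness of the simplex, which is the routine $C^1$-stability argument you sketch.) Two small points worth recording if you write this up: a codimension-$1$ face of $\sigma_k$ can only arise from a facet of $\sigma$ (general position of the projected vertices forces $\dim(F\cap M_k)=\dim F-\dim K$ for every face $F$ with $\dim F\geq\dim K$), and the absence of vertical facets of $\sigma_k$ is what guarantees that the positive and negative facets are exactly the graphs of the ceiling and floor functions over all of $Q$, meeting only over $\partial Q$.
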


\subsubsection{Sequences of triangulations and coverings}

Conditions (i), (ii), and (iii) in Proposition \ref{prop:reduction} can be achieved by modifying the development map of $\SD_{M \times K}$ in the vicinity of the codimension-$1$ skeleton of an adapted triangulation. This is the content of Lemma \ref{lem:reduction}. It yields a structure $\SD_{M \times K}'$ that is Engel in the complement of finitely many balls, each of which is obtained by slightly shrinking one of the top-dimensional simplices of the triangulation. However, condition (iv) does not follow immediately from this argument. To achieve it, we will need to consider an infinite sequence of triangulations, each of which is a subdivision of the previous one. For a sufficiently fine subdivision condition (iv) is satisfied. This will be shown in the upcoming lemmas. 

The first auxiliary lemma constructs the triangulations.
\begin{lemma} \label{lem:adapted triang}
There is
\begin{itemize}
\item a finite cover $\{U_i\}$ of $(M \times K) \setminus \Op(V \cup \Delta)$ by balls that are simultaneously flowboxes of $\SW_{M \times K}$ and foliation charts of $\SF_{M \times K}$,
\item an infinite sequence $\{\ST_j\}_{j=0}^\infty$ of adapted triangulations, and
\item a finite list of model top-dimensional simplices $\{\tau_k \subset \R^{\dim(M \times K)}\}$,
\end{itemize}
with the following properties.
\begin{itemize}
\item $\ST_{j+1}$ is a subdivision of $\ST_j$.
\item Every top-dimensional simplex $\sigma$ of a triangulation $\ST_j$ is adapted and contained in some $U_i$. In the coordinates provided by $U_i$, $\sigma$ agrees with one of the model simplices $\{\tau_k\}$ up to rescaling and translation.
\end{itemize} 
\end{lemma}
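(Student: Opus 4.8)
The plan is to build everything locally and then patch together, following Thurston's jiggling philosophy.

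\textbf{Step 1: The cover.} First I would choose, for each point $p \in (M \times K) \setminus \Op(V \cup \Delta)$, a ball $U_p$ that is simultaneously a flowbox for $\SW_{M \times K}$ and a foliation chart for $\SF_{M \times K}$; such a ball exists because $\SW_{M \times K} \subset \SE_{M \times K} \subset T(\SF_{M \times K})$, so the two foliations are "nested" (lines inside fibre directions) and one can straighten $\SF_{M \times K}$ first and then $\SW_{M\times K}$ inside each plaque. Since the closure of $(M \times K) \setminus \Op(V \cup \Delta)$ is compact (recall after the first reduction $M$ may be taken to be $\D^4$), extract a finite subcover $\{U_i\}$. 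On the overlaps with $\partial(\Op(V\cup\Delta))$ we fix a triangulation $\ST_\partial$ once and for all, as required by the notion of an adapted triangulation.

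\textbf{Step 2: One adapted triangulation.} Using the coordinates on each $U_i$, I would take a standard triangulation of Euclidean space (say the Freudenthal/Kuhn triangulation, whose top simplices come in finitely many affine types up to translation and the lattice symmetry), rescale it by a small factor $\varepsilon$, and, via a partition of unity / Thurston's combinatorial argument, assemble these local pieces into a global triangulation $\ST_0$ of $(M\times K, \Op(V\cup\Delta))$ extending $\ST_\partial$. Because the $U_i$'s are coordinate charts in which $\SF_{M\times K}$, $\SW_{M\times K}$, $\SD_{M\times K}$, $\SE_{M\times K}$ are each $C^0$-close to constant distributions, for $\varepsilon$ small enough every top simplex is in general position with respect to all four distributions (Definition~\ref{d:general position}) — this is exactly the "jiggling lemma" of Thurston, and it is where the finitely many model simplex types $\{\tau_k\}$ enter: general position is an open condition on the affine type of the simplex relative to the (almost constant) distribution, so it suffices to check it for each model type. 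For condition (iii) (less than one projective turn of $\SD_{M\times K}$ along $\SW_{M\times K}$), I would shrink $\varepsilon$ further: over a simplex of diameter $O(\varepsilon)$ the development map of $\SD_{M\times K}|_\sigma$ moves by $O(\varepsilon)$ in $\mathbb{P}(\SE/\SW)$, hence cannot make a full turn. Each top simplex then lies in a single $U_i$ and, in its coordinates, is a translated/rescaled copy of one of the $\tau_k$.

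\textbf{Step 3: The infinite sequence.} Finally I would produce $\ST_{j+1}$ from $\ST_j$ by barycentric (or, better, edgewise/standard) subdivision. The only thing to check is that subdivision preserves the adapted property: general position and the "less-than-one-turn" condition are inherited because a subsimplex of an adapted simplex has smaller diameter and the distributions are even closer to constant on it; and each new top simplex still lies in the $U_i$ containing its parent and is again affinely equivalent to one of the $\tau_k$ (choosing, once and for all, a subdivision scheme of Euclidean space with finitely many simplex types, e.g. the standard edgewise subdivision, guarantees the model list $\{\tau_k\}$ stays finite for all $j$). Iterating gives the sequence $\{\ST_j\}_{j=0}^\infty$ with $\ST_{j+1}$ a subdivision of $\ST_j$.

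\textbf{Main obstacle.} The genuinely delicate point is Step 2: globally assembling the local Euclidean triangulations into a single simplicial complex on $M\times K$ while keeping every simplex adapted and of one of finitely many affine types, and doing so compatibly with the pre-chosen boundary triangulation $\ST_\partial$. This is precisely Thurston's jiggling construction, and I would invoke it essentially verbatim from \cite{th}; the adaptation here is only that we must simultaneously jiggle into general position with respect to four distributions (one foliation and three plane/line fields) rather than one, which is harmless since general position is a finite intersection of open dense conditions. The turn condition (iii) is a genuinely new ingredient not present in \cite{th}, but it is handled by the elementary diameter estimate on the development map indicated above, so it costs us nothing beyond a further shrinking of the mesh.
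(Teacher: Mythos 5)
Your proposal is correct and follows essentially the same route as the paper's proof: a finite cover by simultaneous flowboxes of $\SW_{M\times K}$ and foliation charts of $\SF_{M\times K}$ obtained by compactness, Thurston's jiggling (applied simultaneously to all the distributions) to produce one adapted triangulation, and iterated subdivision by a self-similar crystalline/edgewise scheme with finitely many simplex shapes --- your passing mention of barycentric subdivision would indeed fail for exactly the reason you then correct, and the paper makes the same point --- with the less-than-one-turn condition handled by a mesh-size estimate. Nothing essential differs from the paper's argument.
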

\begin{proof}
There is a finite covering of $M \times K$ by foliation charts of $\SF_{M \times K} = \cup_{k \in K} M_k$ that are also flowboxes of $\SW_{M \times K}$; this follows from the compactness of $M \times K$. We fix a triangulation that is subordinate to this covering. We apply to it Thurston's argument \cite[Section 5]{th} of perturbing and carefully subdividing using a crystalline subdivision scheme. This yields only finitely many shapes of simplices up to rescaling for all subdivisions; note that this is not the case for the barycentric subdivision. Such a scheme is described in detail in \cite[p.~358]{whitney}. This method can be applied simultaneously to all the distributions involved. Let us provide some additional details. 

There are only finitely many shapes of simplices in the following sense: Once a first adapted triangulation $\ST_0$ has been found, each top-dimensional simplex $\sigma \in \ST_0$ is in general position with respect to $\SW_{M \times K}$ and $\SF_{M \times K}$. This implies that
\begin{itemize}
\item $\sigma$ can be identified with the standard simplex $\sigma'$ in $\R^{\dim(M \times K)}$,
\item under this identification $\SF_{M \times K}$ is mapped to a linear foliation which is transverse to all the simplices in the standard triangulation of the unit cube,
\item $\SW_{M \times K}$ is mapped to a linear line field which is transverse to all the simplices in the standard triangulation of the unit cube.
\end{itemize}
This identification can be extended to a diffeomorphism between a neighborhood $U \supset \sigma$ and a neighborhood of $\sigma'$. By construction, $U$ can be assumed to be a foliation chart of $\SF_{M \times K}$ and a flowbox of $\SW_{M \times K}$. Doing this for each top-dimensional simplex $\sigma_i \in \ST_0$ we find the covering of $M \times K$ by flowboxes/foliation charts $\{U_i\}$. Because $M\times K$ is compact, there are only finitely many of them.

Every subsequent triangulation $\ST_j$ is obtained from $\ST_0$ by subdividing each top-simplex $\sigma_i \in \ST_0$ in a standard manner in the coordinates of $\sigma_i$ (which are the same as the coordinates of $U_i$). This yields finitely many shapes independently of $j$ after sufficiently many subdivisions have been performed.
\end{proof}

For condition (iv) in Proposition \ref{prop:reduction} to hold, we need to achieve the Engel condition in a neighborhood of uniform size of the codimension--$1$ skeleton (that is, once the top-simplices are identified with the model simplices, the size of the neighborhood should not depend on $j$). For this purpose, we construct coverings adapted to the triangulations we are working with:
\begin{definition} \label{def:associatedCovering}
Let $\ST$ be an adapted triangulation, which we regard as a collection of simplices $\{\sigma\}$. A covering $\{\SU(\sigma)\}_{\sigma \in \ST}$ of $(M \times K) \setminus \Op(V \cup \Delta)$ is \textbf{associated} to $\ST$ if:
\begin{itemize}
\item[i.] Every simplex $\sigma$ is contained in the union $\cup_{\sigma' \subset \sigma} \SU(\sigma')$, where the union ranges over all the subsimplices of $\sigma$ (including $\sigma$ itself). 
\item[ii.] Each $\SU(\sigma)$ is both a flowbox of $\SW_{M \times K}$ and foliation chart of $\SF_{M \times K}$. In particular, the leaves of $\SW_{M \times K}|_{\SU(\sigma)}$ are connected non-degenerate intervals. The boundary of $\SU(\sigma)$ decomposes into its horizontal part $\partial^h\SU(\sigma)$, which is transverse to $\SW_{M \times K}$, and its vertical part $\partial^v\SU(\sigma)$, which is tangent to $\SW_{M \times K}$.
\item[iii.] $\SU(\sigma)$ and $\SU(\sigma')$ only intersect if one is a subsimplex of the other. Moreover, if  $\sigma' \subset \sigma$, then $\SU(\sigma)$ intersects the boundary of $\SU(\sigma')$ in its vertical part. 
\end{itemize}
\end{definition}
These properties will allow us to inductively achieve the Engel condition on $\SU(\sigma)$ for all simplices  $\sigma$ of positive codimension (see Lemma \ref{lem:reduction}).

In the following lemma we construct neighborhoods of the codimension-$1$ skeleton with uniform size.
\begin{lemma} \label{lem:uniformSize}
There are:
\begin{itemize}
\item coverings $\{\SU(\sigma)\}_{\sigma \in \ST_j}$ of $(M \times K) \setminus (V \cup \Delta)$ associated to each triangulation $\ST_j$, and
\item simplices $\{\tau_k' \subset \tau_k\}$ obtained from the model simplices $\{\tau_k\}$ by scaling with respect to the barycenter using a constant smaller than $1$,
\end{itemize}
such that, after identifying the top-dimensional simplex $\sigma \in \ST_j$ with $\tau_k$, 
$$
\partial\tau_k' \subset \bigcup_{\sigma' \subsetneq \sigma} \SU(\sigma').
$$
\end{lemma}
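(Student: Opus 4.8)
The plan is to do the whole construction once and for all on the finite list of model simplices, and then transport it to every $\ST_j$ using the affine identifications supplied by Lemma~\ref{lem:adapted triang}. Write $d = \dim(M \times K)$. Inside a fixed chart $U_i$ the distributions $\SW_{M \times K}$ and $\SF_{M \times K}$ become a \emph{linear} line field $W_k$ and a \emph{linear} foliation $F_k$ on the model simplex $\tau_k \subset \R^d$, both in general position with respect to $\tau_k$ and all of its faces. First I would build, inside a small open neighborhood of $\tau_k$ in $\R^d$, a family of open sets $\{\SU_k(\mu)\}$ indexed by the faces $\mu \subseteq \tau_k$, each of which is a flowbox of $W_k$ and a foliation chart of $F_k$, subject to: (a) $\tau_k \subset \bigcup_{\mu \subseteq \tau_k} \SU_k(\mu)$; (b) $\SU_k(\mu) \cap \SU_k(\mu') = \varnothing$ unless one of $\mu, \mu'$ is a face of the other, and whenever $\mu' \subsetneq \mu$ the set $\SU_k(\mu)$ meets $\partial\SU_k(\mu')$ inside its vertical part; (c) the transverse ``radius'' of $\SU_k(\mu)$, measured in the linear coordinates, depends only on $\dim\mu$. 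Conditions (a)--(b) are the standard thickening of a subcomplex, staggered by codimension so that neighborhoods of higher-codimensional faces are proportionally fatter; because $W_k$ and $F_k$ are linear and transverse to everything, flowboxes and charts here are just suitably sheared boxes, and (c) is arranged by fixing a single decreasing sequence $r_0 > r_1 > \dots > r_d > 0$, one radius per codimension, used uniformly for all $k$.

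Next I would transport. A simplex $\sigma \in \ST_j$ of codimension $c$ is a face of finitely many top-dimensional simplices $\hat\sigma_1, \dots, \hat\sigma_m$; each $\hat\sigma_r$ lies in a chart $U_{i(r)}$ in which it is an affine rescaling and translation of a model $\tau_{k_r}$, under which $\sigma$ corresponds to a face $\mu_r \subseteq \tau_{k_r}$ of codimension $c$. Pulling $\SU_{k_r}(\mu_r)$ back through these affine maps and taking the union over $r$ defines $\SU(\sigma)$. Verifying that this is legitimate is the crux: all of these pullbacks are honest flowboxes of the \emph{global} foliation $\SW_{M \times K}$ and honest foliation charts of the \emph{global} foliation $\SF_{M \times K}$, they all contain $\sigma$ and overlap precisely along the star of $\sigma$, and by (c) their transverse heights coincide, so the flowbox/chart coordinates match along the overlaps and the union is again a single flowbox of $\SW_{M \times K}$ and chart of $\SF_{M \times K}$. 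Conditions (i)--(iii) of Definition~\ref{def:associatedCovering} for $\{\SU(\sigma)\}_{\sigma \in \ST_j}$ then follow from (a)--(b), because the affine identifications preserve the face poset and the vertical/horizontal splitting of boundaries; and the $\SU(\sigma)$ cover $(M \times K) \setminus \Op(V \cup \Delta)$ since the simplices of $\ST_j$ do.

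For the scaling constant: condition (a), applied only to the \emph{proper} faces, shows that $\partial\tau_k$ is a compact subset of the open set $\bigcup_{\mu \subsetneq \tau_k} \SU_k(\mu)$, so there is $\varepsilon_k \in (0,1)$ such that the scaling $\tau_k'$ of $\tau_k$ towards its barycenter by the factor $1 - \varepsilon_k$ has $\partial\tau_k' \subset \bigcup_{\mu \subsetneq \tau_k} \SU_k(\mu)$. Since the list of model simplices is finite, $\varepsilon := \min_k \varepsilon_k$ does the job for all of them at once. For a top-dimensional $\sigma \in \ST_j$ identified with $\tau_k$, its proper subsimplices $\sigma' \subsetneq \sigma$ correspond bijectively to the proper faces $\mu \subsetneq \tau_k$, and $\SU(\sigma')$ contains the pullback of $\SU_k(\mu)$ by construction; transporting the inclusion for $\tau_k'$ gives $\partial\tau_k' \subset \bigcup_{\sigma' \subsetneq \sigma} \SU(\sigma')$, which is the assertion of the lemma.

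I expect the main obstacle to be exactly the gluing in the second step — confirming that the union of model neighborhoods pulled back from possibly different charts $U_i$ is genuinely a flowbox of $\SW_{M \times K}$ and a foliation chart of $\SF_{M \times K}$, rather than a more complicated open set. This is where it is essential that $\SW_{M \times K}$ and $\SF_{M \times K}$ are honest global foliations with a coherent transverse structure, and that the shapes $\tau_k$ together with the radii $r_0, \dots, r_d$ come from a \emph{finite} list independent of $j$: it is the finiteness of shapes produced by the crystalline subdivision scheme in Lemma~\ref{lem:adapted triang} that keeps the transverse sizes from degenerating as $j \to \infty$, which is precisely the ``uniform size'' that the statement is after.
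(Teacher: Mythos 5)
Your overall strategy --- do the construction once on the finite list of model shapes and extract the scaling constant from finiteness --- is exactly the mechanism the paper uses for uniformity. But the step you yourself single out as the crux is a genuine gap, not just a detail. You define $\SU(\sigma)$ as the union, over all top-dimensional simplices $\hat\sigma_r \supset \sigma$, of model neighborhoods pulled back through the chart identifications, and you justify that this union is again a flowbox of $\SW_{M\times K}$ and a foliation chart of $\SF_{M\times K}$ by saying that the transverse heights coincide because of your condition (c). That is only true inside each chart's own coordinates. Two adjacent top simplices of $\ST_j$ may sit in different charts $U_{i_1},U_{i_2}$ (and be modeled on different $\tau_k$'s); the transition map preserves the straightened $\SW_{M\times K}$ and $\SF_{M\times K}$ as foliations but is not affine and distorts lengths, and there is no invariant notion of ``height along $\SW$''. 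So the pulled-back boxes meeting along $\sigma$ generically have mismatched tops and bottoms, the boundary of the union acquires steps that are neither horizontal nor vertical, and both condition (ii) and condition (iii) of Definition~\ref{def:associatedCovering} can fail at the seams --- precisely the property needed later in Lemma~\ref{lem:reduction}, that each leaf of $\SW_{M\times K}|_{\SU(\sigma)}$ is contained in or disjoint from $\bigcup_{\sigma'\subsetneq\sigma}\SU(\sigma')$. Your condition (b) only controls a pulled-back piece against the boundary of a piece coming from the \emph{same} top simplex; it says nothing about a piece from $\hat\sigma_1$ poking through the horizontal boundary of a piece from $\hat\sigma_2$. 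One could try to repair this (bounded distortion of the finitely many transition maps, $C^1$-closeness to affine maps at the scale of the simplices for $j$ large, and a staggering of radii large enough to absorb the distortion), but none of that is in your argument.

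The paper sidesteps the gluing problem entirely: it never forms unions over the star. Instead it constructs a \emph{single} neighborhood per simplex, by induction on dimension, directly in the manifold: shrink $\sigma$ slightly to $\sigma'$ (so that $\partial\sigma'$ is still covered by the previously built $\SU(\tau)$, $\tau\subsetneq\sigma$), then thicken $\sigma'$ with the thickening along $\SW_{M\times K}$ much greater than in the complementary directions. Since all simplices are in general position with respect to $\SW_{M\times K}$ with angles controlled by the finite list of shapes, this anisotropy forces every simplex incident to $\sigma$ (and, in the inductive scheme, the later neighborhoods) to enter $\SU(\sigma)$ through its vertical boundary, which is what Definition~\ref{def:associatedCovering} requires; uniformity, and hence a scaling constant independent of $j$, again comes from the fact that the construction depends only on the model simplex. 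Note also that this $\SW$-dominant anisotropy is doing real work that your isotropic ``one radius per codimension'' scheme does not replicate; I would recommend rebuilding your model neighborhoods with that anisotropy and abandoning the union-over-the-star definition of $\SU(\sigma)$. (A smaller point of the same kind: your claim that $\partial\tau_k\subset\bigcup_{\mu\subsetneq\tau_k}\SU_k(\mu)$ does not follow from your condition (a) as stated, which includes $\mu=\tau_k$; you must build the model neighborhoods so that the proper faces alone cover a neighborhood of $\partial\tau_k$.)
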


\begin{proof}
A similar result was proven in \cite[Proposition 29]{cppp}. We now give a sketch of the proof.

Every simplex $\sigma \in \ST_j$ arises from the subdivision of some top-dimensional simplex $\sigma_i \in \ST_0$ and is therefore contained in the $\SW_{M \times K}$-flowbox $U_i$. We proceed by induction on their dimension $\dim(\sigma) < \dim(M \times K)$.

Let us start with the $0$-simplices: Given $\sigma_i \in \ST_0$, we fix neighborhoods $\SU(\sigma) \supset \sigma$ for every $j$ and every vertex $\sigma \in \ST_j$ obtained in the subdivision of $\sigma_i$. We require $\SU(\sigma)$ to be a foliation chart and a flowbox as in the statement. If we fix the length of $\SU(\sigma)$ along $\SW_{M \times K}$ and we shrink the other directions, every $1$-simplex incident to $\sigma$ will enter through the vertical boundary $\partial^v\SU(\sigma)$. This follows from the fact that every face is transverse to $\SW_{M \times K}$. 

If $\dim(\sigma) > 0$, we first shrink $\sigma$ to a slightly smaller simplex $\sigma'$ whose boundary is still contained in the union $\cup_{\tau \subsetneq \sigma} \SU(\tau)$. Then, we thicken $\sigma'$ to $\SU(\sigma)$ in such a way that the thickening along $\SW_{M \times K}$ is much greater than in the complementary directions. It follows that the simplices containing $\sigma$ will enter $\SU(\sigma)$ through the vertical boundary $\partial^v\SU(\sigma)$. 

All sets we construct can be assumed not to depend on the simplex itself but on the model simplex they are identified with. In particular, once we have dealt with all the simplices in the codimension-$1$ skeleton, we can choose a constant (which is smaller than but sufficiently close to $1$, and does not depend on $j$) to scale $\tau_k$ and obtain the desired $\tau_k'$. 
\end{proof}

\subsubsection{Achieving the Engel condition in the codimension-$1$ skeleton.} \label{ssec:reduction}

Given any adapted triangulation $\ST$ with associated cover $\{\SU(\sigma)\}_{\sigma \in \ST}$, we can deform $\SD_{M\times K}$ into a family of formal Engel structures $\SD_{M\times K}'$ which is Engel over the neighborhood 
$$
\bigcup_{\{\sigma \in \ST\,|\, \dim(\sigma) < \dim(M \times K)\} } \SU(\sigma)
$$
of the codimension-$1$ skeleton of $\ST$.  
\begin{lemma} \label{lem:reduction}
Let $\SW_{M\times K}\subset \SD_{M\times K} \subset \SE_{M\times K}$ be a family of formal Engel structures such that condition (ii) of Proposition \ref{prop:reduction} holds. Let $\ST$ be an adapted triangulation and $\{\SU(\sigma)\}_{\sigma \in \ST}$ an associated covering.

Then, there is a plane field $\SD'_{M\times K}$ such that
\begin{itemize}
\item $\SD_{M\times K}'$ is homotopic to $\SD_{M \times K}$ through plane fields contained in $\SE_{M\times K}$ and containing $\SW_{M\times K}$,
\item $\SD_{M\times K}'=\SD_{M\times K}$ on $V$ and $\Delta$,
\item  $\SD_{M\times K}'$ is Engel in $\SU(\sigma)$ for every simplex $\sigma \in \ST$ with $\dim(\sigma) < \dim(M \times K)$.
\end{itemize}
\end{lemma}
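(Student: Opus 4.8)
The plan is to modify the development map of $\SD_{M\times K}$ leaf by leaf, working over the simplices of $\ST$ in order of increasing codimension, so that on each flowbox $\SU(\sigma)$ the curve of directions traced by $\SD_{M\times K}'$ along a leaf of $\SW_{M\times K}$ becomes an immersion (which, via Proposition~\ref{prop:EngelCharacterisation}(i) or, more precisely, the model-structure criterion "$\partial_w c>0$'' from Subsection~\ref{ssec:modelStructures}, is exactly the Engel condition once $\SE_{M\times K}'$ is genuinely even-contact, which is condition (ii)). The key point is that condition (ii) lets us work entirely inside the fixed even-contact structure $\SE_{M\times K}$: a plane field $\SD$ with $\SW\subset\SD\subset\SE$ is recorded, on each $\SW$-leaf in a flowbox, by an angular function $c$ (the development map lifted to $\widetilde{\mathbb{P}}(\SE/\SW)$), and homotopies of $\SD$ rel $\SW,\SE$ correspond to homotopies of $c$. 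Adaptedness (condition (iii) of Definition~\ref{d:adapted triang}) guarantees that over each top-dimensional simplex $c$ has total variation less than $\pi$, so there is room to $C^0$-perturb it to be strictly monotone without leaving the range where the construction makes sense, and without creating antipodal issues.

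First I would set up the induction. Order the simplices $\sigma\in\ST$ of dimension $<\dim(M\times K)$ by increasing dimension (equivalently decreasing codimension), and let $\SD^{(m)}$ denote the plane field obtained after treating all simplices of dimension $\le m$; we want $\SD^{(m)}$ to be Engel on $\bigcup_{\dim(\sigma)\le m}\SU(\sigma)$, agreeing with the previous stage away from a neighborhood of the newly treated simplices, and always rel $V$ and $\Delta$. For the base case, over the $\SU(\sigma)$ of a vertex $\sigma$ (a flowbox, by Definition~\ref{def:associatedCovering}(ii)), we simply replace $c$ along each $\SW$-leaf by a function that is strictly increasing in the leaf parameter $w$ and equals the old $c$ near $\partial^h\SU(\sigma)$; such a replacement exists because we may take $\SU(\sigma)$ thin enough that the total variation of $c$ along each leaf is as small as we like, so a monotone interpolation rel endpoints is available (this is the same Bolzano-type argument as Lemma~\ref{lem:Bolzano}). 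For the inductive step at dimension $m$, property (iii) of Definition~\ref{def:associatedCovering} says $\SU(\sigma)$ meets each already-handled $\SU(\sigma')$ (with $\sigma'\subsetneq\sigma$) only along $\partial^v\SU(\sigma')$, i.e. along whole $\SW$-leaves; hence on $\SU(\sigma)$ the structure $\SD^{(m-1)}$ is already Engel near the part of $\partial^h\SU(\sigma)$ coming from lower-dimensional simplices, and I would perform the same leafwise monotonization of $c$ on $\SU(\sigma)$, rel a neighborhood of $\partial^h\SU(\sigma)$ and rel wherever $\SD^{(m-1)}$ is already Engel. Since the $\SU(\sigma)$ at a fixed dimension $m$ are pairwise disjoint (Definition~\ref{def:associatedCovering}(iii)), these modifications for different $\sigma$ of the same dimension can be done simultaneously and independently.

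The main obstacle — and the reason adaptedness is built into the hypotheses — is controlling the boundary conditions so that the leafwise interpolations glue to a \emph{continuous} (indeed smooth) family over all of $M\times K$: when I monotonize $c$ on $\SU(\sigma)$ I must not disturb it on the flowboxes of faces already treated, and I must leave it untouched on $V$ and $\Delta$, where $\ST$ is a triangulation of the pair $(M\times K,\Op(V\cup\Delta))$ so the relevant boundary simplices already carry an Engel structure. The compatibility is exactly what Definition~\ref{def:associatedCovering}(i) and (iii) are arranged to provide: the overlaps between $\SU(\sigma)$ and $\SU(\sigma')$ are vertical, so "rel a neighborhood of the horizontal boundary'' on each flowbox is a consistent constraint across the whole cover, and the union of all the $\SU(\sigma)$ with $\dim(\sigma)<\dim(M\times K)$ covers a neighborhood of the codimension-$1$ skeleton. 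A final routine point is that all choices (the monotone interpolating functions, cut-off profiles) can be made to depend smoothly on the model simplex $\tau_k$ only, by Lemma~\ref{lem:uniformSize}, so that the construction is uniform and produces a genuine smooth plane field $\SD'_{M\times K}$; since every homotopy used stays inside $\SE_{M\times K}$ and fixes $\SW_{M\times K}$, the first two bullet points hold automatically, and the third is the output of the induction.
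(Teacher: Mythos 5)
Your overall scheme (induction over the simplices of $\ST$ by increasing dimension, encoding $\SD_{M\times K}$ inside the fixed even\mbox{-}contact structure $\SE_{M\times K}$ by an angular function $c$ along the leaves of $\SW_{M\times K}$, and making that function increase on each $\SU(\sigma)$) is the same as the paper's. But there is a genuine gap in how you carry out the leafwise modification: you insist on monotonizing $c$ \emph{relative to the horizontal boundary} $\partial^h\SU(\sigma)$, i.e.\ keeping the values of $c$ at both endpoints of each $\SW$-leaf of the flowbox, and you justify the existence of a strictly increasing interpolation by saying the total variation of $c$ along a leaf can be made small by shrinking the box. Smallness of the variation is irrelevant here: a strictly increasing function matching prescribed endpoint values exists if and only if the value at the top endpoint exceeds the value at the bottom endpoint, and for an arbitrary formal Engel structure the net change of $c$ along a leaf can perfectly well be zero or negative (the plane field may rotate ``backwards'' along $\SW$), no matter how thin $\SU(\sigma)$ is. This is exactly the situation Lemma~\ref{lem:Bolzano} excludes by its hypothesis of strictly positive height, so the Bolzano-type argument you invoke does not apply, and the same defect propagates to your inductive step, which is again performed rel $\partial^h\SU(\sigma)$.

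The paper avoids this by never working relative to the horizontal boundary of the box on which the Engel condition is required: one modifies $c$ relative to the boundary of a strictly larger neighborhood $\Op(\SU(\sigma))$ and demands $\SL_W\tilde c>0$ only on $\SU(\sigma)$ itself, so the compensating decrease of $\tilde c$ is pushed into the collar $\Op(\SU(\sigma))\setminus\SU(\sigma)$, where nothing is required. This is legitimate because the lemma does not ask for $\SD'_{M\times K}=\SD_{M\times K}$ outside $V\cup\Delta$. In the inductive step the constraints one must respect are only the previously treated boxes $\SU(\sigma')$, $\sigma'\subsetneq\sigma$ (where $\SL_W c>0$ already holds), and condition (iii) of Definition~\ref{def:associatedCovering} guarantees these sit along the leaves of $\SW|_{\SU(\sigma)}$ in a way that allows an interpolation with $\SL_W\tilde c>0$ on all of $\SU(\sigma)$, again dumping the discrepancy into the outer collar. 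One further point you omit, needed later in the reduction: where the original derivative $\SL_W c$ is non-positive, the new derivative should be taken positive but arbitrarily small, so that the triangulation $\ST$ remains adapted to the resulting formal structure and the subsequent shell construction still applies.
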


\begin{proof}
Recall that there is a subcomplex $\ST_\partial \subset \ST$ which triangulates the boundary $\partial(\Op(V \cup \Delta))$. The structure $\SD_{M \times K}$ is already Engel along $\ST_\partial$. The proof is by induction on the dimension of the simplices of $\ST \setminus \ST_\partial$. The relative nature of our claim follows by not modifying $\SD_{M \times K}$ along $\ST_\partial$. 

Let $\sigma \in \ST$ be a vertex. We fix a trivialization $W$ of $\SW_{M\times K}$ and an oriented $W$-invariant framing $\{W,X,Y\}$ of $\SE_{M\times K}$ on a neighborhood $\Op(\SU(\sigma))$ such that $\SD_{M\times K}$ is spanned by $\{W,\cos(c)X+\sin(c)Y\}$, where $c: \Op(\SU(\sigma)) \lra \R$ is some function. We then homotope $c$, relative to the boundary of $\Op(\SU(\sigma))$, to yield a function $\tilde{c}: \Op(\SU(\sigma)) \lra \R$ with the property that $\SL_W\tilde{c}>0$ in $\SU(\sigma)$. This implies that the corresponding plane field is an Engel structure on $\SU(\sigma)$. This can be iterated over all the $0$-simplices. The resulting plane field is again denoted by $\SD_{M\times K}$. 

Because $\ST$ is adapted, a $1$-simplex is nowhere tangent to $\SW_{M\times K}$. Hence, we can choose a $W$-invariant framing $\{W,X,Y\}$ of $\SE_{M\times K}$ on a neighborhood $\Op(\SU(\sigma))$ of the simplex. Then $\SD_{M\times K}$ is spanned by $\{W,\cos(c)X+\sin(c)Y\}$, where $c$ is a smooth function. For every vertex $\sigma' \subset \sigma$ it holds that $\SL_W c>0$ on $\SU(\sigma) \cap \SU(\sigma')$. Condition (iii) of Definition \ref{def:associatedCovering} implies that each leaf of $\SW_{M \times K}|_{\SU(\sigma)}$ is either contained or   disjoint from $\cup_{\sigma' \subsetneq \sigma} \SU(\sigma')$. Therefore, we can replace $c$ by a function $\tilde{c}$ which coincides with $c$ on $\partial \Op(\SU(\sigma))$ and on $\SU(\sigma')$ and satisfies $\SL_W\tilde{c}>0$. We thus obtain a plane field which is Engel on $\SU(\sigma)$. 

This procedure can be iterated until we have dealt with all simplices of codimension at least $1$, always relative to the neighborhoods of simplices of lower dimension. In the regions where $\SL_W c \leq 0$, we can choose $\SL_W\tilde{c}$ to be positive but arbitrarily close to zero. This ensures that $\ST$ is still adapted to the resulting  formal Engel structure $\SD_{M \times K}'$.
\end{proof}

\subsubsection{Achieving the contractibility hypothesis} \label{ssec:contractibility}

We now explain how to achieve claim (iv) from Proposition \ref{prop:reduction}.

Recall that each top-dimensional simplex $\sigma$ in $\ST_j$ is contained in a flowbox/foliation chart $U_{i_0}$. Under the identification of $U_{i_0}$ with the standard model, $\sigma$ is mapped to (a scaled and translated copy of) some model simplex $\tau_{l_0}$ (from a finite list $\{\tau_l\}$). 

Apply Lemma \ref{lem:reduction} to each adapted triangulation $\ST_j$ (produced by Lemma \ref{lem:adapted triang}) with associated covering $\{\SU(\sigma)\}_{\sigma \in \ST_j}$ (constructed in Lemma \ref{lem:uniformSize}). Under the identification of $\sigma$ with $\tau_{l_0}$, the neighborhood $\cup_{\sigma' \subsetneq \sigma} \SU(\sigma')$ of $\partial \sigma \cong \partial\tau_{l_0}$  covers some rescaling $\tau_{l_0}'$ with fixed scaling factor. Fix some strictly convex neighborhood $\tau_{l_0}' \subset \widetilde{\tau}_{l_0} \subset \tau_{l_0}$ and write $\widetilde\sigma$ for its image in $\sigma$. 

Consider the simplex $\sigma$ in the coordinates provided by the foliation charts $\{U_i\}$. In terms of $U_{i_0}$, $\sigma$ is a genuine linear simplex. However, this might not the case for the other charts. Still, as finer subdivisions are considered (i.e. $j$ goes to infinity) $\sigma$ converges (in the $C^\infty$ topology and after rescaling to have fixed diameter) to an affine simplex in all $U_i$. This follows from the fact that the change of coordinates between $U_i$ and $U_{i'}$ converges to linear map when we rescale it in progressively smaller neighborhoods of a point.

This argument implies that the set $\widetilde\sigma$, which was convex in the coordinates of $U_{i_0}$, is also convex in terms of all the other charts $U_i$ if $j$ is large enough. The same is true for their projections to the leaf space $K$. That is, the set $\widetilde\sigma/\SF_{M \times K}$ is strictly convex in the coordinates provided by each $U_i/\SF_{M \times K}$ for $j$ large enough. By compactness, this can be achieved for all $\sigma \in \ST_j$ simultaneously. This implies that any finite intersection of them is either empty or contractible, as desired.

We fix a sufficiently large number $j$, and henceforth we just write $\ST = \ST_j$.

\subsubsection{Construction of shells} \label{ssec:shells}

The last step in the proof is to produce a shell $B = (B_k)_{k \in K_{\widetilde\sigma}}$ from each $\widetilde\sigma$, where $\sigma$ is a top-dimensional simplex of $\ST$. We consider a rescaling $\sigma' \supset \widetilde\sigma$ of $\sigma$ with respect to the barycenter by a factor smaller than but very close to $1$; $B_k$ will be a smoothing of $\sigma_k'$. The coordinates $(y,x,z,w)$ provided in the simplex by $B_k$ are shown roughly in Figure~\ref{b:sigma notation}, p.~\pageref{b:sigma notation}.

Fix a vector field $W$ spanning $\SW_\sigma$. Since $\sigma'$ is adapted, $W$ is transverse to the faces of each $\sigma_k'$. We write $\sigma'_-$ for the union of those faces where $W$ points into $\sigma'$, and we restrict our attention to the subset $(\sigma_{k,-}'=M_k\cap \sigma_-')_{k \in K_{\widetilde\sigma}}$. Each face of $\sigma_{k,-}'$ is either positively or negatively transverse to the line field $\SD_\sigma \cap\sigma_-'$ and the union of the positive (resp. negative) faces is homeomorphic to a $2$-ball. It follows that $\sigma_{k,-}'$ can be smoothed, yielding a hypersurface $N_k$ with the following properties:
\begin{enumerate}
\item $N_k$ is a smooth hypersurface in $M_k$, transverse to $\SW_k$, and contained in a small neighborhood of $\sigma'_{k,-}$. In particular, $N=\cup_{k\in K_{\widetilde\sigma}} N_k$ is a smooth hypersurface in $M\times K$ transverse to $\SW_\sigma$.  
\item The holonomy of $\SW_\sigma$ yields an embedding  $\pr: \sigma_{k,-}' \lra N_k$ such that the image is disjoint from $\partial N_k$.
\item The boundary $\partial N_k$ is a smooth $2$-sphere. It decomposes into two discs, depending on whether $TN_k\cap\SD_k$ is inwards or outwards pointing. The positive disc $\Gamma_k$ is a smoothing of $\partial_-\sigma_{k,-}'$.  
\end{enumerate}

Since $\sigma'$ is adapted, we can assume that $\partial \Gamma_k$ consists of two intervals: the characteristic foliation $\Gamma_k\cap \SE_k$ is transverse to $\partial\Gamma_k$ on the interior of both intervals and its holonomy identifies the two. Hence, we can equip $\Gamma_k$ with coordinates $(x,y)$ such that 
\begin{equation} \label{e:f-<0<f+}
T\Gamma_k\cap\SE_k=\left\langle \partial_x\right\rangle. 
\end{equation}

Using the flow of a vector field spanning $TN_k\cap\SD_k$ we obtain a family of contact embeddings
\begin{align}\label{e:sign1} 
\psi_k: (N_k,\SE_k \cap TN_k) \lra (\R^3,\ker(\cos(z)dy \pm \sin(z)dx))
\end{align}
mapping $\Gamma_k$ to a smoothing of $[0,1] \times [0,1] \subset \{z=0\}$ (because of \eqref{e:f-<0<f+}) and taking the leaves of $TN_k\cap\SD_k$ to lines parallel to the $z$-axis. 

To obtain the correct sign for the contact structure in Equation~\ref{e:sign1}, we might need to apply the reflection along the $\{x=0\}$ hyperplane. Therefore, we assume that we have a contact embedding 
$$ \psi_k: (N_k,\SE\cap TN_k) \lra (\R^3,\ker(\cos(z)dy - \sin(z)dx)). $$
Since $N_k$ is a compact manifold with boundary,  can extend these embeddings slightly so that their image is 
$$ \{y\in [0,1], x\in[0,1], z\in [f_{-,k},f_{+,k}]\} $$
for some smooth functions $f_{-,k}<0<f_{+,k}$. We may choose $f_{-,k}\equiv-\varepsilon$ with $\varepsilon>0$ small enough. 

The embedding $\psi_k$ extends to an embedding $\Psi_k: B_k = N_k \times [-\varepsilon,1] \lra \R^3 \times \R$ such that 
\begin{itemize}
\item $N_k$ is mapped to $\{w=0\}$,
\item each leaf of $\SW_\sigma$ intersecting $N_k$ is mapped to the interval $\{\psi_k(p)\} \times [-\varepsilon,1]$ for $p \in N_k$.
\end{itemize}
Since the development map of $\SD_\sigma$ does not describe a projective turn (condition (iii) in Definition \ref{d:adapted triang}), the extended embedding $\Psi_k$ determines an angular function $c_k$ vanishing along $\{w=0\}$ such that its values lie in $(-\pi,\pi)$. 

It might happen that $\partial_w c_k < 0$ along the boundary of the model we have constructed. To fix this, one applies the contact transformation $(x,y,z)\mapsto (-x,y,-z)$. After applying this transformation, $\partial_w c_k > 0$ along the boundary of the model. In particular, we can reparametrize $\SW_\sigma$ so that $c_k\equiv \arcsin(w)$ if $w \in [-\varepsilon,\varepsilon]$ (by possibly reducing $\varepsilon$ and the domain of $\Psi_k$). 

This construction goes through for every top-dimensional simplex $\sigma$ of $\ST$, yielding a shell $(B_k)_{k \in \widetilde\sigma}$. The proof of Proposition \ref{prop:reduction} is complete. \hfill $\Box$

\subsection{Extension in the non-parametric case}\label{ssec:nonParametric}

The $\pi_0$-version of Corollary~\ref{cor:extension} states that any Engel germ $(\Op(\partial\D^4),\SD)$ with formal extension to the interior can also be extended to the interior as an Engel structure. With the tools introduced so far we can give a streamlined proof of this fact without invoking Theorem~\ref{thm:main}. The proof is a simplified version of the proof of our main theorem and showcases an important part of the argument.

The first ingredient is the following corollary of Proposition~\ref{prop:reduction}.
\begin{corollary} 
Let $(M,\SD)$ be a formal Engel manifold with $\SD$ Engel over some closed region $U \subset M$. Assume that $M \setminus U$ is connected and that there exists a formal Engel embedding of the overtwisted disc $\Delta: \Delta_\OT \lra (M \setminus U,\SD)$.

Then, $\SD$ is homotopic to a formal Engel structure $\SD'$ satisfying:
\begin{itemize}
\item $\SD'=\SD$ on $U$ and $\Delta$,
\item $\SD'$ is Engel in the complement of a finite collection of circular shells.
\end{itemize}
\end{corollary}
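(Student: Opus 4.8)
The statement is an immediate consequence of Proposition~\ref{prop:reduction} applied in the non-parametric setting. The plan is to set $K$ to be a point, so that the $K$-family of formal Engel structures $\SD_{M\times K}$ is simply $\SD$ on $M$, the subset $V$ becomes $U$ (there is no parameter subcomplex $K'$ to worry about), and the certificate of overtwistedness $\Delta$ is the given formal Engel embedding of $\Delta_\OT$ into $M\setminus U$. The hypotheses of Proposition~\ref{prop:reduction} are then exactly the hypotheses of the corollary: $\SD$ is Engel on $U$ and on $\image(\Delta)$, the complement $M\setminus U$ is connected, and $\Delta \subset M\setminus U$. Applying the proposition produces a formal Engel structure $\SD'=\SD'_{M\times K}$ homotopic to $\SD$ relative to $U$ and $\Delta$, which is Engel in the complement of a finite collection of \emph{shells} $\{B_i\}$.

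The only point requiring an extra word is that Proposition~\ref{prop:reduction} delivers shells, i.e. model structures over intervals $I=[0,1]$, whereas the corollary asks for \emph{circular} shells, i.e. model structures over $I=\NS^1$. First I would observe that one may always enlarge a shell to a circular shell: a shell $B_i = M([0,1],[-\varepsilon,1],-\varepsilon,f_{i,+},c_i)$ carries, near the vertical boundary $\{y\in\{0,1\}\}$, the data $c_i(y,x,z,w)\equiv \arcsin(w)$, which is Engel there; so one can glue a collar on which $\partial_w c > 0$ and then identify the two ends, thereby embedding $B_i$ into an ambient model over $\NS^1$ on which $\SD'$ is Engel away from the original interval-shell. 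Equivalently, one can simply note that the construction of shells in Subsection~\ref{ssec:shells} runs verbatim with $I=\NS^1$ in place of $I=[0,1]$ whenever the top-dimensional simplex $\sigma$ comes together with a neighboring simplex along which the Engel condition has already been achieved, which is the case here since the codimension-$1$ skeleton is Engel after Lemma~\ref{lem:reduction}; the interval shells can thus be absorbed into circular shells by thickening in the $y$-direction across the adjacent Engel region.

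The main (and only) subtlety is bookkeeping: one must make sure that the circular-shell region, obtained by extending each interval shell, remains disjoint from $U$, from $\Delta$, and from the other shells. This is guaranteed by condition (iv) of Proposition~\ref{prop:reduction} together with the fact that the shells $B_i$ are contained in $M\times K\setminus(V\cup\Delta) = M\setminus(U\cup\Delta)$ and are obtained from disjoint shrunk top-dimensional simplices, so the collars added to close them up can be chosen thin enough to stay within the corresponding foliation chart and disjoint from everything else. Hence $\SD'$ is Engel outside a finite union of circular shells, $\SD'=\SD$ on $U$ and on $\Delta$, and $\SD'$ is homotopic to $\SD$ through formal Engel structures. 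This completes the proof.

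\begin{proof}
Apply Proposition~\ref{prop:reduction} with $K$ a point (so $K'=\emptyset$), $V=U$, and certificate $\Delta$. We obtain a formal Engel structure $\SD'$, homotopic to $\SD$ relative to $U$ and $\Delta$, which is Engel on the complement of finitely many shells $B_i \subset M\setminus(U\cup\image(\Delta))$. Each shell is a model structure $M([0,1],[-\varepsilon,1],-\varepsilon,f_{i,+},c_i)$ with $c_i\equiv\arcsin(w)$ near its vertical boundary; in particular $\SD'$ is Engel on a neighborhood of $\{y\in\{0,1\}\}$ inside $B_i$. Extending each $B_i$ by a thin collar in the $y$-direction, across the adjacent region on which $\SD'$ is already Engel, and gluing the two ends of the collar, we embed $B_i$ into a circular shell $B_i'$ on which $\SD'$ fails to be Engel only along the original interval $[0,1]\subset\NS^1$. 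By condition (iv) of Proposition~\ref{prop:reduction} and the fact that the $B_i$ are carried by disjoint shrunk top-dimensional simplices of an adapted triangulation, the collars can be taken small enough that the circular shells $\{B_i'\}$ are pairwise disjoint and disjoint from $U$ and $\Delta$. Thus $\SD'=\SD$ on $U$ and on $\Delta$, $\SD'$ is homotopic to $\SD$ through formal Engel structures, and $\SD'$ is Engel in the complement of the finite collection of circular shells $\{B_i'\}$.
\end{proof}
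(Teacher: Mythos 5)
Your first step is exactly the paper's: apply Proposition~\ref{prop:reduction} with $K$ a point, $V=U$, and the given $\Delta$ as certificate, obtaining $\SD'$ homotopic to $\SD$ rel $U$ and $\Delta$ and Engel outside finitely many interval shells. The gap is in the only step that carries new content, the passage from interval shells to \emph{circular} shells. A circular shell is an embedded model over $\NS^1$ inside $M$, so you cannot ``glue a collar and identify the two ends'': the ends $\{y=0\}$ and $\{y=1\}$ of a shell sit at far-apart faces of the (shrunk) simplex, and identifying them abstractly produces a mapping torus, not a subset of $M$. What is actually required is an embedded arc in $M$ joining the two ends over which the full model normal form extends -- not just the Engel condition, but the flag $\SW=\langle\partial_w\rangle$, $\SE=\SW\oplus\ker(\cos(z)dy-\sin(z)dx)$ and an angular function with $c\equiv\arcsin(w)$ near $\{w=0\}$ -- and your proposal neither constructs this arc nor explains why the normal form extends over it; ``the adjacent region is Engel after Lemma~\ref{lem:reduction}'' gives no such parametrisation. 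Relatedly, the disjointness bookkeeping cannot be fixed by taking ``collars small enough'': the closing region must travel from one end of the shell to the other, so it is not small, and it must be steered around $U$, $\Delta$ and the other shells. (The appeal to condition (iv) of Proposition~\ref{prop:reduction} is also misplaced: that condition concerns the projections of the shells to $K$ and is vacuous when $K$ is a point.)

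The paper resolves this by staying \emph{inside} each shell $B$: since $\partial_w c>0$ on a $2\varepsilon$-collar of $\partial B$, one first passes to a slightly smaller shell $B'\subset B$ so that the non-Engel locus lies in $B'$ and the region between $B'$ and $\partial B$ is Engel; one then chooses a curve $\gamma\subset B$, transverse to $\SE=[\SD',\SD']$ and otherwise disjoint from $B'$, connecting the two boundary components $\{y=0\}$ and $\{y=1\}$ of $B'$ through this Engel collar, and extends the parametrisation of $B'$ to a neighborhood of $\gamma$ (using the standard form near transverse curves, as in Proposition~\ref{prop:modelKnot}) to obtain a circular shell. Because everything happens inside $B$, disjointness from $U$, $\Delta$ and the remaining shells is automatic. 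Your argument needs this connecting-curve construction (or an equivalent one) to be supplied; as written, the conversion to circular shells does not go through.
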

\begin{proof}
It can be proven directly mimicking the last step (Subsection \ref{ssec:shells}) in the proof of Proposition~\ref{prop:reduction}. 

Alternatively, we apply Proposition~\ref{prop:reduction} to reduce to the case where the germ $(\D^4,\SD)$ is described by a shell $B$. We find a slightly smaller shell $B' \subset B$. Then we connect its boundary components $\{y=0\}$ and $\{y=1\}$ using a curve $\gamma \subset B$ transverse to $[\SD,\SD]$ which is otherwise disjoint from $B'$. We can extend the parametrisation of $B'$ to a neighborhood of $\gamma$, yielding the claim. We leave the details to the reader.
\end{proof}

Then, the $\pi_0$-version of Corollary~\ref{cor:extension} follows from an application of:
\begin{proposition} \label{prop:nonParametric}
Any circular shell is formally homotopic, relative to the boundary, to a genuine Engel structure.
\end{proposition}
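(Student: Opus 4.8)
The plan is to build a genuine Engel structure inside the circular shell by exploiting the two sources of flexibility available to us: the circular direction $I = \NS^1$, which allows the angular function $c$ to perform extra projective turns, and the freedom in the profile / Engel-Lutz construction, which lets us absorb those turns. Recall that a circular shell is a family (here non-parametric, so a single structure) $M(\NS^1,[-\varepsilon,1],-\varepsilon,f_+,c)$ with $\SD_c$ Engel near the boundary of the model, $c \equiv \arcsin(w)$ for $w \in [-\varepsilon,\varepsilon]$, and $c > -\pi$ everywhere. The obstruction to $\SD_c$ being Engel is precisely that $\partial_w c$ may fail to be positive in the interior (condition (ii) of Proposition~\ref{prop:EngelCharacterisation}, as recalled after Definition~\ref{def:model Engel structure}). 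By Lemma~\ref{lem:Bolzano}, if we could arrange the height function $c(\cdot,1) - c(\cdot,-\varepsilon)$ to be strictly positive everywhere, we would be done immediately; the difficulty is that a general shell need not satisfy this, since $c$ is only constrained to lie in $(-\pi,\pi)$ near the top and bottom and may ``come back down''.

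The first step is to homotope $c$, relative to a neighborhood of the boundary of the model, so that it becomes monotone in $w$ on the part of the model away from the vertical boundary, at the cost of introducing extra turning near the vertical boundary $\{(y,x,z)\in\partial D\}$. Concretely, I would push $c$ up by adding a large positive multiple of a bump function supported away from the vertical boundary; this is a homotopy through model structures (it only changes the angular function), it is relative to the boundary, and after it the height function is strictly positive on the interior region. The remaining problem is thus localized near the vertical boundary, where $\SD_c$ is already Engel but the model structure itself may have height function that is not positive there. Here is where the circular direction enters: because $I = \NS^1$, the region near the vertical boundary is (a neighborhood of) $\NS^1 \times (\text{surface})$, and the standard neighborhood of a transverse loop $\gamma$ running in the $\partial_y$-direction sits inside it. Using Proposition~\ref{prop:modelKnot}, I identify a tubular neighborhood of such a $\gamma$ with the model $(I\times\R^3,\SD_\trans)$, and I use Lemma~\ref{lem:constantProfile} together with the Engel-Lutz twist of Section~\ref{sec:Lutz} to add or remove one unit of twisting along a transverse torus carried by $\gamma$. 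By Corollary~\ref{cor:homotopyChange2}, a full Engel-Lutz twist does not change the formal type, so composing with the right number of twists I can arrange the angular function to be monotone everywhere while staying in the correct formal class relative to the boundary.

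Finally, I combine the two pieces: after the bump-function homotopy and the Engel-Lutz adjustment near the vertical boundary, the resulting formal Engel structure has strictly positive height function everywhere and is Engel along the boundary, so Lemma~\ref{lem:Bolzano} produces the genuine Engel structure homotopic to it relative to the boundary. Tracking through, the total homotopy is relative to $\partial$ of the shell as required.

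The main obstacle I expect is the bookkeeping at the vertical boundary: one must check that the transverse loop $\gamma$ and its standard neighborhood genuinely fit inside the shell in a way compatible with the model coordinates $(y,x,z,w)$, and that the Engel-Lutz twist there can be performed relative to the already-Engel boundary of the shell (this is exactly the kind of relative-in-the-domain statement that Corollary~\ref{cor:homotopyChange2} is designed to supply, but one must confirm the neighborhood of $S$ in that corollary can be taken inside the shell). The second delicate point is verifying that ``monotone in $w$ away from the vertical boundary'' plus ``Engel near the vertical boundary after twisting'' really does yield a shell with strictly positive height function so that Lemma~\ref{lem:Bolzano} applies — i.e. that the $C^0$-constraint $c > -\pi$ is not violated by the pushing-up homotopy, which it is not, since pushing up only increases $c$. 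The circular topology of $I$ is essential throughout: it is what makes room for an Engel-Lutz twist and is the reason the statement is for \emph{circular} shells rather than shells over an interval.
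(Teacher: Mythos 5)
There is a genuine gap, and it sits exactly at the heart of the argument. Your first step cannot work as stated: the height function of a model is $c(y,x,z,1)-c(y,x,z,-\varepsilon)$, i.e.\ it is determined entirely by the values of $c$ on the top and bottom boundaries, so no homotopy of the angular function that is relative to the boundary of the shell can make it positive; moreover, adding a large positive bump supported in the interior makes $\partial_w c$ very negative on the descending side of the bump, so it does not produce monotonicity in $w$ either. The failure of the Engel condition is also not localized near the vertical boundary: a shell may have $c(y,x,z,1)$ arbitrarily close to $-\pi$, hence below $c(y,x,z,-\varepsilon)=\arcsin(-\varepsilon)$, for \emph{every} $(y,x,z)$, so the bad region fills the bulk of the model. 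Your second step does not repair this: an Engel--Lutz twist changes the plane field only inside its Lutz tube, so a twist along a torus placed near the vertical boundary leaves the angular function over the bulk untouched, and there is no operation that ``removes'' a unit of twisting. Corollary~\ref{cor:homotopyChange2} only controls the formal class; it does not let you redistribute turning away from the tube.

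What the proof actually needs --- and what the paper does --- is to insert a full extra turn along \emph{every} kernel leaf lying below the region where the Engel condition fails. Concretely, one builds a transverse torus $\SS$ inside the band $\{|w|<\varepsilon\}$ (where $c=\arcsin(w)$, so the structure is Engel) whose constant profile is a stabilized transverse unknot whose horizontal part sweeps $x\in[\varepsilon,1-\varepsilon]$ within the slope constraint forced by $|w|<\varepsilon$ (Figure~\ref{fig:manyEights}); this is where the circular direction is used, the core being the transverse loop $y\mapsto(y,1/2,0,0)$. One then takes the boundary $N$ of a thin tubular neighborhood of $\SS$ and pushes its upper tangency locus along the line field $\SH=\cup_{w_0}(\SD_c\cap T\{w=w_0\})$ until the resulting transverse hypersurface $N'$ projects, along $\partial_w$, onto all of the sub-model $M'=\{2\varepsilon\le x\le 1-2\varepsilon,\ 0\le z\le f_+-2\varepsilon,\ w\ge\varepsilon\}$ while remaining disjoint from it. An Engel--Lutz twist along $N'$ then permits lowering the angular function of $M'$ at its bottom boundary $\{w=\varepsilon\}$ by up to $2\pi$, and the shell condition $c>-\pi$ is precisely what guarantees the lowered value drops below $c(\cdot,1)$; only then does Lemma~\ref{lem:Bolzano} apply (to $M'$, relative to its boundary), and since all modifications happen in the interior of the original shell the total homotopy is relative to its boundary. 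You correctly identified the role of circularity, the formal-type invariance of the Lutz twist, and Lemma~\ref{lem:Bolzano} as the endgame, but the covering property of the Lutz tube --- the quantitative construction of $\SS$, $N$, and $N'$ --- is the missing idea, and the bump-function step that replaces it is false.
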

\begin{proof}
Let $B = M(\NS^1,[-\varepsilon,1],-\varepsilon,f_+,c)$ be the circular shell. We first construct a transverse surface $\SS \subset B$ as in Section~\ref{sec:surfaces}. The core is parametrised by $y \mapsto (y,x=1/2,z=0,w=0)$ and the profile $\eta$ (a family of transverse loops in $\NS^1 \equiv [0,1]/_\sim \lra (\R^3,\ker(dx-wdz))$ parametrised by $y$) is independent of $y$. Moreover, we require 
\begin{itemize}
\item $\eta(\theta) = (x=\theta,z=0,w=0)$, for $\theta \in [\varepsilon,1-\varepsilon]$,
\item $z \circ \eta(\theta) < 0$, for $\theta \notin [\varepsilon,1-\varepsilon]$, and
\item $|w \circ \eta(\theta)| < \varepsilon$.
\end{itemize}
The last requirement implies the following condition on the slope: $\left|\frac{dx(\dot{\eta})}{dz(\dot{\eta})}\right| < \varepsilon$. Therefore, it is easy to construct a curve $\eta$ with the desired properties if we introduce sufficiently many stabilizations, c.f. Figure~\ref{fig:manyEights}. By Lemma~\ref{lem:constantProfile}, $\SS$ is a torus transverse to $\SD_c$. 

\begin{figure}[ht]
\begin{center}
\includegraphics[scale=0.8]{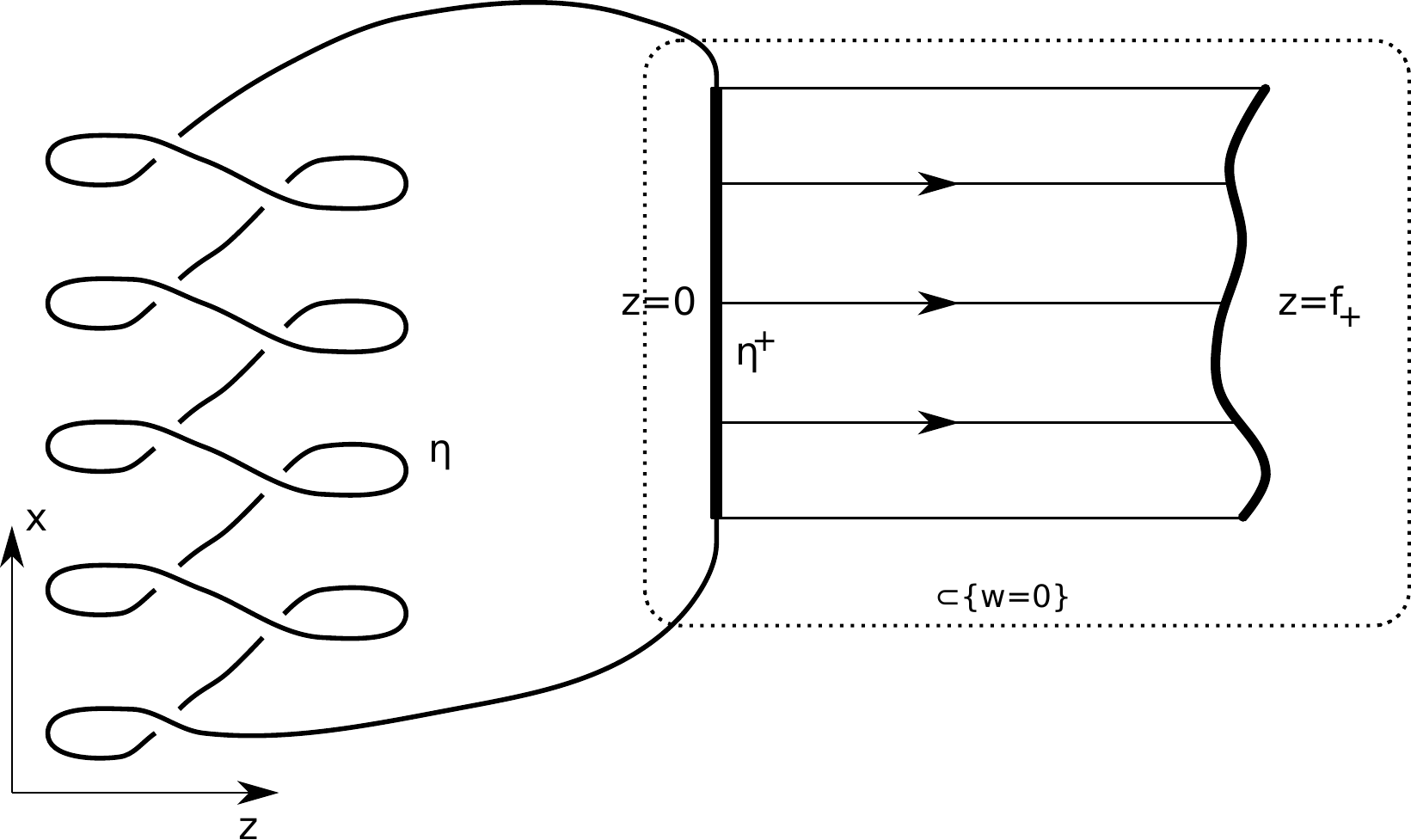}
\caption{Front projection of the transverse curve $\eta$.}
\label{fig:manyEights}
\end{center}
\end{figure}

Consider the rank $1$-foliation $\SH= \cup_{w_0} (\SD_c \cap T\{w=w_0\})$. In the region $\{|w_0|\leq \varepsilon\}$ we may orient it using the vector field $\partial_z + w_0(\partial_x + z\partial_y)$. We pick a constant $\rho>0$ so that $U_\rho(\SS)=\{q\in B\,|\,\mathrm{dist}(p,\SS)\le\rho\}$ is a thin tubular neighborhood satisfying:
\begin{itemize}
\item $U_\rho(\SS)$ is contained in $\{|w|\le\varepsilon\}$,
\item $N=\partial U_\rho(\SS)$ is transverse to $\SD_c$, and
\item the set of tangencies $N_0 = \SW \cap TN$ is a disjoint union of two tori. Each torus is obtained from $\SS$ by pushing along the line field $\SH$ and is, in particular, transverse to $\SD_c$. 
\end{itemize}

Let $\eta^+$ be the part of the profile $\eta$ where $\eta(\theta) = (x=\theta,z=0,w=0)$ and 
$$
\SS^+=\bigcup_{y\in[\varepsilon,1-\varepsilon]}\eta^+ \subset \SS.
$$
We denote  the region within $N_0$ obtained from $\SS^+$ by flowing along $\SH$ positively by $N_0^+$. According to Lemma~\ref{lem:thickenTransverse3fold}, one can isotope a transverse hypersurface using the flow of a vector field that is both tangent to the Engel structure and transverse to the hypersurface. In the present situation, we use the leaves of $\SH$ to isotope $N$ (note that $\SH$ is transverse to $N$ at least in the vicinity of $N_0$).

We want the isotopy to push a small neighborhood of $N_0^+ \subset N$ positively while leaving the rest of the $3$-manifold fixed. We denote the flow of $\partial_z + w_0(\partial_x + z\partial_y)$ by $\varphi$. We then choose a function $\chi : N \lra \R$ such that $\chi|_{N_0^+} = f_+ - \varepsilon$, and $\chi \equiv 0$ in the complement of $\Op(N_0^+)$, allowing us to define:
$$ N' =  \left\{\varphi_{\chi(p)}(p) \,\left.\right|\, p\in N \right\}. $$
This is a transverse $3$-torus which is isotopic to $N$ through transverse hypersurfaces.

The restriction of $\SD_c$ to 
\[ M' = \{2\varepsilon \leq x \leq 1-2\varepsilon, 0 \leq z \leq f_+ - 2\varepsilon, w \geq \varepsilon\} \]
is a circular shell. The hypersurface $N'$ is disjoint from $M'$ and, by construction, its projection $\pi(N')$ to the $(y,x,z)$-hyperplane contains $\pi(M')$. Now we perform an Engel-Lutz twist along $N'$. This takes place away from $M'$, so the resulting formal Engel structure $\SL(\SD_c)$ is still a circular shell in $M'$. Let $c': M' \lra \R$ be the corresponding angular function. A priori, $c'$ agrees with $c$, but the presence of the Engel-Lutz twist allows us lower its value by at most $2\pi$, effectively homotoping $\SL(\SD_c)$ in the band $\{ w \in  (0,\varepsilon]\}$. Doing this, we can set $c'(y,x,z,\varepsilon) < c(y,x,z,1) = c'(y,x,z,1)$ everywhere. 

Lemma~\ref{lem:Bolzano} shows that $(M',\SD_{c'})$ is homotopic to a solid model relative to its boundary. This implies that $\SL(\SD_c)$ is homotopic to a genuine Engel structure relative to the boundary of the original circular shell.
\end{proof}
Observe that the proof we have just presented is clearly parametric (although we avoided this for simplicity). However, we do not know whether it is relative in the parameter: \emph{is there an Engel homotopy that adds Engel torsion?} An affirmative answer to this question would prove that all Engel structures are overtwisted up to homotopy (proving the complete $h$-principle for Engel structures).

\subsection{Setup for the extension}\label{ssec:extension1}

The rest of the paper is dedicated to completing the proof of Theorem~\ref{thm:main} (equivalently, Theorem \ref{thm:mainPrime}). After the reduction argument from Proposition~\ref{prop:reduction}, we may assume that $\SD_{M \times K}$ is Engel in the complement of a finite collection of pairwise disjoint shells $\{B^i = (B^i_k)_{k \in K_i \subset K}\}_i$. We have to construct a homotopy, relative to $V$ and $\Delta$, between $\SD_{M \times K}$ and a $K$-family of Engel structures.

The construction can be carried out independently for each shell, so let us focus on the particular shell $B = B_k^0 = (B_k = B_k^0)_{k \in K_0}$. The construction has two main steps.
\begin{enumerate}
\item First, we homotope $\SD_{M\times K}$ inside the region where $\SD_{M\times K}$ is already Engel. Doing so, we make overtwisted discs with specific properties appear within $B$. We will describe them using the language of universal twist systems from Subsection~\ref{ssec:universalTwistSystem}. Their properties are motivated by the next step.
\item The second step is similar to the strategy outlined in Proposition~\ref{prop:nonParametric}: The presence of the overtwisted discs allows us to modify the angular functions $(c_k)_{k \in K_0}$ of $B$ close to its bottom boundary. Some further manipulations allow us apply Lemma~\ref{lem:Bolzano} and conclude the proof. Since this takes place within $B$, it is relative to $V$ and $\Delta$.  
\end{enumerate}
This process is relative to the other shells, so the argument can be iterated. The current Subsection introduces certain definitions and constructions needed for the proof, which we then carry out in Subsection~\ref{ssec:extension}.

\subsubsection{Overtwisted discs associated to shells} \label{ssec:replication} 

We want all subsequent formal Engel homotopies to be relative to the certificate and yet, we do need a certificate to complete the proof. The replication Lemma~\ref{lem:selfReplication2} can be used to obtain new overtwisted discs as follows.

We use Lemma~\ref{lem:selfReplication2} to modify $\SD_{M \times K}$ in the Engel region $\Op(\Delta) \cap (M \times \Op(K_0))$. This modification is an Engel homotopy that introduces an additional family of overtwisted discs 
\[ \Delta_k': \Delta_\OT \lra (M,\SD_k), \qquad  k \in K_0. \]
It is relative in the parameter to the complement of $\Op(K_0) \subset K$, and relative in the domain to $\Delta$ and the complement of $\Op(\Delta)$. 

Any further operations involving overtwisted discs and the shell $B$ will make use of $\Delta' = (\Delta_k')_{k \in K_0}$, so that our constructions do not affect the original certificate $\Delta$.

\subsubsection{Connections between the shell and the overtwisted disc}

The following definition packages the notion of a smooth family of paths connecting $B$ with the certificate $\Delta'$ associated to it:
\begin{definition} \label{def:connection to core}
A submanifold $\nu: [0,1] \times K_0 \lra M \times K$ is a {\bf connection} between $\Delta'$ and $B$ if
\begin{itemize}
\item $\nu(\cdot,k) \subset M_k = M$ is an embedded path transverse to $\SW_k$ and tangent to $\SE_k$,
\item $\nu(\cdot,k)$ is disjoint from $V$, $\Delta$, and all shells except $B$,
\item $\nu(\cdot,k)$ intersects $B_k$ only at the endpoint $\nu(1,k)$. In the coordinates of $B_k$ this corresponds to $(0,0,-\varepsilon,0)$.
\item $\nu(\cdot,k)$ is disjoint from the scaling region of $\Delta_k'$. It intersects $\Delta_k'$ only at the endpoint $\nu(0,k)$. In the coordinates of $\Delta_k'$ this corresponds to $(L/2,0,0,0)$.
\end{itemize}
\end{definition}
In particular, a connection is contained in the region in which $\SD_{M \times K}$ is Engel. A key fact, which follows from the first two items in the definition and Proposition \ref{prop:tangentCurvesModel}, is that the Engel structure is unique up to diffeomorphism on a neighborhood of $\nu(\cdot,k)$.

We will construct a connection between $B$ and $\Delta'$ in Proposition \ref{prop:connections exist}. For this we will use the following lemma (which rephrases the parametric ambient connected sum lemma in \cite[Lemma 9.1]{BEM}).

\begin{lemma} \label{lem:BEM}
Let $P_0$ be a contractible set. Let $\{P_i \subset P_0\}_{i=1,\ldots,m}$ be a finite collection of subsets such that
\begin{equation} \label{e:contractible condition}
\bigcap_{j\in J} P_{j}\textrm{ is contractible or empty for all }J\subset\{1,\ldots,m\}.
\end{equation}
Let $M$ be a manifold and fix a collection of families of embeddings of the disc
\begin{align*}
\mathcal{B}^i: \D^{\dim(M)} \times P_i \lra M \times P_0 \\
\mathcal{B}^i(p,k) = \left(\mathcal{B}_k^i(p),k\right). 
\end{align*}

Then, there is a collection $\{D_i\}_{i=0,\ldots,m}$ of disjoint embeddings of $\D^{\dim(M)}$ into $M$, and a $P_0$-family of isotopies $(\psi_k: M \lra M)_{k \in P_0}$ such that
$$ \psi_k \circ \mathcal{B}_k^i(p) = D_i(p) \quad\textrm { for all $i$, $k$, and $p$}. $$
\end{lemma}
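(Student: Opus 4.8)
The plan is to reduce the statement to the literature result cited, namely the parametric ambient connected sum lemma from \cite[Lemma 9.1]{BEM}, and to explain why the contractibility hypothesis \eqref{e:contractible condition} is exactly what is needed to make that reduction work. The statement we want is about making finitely many families of embedded discs pairwise disjoint (and in fact equal to fixed standard discs $D_i$) by a family of ambient isotopies, so the content is entirely a matter of iteratively pushing the discs apart while keeping track of the parameter spaces $P_i$ on which each family is defined.

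First I would set up the induction on $m$, the number of disc families. The base case $m=0$ is trivial: there is a single family $\mathcal{B}^0$, and since $P_0$ is contractible we may trivialize the family of embeddings and apply the (parametric) isotopy extension theorem to obtain $(\psi_k)_{k\in P_0}$ with $\psi_k\circ\mathcal{B}_k^0 = D_0$. For the inductive step, suppose the discs $\mathcal{B}^0,\dots,\mathcal{B}^{m-1}$ have already been straightened to disjoint standard discs $D_0,\dots,D_{m-1}$ by a family of isotopies; applying those isotopies to $\mathcal{B}^m$ as well, we may assume $D_0,\dots,D_{m-1}$ are already fixed and we only need to isotope the last family $\mathcal{B}^m$, defined over $P_m$, off of all of them and onto a new standard disc $D_m$. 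The obstruction to doing this parametrically over $P_m$ is purely homotopy-theoretic: the space of embeddings $\D^{\dim M}\hookrightarrow M\setminus(D_0\cup\cdots\cup D_{m-1})$ is (by general position, since $M\setminus(\text{discs})$ is connected and $\dim M\geq 1$ — and in our application $\dim M = 4$) connected, so any family over a point can be moved; to move the family over all of $P_m$ one needs $P_m$ to be sufficiently simple. This is where the hypothesis enters: each intersection $\bigcap_{j\in J}P_j$ being contractible or empty (applied with $J = \{m\}$, giving $P_m$ contractible) means $\mathcal{B}^m$ is, up to homotopy of families, constant, so a single ambient isotopy does the job; the stronger hypothesis on all the intersections is needed when one wants the isotopies for the different families to be compatible where their parameter domains overlap, which is precisely the delicate point handled in \cite{BEM}.

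The cleanest way to present this is simply to invoke \cite[Lemma 9.1]{BEM}: that lemma is stated in exactly this generality (a contractible ambient parameter space, a collection of subsets whose finite intersections are contractible or empty, and finitely many parametrized families of disc embeddings), and its conclusion — disjointification to fixed standard discs by an ambient family of isotopies — is verbatim what we need. I would therefore write the proof as a short verification that our hypotheses match theirs, noting that $M$ is a manifold of dimension $\geq 1$ (in fact a neighborhood in $M\times K$ of the relevant configuration, so one can take $M$ connected as required after the earlier reduction to $M=\D^4$), that $P_0$ is contractible, and that \eqref{e:contractible condition} is literally the hypothesis of \cite[Lemma 9.1]{BEM}.

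\begin{proof}
This is a restatement of the parametric ambient connected sum lemma of Borman--Eliashberg--Murphy; see \cite[Lemma 9.1]{BEM}. Let us indicate how the hypotheses match and how the proof proceeds, for the reader's convenience.

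We argue by induction on $m$. For $m=0$ there is a single family $\mathcal{B}^0:\D^{\dim(M)}\times P_0 \lra M\times P_0$; since $P_0$ is contractible, the family of embeddings $(\mathcal{B}^0_k)_{k\in P_0}$ is isotopic, through families of embeddings, to a constant family whose image is a fixed disc $D_0$. The parametric isotopy extension theorem then yields a $P_0$-family of ambient isotopies $(\psi_k)_{k\in P_0}$ with $\psi_k\circ\mathcal{B}^0_k(p)=D_0(p)$ for all $p$ and $k$.

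Assume the statement holds for $m-1$ families. Given $\mathcal{B}^0,\dots,\mathcal{B}^m$, apply the inductive hypothesis to $\mathcal{B}^0,\dots,\mathcal{B}^{m-1}$ to obtain disjoint discs $D_0,\dots,D_{m-1}$ and a $P_0$-family of isotopies $(\psi_k')_{k\in P_0}$ straightening them. Replacing $\mathcal{B}^m$ by $(\psi_k')\circ\mathcal{B}^m$, we may assume the discs $D_0,\dots,D_{m-1}$ are already in standard position and that we must only isotope the last family $\mathcal{B}^m$, which is defined over $P_m$. By hypothesis \eqref{e:contractible condition} applied with $J=\{m\}$, the set $P_m$ is contractible (or empty, in which case there is nothing to do). Since $M\setminus(D_0\cup\dots\cup D_{m-1})$ is a connected manifold of positive dimension, the space of embeddings $\D^{\dim(M)}\hookrightarrow M\setminus(D_0\cup\dots\cup D_{m-1})$ is connected; as $P_m$ is contractible, the family $(\mathcal{B}^m_k)_{k\in P_m}$ is isotopic, through families of embeddings into this complement, to a constant family with image a new disc $D_m$ disjoint from all the others. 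The isotopy extension theorem, applied parametrically over $P_m$ and relative to a neighborhood of $D_0\cup\dots\cup D_{m-1}$, produces a $P_m$-family of ambient isotopies; since $P_m\subset P_0$ and $P_0$ is contractible, this family of isotopies extends to a $P_0$-family $(\psi_k'')_{k\in P_0}$ supported away from the already-straightened discs. The composition $\psi_k=\psi_k''\circ\psi_k'$ is then the required $P_0$-family of isotopies, and $D_0,\dots,D_m$ are the required disjoint discs. The compatibility of these choices over the overlaps of the various $P_i$, which is what requires the full strength of \eqref{e:contractible condition}, is handled exactly as in \cite[Lemma 9.1]{BEM}.
\end{proof}
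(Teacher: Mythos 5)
Your induction is structured differently from the paper's, and the difference is exactly where the gap lies. After you straighten $\mathcal{B}^0,\dots,\mathcal{B}^{m-1}$ into fixed disjoint discs $D_0,\dots,D_{m-1}$ via $(\psi'_k)_{k\in P_0}$, you claim that the last family $(\psi'_k\circ\mathcal{B}^m_k)_{k\in P_m}$ is a family of embeddings into $M\setminus(D_0\cup\dots\cup D_{m-1})$ and can be isotoped there, relative to the other discs, onto a constant disc $D_m$. That containment is unjustified and in general false: for $k\in P_m\cap P_i$ the disc $\psi'_k\circ\mathcal{B}^m_k$ does avoid $D_i$ (since $\mathcal{B}^m_k$ and $\mathcal{B}^i_k$ are disjoint and $\psi'_k\circ\mathcal{B}^i_k=D_i$), but for $k\in P_m\setminus P_i$ there is no relation whatsoever between $\mathcal{B}^m_k$ and the fixed disc $D_i$, so $\psi'_k\circ\mathcal{B}^m_k$ may perfectly well meet it. These discs have codimension zero, so they cannot be perturbed off each other; at such a parameter no ambient isotopy preserving $D_i$ can carry $\psi'_k\circ\mathcal{B}^m_k$ onto a disc disjoint from $D_i$. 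Hence $\psi''_k$ cannot be taken supported away from all the already-straightened discs for every $k\in P_0$; it must be relative to $D_i$ only over $P_i$ and unconstrained elsewhere, and arranging this continuously in $k$ is precisely the compatibility problem the lemma is about. Your closing sentence defers exactly this point to \cite{BEM}, and, tellingly, your argument never uses hypothesis \eqref{e:contractible condition} for subsets $J$ with more than one element — a sign that the actual difficulty has not been engaged.

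The paper's induction is organized to avoid this issue: at every stage the family being straightened is defined over the \emph{entire} current parameter space, so that once it is standardized, every remaining family automatically avoids the new standard disc wherever it is defined. Concretely, one first standardizes $\mathcal{B}^0$ over $P_0$ to $D_0$; then one restricts the parameter space to $P_1$, where $\mathcal{B}^1$ is defined over all of it, and applies the inductive hypothesis in $M\setminus D_0$ to the collection $\{P_1\cap P_i\}_{i>1}$ — this is where the full strength of \eqref{e:contractible condition} enters; finally $D_1$ is hidden inside a small neighborhood of $D_0$ avoided by all the original families, so that the last application of the inductive hypothesis, over $P_0$ and inside the complement of that neighborhood, does not disturb what has already been achieved. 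If you prefer a short write-up, an honest alternative is to check carefully that the statement coincides with \cite[Lemma 9.1]{BEM} and cite it outright; but the self-contained induction as you have written it does not go through.
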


\begin{proof}
We argue by induction on $m$. The inductive hypothesis is that the lemma holds for all collections of size at most $m-1$ satisfying the contractibility hypothesis and for all manifolds (not just $M$).

In the base case $m=1$ we reason as follows: Since $P_0$ is contractible, the isotopy extension theorem ensures the existence of a $P_0$-family of isotopies $\psi_k^0: M \lra M$ satisfying $\psi_k^0 \circ \mathcal{B}_k^0(p) = D_0(p)$, for some embedding of the disc $D_0$. Then, we regard $\mathcal{B}^1$ as a family of embeddings into $M \setminus D_0$, and we use the same argument to find a $P_1$-family of isotopies $\psi_k^1: (M,D_0) \lra (M,D_0)$ (i.e. relative to $D_0$) satisfying $\psi_k^1 \circ \mathcal{B}_k^1(p) = D_1(p)$.

We now explain the inductive step. First, we may assume that $\mathcal{B}_k^0 = D_0$ for all $k \in P_0$ using the same reasoning as above. Then, the sets $P_1$, $\{P_1 \cap P_i\}_{i>1}$, the families $\{(\mathcal{B}_k^i)_{P_1 \cap P_i}\}_{i=1,\ldots,m}$, and the manifold $M \setminus D_0$ satisfy the inductive hypothesis. 

Therefore, there is a family of isotopies $(\psi_k': (M,D_0) \lra (M,D_0))_{k \in P_1}$ with $\psi_k' \circ \mathcal{B}_k^i = D_i$ for $k\in P_1$ and all $i$. Now we choose a neighborhood $\SU$ of $D_0$ such that $\mathcal{B}_k^i \cap \SU = \emptyset$ for all $k$ and all $i > 0$. It is then possible to modify the isotopies $\psi_k'$ so that
\begin{align*}
 \psi_k' \circ \mathcal{B}_k^1 & = D_1 \subset \SU \\
 (\psi_k' \circ \mathcal{B}_k^i) \cap \SU & = \emptyset, \qquad\textrm{ for } i > 1. 
\end{align*}
Then we conclude by noting that the inductive hypothesis applies once again to $P_0$, $\{P_i\}_{i>1}$, the families $\{(\mathcal{B}_k^i)_{k\in P_i}\}_{i=2,\ldots,m}$, and the manifold $M \setminus \SU$. 
\end{proof}

For $i>0$ the embeddings $\mathcal{B}^i$ in the lemma correspond to the shell $B$, the restrictions $(B_k^i)_{k \in K_0 \cap K_i}$ of the shells $B_i$, and the copy of the overtwisted disc $(\Delta_k')_{k \in K_0}$. Note that assumption \eqref{e:contractible condition} follows from condition (iv) in Proposition \ref{prop:reduction}. 
 
\begin{proposition} \label{prop:connections exist}
There is a connection $\nu: [0,1] \times K_0 \lra M \times K_0$ between the certificate $\Delta'$ and the shell $B$. 
\end{proposition}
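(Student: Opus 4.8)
The plan is to straighten out, all at once, the shell $B$, the remaining shells, the old certificate $\Delta$, and the replica $\Delta'$ by means of the parametric ambient connected-sum Lemma~\ref{lem:BEM}; to join $B$ to $\Delta'$ by a single fixed arc in the resulting normalised picture; and then to make the corresponding $K_0$-family of arcs tangent to the even-contact structures using the existence $h$-principle of Proposition~\ref{prop:tangentCurvesExistence}.

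\emph{Set-up and straightening.} Recall from Subsection~\ref{ssec:contractibility} that $K_0$ is strictly convex, hence contractible. Since the shell $B$ lives in the complement of an Engel neighborhood of $V \cup \Delta$, no $M_k$ with $k \in K'$ meets it, so $K_0 \cap K' = \emptyset$ and therefore $V \cap M_k = U$ for all $k \in K_0$; all the submanifolds we must handle — the shell $B$, the restrictions $(B^i_k)_{k \in K_0 \cap K_i}$ of the other shells, the old certificate $\Delta$ (an embedded $\D^4$-bundle over $K_0$) and the replica $\Delta' = (\Delta'_k)_{k \in K_0}$ — lie inside the open connected $4$-manifold $M \setminus U$. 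I would first thicken each of them to a smooth $K_0$-family of embedded discs; for $B$ I would take a disc $\mathcal{B}_k$ containing $B_k$ together with a short arc $\nu^B_k$ that starts at the point $(0,0,-\varepsilon,0)$, integrates a vector field lying in $\SE_k \setminus \SW_k$ and pointing out of $B_k$, and is, under the chosen parametrisation of $\mathcal{B}_k$, the image of a fixed arc in $\D^4$; similarly for $\Delta'$, with an outward arc $\nu^{OT}_k$ based at $(L/2,0,0,0)$ and disjoint from the scaling region. Applying Lemma~\ref{lem:BEM} with $P_0 = K_0$, the $P_i$ the relevant parameter subsets, and these disc families as the $\mathcal{B}^i$ — the hypothesis \eqref{e:contractible condition} being exactly condition (iv) of Proposition~\ref{prop:reduction} — produces a $K_0$-family of isotopies $(\psi_k)$ of $M \setminus U$ carrying all these discs to fixed, pairwise disjoint discs, and in particular $\psi_k$ sends $\nu^B_k$ and $\nu^{OT}_k$ to fixed arcs.

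\emph{Joining and achieving tangency.} Removing finitely many disjoint discs from the connected $4$-manifold $M \setminus U$ leaves it connected, so I can pick a fixed embedded arc $\mu_0$ joining the free end of $\psi_k(\nu^{OT}_k)$ to the free end of $\psi_k(\nu^B_k)$, meeting the distinguished discs only at these ends, disjoint from the remaining discs (in particular from $\Delta$ and from the straightened scaling region), and matching the two flow arcs to first order. The concatenation $\nu(\cdot,k) = \nu^{OT}_k \cup \psi_k^{-1}(\mu_0) \cup \nu^B_k$ is then, for each $k$, an embedded arc from $(L/2,0,0,0) \in \Delta'_k$ to $(0,0,-\varepsilon,0) \in B_k$, disjoint from $U$, from $\Delta$, from the scaling region of $\Delta'_k$ and from all shells other than $B$, meeting $B_k$ and $\Delta'_k$ only at the prescribed endpoints, and tangent to $\SE_k$ and transverse to $\SW_k$ over a neighborhood $J = \Op(\{0,1\})$ (the two flow pieces). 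Finally I would apply Proposition~\ref{prop:tangentCurvesExistence} to the family $(M \setminus U, \SE_k)_{k \in K_0}$ — these are honest even-contact manifolds by condition (ii) of Proposition~\ref{prop:reduction} — and the family $\nu(\cdot,k)$ relative to $J$: this gives a $C^0$-small perturbation $\tilde\nu_k$ that is everywhere tangent to $\SE_k$, transverse to $\SW_k$, embedded, and unchanged over $J$. As $K_0$ is compact, the obstacles ($U$, $\Delta$, the other shells, the scaling region, the thickenings $\mathcal{B}_k$) stay a uniform positive distance from the compact middle $\nu(\cdot,k)|_{I \setminus J}$, so a sufficiently small perturbation retains every disjointness property, and $\nu(s,k) := (\tilde\nu_k(s),k)$ is the desired connection.

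The hard part is the first step: arranging the thickenings and their parametrisations so that, after the isotopies from Lemma~\ref{lem:BEM}, the distinguished points of $B$ and of $\Delta'$ are sent to $\psi_k$-independent points — which is what lets one use a single fixed arc $\mu_0$ for the whole family — and checking the contractibility hypothesis \eqref{e:contractible condition} against condition (iv) of Proposition~\ref{prop:reduction}. The $h$-principle of Proposition~\ref{prop:tangentCurvesExistence} is then invoked as a black box, the only remaining subtlety being the $C^0$-smallness needed to keep the perturbed arc away from the codimension-$0$ obstacles.
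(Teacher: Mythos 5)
Your proposal is correct and follows essentially the same route as the paper: straighten the shell, the other shells, $\Delta$ and $\Delta'$ with the parametric Lemma~\ref{lem:BEM} (the contractibility hypothesis coming from condition (iv) of Proposition~\ref{prop:reduction}), use connectedness of the complement to choose one fixed arc with the endpoint behaviour prescribed by the explicit models, and then apply Proposition~\ref{prop:tangentCurvesExistence} over the contractible $K_0$ to make the family tangent to $\SE_k$ while a $C^0$-small, relative perturbation preserves embeddedness, transversality to $\SW_k$, and disjointness from $V$, $\Delta$ and the other shells. The only cosmetic difference is that you concatenate explicit flow arcs with a pulled-back middle arc $\psi_k^{-1}(\mu_0)$, whereas the paper simply notes that, after the straightening, the distinguished points are $k$-independent (this is automatic from the parametrized conclusion of Lemma~\ref{lem:BEM}) and prescribes the single path $\nu$ directly.
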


\begin{proof}
Let $B_k^i$ with $k \in K_i, i=1,\ldots,m$, be the collection of shells other than $B$. Recall that $V$ is of the form $(U \times K)\cup (M \times K')$. In particular, $K_0$ and $K'$ are disjoint. Using Lemma \ref{lem:BEM} we find a $K_0$-family $\psi_k: M \lra M$ of isotopies of $M$ with support in the complement of $U$ such that 
\begin{align*}
D & = \psi_k\circ B_k   &   D_\Delta & = \psi_k \circ \Delta_k \\ D_{\Delta'} & = \psi_k\circ\Delta_k' & D_i &  = \psi_k\circ B_k^i
\end{align*}
are embeddings of balls in $M \setminus U$ that do not depend on $k \in K_0$. 

In particular, after applying the isotopy $\psi_k$, we may assume that
\begin{itemize}
\item the region $\{z \leq 0, w \leq \varepsilon\} \subset B_k$, as a \emph{parametrized} subset of $M$,  and
\item the overtwisted disc $\Delta_k'$, as a \emph{parametrized} subset of $M$, 
\end{itemize}
are independent of $k\in K_0$. 
Since $M \setminus (U \cup D_{\Delta} \cup D_1 \cup \ldots \cup D_m)$ is connected manifold, there is an embedded path $\nu: [0,1] \lra M$ such that
\begin{itemize}
\item $\nu$ is disjoint from the balls $U \cup D_{\Delta} \cup D_1 \cup \ldots \cup D_m$,
\item $\nu(0)$ is mapped to $\Delta_k'(L/2,0,0,0) = D_{\Delta'}(L/2,0,0,0)$,
\item $\nu(1)$ is mapped to $B_k(0,-\varepsilon,0,0) = D(0,-\varepsilon,0,0)$,
\item $\nu$ is transverse to $\SW_k$ and tangent to $\SE_k$ at times $t \in \Op(\{0,1\})$.
\end{itemize}
The last property follows from the explicit models we have around the endpoints, allowing us to prefix the path $\nu$ there. Embeddedness follows from $C^\infty$-genericity, since the dimension of $M$ is $4$. 

We can regard $\nu$ as a submanifold $\nu: [0,1] \times K_0 \lra M \times K$ that does not actually depend on the parameter $k \in K_0$. Do note, however, that each $\nu_k$ maps into a \emph{different} even-contact manifold $(M,\SE_k)$. Still, since $K_0$ is a ball, we can apply Proposition \ref{prop:tangentCurvesExistence} to obtain a $C^0$-deformation of the family $\nu = (\nu_k)_{k \in K_0}$ relative to the endpoints. In this manner, we make $\nu$ tangent to  $\SE_{M \times K}$, while preserving embeddedness and transversality with $\SW_{M \times K}$. Therefore, the resulting manifold is a connection.
\end{proof}
We henceforth fix a connection $\nu$ between $B$ and $\Delta'$. Schematically, this is shown in Figure~\ref{fig:connection to shell}.
\begin{figure}
\begin{center}
\includegraphics[scale=0.9]{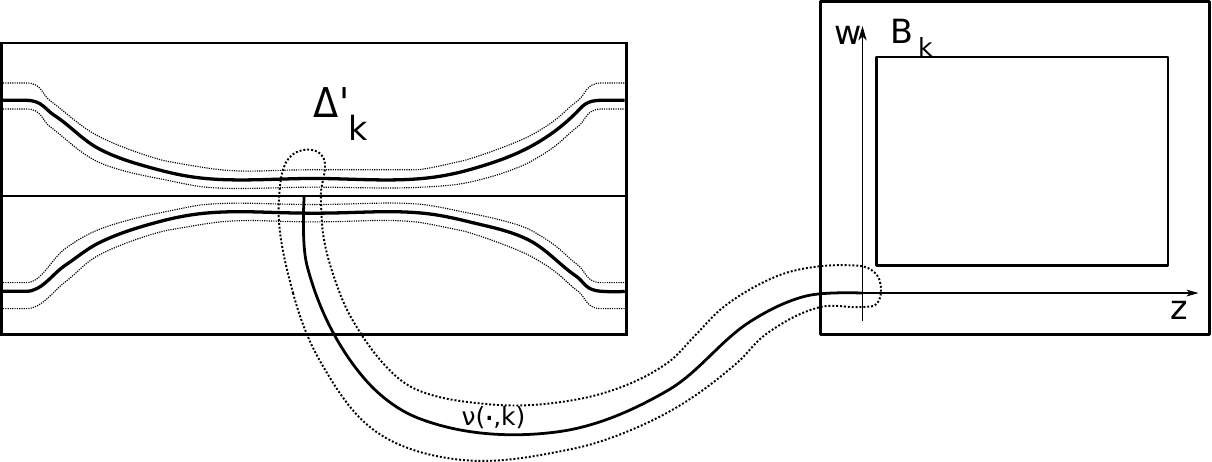} \label{fig:connection to shell}
\caption{A connection $\nu(\cdot,k)$ from an overtwisted disc $\Delta_k'$ to $B_k$.}
\end{center}
\end{figure}

\subsubsection{Twist systems} \label{sssec:twistSystem}

We want to place a portion of the universal twist system inside of the shell $B$. Let us recall some notation from Section~\ref{ssec:universalTwistSystem}, p.~\pageref{ssec:universalTwistSystem}. 

The universal twist system is a $1$-parametric family of  surfaces $(\SS_t)_{t \in [0,1]}$ contained in $I \times \R^3_-$, we consider the case when $I$ is a closed interval. Each of the surfaces $\SS_t$ consists of infinitely many cylinders stacked side to side along the plane $\{z=0\}$:
$$ \SS_t = \psi_\lambda\left(\bigcup_{n\in\Z}T^n(S_t)\right) = \bigcup_{n\in\Z}T^n_\lambda(\psi_\lambda(S_t)), $$
where $S_t$ is a single cylinder, $T_\lambda$ is the translation in $x$ of length $\lambda^2$, and $T = T_1$. The scaling constant $\lambda$ in the definition depends on two parameters (see Remark \ref{rem:twistSystemConstants}):
\begin{itemize}
\item $\varepsilon_0$, the size of $S_t$ in the $x$, $z$, and $w$ coordinates, and
\item $\delta_0$, the length of the interval at the ends of the cylinder in which the unlinking and shrinking take place.
\end{itemize}
A third parameter, $\lambda_0 \in (0,1]$, controls the shrinking of $S_t$ at its ends. It does not affect $\lambda$. In particular, once we fix $\lambda$ we are allowed to further shrink the ends of the universal twist system (while remaining transverse to the Engel structure and embedded). We single out the region  
$$ S_t^+ = S_t \cap \{z=0\} \subset S_t. $$
By construction, the union
$$ \bigcup_{n\in\Z} \psi_\lambda\left(T^n(S_t^+)\right)  $$
contains the infinite band $\{y\in [\delta_0,1-\delta_0],  z=w = 0 \}$. 

Observe that the bottom part of any shell, i.e. a sufficiently small collar of the bottom boundary $\{w=a\}$,  is itself a solid shell of the form 
$$ M([0,1],[-\varepsilon,\varepsilon],-\varepsilon,f_+,\textrm{arcsin}(w)). $$
By construction, the Engel structure in this region is the one we used to define the universal twist system:
\[ \SD_\trans = \ker(\alpha_\trans=dy-zdx)\cap\ker(\beta_\trans=dx-wdz). \]
\begin{definition} \label{def:twist system}
Let $\SS_t$ be a universal twist system with scaling constant $\lambda>0$. Consider a shell $M([0,1],[-\varepsilon,1],-\varepsilon,f_+,c)$ and a pair of integers $m_-,m_+ \in \Z$ satisfying
\begin{enumerate}
\item[(a)] $\psi_\lambda\left(\bigcup_{m_-\leq n\leq m_+} T^n(S_t)\right)$ is contained in $[0,1] \times (0,1) \times (-\varepsilon,0] \times (-\varepsilon,\varepsilon)$,
\item[(b)] $\psi_\lambda\left(\bigcup_{m_-\leq n\leq m_+} T^n(S_1^+)\right)$ contains the square $[\varepsilon,1-\varepsilon]^2 \times \{(0,0)\}$.
\end{enumerate}
Then, the collection of surfaces described in item (a) can be regarded as a subset of the shell and is called a \textbf{twist system}.
\end{definition}

\begin{proposition} \label{prop:twistSystem}
Any shell $M([0,1],[-\varepsilon,1],-\varepsilon,f_+,c)$ admits a twist system. This holds parametrically for compact families of shells.
\end{proposition}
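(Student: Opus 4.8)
The plan is to place a finite portion of a universal twist system (Subsection~\ref{ssec:universalTwistSystem}) inside the bottom collar of the shell. The key structural observation, recorded just above the statement, is that a collar $\{w\in[-\varepsilon,\varepsilon]\}$ of the bottom boundary of $M([0,1],[-\varepsilon,1],-\varepsilon,f_+,c)$ is itself a solid shell $M([0,1],[-\varepsilon,\varepsilon],-\varepsilon,f_+,\arcsin(w))$ whose underlying Engel structure is precisely the model $\SD_\trans=\ker(dy-zdx)\cap\ker(dx-wdz)$ in which universal twist systems are built. Hence any universal twist system, transplanted into this collar via these coordinates, is automatically transverse to the Engel structure of the shell, and the whole content of the proof is to choose the constants of the universal twist system so that a finite sub-collection of its cylinders obeys the two numerical conditions (a) and (b) of Definition~\ref{def:twist system}.

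First I would fix the profile $\eta_1$ (Figure~\ref{fig:univ-twist-overlay-neu}) and let $\ell_0$ be a bound for the $x$-extent of the single cylinder $S_t$; this $\ell_0$ depends only on $\eta_1$. Then I would pick $\varepsilon_0\in(0,\varepsilon)$ and $\delta_0\in(0,\varepsilon)$ small enough that the scaling constant $\lambda$ of the resulting universal twist system $(\SS_t)_{t\in[0,1]}$, $\SS_t=\psi_\lambda(\bigcup_{n\in\Z}T^n(S_t))$, is small compared to $\varepsilon$ and $\ell_0$; this is legitimate because $\lambda$ depends smoothly on $\varepsilon_0,\delta_0$ and tends to $0$ with them, and does not depend on $\lambda_0$ (Remark~\ref{rem:twistSystemConstants}), which we fix arbitrarily in $(0,1]$. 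With these choices one has for every $t$, using $\SS_t\subset[0,1]\times\R^3_-$ together with the bound $\SS_t\subset\{|z|,|w|<\varepsilon_0\}$, that $\SS_t\subset\{-\varepsilon_0<z\le0,\ |w|<\varepsilon_0\}$, and moreover that the cylinders span $y\in[0,1]$; so the $y$-, $z$- and $w$-constraints in (a) hold for any sub-collection. Finally, $\bigcup_{n\in\Z}\psi_\lambda(T^n(S_1^+))\subset\{z=w=0\}$ contains the band $\{y\in[\delta_0,1-\delta_0]\}$ for every value of $x$.

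The remaining point is the one-dimensional packing in the $x$-direction. Consecutive cylinders $\psi_\lambda(T^n(S_t))$ differ by a translation of $x$ by $\lambda^2$, each has $x$-extent $\le\lambda^2\ell_0$, and the slices $\psi_\lambda(T^n(S_1^+))$ of the infinitely many cylinders cover every $x\in\R$ over the range $y\in[\delta_0,1-\delta_0]$. Therefore I would choose integers $m_-<m_+$ with $(m_+-m_-)\lambda^2$ slightly larger than $1-2\varepsilon$, and precompose the twist system with a translation in the $x$-coordinate — which is a symmetry of $\SD_\trans$ — so that the finite union $\psi_\lambda(\bigcup_{m_-\le n\le m_+}T^n(S_t))$ has $x$-extent inside $(0,1)$ while $\psi_\lambda(\bigcup_{m_-\le n\le m_+}T^n(S_1^+))$ covers $x\in[\varepsilon,1-\varepsilon]$. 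Since $\delta_0<\varepsilon$, this last set then contains the square $[\varepsilon,1-\varepsilon]^2$ at $\{z=w=0\}$, which is (b); and since $\varepsilon_0<\varepsilon$ and $f_+>0$, the whole sub-collection lies in the domain of the shell, which together with the previous paragraph gives (a). For the parametric statement I would simply observe that in a compact family $(B_k)_{k\in K_0}$ all shells share the constant $\varepsilon$, their bottom collars carry the same $\SD_\trans$, and $f_{k,+}>0$ for all $k$; since the sub-collection of cylinders constructed above is a fixed subset of $[0,1]\times\R^3$ not depending on $k$, it is simultaneously a twist system for every $B_k$.

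The only delicate point — and hence the ``main obstacle'' — is the compatibility between the two conditions: the $\{z=0\}$-parts of the chosen cylinders must cover an $x$-interval of length $1-2\varepsilon$, while the cylinders themselves (which overhang their $\{z=0\}$-parts by as much as $\lambda^2\ell_0$ in the $x$-direction) must still fit into the open interval $(0,1)$. This is exactly why $\lambda$ must be taken small relative to $\varepsilon$ and $\ell_0$, and the freedom to shrink $\varepsilon_0,\delta_0$ without altering the ambient $\SD_\trans$ or $\lambda_0$ (Remark~\ref{rem:twistSystemConstants}) is what makes this possible. Everything else is a routine verification against the explicit form of $\SD_\trans$ and the listed properties of the universal twist system.
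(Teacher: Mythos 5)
Your proposal is correct and follows essentially the same route as the paper: the paper's (very brief) proof likewise chooses $\varepsilon_0$ and $\delta_0$ much smaller than $\varepsilon$ so that the scaling constant $\lambda$ is small enough for a finite block of consecutive cylinders to satisfy (a) and (b), and handles the parametric case by the fact that a single $\varepsilon$ works for the whole compact family. Your write-up just makes explicit the packing estimate in the $x$-direction and the $x$-translation invariance of $\SD_\trans$, which the paper leaves implicit.
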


\begin{proof}
We choose constants $\varepsilon_0$ and $\delta_0$ in the construction of the universal twist system to be much smaller than $\varepsilon$. Then there are integers $m_-$ and $m_+$ that satisfy the claim. The parametric statement follows by taking a sufficiently small constant $\varepsilon$ suitable for all the models in the family.
\end{proof}

We apply the Lemma to the shell $B = (B_k)_{k \in K_0}$. To each model $B_k$ we assign a twist system $\SS_{k,t}$. This surface does not actually depend on $k$ when we use the coordinates provided by $B_k$. Similarly, we write $\SS_{k,t}^+ \subset \SS_{k,t}$ for the corresponding collection of surfaces as defined in item (b) of Definition~\ref{def:twist system}. The cores of the cylinders in $\SS_{k,t}$ will be denoted by $\{\alpha_k^n\}_{m_-\leq n\leq m_+}$.

\begin{remark}
Twist systems will play a role analogous to the one of the $2$-torus $\SS$ in the proof of Proposition~\ref{prop:nonParametric}. Let us briefly compare the two.

The profile of the torus $\SS$ is a transverse unknot $\eta$ with many stabilizations. The precise number of stabilizations needed depends on how large the Engel region of the circular shell is and, as such, we do not have an a priori bound. However, we do not need such a bound for proving statements that are not relative in the parameter space. 

However, for a complete $h$-principle, one has to define what the overtwisted disc is. In our approach this means that one has to fix the profile of a transverse surface. We choose $\eta_{-3}$, the unknot with a single stabilization. This implies that we cannot introduce in our shells a single, wide surface like $\SS$, but rather a thin cylinder coming from the overtwisted disc. In order to be  able to use these cylinders as in the proof of Proposition~\ref{prop:nonParametric} we stack many of them next to each other. This is exactly what a twist system is.

It is worth noting that   we could have chosen as profile a knot with more stabilizations as well. However, we do not know whether an unstabilized unknot can be used.
\end{remark}

\subsubsection{Homotopies of cores after undoing the Engel-Lutz twist} \label{ssec:homotopies of cores}

In this section we show that the twist systems we have introduced can be obtained by homotoping the core of an overtwisted disc and transporting the transverse surface and the Engel-Lutz twist along. First, we describe how to move the core itself. We make extensive use of the notation introduced in Subsection~\ref{ssec:homotopyOTdisc}, p. \pageref{ssec:homotopyOTdisc}, which we briefly review. %
We continue using the notation from the previous subsection as well.

Given the shell $B = (B_k)_{k \in K_0}$, we produce a copy $\Delta' = (\Delta_k')_{k \in K_0}$ of the certificate using Lemma~\ref{lem:selfReplication2}.  The core of $\Delta'$ is denoted by $\gamma: [0,L] \times K_0 \lra M \times K$. Additionally, we fix a connection $\nu: [0,1] \times K_0 \lra M\times K$ between $B$ and $\Delta'$. Let $K_0' \subset K_0$ be a subset such that $B_k$ is a solid shell for $k\in K_0 \setminus K_0'$.

As in Section~\ref{ssec:homotopyOTdisc} we write $(M\times K_0)^{\Delta'} = M^{\Delta'} \times K_0 = (M_k^{\Delta'})_{k \in K_0}$ for the manifold obtained from $M \times K_0$ by cutting along the upper $\{y=L\}$ and lower $\{y=0\}$ boundaries of the certificate $\Delta'$. 

The restriction of the formal Engel structure $\SD_{M \times K}$ to the manifold $(M\times K_0)^{\Delta'}$ is obtained from some other structure $\SL^{-1}(\SD_{M \times K})$ by an Engel-Lutz twist along a $K_0$-family of cylinders $\Sigma_k$ with core $\gamma$. All elements we have been working with (shells and connections) are disjoint from the upper and lower boundary of $\Delta'$, so we can regard them as lying in $(M\times K_0)^{\Delta'}$. In particular, the shell $B$ inherits the same formal Engel structure from $\SL^{-1}(\SD_{M \times K})$.

We write $\SL^{-1}(\SD_k)$ for the restriction of $\SL^{-1}(\SD_{M \times K})$ to $M_k$ and $\SL^{-1}(\SE_k)$ for the induced even contact structure. In  $\image(\Delta')$ the structure $\SL^{-1}(\SD_k)$ is given by an Engel embedding
$$ \varphi: ([0,L] \times \D^3,\SD_\trans) \lra \left(M^{\Delta'}_k,\SL^{-1}(\SD_k)\right) $$
of the standard neighborhood of the core $\gamma(\cdot,k)$. We also recall that the length of the scaling region $l$ satisfies $L/2 > l > 1/\tau_0$ (where $\tau_0$ is the constant from Lemma \ref{lem:scalingProfile}) and that the cores of the surfaces forming a twist system $(\SS_{k,t})$ are independent of $t$. 

\begin{proposition} \label{prop:homotopiesCoreOTDisc}
In this setting, there is a path of families of embedded transverse curves
$$ \gamma_s: [0,L] \times K_0 \lra \left((M \times K_0)^{\Delta'}, \SL^{-1}(\SD_{M \times K})\right) $$
with $s\in [0,1]$ such that 
\begin{itemize}
\item $\gamma_0$ is the core $\gamma$ of $\Delta'$,
\item $\gamma_s$ is disjoint from $V$, $\Delta$, and all shells other than $B$,
\item for $k\in K_0'$, $\gamma_1$ contains the cores $\{\alpha_k^n\}, m_-\le n\le m_+$, of the twist systems $(\SS_{k,t})$, and 
\item $\gamma_s=\gamma_0$ outside of $\Op(\{L/2\}) \times K_0$. In particular, they agree in the scaling region.
\end{itemize}
\end{proposition}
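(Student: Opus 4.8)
The plan is to first produce the target family $\gamma_1$ explicitly, as a concatenation of pieces of the core $\gamma$, transverse perturbations of the connection $\nu$, the cores $\alpha_k^n$ themselves, and a finite number of short linking arcs, and then to connect $\gamma_0=\gamma$ to $\gamma_1$ by feeding this into the isotopy statement of Lemma~\ref{lem:hPrincipleTransverseCurves}. Everything would be carried out parametrically over $K_0$, which is a closed ball by Lemma~\ref{lem:simplexToShell}; in the coordinates provided by the standard neighborhood theorems the connection $\nu$, the shell $B$, and the twist systems do not depend on $k\in K_0$, so the parametrization is harmless. Over $K_0\setminus K_0'$, where $B_k$ is already a solid shell, I would keep $\gamma_s\equiv\gamma_0$; the detour described below is switched on only over $\Op(K_0')\subset\int(K_0)$ and damped out as $k$ travels from $\partial K_0'$ to $\partial\Op(K_0')$.

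\textbf{Constructing $\gamma_1$.} I would first fix the three standard models involved: around the core $\gamma$ the structure $\SL^{-1}(\SD_k)$ is the Engel model $\SD_\trans$ on $[0,L]\times\R^3$ (Proposition~\ref{prop:modelKnot}, cf.\ Subsection~\ref{ssec:homotopyOTdisc}); around $\nu$ the even-contact structure $\SE_k$ is the model $\ker(dz-ydx)$ on $[0,1]\times\R^3$ (Proposition~\ref{prop:tangentCurvesModel}, valid since $\nu$ is tangent to $\SE_k$ and transverse to $\SW_k$); and the bottom collar of $B_k$ is the standard solid shell with Engel structure $\SD_\trans=\ker(dy-zdx)\cap\ker(dx-wdz)$ of Subsection~\ref{sssec:twistSystem}, which by construction contains the cores $\alpha_k^{m_-},\dots,\alpha_k^{m_+}$ as finitely many arcs parallel to the $y$-axis, each transverse to $\SE_k$. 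Then $\gamma_1$ would agree with $\gamma$ over $[0,L]\setminus\Op(\{L/2\})$, and over $\Op(\{L/2\})$ it would be a \emph{detour}: leaving $\gamma$ near $y=L/2$, running along a small transverse perturbation of $\nu$ into a neighborhood of $B_k$, entering the bottom collar and running along $\alpha_k^{m_-}$, then zig-zagging through $\alpha_k^{m_-+1},\dots,\alpha_k^{m_+}$ joined by short arcs inside the collar, and finally returning along a transverse perturbation of $\nu$ to $\gamma$ near $y=L/2$. Each linking arc has endpoints on curves transverse to $\SE_k$, so after prefixing it is embedded and transverse to $\SE_k$ near its endpoints; the first part of Lemma~\ref{lem:hPrincipleTransverseCurves}, applied inside the model charts, makes it fully embedded and transverse while keeping it $C^0$-close to $\nu$ (respectively inside the collar of $B_k$). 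A $C^\infty$-generic perturbation makes the concatenation embedded, which is permissible since $\dim M=4$ and the curve has codimension $3$. Since the whole detour lies in an arbitrarily small neighborhood $\SN$ of $\gamma\cup\nu\cup(\text{bottom collar of }B)$, and $\nu$ avoids $V$, $\Delta$, and the shells other than $B$ (Definition~\ref{def:connection to core}) while the replicated certificate $\Delta'$ can be placed away from $V$, $\Delta$, and the other shells, the curve $\gamma_1$ is disjoint from $V$, $\Delta$, and all shells other than $B$, it agrees with $\gamma_0$ outside $\Op(\{L/2\})$, and for $k\in K_0'$ it contains the cores $\{\alpha_k^n\}_{m_-\le n\le m_+}$ by construction.

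\textbf{The homotopy, and the main difficulty.} The curves $\gamma_0$ and $\gamma_1$ are embedded transverse arcs that agree over $[0,L]\setminus\Op(\{L/2\})$, in particular near the endpoints of $[0,L]$; they are homotopic rel those endpoints as maps because the detour, being built by gluing contractible pieces (the arc $\nu$ and the ball given by the bottom collar of $B_k$) along a tree of linking arcs, is a null-homotopic based loop. I would then invoke the isotopy statement of Lemma~\ref{lem:hPrincipleTransverseCurves}---in its parametric form, which is legitimate because the $h$-principles behind that lemma (Smale--Hirsch and the $h$-principle for transverse immersions in contact $3$-manifolds) are parametric and $K_0$ is a ball---to obtain a family of isotopies through embedded transverse arcs from $\gamma_0$ to $\gamma_1$ rel the endpoints of $[0,L]$. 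Tracking the construction lets me take this isotopy supported in $\Op(\{L/2\})\times K_0$ and, after the cutoff over $K_0\setminus\Op(K_0')$, constant there; since at every stage the curves remain inside $\SN$, they stay disjoint from $V$, $\Delta$, and the other shells throughout. This would be the desired path $(\gamma_s)_{s\in[0,1]}$. The hard part here is bookkeeping rather than conceptual: making the relative-in-the-parameter cutoff (as $k$ leaves $K_0'$) compatible with the relative-in-the-domain requirements (constancy outside $\Op(\{L/2\})$, and disjointness from $V$, $\Delta$, and the other shells for \emph{every} $s$, not merely at the ends of the homotopy), together with the routine upgrade of Lemma~\ref{lem:hPrincipleTransverseCurves} to families.
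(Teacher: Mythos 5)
Your overall strategy is the paper's: pass to the $k$-independent standard models around $\gamma$, $\nu$ and the bottom collar $\{|w|<\varepsilon\}$ of $B$, produce a transverse embedded curve $\gamma_1$ agreeing with $\gamma$ outside $\Op(\{L/2\})$ and containing the cores via the existence part of Lemma~\ref{lem:hPrincipleTransverseCurves}, connect $\gamma_0$ to $\gamma_1$ through embedded transverse arcs via the classification part applied to the subarcs over $\Op(\{L/2\})$, and damp the whole thing out in the parameter near $\partial\Op(K_0')$. The one structural difference is the treatment of the parameter: the paper glues the three models into a single fixed even-contact ball $(C,\SE_C)$ identified with every $(C_k,\SL^{-1}(\SE_k))$, runs the construction once there, and then sets $\gamma_s(y,k)=\widetilde\gamma_{\chi(k)s}(y)$ for a bump function $\chi$ on $K_0$; you instead invoke a parametric upgrade of Lemma~\ref{lem:hPrincipleTransverseCurves}. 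That upgrade is plausible, but given your own observation that the structures are $k$-independent in the model coordinates it is also unnecessary, and the time-reparametrization trick is the cleaner way to obtain the relative-in-parameter cutoff you only sketch.

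There is, however, one concrete step that fails as written: the description of $\gamma_1$ as ``running along $\alpha_k^{m_-}$, then zig-zagging through $\alpha_k^{m_-+1},\dots,\alpha_k^{m_+}$ joined by short arcs inside the collar.'' Along a connected arc transverse to $\SE$ the function $\alpha_\trans(\dot\gamma)$ never vanishes, hence has constant sign; a core traversed in the $+y$ direction is positively transverse while the adjacent core traversed in the $-y$ direction is negatively transverse. Since ``short arcs'' forces joining same-side endpoints of consecutive cores, i.e.\ traversing them in alternating $y$-directions, no such transverse curve exists, and Lemma~\ref{lem:hPrincipleTransverseCurves} cannot be applied to those linking arcs either (it requires positive transversality at both prefixed ends). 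The repair stays inside your toolkit: traverse all cores in the same direction and join the far end of $\alpha_k^n$ to the near end of $\alpha_k^{n+1}$ by a ``return'' arc inside the collar; since $\alpha_\trans=dy-z\,dx$ there, such an arc can decrease $y$ while keeping $\dot y - z\dot x>0$ by dipping into $\{z<0\}$, and its existence (embedded, $C^0$-controlled, rel the two positively transverse germs) is exactly the existence part of Lemma~\ref{lem:hPrincipleTransverseCurves}. With that correction, and keeping every intermediate curve inside the neighborhood of $\gamma\cup\nu\cup\{|w|<\varepsilon\}$ so that disjointness from $V$, $\Delta$ and the other shells holds for all $s$, your argument coincides with the paper's proof.
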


\begin{proof}
Consider the family of open manifolds
$$C_k = \Op(\gamma \cup \nu(\cdot,k) \cup B_k(\{|w| < \varepsilon\}) $$
with $k\in K_0$. By construction, $\left(C_k,\SL^{-1}(\SD_k)\right)_{k \in K_0}$ is a family of genuine Engel manifolds diffeomorphic to a  ball. Moreover, the even-contact structure $\SL^{-1}(\SE_k)$ on $C_k$ does not depend on $k\in K_0$: First, note that we have explicit models in the vicinity of $\gamma$ (according to Proposition~\ref{prop:modelKnot}), in $B_k(\{|w| < \varepsilon\})$ (provided by the model structure), and in a neighborhood of $\nu(\cdot,k)$ (Proposition \ref{prop:tangentCurvesModel}). These three models glue, so we can identify all the manifolds $(C_k,\SL^{-1}(\SE_k))_{k \in K_0}$ with a fixed even-contact manifold $(C \cong \D^4,\SE_C)$.

Under this identification, the curves $\gamma(\cdot,k)$ are all identified with the same curve $\tilde{\gamma}_0$ in $(C,\SE_C)$. Similarly, for a given $n$, the cores $\alpha_k^n$ are all identified with a single curve $\widetilde{\alpha}_n$. By  the existence part of the $h$-principle for transverse knots  Lemma~\ref{lem:hPrincipleTransverseCurves} (on p.~ \pageref{ssec:transverseKnots}) there is a transverse embedded curve $\tilde\gamma_1: [0,1] \lra (C,\SE_C)$ such that
\begin{itemize}
\item $\widetilde{\gamma}_1(y) = \widetilde{\gamma}_0(y)$ for $y$ outside of $\Op(\{L/2\})$, and
\item $\widetilde{\gamma}_1 \supset \widetilde{\alpha}_n$ for all $n$.
\end{itemize}
According to the classification part of Lemma~\ref{lem:hPrincipleTransverseCurves} there is a homotopy $(\widetilde{\gamma}_s)_{s \in [0,1]}$, as embedded curves transverse to $\SE_C$, between $\widetilde{\gamma}_0$ and $\widetilde{\gamma}_1$.

Now we reintroduce $k$ in the discussion. Fix a bump function $\chi: K_0 \lra [0,1]$ such that
\begin{itemize}
\item $\chi(k) \equiv 0$ in the complement of $\Op(K_0')$, and
\item $\chi(k) \equiv 1$ in $K_0'$.
\end{itemize}
Using the identification between $\gamma(\cdot,k) \subset C_k \subset M_k^{\Delta'}$ and $\widetilde{\gamma}_0 \subset C$, we define the desired homotopy $\gamma_s$ of $\gamma$ 
\begin{align*}
\gamma_s: [0,1] \times K_0 & \lra (M \times K_0)^{\Delta'} \\ 
(y,k) & \longmapsto \gamma_s(y,k)=\widetilde{\gamma}_{\chi(k)s}(y). 
\end{align*}
This homotopy takes place, in the domain, in the region $\Op(\{1/2\}) \times \Op(K_0')$. Similarly, in the target space, it takes place within $(C_k)_{k \in K_0}$. 
\end{proof}

Now that we can move the core effectively, we explain how to move the overtwisted disc along the homotopy $(\gamma_s)_{s \in [0,1]}$. 
\begin{proposition} \label{prop:homotopiesOTDisc}
Fix a constant $t_0$ arbitrarily close to but smaller than $1$. There is a homotopy of formal Engel structures $(\SD_s)_{s \in [0,1]}$ in $M \times K$ satisfying 
\begin{itemize}
\item $\SD_0 = \SD_{M \times K}$,
\item $\SD_1$ has an Engel-Lutz twist along the twist systems $(\SS_{k,t_0})$ for $k \in K_0'$, and
\item $\SD_s$ differs from $\SD_0$ only in a neighborhood of the overtwisted disc $\Delta'$, the connection $\nu$, and the region $\{|w| < \varepsilon\} \subset B$.
\end{itemize}
\end{proposition}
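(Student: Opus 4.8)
The plan is to concatenate the two results preceding the statement. First I would apply Proposition~\ref{prop:homotopiesCoreOTDisc} to produce the homotopy $(\gamma_s)_{s\in[0,1]}$ of families of embedded transverse curves in $\big((M\times K_0)^{\Delta'},\SL^{-1}(\SD_{M\times K})\big)$: it starts at the core $\gamma$ of the replicated certificate $\Delta'$, is constant outside $\Op(\{L/2\})\times K_0$ (hence fixes the scaling region), stays disjoint from $V$, $\Delta$ and every shell other than $B$, and ends at $\gamma_1$, which for $k\in K_0'$ runs through all the cores $\alpha_k^n$, $m_-\le n\le m_+$, of the cylinders of the twist system $(\SS_{k,t_0})$. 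By the construction of $\gamma_1$ and of the connection $\nu$, the arcs of $\gamma_1$ joining consecutive $\alpha_k^n$ are confined to $\Op(\nu)\cup(\{|w|<\varepsilon\}\cap B)$.

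Next I would feed $(\gamma_s)$ into Proposition~\ref{prop:homotopyOTdisc}. That proposition is stated for a single Engel manifold, but its proof uses only Lemma~\ref{lem:scalingProfile} and parametric Engel-Lutz twisting (Remark~\ref{contractible choice in Engel Lutz}), so it applies verbatim to the family $(\SL^{-1}(\SD_k))_{k\in K_0}$ and drags the overtwisted disc $\Delta'$ along the homotopy of cores, yielding a homotopy of formal Engel structures $(\SD_s)_{s\in[0,1]}$ with $\SD_0=\SD_{M\times K}$ in which $\SD_1$ is the Engel-Lutz twist of $\SL^{-1}(\SD_{M\times K})$ along a transverse cylinder $\Sigma_1$ carried by $\gamma_1$, with a profile homotopy $\eta_s(y)$ that I am free to prescribe. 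I would choose $\eta_s(y)$ so that: it is constant equal to $\eta$ over the scaling region (so $\Sigma_1$ agrees there with the original transverse cylinder of $\Delta'$); over the pieces where $\gamma_1$ coincides with $\alpha_k^n$ the final profile $\eta_1(y)$ equals the profile of the $n$-th cylinder $\psi_\lambda(T^n(S_{t_0}))$ of $\SS_{k,t_0}$, precomposed with the inverse of the global rescaling that Proposition~\ref{prop:homotopyOTdisc} reintroduces; and over the connecting arcs $\eta_1(y)$ is an arbitrarily thin unknot. Such a homotopy of profiles exists because every profile in sight is a transverse unknot, transversely isotopic to $\eta=\eta_1$: this follows from the isotopy $(\eta_t)_{t\in[0,1]}$ of the universal twist system together with the invariance of the transverse isotopy type under rescalings of the form $\psi_\lambda$ and under the translations $T^n$.

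Then I would read off the three claimed properties. The equality $\SD_0=\SD_{M\times K}$ is built in. For the support statement, the deformation sits inside $\Op(\Sigma_s)$, and $\bigcup_s\Sigma_s$ lies in an arbitrarily small neighbourhood of $\image(\Delta')\cup\image(\nu)\cup(\{|w|<\varepsilon\}\cap B)$ by construction. Finally, choosing $t_0$ close enough to $1$ that the cylinders of $\SS_{k,t_0}$ are pairwise disjoint with definite gaps, there is room to route the thin connecting tubes of $\Sigma_1$ so that inside $B$ the cylinder $\Sigma_1$ agrees with $\SS_{k,t_0}$ on a neighbourhood of $\bigcup_n\alpha_k^n$ while the rest of $\Sigma_1\cap B$ stays in $\{|w|<\varepsilon\}$ and away from that neighbourhood; since near $\SS_{k,t_0}$ one has $\SD_0=\SL^{-1}(\SD_{M\times K})$ ($\Sigma_1$ being disjoint from the original cylinder of $\Delta'$) and the Engel-Lutz twist is local in the transverse hypersurface, $\SD_1$ near $\SS_{k,t_0}$ is precisely the Engel-Lutz twist along $\SS_{k,t_0}$. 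Throughout, multiplying every deformation by the bump function $\chi$ from Proposition~\ref{prop:homotopiesCoreOTDisc} keeps the homotopy constant over $K_0\setminus\Op(K_0')$, so it extends by the identity to $M\times K$.

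The step I expect to be the real work is the profile matching in the second paragraph: arranging that $\Sigma_1$ reproduces $\SS_{k,t_0}$ \emph{exactly} inside $B$ — not just up to rescaling or perturbation — while simultaneously confining the connecting tubes to $\Op(\nu)\cup(\{|w|<\varepsilon\}\cap B)$ and keeping them off the twist-system cylinders. This is really a bookkeeping exercise with the explicit maps $\psi_\lambda$, $\psi_{\lambda_0}$, $\rho_t$, $T^n$, but it is where the quantitative relations among $\varepsilon$, $\varepsilon_0$, $\delta_0$, $\lambda_0$, $\lambda$ (Remark~\ref{rem:twistSystemConstants}) and the closeness of $t_0$ to $1$ all come into play.
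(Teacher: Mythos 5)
Your overall strategy is the one the paper uses: take the core homotopy $(\gamma_s)$ supplied by Proposition~\ref{prop:homotopiesCoreOTDisc}, apply Proposition~\ref{prop:homotopyOTdisc} parametrically in $k$ to drag the replicated certificate $\Delta'$ along it, choose the profile homotopy so that at $s=1$ the carried cylinder $\Sigma_1$ reproduces the twist systems, and read off the support statement with a cut-off in $k$. The gap is in the matching step, which you defer to ``bookkeeping'': prescribing $\eta_1(y)$ as the twist-system profile precomposed with the inverse of ``the global rescaling that Proposition~\ref{prop:homotopyOTdisc} reintroduces'' cannot work, because that rescaling is an \emph{output} of the construction, not a free input. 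In the proof of Proposition~\ref{prop:homotopyOTdisc} the scaling constant must be small enough that the surfaces $\psi_{\lambda_0}(\eta_t(y))$ are, for \emph{all} $t$, transverse, embedded, and contained in the standard neighborhoods $U_t$ of the cores $\gamma_t$. Since $\gamma_1$ threads all the cores $\alpha_k^n$, which are spaced $\sim\lambda^2$ apart in $x$ while the cylinders of $\SS_{k,t_0}$ have $x$-extent of the same order (for $t_0$ close to $1$ consecutive cylinders nearly touch), any embedded standard tube around $\gamma_1$ is forced to be much thinner than those cylinders; consequently the dragged surface can only carry a small rescaled copy of the twist-system profile along each core, and no choice of prescription inside that framework makes $\Sigma_1$ agree with, or contain, $\SS_{k,t_0}$.

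The missing ingredient is the paper's final enlargement step. One arranges (as you do) that along each $\alpha_k^n$ the profile of $\Sigma_1$ is a rescaling of the twist-system profile, and then performs a further homotopy of $(\Sigma_s)_{s\in\Op(\{1\})}$ through embedded transverse surfaces --- now carried out in the coordinates of the shell, outside the tube around $\gamma_1$ --- which grows the profiles along the cores until $\Sigma_1$ \emph{contains} $\bigcup_{k\in K_0'}\SS_{k,t_0}$; Lutz-twisting along this path of cylinders gives the claimed homotopy. This is exactly where the parameter $\lambda_0$ of the twist system enters: its ends can be made arbitrarily small and it is fixed last (Remark~\ref{rem:twistSystemConstants}), so the enlarged middle portions can taper back down to the thin connecting tubes within the gaps between cylinders while everything stays embedded and transverse. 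Note also that containment, not exact reproduction, is all the statement needs --- an Engel-Lutz twist along $\Sigma_1\supset\SS_{k,t_0}$ is in particular an Engel-Lutz twist along the twist system --- so your stronger goal is both unnecessary and unattainable within Proposition~\ref{prop:homotopyOTdisc} alone.
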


\begin{proof}
We apply Proposition \ref{prop:homotopyOTdisc} to $\SD_{M \times K}$ and $(\gamma_s)_{s \in [0,1]}$, parametrically in $k$. This provides a homotopy of formal Engel structures $(\widetilde\SD_s)_{s \in [0,1]}$ such that
\begin{itemize}
\item $\widetilde\SD_0 = \SD_{M \times K}$,
\item $\widetilde\SD_s$ differs from $\SD_0$ only in a neighborhood of $\Delta'$, $\nu$, and $\{|w| < \varepsilon\} \subset B$, and
\item $\widetilde\SD_s$ is obtained from $\SL^{-1}(\SD_{M \times K})$ by an Engel-Lutz twist along a family of cylinders $\Sigma_s$ with cores $\gamma_s$. In the regions where $\gamma_1(\cdot,k)$ agrees with the cores of $\SS_{k,t_0}$, the profiles describing $\Sigma_1$ can be assumed to be a  rescaling of the profile of the twist system.
\end{itemize}
Recall now that the size of the ends of a twist system were controlled by a constant $\lambda_0$. In particular, we choose them to be arbitrarily small. This implies that we can enlarge the profiles of each $(\Sigma_s)_{s \in \Op(\{1\})}$ along the cores of the twist system until $\Sigma_1$ contains 
$$
\bigcup_{k \in K_0'} \SS_{k,t_0}.
$$
The resulting cylinders are still transverse. Adding an Engel-Lutz twist to $\SL^{-1}(\SD_{M \times K})$ along this path of families of cylinders yields the result.
\end{proof}

\subsection{Extension} \label{ssec:extension}

To conclude the proof of Theorem~\ref{thm:mainPrime}, we need one more ingredient. The following proposition contains the main geometric ideas in this paper, most of which are essentially a refinement of the method shown in Subsection \ref{ssec:nonParametric}. It will become apparent during the proof that the properties of a twist system (Subsection \ref{sssec:twistSystem}) are precisely what is needed for the argument to go through.
\begin{proposition} \label{prop:extension}
Fix a shell
$$
(B,\SD) = M([0,1],[-\varepsilon,1],-\varepsilon,f_+,c)
$$
and a twist system
$$
\SS = \bigcup_{m_-\leq n\leq m_+} \psi_\lambda(T^n(S_{t_0})) \subset \{z \leq 0; |w| < \varepsilon\}
$$
with $t_0$ sufficiently close to but smaller than $1$. Write $\SL(\SD)$ for the formal Engel structure obtained from $\SD$ by performing an Engel-Lutz twist along the surfaces $\SS$.

Then $\SL(\SD)$ is homotopic through formal Engel structures to an honest Engel structure $\SD'$ satisfying:
\begin{itemize}
\item the homotopy is relative to the boundary $\partial B$,
\item in $\{|w| > \varepsilon\}$, the even-contact structure remains fixed and the homotopy only modifies the angular function.
\end{itemize}
\end{proposition}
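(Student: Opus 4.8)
The plan is to reproduce the argument of Proposition~\ref{prop:nonParametric}, now with the twist system $\SS$ in the role of the wide transverse torus used there, after a preliminary step that re-reads $\SL(\SD)$ as the result of introducing Engel torsion along a transverse hypersurface and slides that hypersurface into a convenient position. First I would note that $\SS$ lies in $\{z\le 0,\,|w|<\varepsilon\}$, which is exactly the region where $\SD$ coincides with the standard model $\SD_\trans$ (the solid bottom of the shell, cf.\ Subsection~\ref{sssec:twistSystem}); hence all the surface- and hypersurface-constructions of Sections~\ref{sec:surfaces} and~\ref{sec:Lutz} apply verbatim there. Using Lemma~\ref{lem:t3fromt2} I pick a thin tubular neighborhood $U_\rho(\SS)$ with $\rho$ small enough that $U_\rho(\SS)\subset\{|w|<\varepsilon\}$, so that $\SL(\SD)$ is, by definition, obtained by introducing Engel torsion along $N=\partial U_\rho(\SS)$.

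Next I would push the ``flat'' part of the tangency locus. Let $\Lambda\subset N^0$ be the portion obtained from $\bigcup_{m_-\le n\le m_+}\psi_\lambda(T^n(S_{t_0}^+))$ by flowing along $\SH=\bigcup_{w_0}(\SD\cap T\{w=w_0\})$. Property~(b) of Definition~\ref{def:twist system} (with $t_0$ close to $1$) guarantees that the $\SW$-holonomy projection of $\Lambda$ to the $(y,x,z)$-hyperplane contains the square $[\varepsilon,1-\varepsilon]^2$ at small height. Then, exactly as in Proposition~\ref{prop:nonParametric}, I would use Lemma~\ref{lem:thickenTransverse3fold} — isotoping $N$ through transverse hypersurfaces by the flow of $\partial_z+w(\partial_x+z\partial_y)$ with a cut-off supported near $\Lambda$ — to push $\Lambda$ upward until the resulting hypersurface $N'$ has $(y,x,z)$-projection containing $[\varepsilon,1-\varepsilon]^2\times[0,f_+-2\varepsilon]$. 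Since this flow preserves $w$, every hypersurface in the isotopy stays inside $\{|w|<\varepsilon\}$. Because introducing Engel torsion is parametric in the transverse hypersurface (Remark~\ref{contractible choice in Engel Lutz}), this yields a homotopy through genuine Engel structures, supported in $\{|w|<\varepsilon\}$, from $\SL(\SD)$ to $\SL'(\SD)$, the structure obtained by Engel torsion along $N'$. In particular this homotopy is relative to $\partial B$ and leaves the even-contact structure on $\{|w|>\varepsilon\}$ untouched.

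Finally I would isolate the sub-model-structure where the Engel condition still has to be repaired and invoke Lemma~\ref{lem:Bolzano}. Set $M'=\{y,x\in[2\varepsilon,1-2\varepsilon],\ z\in[0,f_+-2\varepsilon],\ w\in[\varepsilon,1]\}\subset B$; the shell conditions ensure that $M'$ is a model structure, that it contains the whole non-Engel locus of $B$, and that $\partial_w c>0$ along $\partial M'$. Being contained in $\{|w|<\varepsilon\}$, $N'$ is disjoint from $M'$, so $\SL'(\SD)$ agrees with $\SD$ on $M'$; at the same time the $(y,x,z)$-projection of the $\SW$-tangency locus of $N'$ covers that of $M'$, so every kernel leaf meeting $M'$ crosses it and therefore picks up one extra turn from the Engel torsion just below $w=\varepsilon$. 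As in the last part of the proof of Proposition~\ref{prop:nonParametric}, I would use this extra turn to homotope $\SL'(\SD)$ inside the band $\{0<w\le 1\}$ through formal Engel structures with fixed even-contact structure (only the angular function changing): the extra winding lets one lower the bottom value of the angular function of $M'$ by somewhat more than $\pi$ — permissible since the shell satisfies $c>-\pi$ — making the height function of $M'$ strictly positive everywhere while keeping $M'$ Engel along $\partial M'$. Lemma~\ref{lem:Bolzano} then homotopes $\SL'(\SD)|_{M'}$, relative to $\partial M'$, to a solid model; everywhere else the structure is already an honest Engel structure (Engel torsion along a transverse hypersurface is Engel, and near $\partial B$ one has $\partial_w c>0$). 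This produces $\SD'$, formally homotopic to $\SL(\SD)$ relative to $\partial B$, with the even-contact structure on $\{|w|>\varepsilon\}$ unchanged.

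I expect the main obstacle to be the angular-function adjustment inside the band: one must verify carefully that a single extra turn, contributed by the Engel torsion along $N'$, can be redistributed so as to make the height function of $M'$ strictly positive \emph{and} keep $\partial_w c>0$ on $\partial M'$ \emph{and} leave the even-contact structure untouched on $\{|w|>\varepsilon\}$. This is precisely the mechanism isolated in Proposition~\ref{prop:nonParametric}, and the reason a twist system (rather than a single thin cylinder) is needed is exactly property~(b): after the push, the tangency locus must project over all of $M'$ so that the turn is felt by every kernel leaf of $M'$. The remaining bookkeeping — that $N'$ and the entire isotopy stay in $\{|w|<\varepsilon\}$ and disjoint from $\partial B$ — is straightforward, since the pushing flow preserves $w$ and $\SS$ was placed in $\{z\le 0,\,|w|<\varepsilon\}$.
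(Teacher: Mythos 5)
Your plan is a direct transplant of Proposition~\ref{prop:nonParametric} (re-read $\SL(\SD)$ as Engel torsion along $\partial U_\rho(\SS)$, push the flat tangency loci up with the flow of $\partial_z+w(\partial_x+z\partial_y)$, then trade the extra turn and finish with Lemma~\ref{lem:Bolzano}), but it has a gap exactly at the step you yourself flag as the main obstacle, and that step is where the content of the proposition lies. What the final angular-function argument requires is not that every kernel leaf over the non-Engel locus \emph{meets} $N'$ (a statement about the projection $\pi(N')$), but that it crosses $N'$ through its $\SW$-\emph{transverse} part: only there is the inserted turning realized along the kernel with the even-contact structure unchanged, so that the structure stays in model form along the whole leaf and the extra $2\pi$ can be redistributed against the deficit of the height function. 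Near the tangency locus of $N'$ the torsion insertion changes $\SE$ itself and no angular function is available. In Proposition~\ref{prop:nonParametric} this issue is invisible because the single wide torus (whose profile has a shell-dependent number of stabilizations) has its flat part spanning all of $x\in[\varepsilon,1-\varepsilon]$, so after the push the tangency sheets of $N'$ end up over the collar where the shell is already Engel and no repair is needed there. For a twist system each flat part is a band of width of order $\lambda^2$: after the push, the tapering ends of each chimney and the interaction of consecutive chimneys over the overlap intervals (where, by condition (B), the two flat parts differ only by a small vertical shift) produce sheets sitting over the \emph{middle} of the shell, i.e.\ over the shadow of the non-Engel locus. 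You give no argument that the crossings there are clean, nor that the leafwise redistribution of the angular function can be matched continuously with the untouched structure over leaves whose crossings are dirty or absent. You also omit the cut-off of the pushing in the $y$-direction (the cylinders of $\SS$ reach $\{y=0,1\}\subset\partial B$, and the homotopy must be relative to $\partial B$) and the quantitative bookkeeping keeping the pushed tubes embedded: the flow shears $x$ by roughly $w(f_+-\varepsilon)$, which must stay much smaller than the overlap width and than the $w$-separation provided by $t_0<1$.

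This is precisely the difficulty the paper's proof is engineered to avoid, which is why it does not simply repeat Proposition~\ref{prop:nonParametric}. There the twists are handled sequentially: the turning of the twist along $S_j$ is pushed up only onto the region $L_j$ just above $S_j^+$, lowering the angular function by roughly a quarter turn to $-\pi/2+\delta$, and condition (B) is used \emph{crucially} to guarantee that the next cylinder remains transverse to the modified structure so the procedure can be iterated. The payoff is that $TL\cap\SD_1$ becomes almost parallel to $-(\partial_x+z\partial_y)$, and flowing the tube around the single last cylinder $S_{m_+}$ along this tilted Legendrian line field produces one connected transverse hypersurface $N_2$ lying below the whole region where the Engel condition fails; the turning used to make the height function positive is inserted along that one hypersurface, its location and tangency behaviour are controlled, and Lemma~\ref{lem:Bolzano} concludes, with the lowering budget tracked against the constraint $c>-\pi$. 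To salvage your route you would have to prove the clean-crossing statement for the entire forest of pushed tubes (including over the overlaps and the tapered ends) and carry out the matching of the modified angular function with the untouched Engel regions; as written, the central step is asserted rather than proved.
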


Before giving the proof, we want to emphasize that when we speak of introducing an Engel-Lutz twist in the shell $B$ along the twist system, we do not mean that a {\em new} Engel-Lutz twist is introduced. This Engel-Lutz twist arises via a homotopy of Engel structures as explained in Subsection~\ref{ssec:homotopies of cores}, so we are effectively comparing the formal Engel structure before ($\SD$) and after ($\SL(\SD)$) the homotopy of the overtwisted disc. In particular, note that $B$ is a shell with respect to the original formal Engel structure, but not with respect to the formal Engel structure obtained after the homotopy.

\begin{proof}
Let $S_n = \psi_\lambda\left(T^n(S_{t_0})\right), n=m_-,\ldots,m_+$,  be  the individual cylinders in the twist system, and 
\begin{align*}
S_n^+ & = S_n \cap \{y \in [\varepsilon,1-\varepsilon], z=0\}\\
 \SS^+ & = \bigcup_{m_-\leq n\leq m_+} S_n^+.  
\end{align*}
Let $\delta>0$ be a small constant which will be determined later. 

Because of property (5) of the profile of the universal twist system (Section~\ref{ssec:universalTwistSystem} p. \pageref{isotopy prop 1}), there is a function $\omega: \{z \geq 0; w=0\} \lra [0,\varepsilon)$ such that
\begin{itemize}
\item $\omega(y,x,z) \equiv 0$ whenever $z > \delta$,
\item the points in $S_{m_+}^+$ are of the form $(y,x,0,\omega(y,x,0))$, and
\item $\omega(y,x,0) > w$ for all $j < m_+$ and $(y,x,0,w) \in S_j$.
\end{itemize}
That is, the hyperplane $L(y,x,z) = (y,x,z,\omega(y,x,z))$ lies above the $(S_j)_{j < m_+}$, contains $S_{m_+}^+$, and agrees with $\{z \geq 0; w=0\}$ in the complement of $\Op(\{z,w=0\})$. Let us single out the following regions (c.f. Figure~\ref{fig:bla}).
\begin{itemize}
\item The portion of $B$ lying above $L$
$$
B' = \{(y,x,z,w) \,|\, z \geq 0; w \geq \omega(y,x,z) \}, 
$$
\item The surface contained in $L$ and lying directly above $S_j^+$
$$
L_j = \bigcup_{(y,x,0,w) \in S_j^+} \{(y,x,0,\omega(y,x,0))\}, 
$$
\item The strip connecting $S_j^+$ with $L_j$
$$
A_j = \bigcup_{(y,x,0,w) \in S_j^+} \{(y,x,0)\} \times [w,\omega(y,x,0)].
 $$
\end{itemize}
\begin{figure}
\begin{center}
\includegraphics[scale=1]{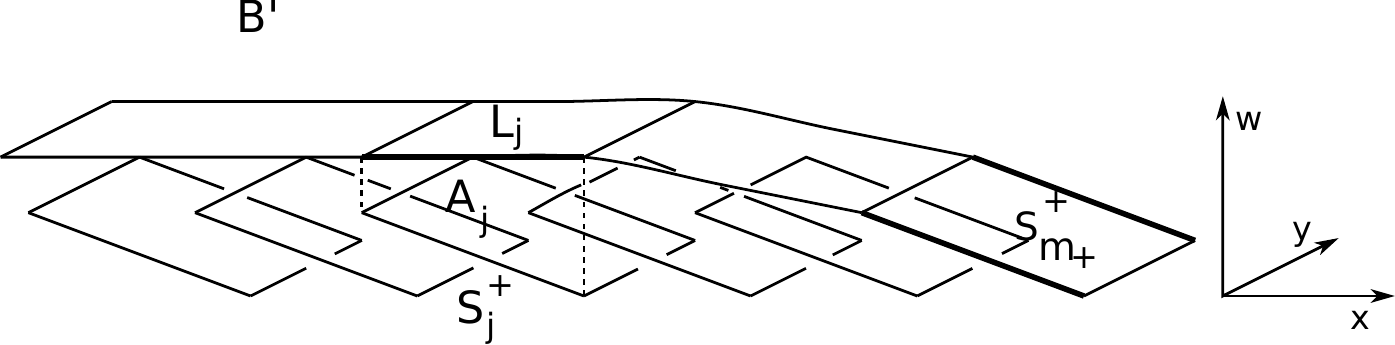}
\caption{Parts of $L,L_j$ and $A_j$. \label{fig:bla}}
\end{center}
\end{figure}

We claim that there is a path of formal Engel structures $(\SD_s)_{s \in [0,1]}$ with the following properties.
\begin{itemize}
\item $\SD_0$ is obtained from $\SD$ by an Engel-Lutz twist  along $(S_j)_{j < m_+}$.
\item $\SD_s = \SD_0$ in the complement of a small neighborhood of $\bigcup_{j < m_+} A_j$.
\item $\SD_s$ is transverse to $S_{m_+}$.
\item $(B',\SD_s)$ is a shell with an angular function $c_s$ such that
\begin{align*}
c_0&(y,x,z,w) - \pi/2 < c_s(y,x,z,w) \leq c_0(y,x,z,w) = c(y,x,z,w)\\
c_1 &\equiv -\pi/2+\delta   \quad\textrm{ on } \bigcup_{j < m_+} L_j.
\end{align*}
\end{itemize}
In particular, note that the leftmost region of $S_{m_+}^+$ (the region with smaller $x$-coordinate) is contained in $\bigcup_{j < m_+} L_j$.

For the construction of $\SD_s$ we will first modify $S_0$ inductively over the regions $\Op(A_j)$ for $j=m_-,\ldots,m_+-1$. Let us start with $j=m_-$: Consider the structure $\SD_{m_-,0}$ obtained from $\SD$ by adding an Engel-Lutz twist along $S_{m_-}$. By construction, $\SD_{m_-,0}$ describes more than one turn along the $\partial_w$-flowlines contained in the strip $A_{m_-}$ (these flowlines are leaves of the kernel). 

Using an isotopy tangent to $\SW$ we can push this turning of the structure upwards. When we push approximately a quarter of a turn in terms of the framing $\{\partial_z, \partial_x + z\partial_y\}$ upwards, we obtain a path of formal Engel structures $\SD_{m_-,s},s \in [0,1]$, satisfying the following conditions.
\begin{itemize}
\item $\SD_{m_-,s} = \SD_{m_-,0}$ in the complement of a small neighborhood of $A_{m_-}$.
\item  On the complement of a small neighborhood of $S_{m_-}$ the formal Engel structure $\SD_{m_-,s}$ is determined by an angular function $c_{m_-,s}$ such that
\begin{align*}
c(y,x&,z,w) - \pi/2 < c_{m_-,s}(y,x,z,w) \leq c(y,x,z,w), \\
c_{m_-,1} & \equiv -\pi/2+\delta   \quad\textrm{ on } L_{m_-}.
\end{align*}
\end{itemize}
We now use property (B) from the definition of the (universal) twist system crucially (c.f. p.~\pageref{condition B}): If the neighborhood of $A_{m_-}$ containing the support of the isotopy is small enough, then $\SD_{m_-,s}$ is transverse\footnote{This can be seen in Figure~\ref{fig:univ-twist-overlay-neu}: If $\eta$ is the profile of the twist system and $T(\eta)$ is its translate, the regions $\eta^+$ and $T(\eta^+)$ overlap over some interval $P$. It is in a neighborhood of this interval (or rather, its counterpart for the twist system) in which the angular function is changing. This corresponds to turning the line field depicted in the figure clockwise until it becomes almost parallel to the $x$-axis. It follows that $T(\eta)$ remains transverse.} to $S_j$ for all $j>m_-$. Since $S_{m_-+1}$ lies below $L_{m_-}$, it follows that $c_{m_-,s} \in (-\pi/2,-\pi/2 +\delta)$ on $S_{m_-+1}$.

This construction can be iterated. For each $n=m_-+1,\ldots,m_+-1$ we consider the family $(\SD_{n-1,s})_{s \in [0,1]}$ constructed in the previous inductive step. Using the fact that $\SD_{n-1,s}$ is transverse to $S_n$ we add an Engel-Lutz along $S_n$, parametrically in $s$. We denote the resulting family by $\SD_{n,s} = \SL\left(\SD_{n-1,2s}\right)$ for $s \in [0,1/2]$. By hypothesis, $\SD_{n,1/2}$ is described  by an angular function $c_{n,1/2}$ that is precisely $-\pi/2+\delta$ in the region $\cup_{j < n} L_j$ (away from a neighborhood of the $(S_j)_{j \leq n}$).  

As above we define a formal Engel homotopy $(\SD_{n,s})_{s \in [1/2,1]}$, of $\SD_{n,1/2}$ using an isotopy along $\partial_w$ in a neighborhood of the strip $A_n$ (i.e. along the kernel foliation). This allows us to modify the angular function so that it is precisely $-\pi/2+\delta$ in the region $\cup_{j \leq n} L_j$. This completes the induction.

As above, it follows that $S_{m_+}$ is transverse to the path of formal Engel structures $(\SD_s)_{s \in [0,1]}$ that we have constructed. Now we thicken $S_{m_+}$ to a family $(N_s)_{s\in [0,1]}$ of transverse $3$-dimensional manifolds. We may assume that
\begin{itemize}
\item $N_s$ lies in an arbitrarily small neighborhood of $S_{m_+}$,
\item $N_s$ fails to be transverse to the kernel of $\SD_s$ along two disjoint surfaces $N_s^{0,\pm}$ which are themselves transverse to $\SD_s$,
\item $N_s^{0,+}$ and $N_s^{0,-}$ are obtained from $S_{m_+}$ by an isotopy tangent to $\SD_s$, and
\item in the vicinity of $S_{m_+}^+$, this flow is along the Legendrian marking $TL \cap \SD_s$.
\end{itemize}
We can now define an isotopy of $3$-manifolds $(N_s)_{s\in [1,2]}$ transverse to $\SD_1$ by pushing $N_1^{0,+}$ further along the line field $TL \cap \SD_1$. By construction, the Legendrian marking $TL \cap \SD_1$ is given by the line field
$$
\cos(-\pi/2+\delta)\frac{\partial}{\partial z} + \sin(-\pi/2+\delta)\left(\frac{\partial}{\partial x}+ z\frac{\partial}{\partial y}\right)
$$
in a neighborhood of $\cup_{j < m_+} L_j$. In general, $TL \cap \SD_1$ is a line field lying in the tangent cone given by turning clockwise from $-(\partial_x + z\partial_y)$ to $\partial_z$.

\begin{figure}[htb]
\begin{center}
\includegraphics[scale=0.6]{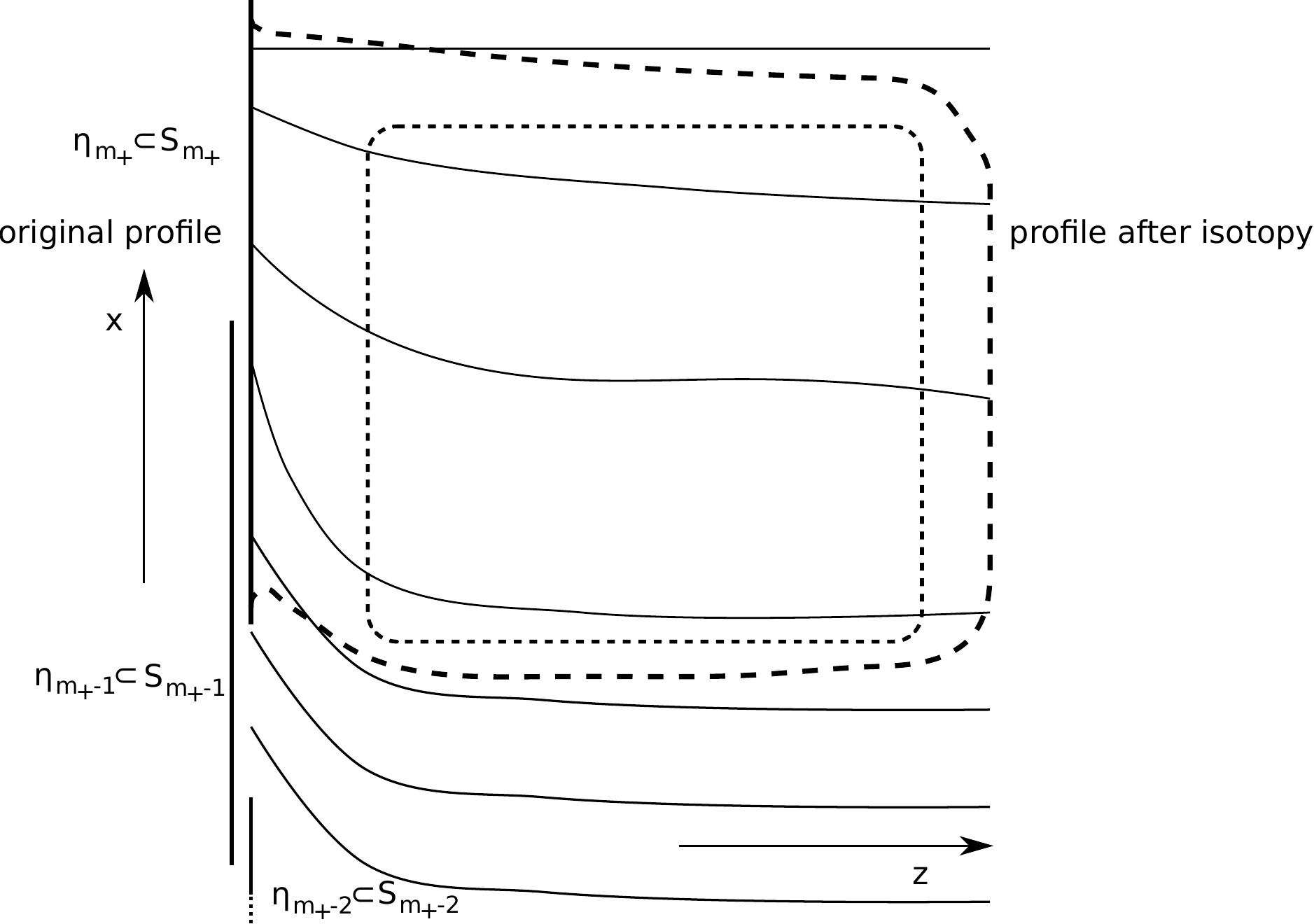}
\caption{The line field $TL\cap \SD_1$.} \label{fig:tilted-vf}
\end{center}
\end{figure}

If we flow $N_1$ along $TL \cap \SD_1$ for sufficiently long times, and $\delta$ was chosen to be sufficiently small, we will obtain a transverse $3$-manifold $N_2$ which contains  
$$ L \cap \{y,x \in [\varepsilon,1-\varepsilon]; z \in [\varepsilon,f_+-\varepsilon]\}. $$
In particular, note that the region where the Engel condition fails lies \emph{above} this hypersurface. The continuous lines in Figure~\ref{fig:tilted-vf} represent $TL \cap \SD_1$. The area contained in the pointed square lies below the region in which $\SD_1$ is not Engel. The thick line on the left corresponds to $S_{m_+}$. The dashed line depicts $N_2^{0,+}$, which is obtained from $S_{m_+}$ by pushing along the Legendrian marking. The manifold $N_2$ covers the region between the solid and the dashed curves.

Now we can finish the proof as in the Subsection \ref{ssec:nonParametric}. First, we construct a path of formal Engel structures $(\SL_s(\SD))_{s \in [0,1]}$ by adding Engel torsion to each $\SD_s$ along the corresponding $N_s$. This corresponds to adding an Engel-Lutz twist along $S_{m_+}$. Therefore, $\SL_0(\SD)$ is precisely $\SL(\SD)$, as desired. We define $(\SL_s(\SD))_{s \in [1,2]}$ to be the structures obtained by adding Engel torsion to $\SD_1$ along $(N_s)_{s \in [1,2]}$. In particular, we obtain a family for formal Engel structure  $\SL_2(\SD)$ that has Engel torsion below the region where the formal Engel structure is not necessarily induced by an Engel structure. As above, this implies that we can push this additional turning upwards, thereby modifying the angular function $c_2$ of $\SL_2(\SD)$ over the region $B'$. 

By definition,  $c_2(y,x,z,1) = c(y,x,z,1) > -\pi$ and an application of Lemma \ref{lem:Bolzano} produces a homotopy $(\SL_s(\SD))_{s \in [2,3]}$ of $\SL_2(\SD)$ with $\SL_3(\SD)$ Engel. This homotopy preserves the even-contact structure, since it simply changes the angular function.
\end{proof}

The parametric version of Proposition~\ref{prop:extension} is stated in the next corollary.
\begin{corollary} \label{cor:parametricExtension}
Fix a shell
$$ (B_k,\SD_k) = M([0,1],[-\varepsilon,1],-\varepsilon,f_{k,+},c_k) \qquad k \in K_0 $$
and a twist system
$$ \SS_k = \bigcup_{m_-\leq n\leq m_+} \psi_\lambda(T^n(S_{t_0})) \subset \{z \leq 0; |w| < \varepsilon\}  $$
where $t_0$ is sufficiently close to but smaller than $1$. Let $\SL(\SD_k)$ be the formal Engel structure obtained from $\SD_k$ by performing an Engel-Lutz twist along the surfaces $\SS_k$.

Then $\SL(\SD_k)$ is homotopic through formal Engel structures to an honest Engel structure $\SD_k'$ such that
\begin{itemize}
\item the homotopy is relative to a $\varepsilon$-neighborhood of $\partial (\cup_K B_k)$, and
\item  the even-contact structure remains fixed on $\{|w| > \varepsilon\}$. There, the homotopy only affects the angular function.
\end{itemize}
\end{corollary}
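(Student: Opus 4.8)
The plan is to deduce Corollary~\ref{cor:parametricExtension} from Proposition~\ref{prop:extension} by running the argument of that proposition simultaneously over the parameter space $K_0$. The key point is that nothing in the proof of Proposition~\ref{prop:extension} was special to a single shell: all the operations performed there — constructing the auxiliary hyperplane $L$ and the regions $B'$, $L_j$, $A_j$; pushing the turning of the structure along the leaves of $\SW$; adding Engel-Lutz twists successively along $S_{m_-},\dots,S_{m_+}$; thickening $S_{m_+}$ to transverse hypersurfaces $N_s$ and flowing them along $TL \cap \SD_1$; and finally applying Lemma~\ref{lem:Bolzano} — are all canonical once the shell and its twist system are fixed, and we arranged in Definition~\ref{def:twist system} and Proposition~\ref{prop:twistSystem} that the twist system $\SS_k$ does not depend on $k$ in the coordinates provided by $B_k$. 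Hence every choice made in the non-parametric proof can be made to depend smoothly on $k$, and since Lemma~\ref{lem:Bolzano} is itself parametric and relative to the boundary (as recorded in Subsection~\ref{ssec:modelStructures}), the whole homotopy $(\SL_s(\SD_k))_{s \in [0,3]}$ assembles into a $K_0$-family.

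Concretely, I would first observe that the functions $\omega$, the regions $B'$, $L_j$, $A_j$, and the bound $\delta$ in the proof of Proposition~\ref{prop:extension} are determined by the twist system alone; since $\SS_k$ is independent of $k$ in the model coordinates, we obtain $k$-independent data $\omega_k \equiv \omega$, $B_k' $, etc. Next, the inductive construction of the formal Engel homotopies $\SD_{n,s}$ — obtained by adding Engel-Lutz twists along $S_n$ and by pushing the excess turning up the $\partial_w$-flowlines — is performed fibrewise; the transversality statements invoked there (using property (B) of the universal twist system) hold uniformly in $k$ because the profiles and their translates are fixed. The thickening of $S_{m_+}$ to $(N_{k,s})_{s\in[0,2]}$ and the flow along $TL \cap \SD_{k,1}$ likewise go through parametrically, producing a $K_0$-family of transverse hypersurfaces $N_{k,2}$ each containing $L \cap \{y,x \in [\varepsilon,1-\varepsilon];\, z \in [\varepsilon, f_{k,+}-\varepsilon]\}$. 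Finally, adding Engel torsion parametrically (which is legitimate by Remark~\ref{contractible choice in Engel Lutz}) and invoking the parametric, relative form of Lemma~\ref{lem:Bolzano} — using $c_{k,2}(y,x,z,1) = c_k(y,x,z,1) > -\pi$, which holds for every $k$ by the shell axioms in Definition~\ref{def:shell} — yields the family $\SD_k'$ of genuine Engel structures and a homotopy that is relative to an $\varepsilon$-neighborhood of $\partial(\cup_K B_k)$ and preserves the even-contact structure on $\{|w|>\varepsilon\}$.

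The one genuinely parametric subtlety, and the step I expect to require the most care, is ensuring that the upper bound $f_{k,+}$ — which varies with $k$ — does not interfere with the uniformity of the constants $\varepsilon_0$, $\delta_0$ (hence $\lambda$), $\delta$, and the number of cylinders $m_+ - m_-$ needed. This is exactly the content of the parametric statement in Proposition~\ref{prop:twistSystem}: one chooses $\varepsilon$ small enough (and $m_\pm$ accordingly) to be simultaneously suitable for all shells in the compact family, and since the flow time along $TL \cap \SD_{k,1}$ needed to engulf the region $\{z \in [\varepsilon, f_{k,+}-\varepsilon]\}$ depends continuously on $k$ and $K_0$ is compact, a single homotopy parameter range $s \in [1,2]$ works for all $k$ after reparametrisation. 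Everything else is a verbatim parametric repetition of the non-parametric argument, so I would write the proof as: ``Repeat the proof of Proposition~\ref{prop:extension} fibrewise over $K_0$, using the parametric form of Proposition~\ref{prop:twistSystem}, Remark~\ref{contractible choice in Engel Lutz}, and the parametric relative version of Lemma~\ref{lem:Bolzano}; compactness of $K_0$ allows all auxiliary constants to be chosen uniformly.''
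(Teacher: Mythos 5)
Your overall strategy -- rerunning the proof of Proposition~\ref{prop:extension} fibrewise over $K_0$, with the auxiliary constants chosen uniformly by compactness -- is the same as the paper's, but it misses the one point where the parametric statement genuinely differs from the non-parametric one: relativity in the \emph{parameter} direction. The boundary $\partial\big(\cup_{k\in K_0}B_k\big)$ contains the fibres $B_k$ lying over $\Op(\partial K_0)$, so the required homotopy must be constant for $k$ near $\partial K_0$; this is exactly what allows it to be glued to the constant homotopy for parameters outside $K_0$ (resp.\ outside $K_0'$) in the proof of Theorem~\ref{thm:main}. A verbatim fibrewise repetition does not have this property: even for those $k$ where the shell is solid and $\SL(\SD_k)$ is already Engel, the construction of Proposition~\ref{prop:extension} still pushes turning upwards along the $\partial_w$-flowlines over the strips $A_j$, still flows the thickened hypersurface along $TL_k\cap\SD_k$, and still modifies the angular function above $L_k$ via Lemma~\ref{lem:Bolzano}; none of these steps is the identity, so the homotopy you produce is not relative over $\Op(\partial K_0)$, and the relativity you assert in your final sentence is not established.

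The missing ingredient is the cut-off in $k$ that the paper's sketch is essentially devoted to: damp the angular-function isotopies (the ``pushing up'' of the extra turning near $\{z=0\}$) by a bump function in $k$ supported away from $\partial K_0$, and cap off the flow of the transverse hypersurface along $TL_k\cap\SD_k$ so that for $k\in\Op(\partial K_0)$ it degenerates to the boundary of a thin tubular neighbourhood of $\psi_\lambda(T^{m_+}(S_{t_0}))$, i.e.\ the structure there remains exactly $\SL(\SD_k)$. This is consistent with the final step because for such $k$ the shell is solid, hence the angular function in the region above $L_k$ is already increasing, and the parametric, relative application of Lemma~\ref{lem:Bolzano} does nothing there. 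By contrast, the issue you single out as the main parametric subtlety -- uniformity of the constants with respect to the varying $f_{k,+}$ -- is real but is already absorbed by Proposition~\ref{prop:twistSystem} and compactness; it is not where the extra work over Proposition~\ref{prop:extension} lies.
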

\begin{proof}
The argument is identical to the one in the proof of Proposition~\ref{prop:extension}, as one must simply add parameters and suitable cut--off functions. We leave the details to the reader, but we will briefly sketch the main points.

We fix a family of hypersurfaces $(L_k)_{k \in K_0}$ lying slightly above $\{w=0\}$ and extending the last component $\psi_\lambda(T^{m_+}(S_{t_0}))$ of the twist system (which, in the paremetrisation of the shell, does not depend on $k$). The proof has two parts. In the first one, the angular function is modified close to the hyperplane $\{z=0\}$ so that it agrees with $-\pi/2 + \delta$ in the region of $L_k$ lying above the other components of the twist system. This has to be done relative to $\partial K_0$ in the parameter. Since the argument boils down to isotoping the formal Engel structure using an isotopy of the angular function, this process can be capped off close to the boundary $\partial K_0$.

In second part of the argument we use flows along $TL_k \cap \SD_k$ to thicken the surface $\psi_\lambda(T^{m_+}(S_{t_0}))$ to a transverse $3$-manifold covering most of the hypersurface $L_k$. Again, this flow can be capped off as $k$ approaches $\partial K_0$, so that the resulting $3$-manifold is instead the boundary of a thin tubular neighborhood of the surface. In this manner, for $k \in \Op(\partial K_0)$, the resulting Engel structure is simply $\SL(\SD_k)$.

Lastly, we apply Lemma \ref{lem:Bolzano}, parametrically on $k$, to modify the angular function in the region lying above $L_k$. For $k \in \Op(\partial K_0)$, the angular function is already increasing and, hence, the relative nature of the lemma yields the claim.
\end{proof}

We can finally prove the main theorem.
\begin{proof}[Proof of Theorem~\ref{thm:mainPrime} (and Theorem~\ref{thm:main})]
After applying the reduction from Proposition~\ref{prop:reduction}, we obtain a family of formal Engel structures $\SD_{M \times K}$ that is Engel in the complement of a collection of shells. Given one such shell $(B = B_k)_{k \in K_0}$ we associate to it
\begin{itemize}
\item a copy $\Delta'$ of the certificate (Subsection \ref{ssec:replication}),
\item a connection $\nu$ between $B$ and $\Delta'$ (Proposition~\ref{prop:connections exist}), and
\item a $K_0$-family of twist systems $(\SS_k)_{k \in K_0}$ (Proposition~\ref{prop:twistSystem}).
\end{itemize}
Write $K_0'\subset K_0$ for a ball such that the formal Engel structure in $B_k$ is Engel for $k\in K_0\setminus K_0'$.

Using Proposition~\ref{prop:homotopiesCoreOTDisc} we construct a homotopy $(\gamma_s)_{s \in [0,1]}$ of the core $\gamma_0$ of the overtwisted disc $\Delta'$ so that $\gamma_1$ contains the cores of the twist systems $(\SS_k)_{k \in K_0'}$. Proposition~\ref{prop:homotopiesOTDisc} implies that there is a homotopy of formal Engel structures $(\SD_{M \times K, s})_{s \in [0,1]}$, starting from $\SD_{M \times K}$, where the overtwisted disc moves along $(\gamma_s)_{s \in [0,1]}$. The final structure $\SD_{M \times K, 1}$ is still only a formal Engel structure, but it has an Engel-Lutz twist along $\SS_k$ for all $k \in K_0'$.

On $B_k$, $k \in K_0'$, the structure $\SD_{M \times K, 1}$ is obtained from $\SD_{M \times K, 0}$ by adding an Engel-Lutz twist along $\SS_k$, $k \in K_0'$. Then Corollary~\ref{cor:parametricExtension} yields a formal Engel homotopy $(\SD_{M \times K, s})_{s \in [1,2]}$ between $\SD_{M \times K, 1}$ and an Engel structure  $\SD_{M \times K, 2}$. This homotopy is relative to the boundary of the shell $(B_k)_{k \in K_0'}$, so the argument can be iterated for the other shells. 
\end{proof}

\subsection{Proofs of Corollaries of Theorem~\ref{thm:main}} 

In this Subsection we  prove some consequences of Theorem~\ref{thm:main}.

\subsubsection{Proof of Corollary \ref{cor:main}} \label{ssec:corMain}

In Corollary \ref{cor:existence} we already showed that the inclusion $\Engel_\OT(M,\Delta) \lra \FEngel(M,\Delta)$ induces a surjection in homotopy groups. Alternatively, one can apply Theorem~\ref{thm:main} taking $K'$ to be the empty set and $K$ to be a sphere. 

For the general statement we need to show the vanishing of the relative homotopy groups $\pi_k(\FEngel(M,\Delta),\Engel_\OT(M,\Delta))$. This is also an application of Theorem~\ref{thm:main} with $K = \D^k$ and $K' = \partial\D^k$. \hfill $\Box$

\subsubsection{Proof of Corollary \ref{cor:extension}} \label{ssec:extensionProof}

The non-parametric statement was already proven in Subsection \ref{ssec:nonParametric}. The general statement is proven as follows: Set $M = \D^4$ and $U = \partial\D^4$. Take a point $p$ in the interior of $\D^4$ and find a family of intervals $(\gamma_k)_{k \in K}$ with $\gamma_k$ passing through $p$ and transverse to $\SE_0(k) = [\SD_0(k),\SD_0(k)]$; the space of such families is contractible. We can then deform $\SD_0$ in $\Op(\{p\}) \times K$ to yield a formal Engel family $\SD_{1/2}$ such that $(\Op(\{p\}),\SD_{1/2}(k))_{k \in K}$ is a locally trivial fibration with fiber $([0,L] \times \D^3, \SD_\OT)$, where $\gamma_k$ corresponds to $[0,L] \times \{0\}$. This is a certificate $\Delta \subset \D^4 \times K$. An application of Theorem~\ref{thm:main} yields a homotopy $(\SD_s)_{s \in [1/2,1]}$ relative to $U = \partial\D^4$ with $\SD_1$ honestly Engel. \hfill $\Box$

\subsubsection{Proof of Corollaries \ref{cor:pi0hPrinciple} and \ref{cor:pi3hPrinciple}} \label{pi3hPrincipleproof}

The proof of Corollary \ref{cor:pi0hPrinciple} goes as follows: Consider $\SD_0$ and $\SD_1$ overtwisted Engel structures with overtwisted discs $\Delta_0$ and $\Delta_1$, respectively. We first homotope them in $\Op(\Delta_i)$ arguing as in Lemma \ref{lem:selfReplication}: this allows us to assume that both overtwisted discs have the same length $L$. Then, using an isotopy of the manifold we set $\Delta_0 = \Delta_1$. A homotopy between $\SD_0$ and $\SD_1$ is then provided by Corollary \ref{cor:main}.

The proof of Corollary \ref{cor:pi3hPrinciple} is slightly more involved. Consider the $K$-family $\SD_0$ with certificate $\Delta^0 = (\Delta_k^0)_{k \in K}$. We can isotope $\SD_0$, parametrically in $k$, to assume that $\image(\Delta_k^0)$ is an arbitrarily small ball. If we assume that $\dim(K) < 4$, the union 
\[ \bigcup_{k \in K} \image(\Delta_0(k)) \subset M \]
does not cover the whole of $M$; choose a point $p$ disjoint from it. We can argue similarly for $\SD_1$ and assume that the certificate $\Delta^1$ misses the same point $p$.

Since $\SD_0$ and $\SD_1$ are formally homotopic, there are \emph{formal} Engel families $\widetilde\SD_0$ and $\widetilde\SD_1$ satisfying:
\begin{itemize}
\item $\widetilde\SD_0$ and $\widetilde\SD_1$ have a certificate $\Delta$ in $U \times K$ (where $U$ is a small neighborhood of $p$),
\item $\widetilde\SD_i$ is formally homotopic to $\SD_i$ and agrees with it in the complement of $\Op(U)$.
\end{itemize}
An application of Theorem~\ref{thm:main} provides a formal homotopy between $\widetilde\SD_0$ and some $\SD_{1/3}$ which is honestly Engel. This homotopy is relative to the complement of $\Op(U)$ and to $\Delta$ (and uses $\Delta$ as certificate). A second application of Theorem~\ref{thm:main} states that $\SD_0$ and $\SD_{1/3}$ are Engel homotopic (using $\Delta^0$ as certificate). Similarly, we produce a formal homotopy between $\widetilde\SD_1$ and some $\SD_{2/3}$ genuinely Engel, which is itself Engel homotopic to $\SD_1$ (using $\Delta^1$ as certificate) and to $\SD_{1/3}$ (using $\Delta$ as certificate). This concludes the proof. \hfill $\Box$

\subsubsection{Foliated results} \label{ssec:foliated}

A well-known observation due to Gromov says that any complete $h$-principle (that is, relative in the parameter and the domain) automatically yields a foliated $h$-principle. When this $h$-principle requires some extra data to be fixed (the certificate), the foliated analogue requires slightly more work, see \cite{cpp15,BEM}. Still, the methods used in the proof of Theorem~\ref{thm:main} (or  a careful application of the Theorem itself), readily imply the following.

Let $(W^{4+m}, \SF^4)$ be a manifold endowed with a smooth foliation of rank $4$. Let $\SW \subset \SD \subset \SE \subset \SF$ be a complete flag for the foliation and assume that we are additionally given bundle isomorphisms
\begin{align} 
\begin{split}\label{e:canonicalIsosFol}
 \det(\SD) &\cong \SE/\SW, \\
 \det(\SE/\SW)& \cong \SF/\SE.
\end{split}
\end{align}
This data is a \textbf{formal foliated Engel structure}. We want to homotope a formal foliated Engel structure so that $\SD$ is a leafwise Engel structure and $\SW \subset \SD \subset \SE$ is the corresponding leafwise Engel flag. We may suppose that there is a closed subset $V \subset W$ where the Engel condition already holds.

Let $K$ be a compact, possibly disconnected, $m$-manifold. Suppose we are given a (embedded) foliation chart
$$
\Delta: \left(\D^4 \times K, \coprod_{k \in K} \D^4 \times \{k\}\right) \lra (W \setminus V,\SF)
$$
satisfying $\Delta^i(\cdot,k)^* \SD = \SD_\OT$ and, additionally, every leaf of $(W \setminus V,\SF)$ intersects the image of $\Delta$. We say that $\Delta$ is a \textbf{certificate of overtwistedness} for the formal leafwise Engel structure. 

\begin{remark}
The case where $(W,\SF)$ is a trivial fibration is precisely the usual parametric setting for the $h$-principle. In this case, the two definitions of certificate are different but equivalent up to homotopy. Indeed, one may use Theorem \ref{thm:main} to produce a certificate in the usual sense if we are given a certificate in the foliated sense, and vice versa. We leave this to the reader. \hfill $\Box$
\end{remark}

The $h$-principle in the foliated setting reads:
\begin{theorem}
Let $(W,\SF,\SW_0,\SD_0,\SE_0)$ be a formal foliated Engel manifold. Suppose that the formal structure is already Engel over some closed subset $V$ and that there is a certificate of overtwistedness $\Delta \subset W \setminus V$.

Then, there is a homotopy of formal foliated Engel structures $(\SW_s \subset \SD_s \subset \SE_s)_{s \in [0,1]}$ with $\SW_1 \subset \SD_1 \subset \SE_1$ a foliated Engel flag. This homotopy is relative to $\Delta$ and $V$.
\end{theorem}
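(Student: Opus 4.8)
The plan is to re-run the proof of Theorem~\ref{thm:mainPrime} (that is, the whole of Section~\ref{sec:hPrinciple}) with the product foliation $\SF_{M\times K}$ systematically replaced by $\SF$, and with the global parameter space $K$ replaced, where needed, by the transversals of a foliated atlas. This is legitimate because every construction in Section~\ref{sec:hPrinciple} was phrased in terms of the foliation by plaques and of the distributions restricted to it, and uses only that $\SF_{M\times K}$ is \emph{a} smooth foliation — in particular the adapted triangulations, the crystalline subdivision scheme of Lemma~\ref{lem:adapted triang}, the uniform-size coverings of Lemma~\ref{lem:uniformSize}, and the shell construction of Subsection~\ref{ssec:shells} make sense for any smooth foliation, since any such foliation is linear in suitable charts. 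Thus the reduction of Proposition~\ref{prop:reduction} applies to $(W,\SF)$ essentially verbatim — applying Gromov's $h$-principle leafwise for the Engel relation, and McDuff's theorem \cite{mcd} leafwise for the even-contact relation, all relative to $V$ and to $\Delta$ (on $\image(\Delta)$ the structure is $\SD_\OT$, hence genuinely leafwise Engel) — and, after restricting as in that proof to a neighborhood of $\image(\Delta)$ with compact closure (handling a non-compact $W$ by the usual exhaustion), produces a formal foliated Engel structure $\SD'$, homotopic to the given $\SD_0$ rel $V$ and $\Delta$, which is leafwise Engel outside a finite collection of pairwise disjoint \emph{foliated shells} $\{B^i = (B^i_k)_{k\in K_i}\}_{i=1}^m$. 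Here each $K_i$ is a closed ball contained in a transversal, each $B^i$ lies in a foliation chart, and the intersection condition (iv) of Proposition~\ref{prop:reduction} holds.

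Now one treats the shells one at a time, exactly as in Subsections~\ref{ssec:extension1}--\ref{ssec:extension}; the only genuinely new point is producing, for a fixed shell $B=(B_k)_{k\in K_0}$, a certificate of overtwistedness over $K_0$. Since $\SD'$ is leafwise Engel off the shells, each plaque $B_k$ lies on a leaf of $(W\setminus V,\SF)$ which, by hypothesis on $\Delta$, meets $\image(\Delta)$; moreover removing from a connected $4$-manifold the finitely many balls $B^1,\dots,B^m$ and $\image(\Delta)$ leaves it connected, so, parametrically and smoothly in $k\in K_0$, one can choose an embedded path inside that leaf from $B_k$ to $\image(\Delta)$, disjoint from $V$, from the interior of $\Delta$, from the scaling region, and from all shells except at its initial point on $B$. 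Applying the $h$-principle for curves tangent to an even-contact structure (Proposition~\ref{prop:tangentCurvesExistence}) leafwise makes this $K_0$-family of paths transverse to $\SW$ and tangent to $\SE$. Then, exactly as in Subsection~\ref{ssec:replication}, the self-replication Lemma~\ref{lem:selfReplication2} yields, by an Engel homotopy supported near $\Delta$, a copy $\Delta'=(\Delta'_k)_{k\in K_0}$ of the certificate, and (a reparametrization of) the chosen paths becomes a connection $\nu$ between $B$ and $\Delta'$ in the sense of Definition~\ref{def:connection to core} — this is Proposition~\ref{prop:connections exist}, now carried out inside the leaves rather than in $M\setminus U$. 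With $\Delta'$, $\nu$ and a $K_0$-family of twist systems in $B$ (Proposition~\ref{prop:twistSystem}) at hand, Propositions~\ref{prop:homotopiesCoreOTDisc} and~\ref{prop:homotopiesOTDisc} move the replicated disc so as to acquire an Engel--Lutz twist along the twist system, and Corollary~\ref{cor:parametricExtension} then homotopes $\SD'$ on $B$ to a genuine leafwise Engel structure, relative to $\partial B$.

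All the homotopies of the previous paragraph are supported in an arbitrarily small neighborhood of $B\cup\nu\cup\Delta'$, which is disjoint from $V$, from $\Delta$ (we use the replicated $\Delta'$, never $\Delta$ itself), and — by the choice of $\nu$ — from the remaining shells. Hence the procedure can be applied to $B^1,\dots,B^m$ in succession, and concatenating the resulting homotopies with the reduction homotopy of the first paragraph yields a homotopy of formal foliated Engel structures from $\SD_0$ to a foliated Engel flag, relative to $V$ and $\Delta$, as desired. (One could alternatively phrase the whole argument as a chart-by-chart application of Theorem~\ref{thm:main} over the transversals, gluing via the relative-in-the-domain property; the content is the same.)

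The main obstacle is bookkeeping rather than a new idea. One has to check carefully that \emph{every} construction in Section~\ref{sec:hPrinciple} is functorial with respect to the foliation — above all that the adapted triangulations, the associated coverings of Lemmas~\ref{lem:adapted triang}--\ref{lem:uniformSize}, and the shell construction of Subsection~\ref{ssec:shells} go through for $(W,\SF)$ — and that the fact that the ``parameter spaces'' $K_i$ are now only local transversals causes no trouble, which works because after the reduction each shell is confined to a single foliation chart with a ball transversal. The delicate step is the construction of the connection $\nu$: it must be produced parametrically over $K_0$, inside the leaves, and disjoint from $V$, $\Delta$ and the other shells; this is precisely the place where the hypothesis that $\image(\Delta)$ meets every leaf of $(W\setminus V,\SF)$, combined with the self-replication of the overtwisted disc, is indispensable. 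Once these points are in place, the remainder is a line-by-line transcription of Subsections~\ref{ssec:extension1}--\ref{ssec:extension}.
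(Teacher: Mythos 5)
Your proposal is correct and follows essentially the route the paper itself indicates: the paper offers no detailed argument for this theorem, stating only that the methods of Theorem~\ref{thm:main} (or a careful chart-by-chart application of it, in the spirit of \cite{cpp15,BEM}) readily yield the foliated statement, which is exactly what you carry out by re-running Section~\ref{sec:hPrinciple} with $\SF$ in place of $\SF_{M\times K}$ and local transversals as parameter spaces. Your identification of the genuinely new point --- producing the connection $\nu$ leafwise and parametrically over the shell's transversal, using the hypothesis that every leaf of $(W\setminus V,\SF)$ meets $\image(\Delta)$ together with the replication lemma --- is precisely the ``slightly more work'' the paper alludes to.
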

In particular, this statement recovers Theorem \ref{thm:main}.


\begin{thebibliography}{CPP15}

\bibitem[BEM]{BEM} M.S.~Borman, Y.~Eliashberg, E.~Murphy, \emph{Existence and classification of overtwisted contact structures in all dimensions}.  Acta Math. 215 (2015), no. 2, 281--361.

\bibitem[CPPP]{cppp} R.~Casals, J.L.~P{\'e}rez, {\'A}.~del Pino, F.~Presas, {\em Existence h-principle for Engel structures}. To appear in Invent. Math. arXiv:1507.05342.

\bibitem[CPP15]{cpp15} R.~Casals, {\'A}.~del Pino, F.~Presas, {\em $h$-Principle for Contact Foliations}. Int. Math. Res.
Not. 20 (2015), 10176--10207.

\bibitem[CPP]{cpp} R.~Casals, {\'A}.~del Pino, F.~Presas, {\em Loose Engel structures}. Preprint. arXiv:1712.09283

\bibitem[El]{El89} Y.~Eliashberg, \emph{Classification of overtwisted contact structures on 3-manifolds}. Invent. Math. 98 (1989), no. 3, 623--637. 

\bibitem[EM]{em} Y.~Eliashberg, N.~Mishachev, \emph{Introduction to the h-principle}. Graduate Studies in Mathematics, 48. American Mathematical Society, Providence, RI, 2002.


\bibitem[Et]{etnyre} J.~Etnyre, \emph{Legendrian and transversal knots},  Handbook of knot theory, 105--185, Elsevier B. V., Amsterdam, 2005.

\bibitem[Ge]{ge-review} H.~Geiges, {\em Review of R.~Montgomery's paper Engel deformations and contact structures}. MathSciNet: MR1736216 (2001h:53127).


\bibitem[Gr]{gr} M.~Gromov, \emph{Partial differential relations}, Ergebnisse der Mathematik und ihrer Grenzgebiete (3), Springer-Verlag, Berlin, 1986.

\bibitem[KS]{ks} M. Klukas, B. Sahamie. \emph{On prolongations of contact manifolds}, Proc. Amer. Math. Soc. 141 (2013), no. 9, 3257--3263.

\bibitem[Lu]{lu} R.~Lutz, {\em Structures de contact sur les fibr{\'e}s principaux en cercles de dimension trois}, Ann. Inst. Fourier (Grenoble) 27 (1977), no. 3, ix, 1--15.

\bibitem[McD]{mcd} D.~McDuff, {\em Applications of convex integration to symplectic and contact geometry}, Ann. Inst. Fourier (Grenoble) 37 (1987), no. 1, 107--133. 

\bibitem[Mo]{montgomery} R.~Montgomery, {\em Engel deformations and contact structures},  Northern Calif. Sympl. Geometry Seminar, 103--117, Amer. Math. Soc. Transl. Ser. 2, 196, Adv. Math. Sci., 45, Amer. Math. Soc., Providence, RI, 1999.  

\bibitem[Mur]{Mur} E.~Murphy, {\em Loose Legendrian Embeddings in High Dimensional Contact Manifolds}, arXiv:1201.2245.

\bibitem[dP]{dp} {\'A}.~del Pino, \emph{On the classification of prolongations up to Engel homotopy}, to appear in Proc. Amer. Math. Soc.

\bibitem[dP2]{dp2} {\'A}.~del Pino, \emph{Engel structures and symplectic foliations}, PhD thesis, Universidad Aut\'onoma de Madrid.

\bibitem[Th]{th} W.~Thurston, {\em The theory of foliations of codimension greater than one}, Comment. Math. Helv. 49 (1974), 214--231.

\bibitem[Vo]{vo} T.~Vogel, {\em Existence of Engel structures}, Ann. of Math. (2) 169 (2009), no. 1, 79--137.

\bibitem[Wh]{whitney} H. Whitney, {\sl Geometric Integration Theory}, Princeton Univ. Press 1957. 
\end{thebibliography}
\end{document}